\documentclass[11pt]{article}
\usepackage[margin=1in]{geometry}
\usepackage{amsmath}
\usepackage{amsfonts}
\usepackage{xcolor}
\usepackage{latexsym}
\usepackage{enumitem}
\usepackage{tablefootnote}

\usepackage{todonotes}
\usepackage{tablefootnote}
\usepackage{epsfig}
\usepackage{multirow}
\usepackage{float}
\usepackage{array}
\usepackage{mathtools}
\usepackage{hyperref}
\usepackage{comment}
\usepackage{amssymb,amsthm}
\usepackage{bm}
\usepackage{algorithm,algorithmic}
\usepackage[labelformat=simple]{subcaption}

\mathtoolsset{showonlyrefs=false}

\newcommand*{\QEDA}{\hfill\hbox{\vrule width1.0ex height1.0ex}}

\newcommand{\prox}{\operatorname{prox}}
\newcommand{\xt}{\tilde x}

\newcommand{\lora}{\mathrm{L}}
\newcommand{\flora}{\mathrm{F}}
\renewcommand{\L}{\mathcal{L}}
\newcommand{\bardtL}{\hat{R}_{\tilde\Lambda}}
\newcommand{\Lp}{\L_\rho}
\newcommand{\tnu}{\tilde{\nu}}

\newcommand{\dtL}{R_{\tilde\Lambda}}
\newcommand{\tL}{\tilde{\L}}

\newcommand{\Lpt}{\tilde{\Lp}}
\newcommand{\CConv}[2]{\underset{#1}{\overline{\mathrm{Conv}}}(#2)}
\newcommand{\td}{\tilde{d}}
\newcommand{\tO}{\tilde{\mathcal{O}}}
\usepackage{makecell,booktabs}
\usepackage[version=4]{mhchem}
\usepackage[strict]{changepage}

\usepackage{mdframed}
\usepackage[most]{tcolorbox}

\newtcolorbox{eqbox}{
  colback=gray!5!white,
  colframe=gray!80!black,
  boxrule=0.5pt,
  arc=4pt,
  left=6pt,
  right=6pt,
  top=4pt,
  bottom=4pt,
  enhanced
}

\usepackage{amsmath}
\usepackage{amsfonts}
\usepackage{latexsym}
\usepackage{amssymb}

\newtheorem{thm}{Theorem}[section]
\newtheorem{theorem}[thm]{Theorem}

\newtheorem{lemma}[thm]{Lemma}

\newtheorem{proposition}[thm]{Proposition}

\newtheorem{corollary}[thm]{Corollary}

\theoremstyle{definition}

\newtheorem{example}[thm]{Example}

\newtheorem{remark}[thm]{Remark}

\newcommand{\beq}{\begin{equation}}
\newcommand{\eeq}{\end{equation}}
\newcommand{\beqa}{\begin{eqnarray}}
\newcommand{\eeqa}{\end{eqnarray}}
\newcommand{\beqas}{\begin{eqnarray*}}
\newcommand{\eeqas}{\end{eqnarray*}}
\newcommand{\bi}{\begin{itemize}}
\newcommand{\ei}{\end{itemize}}

\newcommand{\nn}{\nonumber}

\newcommand{\R}{\mathbb{R}}

\newcommand{\lam}{{\lambda}}

\newcommand{\A}{{\cal A}}

\newcommand{\inner}[2]{\langle #1,#2\rangle}

\newcommand{\argmin}{\mathrm{argmin}\,}
\newcommand{\argmax}{\mathrm{argmax}\,}

\newcommand{\dom}{\mathrm{dom}\,}

\newcommand{\Argmin}{\mathrm{Argmin}\,}

\newcommand{\tx}{\tilde x}
\newcommand{\ty}{\tilde y}

\newtheorem{assumption}{Assumption}
\begin{document}
	\title{Improved Analysis of Restarted Accelerated Gradient and Augmented Lagrangian Methods via Inexact Proximal Point Frameworks}
	\date{February 19, 2026}
	\author{Matthew X. Burns\thanks{
        Department of Electrical and Computer Engineering, University of Rochester, Rochester, NY 14627 (email: {\tt mburns13@ur.rochester.edu}).}
  \and
		Jiaming Liang \thanks{Goergen Institute for Data Science and Artificial Intelligence (GIDS-AI) and Department of Computer Science, University of Rochester, Rochester, NY 14620 (email: {\tt jiaming.liang@rochester.edu}). This work was partially supported by AFOSR grant FA9550-25-1-0182.
		}
		 }
	\maketitle

	\begin{abstract}
    This paper studies a class of double-loop (inner-outer) algorithms for convex composite optimization. For unconstrained problems, we develop a restarted accelerated composite gradient method that attains the optimal first-order complexity in both the convex and strongly convex settings. For linearly constrained problems, we introduce inexact augmented Lagrangian methods, including a basic method and an outer-accelerated variant, and establish near-optimal first-order complexity for both methods. The established complexity bounds follow from a unified analysis based on new inexact proximal point frameworks that accommodate relative and absolute inexactness, acceleration, and strongly convex objectives. Numerical experiments on LASSO and linearly constrained quadratic programs demonstrate the practical efficiency of the proposed methods. \\
    
		{\bf Key words.} Convex composite optimization, Accelerated gradient method, Augmented Lagrangian method, Proximal point method, Optimal iteration-complexity
		\\
		
		{\bf AMS subject classifications.} 
		49M37, 65K05, 68Q25, 90C25, 90C30, 90C60
	\end{abstract}
	
	\section{Introduction}\label{sec:intro}

    In this paper, we consider two optimization problems: the convex smooth composite optimization (CSCO) problem
    \begin{equation}\label{eq:ProbIntro}
        \phi_*:=\min_{x\in\R^n}\{\phi(x):=f(x)+h(x)\},
    \end{equation}
    and the linearly constrained CSCO (LC-CSCO) problem
    \begin{equation}\label{eq:ProbIntro_LC}
        \hat\phi_*:=\min_{x\in\R^n}\{\phi(x):=f(x)+h(x):Ax=b\},
    \end{equation}
    where $A\in\R^{m\times n}$ and $b\in \R^m$ define $m\leq n$ linear equality constraints.
    In both problems, we assume that i) $f,h:\R^n\to\R\cup\{+\infty\}$ are closed proper convex functions such that $\dom h \subset \dom f$, ii) $f$ is $L_f$-smooth on $\R^n$, and iii) $h$ has a computable proximal mapping.
    Moreover, we also assume for LC-CSCO that $\dom h$ is bounded with diameter $D$ and Slater's condition is satisfied.

    For any $\varepsilon>0$, we say a point $x\in\R^n$ is an $\varepsilon$-solution to~\eqref{eq:ProbIntro} if $\phi(x)-\phi_*\leq \varepsilon$. 
    To define an optimality criterion for~\eqref{eq:ProbIntro_LC}, we consider the unconstrained primal-dual reformulation,
    \[
    \max_{\lambda\in\R^m}\min_{x\in\R^n}\{\L(x,\lambda):=\phi(x)+\inner{\lambda}{Ax-b}\},
    \]
    where $\L$ is the Lagrangian function and $\lambda\in\R^m$ is the Lagrange multiplier for the constraint $Ax=b$. Slater's condition implies the strong duality equivalence
    \begin{equation}
    \hat\phi_* = \max_{\lambda\in\R^m}\bigl\{d(\lambda):=\min_{x\in\R^n} \L(x,\lambda)\bigr\},\label{eq:strong_duality}
    \end{equation}
    where $d(\lambda)$ is the Lagrangian dual of~\eqref{eq:ProbIntro_LC}. 
    For fixed $\varepsilon > 0$, we call the pair $(x,\lambda)$ an $\varepsilon$-primal-dual solution to~\eqref{eq:ProbIntro_LC} if it is an $\varepsilon$-stationary point of $\L$, that is for some $v\in\R^n$
    \begin{equation}
        v\in \partial h(x) +\nabla f(x)+A^\top\lambda,\quad
        \|v\|\leq\varepsilon,\quad\|Ax-b\|\leq \varepsilon.
    \label{def:approximate_kkt}
    \end{equation} 
    Additionally, we call a point $x\in\dom h$ an $\varepsilon$-primal solution if
    \begin{equation}\label{def:primal_lc_sol}
        |\phi(x)-\hat\phi_*|\leq \varepsilon,\quad \|Ax-b\|\leq \varepsilon,
    \end{equation}
    We can show (see Lemma~\ref{lem:pd_gap} in Appendix \ref{appdx:technical}) that~\eqref{def:approximate_kkt} implies an $\mathcal{O}(\varepsilon)$ primal solution, hence we focus on~\eqref{def:approximate_kkt} in this work for generality.

    \noindent
    \textbf{Literature Review.}
    Nesterov's accelerated composite gradient (ACG) method is standard for solving CSCO problems. First proposed for purely smooth problems ($h=0$)~\cite{ag_nesterov83}, accelerated methods have since been extended to the composite setting~\cite{auslender2006interior,beck2009fast,monteiro2016adaptive,nesterov2013gradient}, where they achieve optimal complexity $\mathcal{O}(\varepsilon^{-1/2})$ for obtaining an $\varepsilon$-solution to~\eqref{eq:ProbIntro}. However, some undesirable phenomena are observed in practice, namely oscillations in the objective value. ``Restarted'' ACG methods are a widely used strategy to improve ACG performance and suppress oscillations. A restarted ACG method periodically resets the acceleration scheme according to some predefined rule~\cite{alamo2019gradient,alamo2022restart,odonoghueAdaptiveRestartAccelerated2015,su2016differential}. The seminal work~\cite{odonoghueAdaptiveRestartAccelerated2015} proposed ``gradient''  and ``function value'' restart heuristics which restart when the gradient forms an acute angle with the update direction (``gradient'') or when the function value increases (``function value''). While these restart criteria are empirically performant, they were initially heuristic strategies without theoretical support. Recent work has shown that gradient restart achieves optimal rates in the strongly convex setting~\cite{baoAcceleratedGradientMethods2025}, however the authors do not consider the more general class of merely convex objectives. A ``speed restart'' strategy was further proposed by~\cite{su2016differential}. Motivated by continuous-time ODE analysis, discrete-time speed restart resets acceleration when $\|x_{k}-x_{k-1}\|< \|x_{k-1}-x_{k-2}\|$. While a convergence analysis was presented for the continuous-time limit, the final bound contains some constants which are simply shown to exist, lacking exact characterization. Parameter-free restarting schemes for strongly convex optimization were proposed by~\cite{sujanani2025efficient} based on an estimation procedure for the (unknown) strong convexity modulus.

    A classical method for solving LC-CSCO problems is the augmented Lagrangian method (ALM), also known as the method of multipliers, which has the iteration
    \begin{align}
        x_{k+1}&= \underset{u\in\R^n}\argmin \Lp(u, \lambda_k),\label{eq:exact_alm_primal}\\
        \lam_{k+1}&= \lam_k + \rho(Ax_{k+1}-b),\label{eq:exact_alm_dual}
    \end{align}
    where
    \begin{equation}\label{def:augmented_lagrangian}
        \Lp(x,\lambda) = \phi(x) + \inner{\lambda}{Ax-b} +\frac{\rho}{2}\|Ax-b\|^2
    \end{equation}
    is the augmented Lagrangian with penalty coefficient $\rho>0$.
    First analyzed by Hestenes~\cite{hestenes1969multiplier} and Powell~\cite{powell1969method}, variants of ALM have become standard methods for linearly-constrained optimization. 
    The classical exact ALM is typically intractable to implement, motivating the development of the inexact ALM (I-ALM),
    \begin{align}
        x_{k+1}&\approx \underset{u\in\R^n}\argmin\Lp(u, \lambda_k)\label{eq:AL-x},\\
        \lam_{k+1}&= \lam_k + \rho(Ax_{k+1}-b)\label{eq:AL-y},
    \end{align}
    which permits some inexactness in the primal minimization step. While analysis of the I-ALM dates back to Rockafellar~\cite{rockafellarAugmentedLagrangiansApplications1976}, non-asymptotic guarantees for more general problem classes have emerged only recently.
The authors of~\cite{lanIterationcomplexityFirstorderAugmented2016a} provided non-ergodic complexity bounds for an I-ALM when $h(x)=\delta_Q(x)$ is the indicator function of a compact convex set $Q$. In a pattern replicated in later works,~\cite{lanIterationcomplexityFirstorderAugmented2016a} used ACG as a first-order inner solver for~\eqref{eq:AL-x} and separated their ``inner'' and ``outer'' complexity analyses. The baseline ACG-based I-ALM was shown to have an $\mathcal{O}(\varepsilon^{-7/4})$ complexity. To improve the complexity,~\cite{lanIterationcomplexityFirstorderAugmented2016a} added a strongly convex perturbation to~\eqref{def:augmented_lagrangian}, $\gamma_p\|x-x_0\|^2/2$ for some $x_0\in \dom h$ with $\gamma_p\propto \varepsilon/D$ to maintain an $\varepsilon$-primal-dual solution to the original problem. The perturbation seemingly improved I-ALM iteration complexity to $\tO(\varepsilon^{-1})$, however, as shown by~\cite{luIterationComplexityFirstOrderAugmented2023}, this perturbation adds a hidden $\varepsilon$ dependence.
Accordingly, further studies have rectified and extended non-asymptotic guarantees for the I-ALM. 
 Authors have proven $\tO(\varepsilon^{-1})$ complexities by considering ergodic iterates~\cite{patrascuAdaptiveInexactFast2017,xuIterationComplexityInexact2021} and geometrically increasing penalty terms~\cite{luIterationComplexityFirstOrderAugmented2023,xuIterationComplexityInexact2021}. Several works have also extended the problem class to include nonlinear inequality constraints~\cite{luIterationComplexityFirstOrderAugmented2023,xuIterationComplexityInexact2021}, general simple nonsmooth $h$~\cite{liuNonergodicConvergenceRate2019,luIterationComplexityFirstOrderAugmented2023,xuIterationComplexityInexact2021}, and projection-free inner subroutines~\cite{liuNonergodicConvergenceRate2019}. We refer interested readers to the recent survey~\cite{dengAugmentedLagrangianMethods2025a} for a more comprehensive treatment of the ALM in mathematical programming.  
 \textbf{Remark.} Numerous works that use inexact first-order subroutines also assume that $\dom h$ is bounded~\cite{lanIterationcomplexityFirstorderAugmented2016a,liuNonergodicConvergenceRate2019,luIterationComplexityFirstOrderAugmented2023,patrascuAdaptiveInexactFast2017,xuIterationComplexityInexact2021}. Interestingly, works which do not assume boundedness also do not rely on inexact first-order subroutines, instead using explicit minimization (either as a theoretical oracle, a linear program, or as a linearized approximation~\cite{ouyangAcceleratedLinearizedAlternating2015,sabachFasterLagrangianBasedMethods2022,xuAcceleratedFirstOrderPrimalDual2017}).

While the Restarted ACG and I-ALM algorithms target distinct problem classes, we can view them in a unified perspective by considering inexact proximal point (IPP) methods~\cite{rockafellarAugmentedLagrangiansApplications1976,rockafellar1976monotone,solodovHybridApproximateExtragradient1999,solodovInexactHybridGeneralized2000a}, which solve a generic optimization problem $\Phi_*:=\min_{x \in \R^n} \Phi(x)$ by the iteration
\begin{equation*}
    x_{k+1}\approx\underset{x\in \R^n}\argmin\left\{\Phi(x)+\frac{1}{2\lambda}\|x-x_k\|^2\right\}.
\end{equation*}
The approximation ``$\approx$'' can be characterized in a number of ways, either bounded by some absolute tolerance $\delta_k\geq 0$~\cite{rockafellar1976monotone,solodovHybridApproximateExtragradient1999} or by some relative term $\|x_{k+1}-x_k\|$~\cite{monteiro2010complexity,MonteiroSvaiterAcceleration,solodovHybridApproximateExtragradient1999}. IPP frameworks have long been used to analyze algorithms for optimization over problems with convex structures. Rockafellar's absolute error~\cite{rockafellarAugmentedLagrangiansApplications1976} framework has repeatedly been used for I-ALM analysis~\cite{liuNonergodicConvergenceRate2019,xuIterationComplexityInexact2021}. Solodov and Svaiter's HPE framework provided the first iteration-complexity bound for ADMM~\cite{monteiroIterationComplexityBlockDecompositionAlgorithms2013} and its accelerated extension~\cite{MonteiroSvaiterAcceleration} has been instrumental in the development of high-order methods~\cite{bubeckNearoptimalMethodHighly2019,gasnikovOptimalTensorMethods2019,jiangOptimalHighOrderTensor2021}.

\noindent
\textbf{Contributions.}
For CSCO problems, we propose a novel Restarted ACG method (Algorithm~\ref{alg:restart}) that achieves the same optimal iteration-complexity as that of ACG in both convex and strongly convex settings. To our knowledge, this is a novel result in the restarted ACG literature. For LC-CSCO problems, we first prove that a classical I-ALM algorithm (Algorithm~\ref{alg:al}) achieves near-optimal, non-ergodic $\tO(\varepsilon^{-1})$ complexity with a fixed penalty parameter $\rho>0$. To our knowledge, this is a novel finding in the ALM literature, where previous methods either require geometrically increasing $\rho$ or ergodic convergence to achieve near-optimal complexity. Building on our analysis of I-ALM, we propose a dual-accelerated inexact ``fast'' ALM (I-FALM, Algorithm~\ref{alg:dsc_aalm}), which achieves $\tO(\varepsilon^{-1})$ non-ergodic complexity with improved dependence on the domain diameter $D$. Table~\ref{tab:lit_review_alm} places the proposed I-ALM and I-FALM methods in the context of the broader I-ALM literature, where the stated complexity is to find an $\varepsilon$-primal solution in the sense of~\eqref{def:primal_lc_sol}. Numerical experiments suggest that Algorithm~\ref{alg:restart} is competitive with existing restart schemes, as well as showing that Algorithm~\ref{alg:dsc_aalm} can significantly outperform existing non-ergodic I-ALM variants. 

Furthermore, we introduce two analytical frameworks, namely lower oracle approximation (LOrA) and its accelerated variant, fast LOrA (FLOrA), which provide a unified and principled foundation for analyzing algorithms for solving either CSCO or LC-CSCO. LOrA is an IPP framework built upon ``lower estimation" functions that arise naturally in the analysis of convex optimization methods. It encompasses, as special cases, proximal gradient methods, proximal bundle methods, and I-ALM. By incorporating Nesterov's acceleration scheme into LOrA, we develop FLOrA, which further captures ACG, Restarted ACG, and I-FALM as instances.

\noindent
\textbf{Organization.}
Section~\ref{sec:primal} provides an overview of ACG and introduces the Restarted ACG algorithm along with its optimal iteration-complexity. Section~\ref{sec:dual} provides the setup and complexity bounds for the I-ALM and I-FALM algorithms in Subsections~\ref{ssec:baseline_alm} and~\ref{ssec:acc_alm}, respectively. Section~\ref{sec:framework} introduces the LOrA and FLOrA frameworks along with their theoretical guarantees. Building on the two frameworks, Section~\ref{sec:proof} proves the main complexity results presented in Sections~\ref{sec:primal} and~\ref{sec:dual}. Preliminary computational results are reported in Section~\ref{sec:numerical}. Section~\ref{sec:conclusion} provides concluding remarks and potential future directions. Appendix~\ref{appdx:numerical_details} provides additional details for numerical experiments. Technical lemmas can be found in Appendix~\ref{appdx:technical}. LOrA and FLOrA analyses are presented in Appendix~\ref{appdx:framework}. Appendices~\ref{appdx:primal_deferred} and~\ref{appdx:deferred_alm} contain deferred proofs relevant to Sections~\ref{sec:primal} and~\ref{sec:dual}, respectively. 
\begin{table}[H]
    \centering
    \begin{tabular}{l|c|c|c|c|c|c|c}
    \toprule
        Paper & Alg. &  Complexity & $\rho$ & $\phi$ & Constraints & Subroutine & Conv. Pt. \\
        \midrule
        \multirow{1}{*}{\cite{lanIterationcomplexityFirstorderAugmented2016a}} & I-ALM & $\mathcal{O}(\varepsilon^{-7/4})$ & Static &$f+\delta_Q$ & Linear & ACG & Non-Erg. \\ 
        \cite{patrascuAdaptiveInexactFast2017} & IFAL & $\mathcal{O}(\varepsilon^{-1})$  & Static & $f+\delta_Q$ &  Linear & ACG & Erg. \\ 
        \cite{liuNonergodicConvergenceRate2019} & I-ALM & $\mathcal{O}(\varepsilon^{-2})$ & Static & $f+h$ & Linear & ACG/CG & Non-Erg. \\
        \multirow{2}{*}{\cite{xuIterationComplexityInexact2021}} &I-ALM & $\mathcal{O}(\varepsilon^{-1})$ & Geo. & $f+h$ & Nonlinear & ACG & Non-Erg. \\ 
         &I-ALM & $\mathcal{O}(\varepsilon^{-1})$ &  St./Geo. & $f+h$ & Nonlinear & ACG & Erg.\\
        
        \cite{luIterationComplexityFirstOrderAugmented2023}& aI-ALM & $\tO(\varepsilon^{-1})$ & Geo. & $f+h$ & Nonlinear & ACG & Non-Erg. \\
        \cite{liAcceleratedAlternatingDirection2019} & LPALM & $\mathcal{O}(\varepsilon^{-1})$ & Static & $f+h$ & Linear & Prox & Non-Erg.\\
        \textbf{TW} & Alg. \ref{alg:al} & $\tO(\varepsilon^{-1})$ & Static & $f+h$ & Linear & ACG & Non-Erg. \\   
        \textbf{TW} & Alg. \ref{alg:dsc_aalm} & $\tO(\varepsilon^{-1})$ & Static & $f+h$ & Linear & ACG & Non-Erg. \\ 
        \bottomrule
    \end{tabular}
    \caption{Non-exhaustive summary of related works on I-ALM. \textbf{TW} indicates ``This Work''. For simplicity, we use the common term ``ACG'' to refer to either ACG (Algorithm~\ref{alg:ACG}) or related variants such as FISTA~\cite{beck2009fast}.  ``CG'' refers to the Conditional Gradient (or Frank-Wolfe) algorithm~\cite{braun_conditional_2025,frank1956algorithm}. ``Prox'' refers to a single, closed-form proximal mapping for a linearized augmented Lagrangian model~\cite{liAcceleratedAlternatingDirection2019,ouyangAcceleratedLinearizedAlternating2015}. ``Static'' $\rho$ selection refers to choosing a constant penalty $\rho$ across all iterations, while ``Geo(metric)'' refers to a geometrically increasing $\rho$, i.e., $\rho_k=\rho_0\cdot \beta^k$ for some $\beta > 1$. ``Conv. Pt.'' refers to the point of convergence, where ``Erg(odic)'' refers to convergence in an averaged point (e.g., $\hat x_k=k^{-1}\sum_{i=1}^kx_i$) while ``Non-Erg(odic)'' directly shows convergence in some single iterate $x_k$ (e.g., the best or the last). $\delta_Q$ is taken to be the indicator function of some simple, closed convex set $Q$, $h$ is a simple, possibly nonsmooth closed convex function, and $f$ is a smooth closed convex function. Algorithm acronyms are: ``IFAL'' is ``Iterative  Fast Augmented Lagrangian'', ``aI-ALM'' is ``adaptive I-ALM'', and ``LPALM'' is ``Linearized Proximal ALM''. All iteration-complexity results are to obtain an $\varepsilon$-primal solution to~\eqref{eq:ProbIntro_LC} in the sense of~\eqref{def:primal_lc_sol}.}
    \label{tab:lit_review_alm}
\end{table}

\subsection{Basic Definitions and Notation} \label{subsec:DefNot}    
    

    The set of real numbers is denoted by $\R$, non-negative reals by $\R_+$, and positive reals by $\R_{++}$. Let $\R^n$ be the $n$-dimensional Euclidean space equipped with the standard inner product $\inner{\cdot}{\cdot}$ and norm $\|\cdot\|$. Let $\R^{n\times n}$ be the space of real-valued $n\times n$ matrices equipped with the spectral norm 
    \[
        \|A\|=\sup_{x\in\R^n}\{\|Ax\|:\|x\|\le 1\}.
    \]

    For a convex set $Q\subseteq\R^n$, we define the \textit{diameter} $D$ as $D=\sup_{x,y\in Q}\{\|x-y\|\}$.
    If $D<\infty$, then $Q$ is bounded. We define the \textit{relative interior} of $Q$, $\mathrm{relint}(Q)$ as
    \[\operatorname{relint}(Q)=\{x\in Q: B(x,r)\cap \operatorname{affine}(Q) \subseteq Q\text{ for some }r>0\},\]
    where $\operatorname{affine}(Q)$ is the affine hull of $Q$. We say that~\eqref{eq:ProbIntro_LC} satisfies \textit{Slater's condition} if there exists a feasible point in $\operatorname{relint}(\dom h)$, i.e., 
    \begin{equation}
        \operatorname{relint}(\dom h)\cap \{x\in\R^n:Ax=b\}\neq \emptyset.\label{def:slater}
    \end{equation}

	For a proper function $f$, the \emph{subdifferential} of $ f $ at $x \in \dom f$ is denoted by
	\[
	    \partial f (x):=\left\{ s \in\R^n: f(y)\geq f(x)+\left\langle s,y-x\right\rangle, \forall y\in\R^n\right\}.
	\]
    For a given \textit{subgradient}
$f'(x) \in \partial f(x)$, we denote the \textit{linearization of $f$ at $x$} by $\ell_f(\cdot;x)$, which is defined as
\[
\ell_f(\cdot;x):=f(x)+\inner{ f'(x)}{\cdot-x}.
\]
For a function $f:\R^n\to(-\infty,+\infty]$, we denote its effective domain by $\dom f = \{x:f(x)<+\infty\}$. We say that $f$ is \textit{$\mu$-strongly convex} for some $\mu > 0$ if for every $x, y \in \dom f$ and $\lam \in [0,1]$,
    \[
    f(\lam x+(1-\lam) y)\le \lam f(x)+(1-\lam)f(y) - \frac{\lam(1-\lam) \mu}{2}\|x-y\|^2.
    \]
    Equivalently, $f$ is $\mu$-strongly convex if for every $x,y\in\dom f$ and all $f'(x)\in\partial f(x)\neq \emptyset$,
    \begin{equation*}
        f(y)-f(x)-\inner{f'(x)}{y-x}\geq \frac{\mu}{2}\|x-y\|^2.
    \end{equation*}
    With $\mu=0$ we recover the standard definitions of convexity. We denote the set of proper closed $\mu$-strongly convex functions over set $Q$ as $\CConv{\mu}{Q}$, with $\CConv{}{Q}$ used if $\mu=0$.

    We say that a differentiable function $f$ is \textit{$L_f$-smooth} if $\nabla f$ is $L_f$-Lipschitz continuous on $\R^n$. Equivalently, $f$ is $L_f$-smooth if there exists an $L_f>0$ such that for every $x, y \in \R^n$
    \begin{equation}\label{ineq:Lsmooth}
       f(y)-f(x)-\inner{\nabla f(x)}{y-x}\leq \frac{L_f}{2}\|x-y\|^2.
    \end{equation}
    We define the \emph{proximal mapping} (or ``prox mapping'') of a closed convex function $h$ as 
    \[
        \prox_{h}(x)=\underset{y\in\R^n}\argmin\left\{h(y)+\frac{1}{2}\|x-y\|^2\right\}.
    \]
    We say $h$ is \emph{simple} if it has a computable prox mapping. We define the \emph{iteration-complexity} of an algorithm as the number of prox mappings it needs to solve a problem to a specified tolerance.
    
    Given a positive scalar $\lambda$ and a composite function $\phi(x)=f(x)+h(x)$, where $f$ is smooth and $h$ has an available prox mapping, we define the \textit{gradient mapping} $\mathcal{G}^{\lambda}_\phi(x)$ as
    \begin{equation}\label{def:grad_mapping}
        \mathcal{G}^\lambda_\phi(x)=\frac1{\lambda} (x-\prox_{\lambda h}(x-\lambda\nabla f(x))).
    \end{equation}

        
    

\section{Primal Algorithm: Restarted ACG}	\label{sec:primal}
In this section, we consider the CSCO problem~\eqref{eq:ProbIntro} under the following standard assumptions.

\begin{assumption}\label{assmp:csco} Problem~\eqref{eq:ProbIntro} satisfies the following:
\begin{enumerate}[label={\rm(\alph*)}]
    \item $f$ is proper closed, $\mu_f$-strongly convex and $L_f$-smooth on $\R^n$,
    \item the smoothness parameter $L_f$ and strong convexity parameter $\mu_f$ satisfy $L_f\geq 2\mu_f\geq 0$,\footnote{This assumption can be made without loss of generality, since the definition of $L_f$-smoothness in~\eqref{ineq:Lsmooth} implies $f$ is also $2L_f$-smooth. In view of Theorem~\ref{thm:main}, this only incurs a constant $\sqrt{2}$ factor increase in the iteration complexity of Algorithm~\ref{alg:restart}.}
    \item $h$ is proper closed convex with a simple proximal mapping.
\end{enumerate}
\end{assumption}
    

In Subsection~\ref{subsec:ACG}, we begin by introducing a variant of ACG (Algorithm~\ref{alg:ACG}) for solving a regularized version of problem~\eqref{eq:ProbIntro}, and we establish its convergence rate bound as an inner solver. Building on this result, Subsection~\ref{subsec:algorithm} proposes a Restarted ACG method that repeatedly invokes Algorithm~\ref{alg:ACG} to solve a sequence of proximal subproblems of \eqref{eq:ProbIntro}. We analyze the outer iteration complexity of this restarted method, and by combining the inner complexity of Algorithm~\ref{alg:ACG}, we derive the overall complexity of the Restarted ACG method.

\subsection{Overview of an ACG variant}\label{subsec:ACG} 
    Throughout this work, we will utilize ACG as a subroutine to solve regularized subproblems. The generic regularized subproblem we consider is of the form
	\begin{equation}\label{eq:prob}
	    \min \{\psi(x):=g(x)+h(x):x\in \R^n \},
	\end{equation}
	where $ g $ is $ \mu $-strongly convex and $ (L+\mu) $-smooth, and $ h $ is a convex and possibly nonsmooth function with a simple proximal mapping, satisfying $\dom h \subset \dom g$. We describe an ACG variant tailored to \eqref{eq:prob} and present some basic results regarding the ACG variant.

\begin{algorithm}[H]
\caption{Accelerated Composite Gradient}\label{alg:ACG}
\begin{algorithmic}
\REQUIRE given initial point $ x_0\in \dom \psi $, $L\geq 0$, and $\mu\geq0$, set $A_0=0$, $\tau_0=1$, and $ y_0=x_0$.

\FOR{$j=0,1,\cdots$}

\STATE {\bf 1.} Compute
		\begin{gather}
		    a_j=\frac{\tau_j + \sqrt{\tau_j^2+8\tau_j A_jL}}{4L}, \quad A_{j+1} = A_j + a_j,\quad \tau_{j+1}=\tau_j + \mu a_j,\label{def:acg_scalar}\\
            \tx_j=\frac{A_j}{A_{j+1}}y_j + \frac{a_j}{A_{j+1}}x_j\label{def:tx}.
		\end{gather}

\STATE {\bf 2.} Compute  
		\begin{align}
		    \ty_{j+1} &=\underset{u\in \R^n}\argmin\left\lbrace \ell_g(u;\tx_j) + h(u) + \frac{2L+\mu}{2}\|u-\tx_j\|^2\right\rbrace, \label{def:tyj} \\
            y_{j+1} &=\argmin\left\lbrace \psi(y_j),\psi(\ty_{j+1})\right\rbrace, \label{def:yj} \\
            x_{j+1} &=\frac{(2L+\mu)a_j\ty_{j+1} - \frac{2A_ja_jL}{A_{j+1}} y_j}{A_{j+1}\mu + 1}. \label{def:xj}
		\end{align}	
\ENDFOR

\end{algorithmic} 
\end{algorithm}

    The following convergence rates are standard for first-order accelerated methods. However, we provide a self-contained analysis of Algorithm~\ref{alg:ACG} based on the FLOrA framework in Appendix~\ref{appdx:acg} for completeness.
\begin{lemma}\label{lem:acg_convergence}
    Define $R_0=\min\{\|x-x_0\|:x\in X_*\}$, where $X_*$ is the set of optimal solutions to~\eqref{eq:prob}. Then, for all $j\geq 1$,
    \begin{align}
        \psi(y_{j})-\psi(x_*)&\leq\frac{R_0^2}{2A_{j}},\label{ineq:acg_primal_gap}\\
        \|\mathcal{G}^{(2L+\mu)^{-1}}_{\psi}(\tx_{j-1})\|&\leq\frac{(2L+\mu)R_0}{\sqrt{LA_{j}}}\label{ineq:acg_grad_mapping}.
    \end{align}
\end{lemma}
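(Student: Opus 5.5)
We prove Lemma~\ref{lem:acg_convergence} with the standard estimate-sequence (potential function) argument for accelerated methods, adapted to the scalars in~\eqref{def:acg_scalar}. The key quantity is the potential
\[
\Gamma_j := A_j\bigl(\psi(y_j)-\psi(x_*)\bigr)+\frac{\tau_j}{2}\|x_j-x_*\|^2 ,
\]
and we aim to show $\Gamma_{j+1}\le \Gamma_j$ up to a nonpositive residual that we keep track of. Since $A_0=0$ and $\tau_0=1$, we have $\Gamma_0=\tfrac12 R_0^2$ once $x_*$ is taken as the projection of $x_0$ onto $X_*$. Telescoping then gives $A_j(\psi(y_j)-\psi_*)\le R_0^2/2$, which is~\eqref{ineq:acg_primal_gap}.

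\textbf{Step 1 (one-step inequality).} Set $M=2L+\mu$. We have
\[
\ty_{j+1}=\prox_{h/M}\bigl(\tx_j-\nabla g(\tx_j)/M\bigr).
\]
The optimality condition of~\eqref{def:tyj}, combined with $\mu$-strong convexity and $(L+\mu)$-smoothness of $g$, gives for every $u$
\[
\psi(u)\ \ge\ \psi(\ty_{j+1})+M\inner{\tx_j-\ty_{j+1}}{u-\tx_j}+\frac{\mu}{2}\|u-\tx_j\|^2+\frac{M+L-\mu}{2}\|\ty_{j+1}-\tx_j\|^2 .
\]
The last coefficient comes from $M-(L+\mu)/2\cdot 2$ bookkeeping; the precise form is routine. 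This is exactly the "lower oracle" $\ell_g+h$ plus a quadratic that the FLOrA framework formalizes.

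\textbf{Step 2 (combine).} Apply the inequality with $u=y_j$ (weight $A_j$) and with $u=x_*$ (weight $a_j$). Use~\eqref{def:tx}, which gives $A_j(y_j-\tx_j)+a_j(x_*-\tx_j)=a_j(x_*-x_j)$. Then use $\psi(y_{j+1})\le\psi(\ty_{j+1})$ from~\eqref{def:yj}. The update~\eqref{def:xj} should be precisely the minimizer of the strongly convex quadratic model
\[
\frac{\tau_j}{2}\|x-x_j\|^2+a_j\Bigl[M\inner{\tx_j-\ty_{j+1}}{x-\tx_j}+\frac{\mu}{2}\|x-\tx_j\|^2\Bigr],
\]
whose modulus is $\tau_j+\mu a_j=\tau_{j+1}$. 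Verifying this identity from~\eqref{def:xj} is a routine but messy algebra step. Completing the square then leaves a residual of the form
\[
\Bigl(\frac{a_j^2M^2}{2\tau_{j+1}}-A_{j+1}\frac{M+L-\mu}{2}\Bigr)\|\ty_{j+1}-\tx_j\|^2 ,
\]
or something similar. The main obstacle is showing that the choice of $a_j$ in~\eqref{def:acg_scalar} makes this residual strictly negative. Specifically, $a_j$ solves $2La_j^2=\tau_j(A_j+a_j)$, i.e.\ $2La_j^2=\tau_jA_{j+1}$. We then need to check that this leaves slack of order $A_{j+1}L\|\ty_{j+1}-\tx_j\|^2$, keeping a fraction of the negative term rather than discarding it.

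\textbf{Step 3 (gradient mapping).} By~\eqref{def:grad_mapping}, $\mathcal{G}^{1/M}_\psi(\tx_j)=M(\tx_j-\ty_{j+1})$. Retaining the slack from Step 2, the telescoped inequality gives
\[
c\,A_{j+1}L\|\ty_{j+1}-\tx_j\|^2\le \Gamma_j-\Gamma_{j+1}\le R_0^2/2 .
\]
Since each $\Gamma_i\ge 0$, this holds individually at index $j$. Hence
\[
\|\mathcal{G}^{1/M}_\psi(\tx_j)\|\le \frac{MR_0}{\sqrt{LA_{j+1}}}
\]
once the constant satisfies $c\ge 1/2$; shifting the index gives~\eqref{ineq:acg_grad_mapping}. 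If the slack constant comes out smaller, the factor $2L$ in $M$ and the factor $8$ in $a_j$ should be the intended margin. I would tune the Step~2 estimate to extract exactly $A_{j+1}L/2$.
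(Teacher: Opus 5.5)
Your potential-function architecture is sound, and several pieces are right. Writing $E_j$ for your potential (to avoid a clash with the paper's $\Gamma_j$), the following are all correct:
$E_0=\tfrac12R_0^2$; the identity $A_j(y_j-\tx_j)+a_j(x_*-\tx_j)=a_j(x_*-x_j)$; the fact that \eqref{def:xj} minimizes your $\tau_{j+1}$-strongly convex model; and the reduction of \eqref{ineq:acg_grad_mapping} to a per-step slack of $\frac L2A_{j+1}\|\ty_{j+1}-\tx_j\|^2$.

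The gap is the step you call the main obstacle. That step is the whole content of the lemma, and the formulas you set up for it do not deliver it.

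\textbf{First: the Step~1 coefficient.} It is $(2L+\mu)-\frac{L+\mu}{2}=\frac{3L+\mu}{2}$, not $\frac{3L}{2}$. At $j=0$ your formula is exact, since $\tx_0=x_0$ and $a_0=\tau_0/(2L)$. With $\frac{3L}{2}$, your residual there equals $A_1\frac{\mu-L}{2}\|\ty_1-\tx_0\|^2$. The resulting slack $\frac{L-\mu}{2}A_1$ is too small for \eqref{ineq:acg_grad_mapping}. For $\mu>L$ the potential is not shown to decrease at all, and the paper uses this regime: Restarted ACG with $\lambda=1/(L_f-\mu_f)$ has $\mu=\mu_f+L$.

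\textbf{Second: the cross term.} Even with the corrected coefficient, for $j\ge1$ completing the square in your $\tx_j$-centered model does not leave a pure multiple of $\|\ty_{j+1}-\tx_j\|^2$. Set $e=\tx_j-x_j$ and $p=(2L+\mu)(\tx_j-\ty_{j+1})$. Then there is a sign-indefinite cross term $-\frac{a_j^2\mu}{\tau_{j+1}}\inner{e}{p}$ plus $\|e\|^2$-terms. The obvious way to remove it is to discard the $\mu$-quadratics, which replaces $\tau_{j+1}$ by $\tau_j$. Using $2La_j^2=\tau_jA_{j+1}$, the coefficient becomes
$A_{j+1}\bigl(\frac{(2L+\mu)^2}{4L}-\frac{3L+\mu}{2}\bigr)=A_{j+1}\bigl(\frac{\mu-L}{2}+\frac{\mu^2}{4L}\bigr)$,
which is positive once $\mu>(\sqrt3-1)L$. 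So neither \eqref{ineq:acg_primal_gap} nor \eqref{ineq:acg_grad_mapping} follows for general $\mu$ from what you wrote, and tuning constants will not fix it.

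\textbf{The missing idea} is how to handle the strong-convexity terms. Center the lower model at $\ty_{j+1}$ with slope $2L(\tx_j-\ty_{j+1})$ rather than $(2L+\mu)(\tx_j-\ty_{j+1})$. This is exactly the paper's $\theta_{j+1}$ in \eqref{def:theta_acg} with $u_{j+1}=2L(\tx_j-\ty_{j+1})$. With $d=\tx_j-\ty_{j+1}$, your corrected Step~1 inequality is identically
\[
\psi(u)\ge \psi(\ty_{j+1})-\tfrac L2\|d\|^2+2L\inner{d}{u-\ty_{j+1}}+\tfrac\mu2\|u-\ty_{j+1}\|^2 .
\]
Combining at $u=y_j$ and $u=x_*$ with weights $A_j,a_j$ leaves $\frac{3L}{2}A_{j+1}\|d\|^2$ on the favorable side. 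Moreover, $x_{j+1}$ (in the form \eqref{eq:x-new}) minimizes
$Q(x)=\frac{\tau_j}{2}\|x-x_j\|^2+2La_j\inner{d}{x-x_j}+\frac{\mu a_j}{2}\|x-\ty_{j+1}\|^2$.
Now the $\mu$-terms can be discarded at no cost, since $\min Q\ge-\frac{2L^2a_j^2}{\tau_j}\|d\|^2=-LA_{j+1}\|d\|^2$. This gives $E_{j+1}+\frac L2A_{j+1}\|d\|^2\le E_j$ for every $\mu\ge0$.

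You can also stay in your frame. To do so you must keep the term $\frac{\mu A_j}{2}\|y_j-\tx_j\|^2=\frac{\mu a_j^2}{2A_j}\|e\|^2$ that you implicitly dropped, and maximize over $e$ exactly.

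\textbf{Comparison with the paper.} With this repair your Lyapunov argument is a valid self-contained alternative. The paper instead checks that Algorithm~\ref{alg:ACG} is an instance of FLOrA with $\lambda_{\flora}=1/(2L)$ and $\sigma_{\flora}=1/2$; the $\sigma$-condition is exactly the smoothness inequality. It then reads both bounds off Theorem~\ref{thm:flora_complexity}. That theorem's accumulated term $\sum_i\frac{(1-\sigma_{\flora})B_i}{2\lambda_{\flora}}\|\ty_i-\tx_{i-1}\|^2=\sum_i\frac{LA_i}{2}\|\ty_i-\tx_{i-1}\|^2$ is precisely your slack.
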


    The following lemma develops technical bounds in terms of the relative quantity $\|y_j-x_0\|$. These bounds will be critical in analyzing the Restarted ACG algorithm proposed in Subsection~\ref{subsec:algorithm}. The proof is deferred to Appendix~\ref{appdx:acg}.
    
	\begin{lemma}\label{lem:tech}
        For every $j\ge 1$, define
        
		\begin{align}
        & \Gamma_j(\cdot):=\ell_g(\cdot;\tx_j)+h(\cdot)+\frac{2L+\mu}{2}\|u-\tx_j\|^2, \nn \\
		& \theta_{j+1}(x):=\Gamma_j(\ty_{j+1}) - L\|\ty_{j+1}-\tx_j\|^2 + \inner{u_{j+1}}{x-\ty_{j+1}}+\frac{\mu}{2}\|x-\ty_{j+1}\|^2,  \label{def:theta_acg}\\
		& \Theta_{j+1}(x):=\frac{A_j \Theta_j(x) + a_j \theta_{j+1}(x)}{A_{j+1}}, \label{def:Theta_acg}\\
       & \hat x_j :=\underset{u\in\R^n}\argmin\left\{ \Theta_j(u)\right\},\qquad s_j:=\frac{x_0-x_j}{A_j}\in\partial \Theta_j(x_j)\label{def:hatxj_sj},
		\end{align} 
    where $\Theta_0(\cdot)=0$. Assuming that $ A_j \ge 3/\mu $, then the following statements hold for every $j\ge 1$:
		\begin{itemize}
			\item[\rm a)] 
			\begin{equation}\label{ineq:mj_mu}
			\psi (y_j)- \Theta_j( \hat x_j) \le \frac{\mu}{\mu A_j-2}\|y_j-x_0\|^2;
			\end{equation}
			
			\item[\rm b)]
			\begin{equation}\label{ineq:sj}
			    \|s_j\|\le \frac{3\|y_j-x_0\|}{2A_j}.
			\end{equation}
		\end{itemize}
	\end{lemma}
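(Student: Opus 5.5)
The plan is to use the estimate-sequence structure that FLOrA gives Algorithm~\ref{alg:ACG} (Appendix~\ref{appdx:acg}). I will rely on three facts.

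\begin{itemize}
\item[(i)] Each $\theta_{j+1}$ is a $\mu$-strongly convex lower model, $\theta_{j+1}\le\psi$. This follows from $\mu$-strong convexity and $(L+\mu)$-smoothness of $g$ together with the optimality condition of \eqref{def:tyj}, with $u_{j+1}\in\partial\Gamma_j(\ty_{j+1})$ suitably shifted. Hence $\Theta_j$ is $\mu$-strongly convex and $\Theta_j\le\psi$.
\item[(ii)] The key FLOrA inequality holds:
\[
A_j\psi(y_j)\le \min_x\Big\{A_j\Theta_j(x)+\tfrac12\|x-x_0\|^2\Big\}=A_j\Theta_j(x_j)+\tfrac12\|x_j-x_0\|^2 .
\]
\item[(iii)] The point $x_j$ from \eqref{def:xj} is exactly the minimizer in (ii). This is consistent with $s_j=(x_0-x_j)/A_j\in\partial\Theta_j(x_j)$.
\end{itemize}

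Facts (i)--(ii) are what yield Lemma~\ref{lem:acg_convergence}, so I take them from the appendix analysis.

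\textbf{Step 1: relate $x_j$ to $y_j$.} Set $t=\mu A_j\ge 3$, $e=y_j-x_0$ and $d=x_j-x_0$. Strong convexity of $\Theta_j$ at $x_j$ with subgradient $s_j=-d/A_j$ gives
\[
\Theta_j(y_j)\ge\Theta_j(x_j)-\inner{d}{e-d}/A_j+\tfrac{\mu}{2}\|e-d\|^2 .
\]
By (i) and (ii),
\[
\Theta_j(y_j)\le\psi(y_j)\le\Theta_j(x_j)+\|d\|^2/(2A_j).
\]
Combining the two and multiplying by $2A_j$ gives $\|d\|^2-2\inner{d}{e}+t\|e-d\|^2\le0$, which rearranges to
\[
\|d-e\|^2\le \frac{\|e\|^2}{1+t}.
\]
Hence $\|x_j-x_0\|\le\big(1+(1+t)^{-1/2}\big)\|y_j-x_0\|\le\tfrac32\|y_j-x_0\|$ because $t\ge3$. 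Dividing by $A_j$ gives (b) immediately.

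\textbf{Step 2: the gap bound (a).} Minimize the strong-convexity lower bound of $\Theta_j$ around $x_j$ to get $\Theta_j(\hat x_j)\ge\Theta_j(x_j)-\|s_j\|^2/(2\mu)$. Together with (ii) this gives
\[
\psi(y_j)-\Theta_j(\hat x_j)\le \frac{\|d\|^2}{2A_j}+\frac{\|d\|^2}{2\mu A_j^2}=\frac{(1+t)\|d\|^2}{2\mu A_j^2}.
\]
The remaining task is to turn this into $\mu\|e\|^2/(t-2)$.

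\textbf{Main obstacle.} The crude substitution $\|d\|\le\tfrac32\|e\|$ loses too much: it gives roughly $\tfrac{9}{8}\mu(1+t)/t^2$, which exceeds $\mu/(t-2)$ for large $t$. So Step 2 must use the Step 1 constraint more sharply. I would first keep the negative term that (ii) actually provides. Evaluating the strongly convex function $A_j\Theta_j+\tfrac12\|\cdot-x_0\|^2$ at $\hat x_j$ rather than at $x_j$ gives
\[
A_j\psi(y_j)\le A_j\Theta_j(\hat x_j)+\tfrac12\|\hat x_j-x_0\|^2 ,
\]
so that $\psi(y_j)-\Theta_j(\hat x_j)\le\|\hat x_j-x_0\|^2/(2A_j)$.

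It then remains to bound $\|\hat x_j-x_0\|$ in terms of $\|e\|$. Since $\hat x_j$ minimizes $\Theta_j$, the same argument as in Step 1, now applied with $0\in\partial\Theta_j(\hat x_j)$, gives
\[
\tfrac{\mu}{2}\|y_j-\hat x_j\|^2\le\psi(y_j)-\Theta_j(\hat x_j)=:\Delta .
\]
Hence $\|\hat x_j-x_0\|\le\|e\|+\sqrt{2\Delta/\mu}$, and therefore
\[
\Delta\le\frac{\big(\|e\|+\sqrt{2\Delta/\mu}\big)^2}{2A_j}.
\]
This is a quadratic inequality in $\sqrt\Delta$. Solving it, or more simply applying Young's inequality with weights tuned to $t$, should yield $\Delta\le\mu\|e\|^2/(t-2)$, which is where the denominator $\mu A_j-2$ and the hypothesis $\mu A_j\ge3$ (ensuring positivity with slack) would come from. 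If the constants do not close exactly this way, I would fall back on combining the Step 1 constraint $(1+t)\|d\|^2\le 2(1+t)\inner{d}{e}-t\|e\|^2$ with the Step 2 estimate, and optimize over $d$.
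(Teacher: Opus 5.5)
Your proposal is correct and is essentially the paper's proof. Part (b) is the same strong-convexity-at-$x_j$ argument, giving $\|y_j-x_j\|^2\le\|y_j-x_0\|^2/(1+\mu A_j)$. Part (a) combines the same two inequalities, $\Delta\le\|\hat x_j-x_0\|^2/(2A_j)$ and $\frac{\mu}{2}\|y_j-\hat x_j\|^2\le\Delta$, through the triangle inequality.

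The final step you left hedged closes in one line. From $\|\hat x_j-x_0\|^2\le 2\|y_j-x_0\|^2+4\Delta/\mu$ you get $\Delta\le\|y_j-x_0\|^2/A_j+2\Delta/(\mu A_j)$, which is exactly \eqref{ineq:mj_mu}. Solving the quadratic in $\sqrt{\Delta}$ exactly would even give the sharper bound $\mu\|y_j-x_0\|^2/(2(\sqrt{\mu A_j}-1)^2)$.
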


\subsection{The Restarted ACG Method}\label{subsec:algorithm}

This subsection presents the Restarted ACG method to solve \eqref{eq:ProbIntro}.
Restarted ACG requires repeatedly invoking Algorithm \ref{alg:ACG} as a subroutine within a double-loop algorithm.
This approach aligns naturally with the IPP framework, which iteratively solves a sequence of proximal subproblems using a recursive subroutine. Within each loop of the IPP framework, Algorithm~\ref{alg:ACG} is employed to solve a certain proximal subproblem, while between successive loops, an acceleration scheme is applied. Consequently, the proposed Restarted ACG method (i.e., Algorithm~\ref{alg:restart}) can be described as ``doubly accelerated."

\begin{algorithm}[H]
\caption{Restarted ACG}\label{alg:restart}
\begin{algorithmic}
\REQUIRE given initial point $ w_0\in \dom h $, $\sigma\in(0,1)$, $L_f\geq0$, $\mu_f\geq 0$, and $\lam>0$, set $B_0=0$, $\tau_0=1$, and $ v_0=w_0 $.

\FOR{$k=0,1,\cdots$}

\STATE {\bf 1.} Compute 
		\[
		b_{k}=\frac{\tau_k\lam + \sqrt{\tau_k^2\lam^2+4\tau_k\lam B_{k}}}{2}, \quad B_{k+1} = B_{k} + b_{k},\quad \tau_{k+1}=\tau_k + b_k\mu_f,\]
        \begin{equation}
            \tilde v_{k}=\frac{B_{k}}{B_{k+1}}w_{k} + \frac{b_{k}}{B_{k+1}}v_{k}.\label{def:tv}
        \end{equation}
		
\STATE {\bf 2.} Call Algorithm~\ref{alg:ACG} with
\begin{equation}\label{eq:setup}
    x_0=\tilde v_{k}, \quad \psi(\cdot)= g(\cdot)+h(\cdot), \quad g(\cdot)=f(\cdot)+\frac{1}{2\lam}\|\cdot-\tilde v_{k}\|^2, \quad \mu=\mu_f + \frac{1}{\lam}, \quad L=L_f - \mu_f
\end{equation}

and perform $j$ iterations until 
\begin{equation}
    \|\lambda s_j\|^2+ 2\lambda[\psi(y_j)-\Theta_j(x_j)]\leq \sigma\|y_j-x_0\|^2\label{ineq:lora_restart_acg},
\end{equation}
where $y_j$ and $x_j$ are the ACG iterates defined in~\eqref{def:yj} and~\eqref{def:xj}, respectively, and $\Theta_j$ and $s_j$ are defined in~\eqref{def:Theta_acg} and~\eqref{def:hatxj_sj}, respectively.

\STATE {\bf 3.} Choose $ w_{k+1}\in \Argmin\left\lbrace \phi(u): u\in \{w_{k}, y_j\}\right\rbrace$ and compute
\begin{equation}
    v_{k+1}=\frac{1}{\tau_{k+1}}\left(\tau_k v_{k} + b_k\mu_f  x_j-b_{k} \frac{A_j+\lambda}{\lambda}s_j\right),\label{def:vkp1}
\end{equation}
where $A_j$ is the ACG scalar as in~\eqref{def:acg_scalar}.

\ENDFOR

\end{algorithmic} 
\end{algorithm}

    From the ``inner loop" perspective, Algorithm~\ref{alg:restart} keeps performing ACG iterations to solve the proximal subproblem \eqref{eq:prob} with specification as in \eqref{eq:setup} until \eqref{ineq:lora_restart_acg} is satisfied, and then restarts ACG with the initialization as in \eqref{eq:setup}. 
     From the ``outer loop" perspective, Algorithm~\ref{alg:restart} is an instance of the FLOrA framework for solving \eqref{eq:ProbIntro} with ACG as its subroutine to implement Step \textbf{2} of Algorithm~\ref{alg:flora}, as we will show in Subsection~\ref{ssec:proofs_restart}. 
 


The next result combines the ``outer'' and ``inner'' complexities (see Propositions~\ref{prop:outer} and ~\ref{prop:inner_to_outer}, respectively) to obtain the total iteration-complexity of Algorithm~\ref{alg:restart}.

\begin{theorem}\label{thm:main}
    For given $\varepsilon>0$, the following statements hold:
    \begin{enumerate}[label={\rm (\alph*)}]
        \item if $\mu_f=0$ and $1/L_f\leq \lam \leq R_0^2/\varepsilon$, then the total iteration-complexity of Algorithm~\ref{alg:restart} to find an $\varepsilon$-solution is
        $\tO\left(R_0\sqrt{L_f/\varepsilon}\right)$;
        \item if $\mu_f>0$ and $1/(L_f-\mu_f)\leq \lam\leq\min\{1/\mu_f,R_0^2/\varepsilon\}$, then the total iteration-complexity of Algorithm~\ref{alg:restart} to find an $\varepsilon$-solution is $\tO(\min\{\sqrt{L_f/\mu_f}, R_0\sqrt{L_f/\varepsilon}\}))$.
    \end{enumerate}
\end{theorem}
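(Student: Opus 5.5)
The plan is to multiply an outer iteration count by a uniform per-call inner count. Algorithm~\ref{alg:restart} is viewed as an instance of FLOrA, an accelerated inexact proximal point method on $\phi$ with stepsize $\lam$ and relative inexactness $\sigma$. The outer bound (Proposition~\ref{prop:outer}) should read $\phi(w_k)-\phi_*\le R_0^2/(2(1-\sigma)B_k)$, or something of that form. The inner bound (Proposition~\ref{prop:inner_to_outer}) should say that each ACG call stops after a number of iterations depending only on $L$, $\mu$ and $\lam$, not on $\varepsilon$.

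\textbf{Inner count.} For the subproblem~\eqref{eq:setup} we have $\mu=\mu_f+1/\lam$ and $L=L_f-\mu_f$. From~\eqref{def:acg_scalar}, $\tau_j\ge1$ and $a_j\ge \tau_j/(2L)$, so $A_j$ grows at least like $(1+\sqrt{\mu/(cL)})^{j}/L$ once $j\gtrsim 1$. Hence $A_j\ge 3/\mu$ after $\mathcal{O}(\sqrt{L/\mu}\log(L/\mu))$ iterations, and Lemma~\ref{lem:tech} applies. Using Lemma~\ref{lem:tech}(a),(b) and $\Theta_j(x_j)\ge\Theta_j(\hat x_j)$, the left side of~\eqref{ineq:lora_restart_acg} is at most
\[
\left(\frac{9\lam^2}{4A_j^2}+\frac{2\lam\mu}{\mu A_j-2}\right)\|y_j-x_0\|^2 .
\]
With $\lam\mu=\lam\mu_f+1\le 2$, this is below $\sigma\|y_j-x_0\|^2$ once $A_j\gtrsim \lam/\sigma$. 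Since $A_j$ grows geometrically at rate $\sqrt{\mu/L}$, the termination criterion holds after
\[
\mathcal{O}\left(\sqrt{L/\mu}\,\log(L\lam/\sigma)\right)=\tO\left(\sqrt{1+\lam L_f}\right)
\]
inner iterations. Because $\lam\ge 1/L$, this is $\tO(\sqrt{\lam L_f})$.

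\textbf{Outer count.}
\begin{itemize}
\item[(a)] With $\mu_f=0$ we have $\tau_k=1$, so $b_k\ge \lam/2+\sqrt{\lam B_k}$ and $B_k\ge \lam k^2/4$. An $\varepsilon$-solution therefore needs $K=\mathcal{O}(R_0/\sqrt{\lam\varepsilon})$ outer steps; the condition $\lam\le R_0^2/\varepsilon$ ensures $K\ge1$. The total is $K\cdot\tO(\sqrt{\lam L_f})=\tO(R_0\sqrt{L_f/\varepsilon})$, and $\lam$ cancels.
\item[(b)] With $\mu_f>0$ we have $\tau_k\ge 1+\mu_f B_k$, so $b_k\ge\sqrt{\tau_k\lam B_k}\ge\sqrt{\lam\mu_f}\,B_k$. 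This gives linear growth $B_{k+1}\ge(1+\sqrt{\lam\mu_f})B_k$, hence $K=\mathcal{O}\left((\lam\mu_f)^{-1/2}\log(R_0^2/(\lam\varepsilon))\right)$. Taking the minimum with the sublinear bound from (a), which still applies, the total is $\tO(\min\{\sqrt{L_f/\mu_f},R_0\sqrt{L_f/\varepsilon}\})$. The constraint $\lam\le1/\mu_f$ keeps $\lam\mu\le2$ in the inner bound.
\end{itemize}

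\textbf{Main obstacle.} The delicate step is verifying that Lemma~\ref{lem:tech}'s hypothesis $A_j\ge3/\mu$ and the target $A_j\gtrsim\lam/\sigma$ are both reached within the claimed $\sqrt{L/\mu}$-type bound, uniformly across outer iterations. The first thing to try is an explicit lower bound on $A_j$ from~\eqref{def:acg_scalar}, namely $A_{j}\ge \frac1{2L}(1+\tfrac12\sqrt{\mu/L})^{2(j-1)}$, checked by induction using $\tau_j\ge\mu A_j$. A second point needing care is that the stopping rule must not depend on $\|y_j-x_0\|$ being nonzero. If $y_j=x_0$, the criterion forces exact optimality, which the bound above handles since its left side is then $0$. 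Logarithmic factors from both loops are absorbed into $\tO$.
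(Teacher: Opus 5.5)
Your proposal is correct and follows the paper's proof: the outer rate comes from Proposition~\ref{prop:outer} (FLOrA with zero absolute error plus Lemma~\ref{lem:b_seq}(c)), the uniform inner count from Lemma~\ref{lem:tech} once $A_j\gtrsim\lambda/\sigma$ (Proposition~\ref{prop:inner_to_outer}), and the product is simplified using $\lambda L\ge 1$, $\lambda\le R_0^2/\varepsilon$ and, in (b), $\lambda\mu_f\le 1$. Two small adjustments. First, $\lambda\mu_f\le1$ is needed in your outer step, to get $\ln(1+\sqrt{\lambda\mu_f})\gtrsim\sqrt{\lambda\mu_f}$ and absorb the additive constant; the inner step does not need $\lambda\mu\le 2$, because $\frac{2\lambda\mu}{\mu A_j-2}=\frac{2\lambda}{A_j-2/\mu}\le\frac{2\lambda}{A_j-2\lambda}$ follows from $\mu\ge 1/\lambda$ alone. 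Second, the explicit bound $A_j\ge\frac{1}{2L}\bigl(1+\frac12\sqrt{\mu/L}\bigr)^{2(j-1)}$ that you propose to prove by induction is false for large $j$ when $\mu\ll L$, since $A_{j+1}/A_j\to 1+\sqrt{\mu/(2L)}+O(\mu/L)$; use Lemma~\ref{lem:101}(c), $A_{j+1}\ge\frac{1}{2L}\bigl(1+\frac12\sqrt{\mu/(2L)}\bigr)^{2j}$, which changes only constants.
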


If $\lam$ is taken to be $1/(L_f-\mu_f)$, we can show that the number of ACG iterations on each call to Algorithm~\ref{alg:ACG} is $\mathcal{O}(1)$ (see~\eqref{eq:bound} below). If $\lam$ is taken sufficiently small, then each call will only perform a single ACG iteration, effectively reducing Restarted ACG (i.e., Algorithm~\ref{alg:restart}) to standard ACG (i.e., Algorithm~\ref{alg:ACG}).

\section{Dual Algorithm: Augmented Lagrangian}\label{sec:dual}
In this section we consider the LC-CSCO problem~\eqref{eq:ProbIntro_LC} under the following standard assumptions.

\begin{assumption}\label{assmp:constrained} Problem~\eqref{eq:ProbIntro_LC} satisfies the following:
\begin{enumerate}[label={\rm(\alph*)}]
    \item $f$ is proper closed convex and $L_f$-smooth on $\R^n$,
    \item $h$ is proper closed convex with a simple proximal mapping,
    \item Slater's condition (i.e.,~\eqref{def:slater}) is satisfied,
    \item  $\dom h$ is bounded with diameter $D\geq 1$.
\end{enumerate}
\end{assumption}

The exact ALM (see~\eqref{eq:exact_alm_primal} and~\eqref{eq:exact_alm_dual}) was originally developed from the primal perspective~\cite{hestenes1969multiplier}, with the quadratic term $\rho\|Ax-b\|/2$ motivated by explicit penalty methods. However, as noted by Rockafellar~\cite{rockafellarAugmentedLagrangiansApplications1976}, the ALM can be reformulated as a proximal point method in the dual,
\begin{equation}\label{eq:exact_ppm_alm}
    \lambda_{k+1} = \underset{\lambda\in\R^m}\argmax\left\{d(\lambda)-\frac{1}{2\rho}\|\lambda-\lambda_{k}\|^2\right\},
\end{equation}
where $\rho>0$ is now the proximal stepsize. As mentioned in Section~\ref{sec:intro}, however, solving~\eqref{eq:exact_ppm_alm} (i.e.,~\eqref{eq:exact_alm_primal}) is typically intractable. Accordingly, practitioners instead adopt the I-ALM with the inexact primal step~\eqref{eq:AL-x}. Since ALM is equivalent to the proximal point method, it is only natural to suppose that I-ALM is equivalent to the IPP iteration
\[
     \lambda_{k+1} \approx \underset{\lambda\in\R^m}\argmax\left\{d(\lambda)-\frac{1}{2\rho}\|\lambda-\lambda_{k}\|^2\right\},
\]
for some suitable definition of ``inexactness''. Letting $\hat\lambda_k$ be the exact minimizer to the dual proximal problem~\eqref{eq:exact_ppm_alm}, Rockafellar~\cite[Proposition 6]{rockafellarAugmentedLagrangiansApplications1976} proved that
\begin{equation*}
    \frac{1}{2\rho}\|\lambda^{k+1}-\hat\lambda_k\|^2\leq \Lp(x_{k+1},\lambda_k)-\min_{x\in \R^n}\Lp(x,\lambda_k).
\end{equation*}
Thus, if we can ensure $\Lp(x_{k+1},\lambda_k)-\min_{x\in \R^n}\Lp(x,\lambda_k)\leq \varepsilon_k$ for some summable sequence $\{\varepsilon_k\}_{k=0}^\infty$, then we can show (see~\cite[Theorem 4]{rockafellarAugmentedLagrangiansApplications1976}) that $\lim_{k\to\infty}\lambda_k=\lambda_*$ for some $\lambda_*\in\{\lambda:d(\lambda)=\max_{\nu\in\R^m} d(\nu)\}$. This ``absolute error'' IPP perspective has persisted in several recent analyses of the I-ALM~\cite{liuNonergodicConvergenceRate2019,xuIterationComplexityInexact2021}. 

Instead of a traditional absolute error framework, we use the LOrA and FLOrA frameworks of Section~\ref{sec:framework} to provide an IPP perspective on the I-ALM, enabling us to mix relative and absolute error criteria. Subsection~\ref{ssec:baseline_alm} provides near-optimal iteration-complexity bounds for a baseline I-ALM (Algorithm~\ref{alg:al}), improving on the non-ergodic complexity bounds from~\cite{lanIterationcomplexityFirstorderAugmented2016a,liuNonergodicConvergenceRate2019}. Utilizing the FLOrA framework, Subsection~\ref{ssec:acc_alm} then proposes an accelerated ALM variant, I-FALM (Algorithm~\ref{alg:dsc_aalm}).

\subsection{Inexact Augmented Lagrangian Method}\label{ssec:baseline_alm}
In this subsection we prove near-optimal, non-ergodic $\tO(\varepsilon^{-1})$ complexity for the I-ALM (Algorithm~\ref{alg:al}) with constant penalty term $\rho>0$. For convenience, we denote the smooth part of the augmented Lagrangian~\eqref{def:augmented_lagrangian} as

\begin{equation*}
    \Psi_{\lambda}(x):=f(x)+\inner{\lambda}{Ax-b}+\frac{\rho}{2}\|Ax-b\|^2,
\end{equation*}
with smoothness constant $M_\rho$
\begin{equation}
    M_\rho:=L_f+\rho\|A\|^2.\label{def:Mrho}
\end{equation}

The termination condition for the inexact iteration~\eqref{eq:AL-x} is typically stated in terms of an $\varepsilon_k$-small objective gap, i.e., $\Lp(x_{k+1},\lam_k)-\min_{x\in \R^n}\Lp(x,\lam_k)\leq \varepsilon_k$, for some specified tolerance $\varepsilon_k>0$. In most cases, however, the exact objective gap is not computable. Supposing that $x_{k+1}$ is computed from a proximal mapping with stepsize $\eta<1/L_f$, i.e., $x_{k+1}=\prox_{\eta h}(\tx_k-\eta\nabla f(\tx_k))$, we can use an alternative termination condition based on the gradient mapping $\mathcal{G}_{\Lp(\cdot,\lam_k)}^{\eta}(\tx_k)$, defined in~\eqref{def:grad_mapping}. Applying Lemma~\ref{lem:grad_map_merged}(a) with $\tx=\tx_k$ and $x^+=x_{k+1}$, the condition $\|\mathcal{G}_{\Lp(\cdot,\lam_k)}^{\eta}(\tx_k)\|\leq \varepsilon_k/D$ implies $\Lp(x_{k+1},\lam_k)-\min_{x\in \R^n}\Lp(x,\lam_k)\leq \varepsilon_k$. Unlike the primal gap, the gradient mapping norm is an explicit and efficiently computable quantity: providing a practical inner termination condition. 

However, if the objective $\Lp(\cdot,\lam_k)$ is merely convex, then we can show that Algorithm~\ref{alg:ACG} requires $\mathcal{O}(\varepsilon_k^{-2/3})$ iterations to guarantee an $\varepsilon_k$-small gradient mapping~\cite{nesterov2013gradient}, worse than the $\mathcal{O}(\varepsilon_k^{-1/2})$ complexity needed for an $\varepsilon_k$-small primal gap. To improve the complexity of the inner call, we can instead optimize the objective
\[
    \min_{x\in\R^n}\left\{\Lp(x,\lam_k)+\frac{\varepsilon_k}{4D^2}\|x-x_k\|^2\right\},
\]
which, as shown in Proposition~\ref{prop:alm_inner_complexity} below, guarantees $\tO(D/\sqrt{\varepsilon_{k}})$ complexity for each inner iteration. This ``perturbation'' trick is common for improving the complexity of finding a gradient~\cite[Subsection 2.2.2]{nesterov2018lectures} or gradient mapping~\cite[Subsection 5.2]{nesterov2013gradient} with $\varepsilon_k$-small norm. A gradient mapping termination criterion also removes the need for post-processing routines (e.g., \cite{lanIterationcomplexityFirstorderAugmented2016a}) and improves theoretical guarantees with fixed $\rho$, as we discuss further in remarks following Theorem~\ref{thm:al_baseline_complexity}.



\begin{algorithm}[H]
\caption{Inexact Augmented Lagrangian Method}\label{alg:al}
\begin{algorithmic}
\REQUIRE given initial point $x_0\in \dom{h}$, $\rho>0$, $\varepsilon_0>0$, $\alpha\in(0,1)$, $\varepsilon > 0$, set $\lambda_0=0$, and choose $\sigma\in (0,1)$ such that $2\sigma\rho\leq D/\varepsilon$. 
\FOR{$k=0,1,\cdots$}
\STATE {\bf 1.} Set $\varepsilon_k=(\varepsilon_0\alpha^k + \sigma\rho\varepsilon^2)/2$ and call Algorithm~\ref{alg:ACG} with 
\begin{equation}
\begin{gathered}
x_0=x_k,\quad \psi(\cdot)=\Lp(\cdot,\lambda_k)+\frac{\varepsilon_k}{8D^2}\|\cdot-x_k\|^2,\quad g(\cdot)=\Psi_{\lambda_k}(\cdot)+\frac{\varepsilon_k}{8D^2}\|\cdot-x_k\|^2,\\
L=M_\rho,\quad\mu=\frac{\varepsilon_k}{4D^2},
\end{gathered}
\label{def:setup_alm}
\end{equation}
to find a $\tx_{k}$ satisfying $ \|\mathcal{G}^{(2L+\mu)^{-1}}_{\Lp(\cdot,\lambda_k)}(\tx_k)\|\leq \varepsilon_k/(2D)$
and set
\begin{equation*}
    x_{k+1}=\tx_k-(2L+\mu)^{-1}\mathcal{G}^{(2L+\mu)^{-1}}_{\Lp(\cdot,\lambda_k)}(\tx_k).
\end{equation*}
\STATE {\bf 2.} Compute 
\begin{equation}
\lambda_{k+1}=\lambda_{k}+\rho(Ax_{k+1}-b).
\label{def:lambda_alm}\end{equation}
\STATE {\bf 3.} If $\|\mathcal{G}^{(2L+\mu)^{-1}}_{\Lp(\cdot,\lambda_k)}(\tx_k)\|\leq \varepsilon/2$ and $\|Ax_{k+1}-b\|\leq \varepsilon$, then \textbf{return} $(x_{k+1},\lambda_{k+1})$.
\ENDFOR
\end{algorithmic} 
\end{algorithm}

To analyze Algorithm~\ref{alg:al}, we separately consider the ``inner'' and ``outer'' perspectives. For the inner, we apply the known iteration-complexity guarantees of ACG to achieve the termination condition in Step \textbf{2} (see Proposition~\ref{prop:alm_inner_complexity} below). For the outer, we first prove I-ALM as an instance of the LOrA framework (see Section~\ref{subsec:LORA} below), and then apply the sub-optimality guarantee of LOrA to obtain the outer iteration-complexity. Combining the two perspectives yields the following iteration-complexity bound, whose proof is deferred to Subsection~\ref{ssec:proofs_alm}.
\begin{theorem}\label{thm:al_baseline_complexity}
    Given $\varepsilon>0$, we choose $\varepsilon_0=\varepsilon$, $\sigma=1/2$, and $\rho=\varepsilon^{-1}$. Then, Algorithm~\ref{alg:al} finds an $\varepsilon$-primal-dual solution to~\eqref{eq:ProbIntro_LC} in
    \begin{equation}\label{cmplx:total_alm}
    \tO\left((1+R_\Lambda^2)\left(1+D\left(\frac{\sqrt{L_f}}{\sqrt{\varepsilon}}+\frac{\|A\|}{\varepsilon}\right)\right)\right)
    \end{equation}
    ACG iterations,
    where $R_\Lambda=\|\lambda_*-\lambda_0\|=\min\{\|\lambda-\lambda_0\|:\lambda\in\Lambda_*\}$, where $\Lambda_*$ is the set of maximizers to the dual problem~\eqref{eq:strong_duality}.
\end{theorem}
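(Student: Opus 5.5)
The plan is to bound the number of outer iterations of Algorithm~\ref{alg:al} by viewing it as an instance of the LOrA framework applied to the dual problem $\max_\lambda d(\lambda)$ with stepsize $\rho$. Separately, we bound the ACG iterations per outer call using Lemma~\ref{lem:acg_convergence}. The total bound \eqref{cmplx:total_alm} is the product of the two, up to logarithmic factors.

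\textbf{Outer loop.} First we show that the inner termination test $\|\mathcal{G}^{(2L+\mu)^{-1}}_{\Lp(\cdot,\lambda_k)}(\tx_k)\|\le \varepsilon_k/(2D)$ produces an inexact dual proximal step. By Lemma~\ref{lem:grad_map_merged}(a) and the bounded domain, it gives $\Lp(x_{k+1},\lambda_k)-\min_x\Lp(x,\lambda_k)\le \varepsilon_k$. Through Rockafellar's estimate, this yields a concave lower model of $d$ at $\lambda_{k+1}$ with absolute error $\varepsilon_k$. The split $\varepsilon_k=(\varepsilon_0\alpha^k+\sigma\rho\varepsilon^2)/2$ is what allows the error to be mixed. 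The constant part $\sigma\rho\varepsilon^2/2$ is treated as a relative error: as long as the algorithm has not stopped, $\|Ax_{k+1}-b\|>\varepsilon$, so $\sigma\rho\varepsilon^2/2\le \frac{\sigma}{2\rho}\|\lambda_{k+1}-\lambda_k\|^2$. The geometric part is a summable absolute error with total $\varepsilon_0/(1-\alpha)$.

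The LOrA guarantee (proved in Section~\ref{sec:framework}) then gives $\min_{i\le k}\|\lambda_{i+1}-\lambda_i\|^2/\rho^2=\mathcal{O}\bigl((R_\Lambda^2+\rho\varepsilon_0/(1-\alpha))/(\rho^2 k)\bigr)$. With $\rho=\varepsilon^{-1}$ and $\varepsilon_0=\varepsilon$, this forces $\|Ax_{k+1}-b\|\le\varepsilon$ after $K=\mathcal{O}(1+R_\Lambda^2)$ outer iterations; the $\alpha$-dependent factor is absorbed into $\tO$. The gradient mapping condition holds automatically, because $\varepsilon_k/(2D)\le \varepsilon/2$ once $\varepsilon_k\le D\varepsilon$. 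That inequality follows from $D\ge1$ together with $\sigma\rho\varepsilon^2\le D\varepsilon$, which is the requirement $2\sigma\rho\le D/\varepsilon$. Finally, Lemma~\ref{lem:grad_map_merged} converts the small gradient mapping at $\tx_k$ into a subgradient residual $v$ at $x_{k+1}$ with $\|v\|\le\varepsilon$. Combined with $\lambda_{k+1}=\lambda_k+\rho(Ax_{k+1}-b)$, this certifies \eqref{def:approximate_kkt} for the Lagrangian.

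\textbf{Inner loop.} For each call, $\psi$ is $\mu=\varepsilon_k/(4D^2)$-strongly convex and $(M_\rho+\mu)$-smooth. By the recursion \eqref{def:acg_scalar}, $A_j$ grows like $j^2/L$ initially and geometrically at rate $1+\Theta(\sqrt{\mu/L})$ afterwards. Lemma~\ref{lem:acg_convergence}, equation \eqref{ineq:acg_grad_mapping}, with $R_0\le D$, requires $(2L+\mu)D/\sqrt{LA_j}\le \varepsilon_k/(2D)$. The perturbation keeps the minimizer of $\psi$ within $\mathcal{O}(D)$ of the minimizer of $\Lp$, so this translates to the original gradient mapping up to constants. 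The requirement is met after $\tO(\sqrt{L/\mu})=\tO\bigl(D\sqrt{M_\rho/\varepsilon_k}\bigr)$ iterations. Since $\varepsilon_k\ge\sigma\rho\varepsilon^2/2=\varepsilon/4$ and $\sqrt{M_\rho}\le\sqrt{L_f}+\|A\|/\sqrt{\varepsilon}$, each call costs $\tO\bigl(1+D(\sqrt{L_f/\varepsilon}+\|A\|/\varepsilon)\bigr)$. Multiplying by $K$ gives \eqref{cmplx:total_alm}.

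\textbf{Main obstacle.} The main obstacle is the outer step: verifying that the I-ALM iterates fit the hypotheses of LOrA, with the constant floor $\sigma\rho\varepsilon^2$ charged as relative error only while the algorithm is running. It also requires showing that the lower model built from $x_{k+1}$ is a valid concave minorant of $d$, with error controlled by $\varepsilon_k$ despite the quadratic perturbation in $\psi$. I would first establish $d(\lambda)\le \Lp(x_{k+1},\lambda_k)+\langle Ax_{k+1}-b,\lambda-\lambda_k\rangle+\varepsilon_k$-type bounds directly from the gradient-mapping inequality, and then match them to the LOrA inexactness condition.
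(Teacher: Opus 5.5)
Your inner-loop count and the outer bookkeeping match the paper. That bookkeeping is: charging $\sigma\rho\varepsilon^2/2$ to the relative error whenever $\|Ax_{k+1}-b\|>\varepsilon$, summing $\varepsilon_0\alpha^k$, and noting that the stationarity test is automatic when $\varepsilon_0=\varepsilon$. However, the step that actually makes I-ALM an instance of LOrA does not go through as you describe it. Take $\Gamma^{\lora}_k=-\mathcal{L}(x_{k+1},\cdot)+\frac{1}{2\rho}\|\cdot-\lambda_k\|^2$, $\A^{\lora}_k=\infty$, and $y^{\lora}_{k+1}=x^{\lora}_{k+1}=\lambda_{k+1}$. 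You need this choice, since your relative error is measured by $\|\lambda_{k+1}-\lambda_k\|$. Then the residual in \eqref{ineq:lora_base} is exactly the Lagrangian gap $\mathcal{L}(x_{k+1},\lambda_{k+1})-d(\lambda_{k+1})$ at the new multiplier.

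You try to bound this residual through the augmented Lagrangian gap $\mathcal{L}_\rho(x_{k+1},\lambda_k)-\min_x\mathcal{L}_\rho(x,\lambda_k)$ and Rockafellar's estimate, but that runs in the wrong direction. Since $\mathcal{L}_\rho(x,\lambda_k)\ge\mathcal{L}(x,\lambda_{k+1})-\frac{1}{2\rho}\|\lambda_{k+1}-\lambda_k\|^2$ for all $x$, with equality at $x_{k+1}$, the augmented Lagrangian gap is \emph{at most} the Lagrangian gap. Rockafellar's bound only controls $\|\lambda_{k+1}-\hat\lambda_k\|$, which says nothing about $-d(\lambda_{k+1})$ because $-d$ need not be smooth.

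Concretely, take $\phi=\delta_{[-1,1]}$, $A=1$, $b=0$, $\lambda_k=0$, $x_{k+1}=t$. The augmented Lagrangian gap is $\rho t^2/2$, while $\mathcal{L}(x_{k+1},\lambda_{k+1})-d(\lambda_{k+1})=\rho t^2+\rho|t|$. So an augmented Lagrangian gap of $\varepsilon_k$ only yields a residual of order $\sqrt{\rho\varepsilon_k}$. This is $\Theta(1)$ when $\rho=\varepsilon^{-1}$ and $\varepsilon_k=\Theta(\varepsilon)$, far above the allowed $\varepsilon_0\alpha^k+\frac{\sigma}{2\rho}\|\lambda_{k+1}-\lambda_k\|^2=\mathcal{O}(\varepsilon)$. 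Passing through Lemma~\ref{lem:grad_map_merged}(a) discards precisely the information you need.

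The missing idea is one you already invoke at the very end, but it must be used with the inner tolerance $\varepsilon_k/(2D)$ rather than with $\varepsilon/2$. Because $\partial\mathcal{L}_\rho(\cdot,\lambda_k)(x_{k+1})=\partial\mathcal{L}(\cdot,\lambda_{k+1})(x_{k+1})$, Proposition~\ref{prop:lag_subgradient} turns the test $\|\mathcal{G}^{(2L+\mu)^{-1}}_{\mathcal{L}_\rho(\cdot,\lambda_k)}(\tilde x_k)\|\le\varepsilon_k/(2D)$ into some $v\in\partial\mathcal{L}(\cdot,\lambda_{k+1})(x_{k+1})$ with $\|v\|\le\varepsilon_k/D$. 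Convexity of $\mathcal{L}(\cdot,\lambda_{k+1})$ and the diameter bound on $\dom h$ then give $\mathcal{L}(x_{k+1},\lambda_{k+1})-d(\lambda_{k+1})\le\langle v,x_{k+1}-u(\lambda_{k+1})\rangle\le\varepsilon_k$, where $u(\lambda_{k+1})$ minimizes $\mathcal{L}(\cdot,\lambda_{k+1})$. This is exactly the inequality your case split needs; with it, the rest of your argument goes through.

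Accordingly, the ``main obstacle'' you name is misplaced. The minorant property $-\mathcal{L}(x_{k+1},\cdot)\le -d$ holds exactly for any $x_{k+1}$, with no error term. The perturbation in $\psi$ plays no role in the outer analysis, since the algorithm tests the gradient mapping of the unperturbed $\mathcal{L}_\rho(\cdot,\lambda_k)$.

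A minor point in the inner loop: what transfers the ACG guarantee is not proximity of minimizers. It is the bound $\|\mathcal{G}_{\mathcal{L}_\rho(\cdot,\lambda_k)}(\tilde x)-\mathcal{G}_{\psi}(\tilde x)\|\le\frac{\varepsilon_k}{4D^2}\|\tilde x-x_k\|\le\frac{\varepsilon_k}{4D}$. So you should drive $\|\mathcal{G}_\psi\|$ below $\varepsilon_k/(4D)$; the iteration count is unchanged.
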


\noindent\textbf{Remark. }Lan and Monteiro~\cite{lanIterationcomplexityFirstorderAugmented2016a} analyzed I-ALM with a static $\rho$ similar to Algorithm~\ref{alg:al}, obtaining an iteration-complexity of $\mathcal{O}(\varepsilon^{-7/4})$. While there are a number of differences between Lan and Monteiro's approach and ours, the $\varepsilon$-complexity disparity can be attributed primarily to the method of ensuring $\varepsilon$-stationarity. Lan and Monteiro used a ``refinement'' final call to Algorithm~\ref{alg:ACG} with $\mathcal{O}(M_\rho^{-1}\varepsilon^2)$ accuracy, thereby requiring $\mathcal{O}(M_\rho/\varepsilon)=\mathcal{O}((L_f+\rho\|A\|^2)/\varepsilon )$ iterations. Setting $\rho=\varepsilon^{-1}$ would result in the ``refinement'' phase taking $\mathcal{O}(\varepsilon^{-2})$ iterations, requiring the authors to trade off ``main loop'' and ``refinement'' complexity. In contrast, our usage of gradient mapping norms to provide stationarity guarantees does not require a postprocessing stage, and each inner call takes $\mathcal{O}((\sqrt{L_f}+\sqrt{\rho}\|A\|)/({\varepsilon\sqrt{\rho}}))$ iterations (see~\eqref{cmplx:inner_alm} below), enabling us to take $\rho=\varepsilon^{-1}$ without adding superfluous $\varepsilon$-dependence.

Using Lemma~\ref{lem:pd_gap} from Appendix \ref{appdx:technical}, we can translate the $\varepsilon$-solution complexity in Theorem~\ref{thm:al_baseline_complexity} into the complexity to find an $\varepsilon$-primal solution in the sense of~\eqref{def:primal_lc_sol}. The proof is deferred to Appendix~\ref{appdx:deferred_alm}.

\begin{corollary} \label{cor:alm_complexity_pd}
    Under the conditions and parameter choices of Theorem~\ref{thm:al_baseline_complexity}, Algorithm~\ref{alg:al} finds an $\varepsilon$-primal solution to~\eqref{eq:ProbIntro_LC} in
    \begin{equation}
        \tO\left((1+R_\Lambda^2)\left(1+D\left(\frac{\sqrt{(R_\Lambda + D)L_f}}{\sqrt{\varepsilon}} + \frac{(R_\Lambda + D)\|A\|}{\varepsilon}\right)\right)\right)\label{cmplx:alm_pd_total}
    \end{equation}
    ACG iterations, where $R_\Lambda=\|\lambda_*-\lambda_0\|=\min\{\|\lambda-\lambda_0\|:\lambda\in\Lambda_*\}$, and $\Lambda_*$ is the set of maximizers to the dual problem~\eqref{eq:strong_duality}.
\end{corollary}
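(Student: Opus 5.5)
The plan is to reduce Corollary~\ref{cor:alm_complexity_pd} to Theorem~\ref{thm:al_baseline_complexity} run at a smaller tolerance $\varepsilon'$. Lemma~\ref{lem:pd_gap} turns an $\varepsilon'$-primal-dual solution into an $\mathcal{O}(\varepsilon')$-primal solution, with constants depending on $R_\Lambda$ and $D$. We then pick $\varepsilon'$ so that those constants collapse to $\varepsilon$.

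First I would make Lemma~\ref{lem:pd_gap} explicit. Let $(x,\lambda)$ satisfy~\eqref{def:approximate_kkt} with tolerance $\varepsilon'$, and let $\lambda_*\in\Lambda_*$ and $x_*$ be optimal. The upper bound follows from convexity of $\L(\cdot,\lambda)$ and $v\in\partial_x\L(x,\lambda)$:
\[
\phi(x)-\hat\phi_* \le \inner{v}{x-x_*} - \inner{\lambda}{Ax-b} \le \varepsilon' D + \|\lambda\|\varepsilon'.
\]
The lower bound comes from strong duality~\eqref{eq:strong_duality}:
\[
\phi(x)-\hat\phi_*\ge -\inner{\lambda_*}{Ax-b}\ge -\|\lambda_*\|\varepsilon'.
\]
Since $\lambda_0=0$, $\|\lambda_*\|=R_\Lambda$. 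So we need a bound on $\|\lambda\|$ for the returned multiplier $\lambda_{k+1}$.

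Second, I would bound $\|\lambda_{k+1}\|\le \mathcal{O}(R_\Lambda)$, possibly up to an additive term that is $\mathcal{O}(1)$ under the chosen parameters. The source is the LOrA outer analysis used in the proof of Theorem~\ref{thm:al_baseline_complexity}. As an IPP method on the dual, the iterates satisfy a Fejér-type inequality $\|\lambda_{k}-\lambda_*\|^2\le \|\lambda_0-\lambda_*\|^2+\mathcal{O}(\sum_i\rho\varepsilon_i)$. With $\rho=\varepsilon^{-1}$, $\varepsilon_0=\varepsilon$ and geometric $\varepsilon_k$, the accumulated absolute errors should be $\mathcal{O}(1)$ plus the relative part, which the LOrA sufficient-decrease condition absorbs. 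This gives $\|\lambda_k\|\le 2R_\Lambda+\mathcal{O}(1)$. This is the step I expect to be the main obstacle: it needs the exact form of the LOrA distance bound and care with the floor $\sigma\rho\varepsilon^2$ in $\varepsilon_k$, which does not decay, so its contribution must be controlled over the $\tO(1+R_\Lambda^2)$ outer iterations. If the floor causes trouble, I would instead bound $\|\lambda_k-\lambda_*\|$ directly by the per-iteration nonexpansiveness of the inexact dual prox with $\sum$ error $\lesssim \sqrt{\rho\varepsilon_k}\cdot k$, and check that it remains $\mathcal{O}(R_\Lambda+1)$.

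Combining the two steps gives $|\phi(x_{k+1})-\hat\phi_*|\le c(R_\Lambda+D)\varepsilon'$, and the feasibility residual is already $\le\varepsilon'$; here $D\ge1$ absorbs the constant. Set $\varepsilon'=\varepsilon/(c(R_\Lambda+D))$ and apply Theorem~\ref{thm:al_baseline_complexity} with tolerance $\varepsilon'$, including its parameter choices $\rho=1/\varepsilon'$ and $\varepsilon_0=\varepsilon'$. Substituting into~\eqref{cmplx:total_alm}, $\sqrt{L_f/\varepsilon'}=\sqrt{(R_\Lambda+D)L_f/\varepsilon}$ and $\|A\|/\varepsilon'=(R_\Lambda+D)\|A\|/\varepsilon$, which yields~\eqref{cmplx:alm_pd_total} up to constants and logarithmic factors.
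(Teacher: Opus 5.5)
Your proposal is correct and follows the paper's proof essentially step for step. You bound $\|\lambda_k\|\le 2R_\Lambda+\zeta D$ with $\zeta=\sqrt{2/(1-\alpha)}$ via the LOrA distance bound, plug this into Lemma~\ref{lem:pd_gap}, and rerun Theorem~\ref{thm:al_baseline_complexity} at tolerance $\varepsilon/(2R_\Lambda+(1+\zeta)D)$; the obstacle you flag is already handled by the correspondence~\eqref{def:alm_corresp}, where Proposition~\ref{prop:alm_contrapositive} absorbs the non-decaying floor $\sigma\rho\varepsilon^2$ into the relative term, so the LOrA absolute errors are only $\delta^\lora_k=\varepsilon_0\alpha^k$ and \eqref{ineq:lora_xkdist} gives $\|\lambda_k-\lambda_*\|\le R_\Lambda+\sqrt{2\rho\varepsilon_0/(1-\alpha)}=R_\Lambda+\sqrt{2/(1-\alpha)}$ directly (your distance-level fallback is unnecessary, and as stated would only yield $\mathcal{O}(1+R_\Lambda^2)$, because the floor contributes a constant per step over $\mathcal{O}(1+R_\Lambda^2)$ outer iterations).
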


\noindent\textbf{Remark.} Comparing to the lower bounds for primal convergence established in~\cite[Theorem 3.1]{ouyangLowerComplexityBounds2021}, the complexity of Corollary~\ref{cor:alm_complexity_pd} is optimal (up to logarithmic terms) in $\varepsilon$, $L_f$, and $\|A\|$. However, it is suboptimal in $R_\Lambda$ ($\mathcal{O}(R_\Lambda^3)$ vs $\mathcal{O}(R_\Lambda)$) and $D$ ($\mathcal{O}(D^2)$ vs $\mathcal{O}(D)$). The discrepancy may be due to our choice of optimality measure in~\eqref{def:approximate_kkt}. Lu and Zhou~\cite{luIterationComplexityFirstOrderAugmented2023} also obtained optimal complexity in terms of $L_f$, $\|A\|$, and $\varepsilon$, but similarly incurred additional $R_\Lambda$ and $D$ dependence when converting to a primal gap bound.



\subsection{Inexact Fast Augmented Lagrangian Method}\label{ssec:acc_alm}
While Algorithm~\ref{alg:al} is near-optimal, numerical experiments in Section~\ref{sec:numerical} below show that it is often outperformed by more advanced methods such as the linearized proximal ALM (LPALM)~\cite{liAcceleratedAlternatingDirection2019,ouyangAcceleratedLinearizedAlternating2015}, particularly when $\rho=\varepsilon^{-1}$. In this subsection, we utilize FLOrA (see Subsection~\ref{sec:flora}) to accelerate the outer loop, accelerating dual maximization and leading to a more performant algorithm. 


As discussed in the last subsection, accelerated methods typically require $\mathcal{O}(\varepsilon^{-2/3})$ iterations to guarantee an $\varepsilon$-small gradient mapping for a merely convex objective. However, adding a small, strongly convex perturbation improves the iteration complexity to $\mathcal{O}(\varepsilon^{-1/2})$. Since the criterion~\eqref{def:approximate_kkt} can be interpreted as finding $\varepsilon$-small primal/dual subgradients, we add strongly convex (concave) perturbations to the primal (dual) problems to improve the iteration-complexity. First, we define the perturbed primal problem

\begin{equation}\label{eq:ProbIntro_LC_pert}
	\tilde{\phi}_*:=\min_{x\in \R^n}\left\{ \phi(x)+\frac{\gamma_p}{2}\|x-x_0\|^2: Ax = b\right\}
\end{equation}
where $\phi(x)$ is as in~\eqref{eq:ProbIntro_LC} and $x_0\in\dom h$ is an arbitrary point. The associated Lagrangian is then
\begin{equation*}
    \L^{\gamma_p}(x,\lambda)=\phi(x) + \frac{\gamma_p}{2}\|x-x_0\|^2 + \inner{\lambda}{Ax-b},
\end{equation*}
with the augmented form
\begin{equation*}
    \Lp^{\gamma_p}(x,\lambda)=\phi(x) + \frac{\gamma_p}{2}\|x-x_0\|^2 + \inner{\lambda}{Ax-b} + \frac{\rho}{2}\|Ax-b\|^2.
\end{equation*}
Extending the idea of perturbations to the dual problem, we define the \emph{symmetrically perturbed} (augmented) Lagrangian
\begin{align*}
        \tL(x,\lambda):=\L^{\gamma_p}(x,\lambda)- \frac{\gamma_d}{2}\|\lambda-\lambda_0\|^2,\qquad \Lpt(x,\lambda):=\Lp^{\gamma_p}(x,\lambda)- \frac{\gamma_d}{2}\|\lambda-\lambda_0\|^2;
\end{align*}
where $\lambda_0\in\R^m$ is arbitrary. The associated perturbed dual problem is
\begin{equation}\label{def:pert_dual}
    \td(\lambda) = \underset{u\in\R^n}\min\tL(u,\lambda)=\underset{u\in\R^n}\min \L^{\gamma_p}(u,\lambda) - \frac{\gamma_d}{2}\|\lambda-\lambda_0\|^2,
\end{equation}
which is $\gamma_d$-strongly concave, hence $-\td(\lambda)$ is $\gamma_d$-strongly convex. Accordingly,~\eqref{def:pert_dual} has a unique maximizer $\tilde{\lambda}_*$ with $\dtL:=\|\tilde{\lambda}_*-\lambda_0\|$.
    By the definition of $\td$ and the superadditivity of $\min$, we have
    \begin{align}
    \td(\lambda)&\stackrel{\eqref{def:pert_dual}}=\min_{x\in\R^n}\left\{\L(x,\lambda)+\frac{\gamma_p}{2}\|x-x_0\|^2\right\} -\frac{\gamma_d}{2}\|\lambda-\lambda_0\|^2 \nn \\
        &\geq \min_{x\in\R^n}\L(x,\lambda)+\overbrace{\min_{x\in\R^n}\frac{\gamma_p}{2}\|x-x_0\|^2}^{=0} -\frac{\gamma_d}{2}\|\lambda-\lambda_0\|^2\stackrel{\eqref{eq:strong_duality}}{=}d(\lambda)-\frac{\gamma_d}{2}\|\lambda-\lambda_0\|^2. \label{ineq:pert_dual_ub}
    \end{align}
%
As before, we define $\Psi_{\lambda}^{\gamma_p}(\cdot)$ as the smooth, primal part of $\Lpt(\cdot)$
\[
\Psi_{\lambda}^{\gamma_p}(x):=f(x)+\frac{\gamma_p}{2}\|x-x_0\|^2 + \inner{\lambda}{Ax-b} + \frac{\rho}{2}\|Ax-b\|^2,
\]
which is $\gamma_p$-strongly convex and $(M_\rho+\gamma_p)$-smooth on $\R^n$.

For sufficiently small $\gamma_p$, an approximate solution to the perturbed problem implies an approximate solution to the original target problem. The following result is elementary, and similar lemmas have been used in previous works~\cite{lanIterationcomplexityFirstorderAugmented2016a}, and we therefore defer its proof to Appendix~\ref{appdx:deferred_alm}.

\begin{lemma}[Perturbation Solution]\label{lem:perturb_solution}
    Set ${\gamma_p}={\varepsilon}/({2D})$ and suppose that for the pair $(x,\lambda)$ there exists $v\in\partial \tL(\cdot,\lambda)(x)$ satisfying $\|v\|\leq \varepsilon/2$. Then, there exists $v'\in \partial \L(\cdot,\lambda)(x)$ satisfying $\|v'\|\leq \varepsilon$.
\end{lemma}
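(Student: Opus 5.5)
The plan is to use the subdifferential sum rule to peel off the two perturbation terms that separate $\tL(\cdot,\lambda)$ from $\L(\cdot,\lambda)$.

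As a function of $x$ with $\lambda$ fixed,
\[
\tL(x,\lambda)=\L(x,\lambda)+\frac{\gamma_p}{2}\|x-x_0\|^2-\frac{\gamma_d}{2}\|\lambda-\lambda_0\|^2 .
\]
The last term is constant in $x$, so it does not affect the subdifferential. The quadratic $\frac{\gamma_p}{2}\|\cdot-x_0\|^2$ is finite and differentiable on all of $\R^n$. Hence the exact sum rule applies without any constraint qualification, giving
\[
\partial \tL(\cdot,\lambda)(x)=\partial \L(\cdot,\lambda)(x)+\gamma_p(x-x_0).
\]
Concretely, $\L(\cdot,\lambda)=f+h+\inner{\lambda}{A\cdot-b}$ is closed proper convex. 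Adding a real-valued convex differentiable function shifts its subdifferential by that function's gradient.

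Given $v\in\partial\tL(\cdot,\lambda)(x)$ with $\|v\|\le\varepsilon/2$, I would set $v':=v-\gamma_p(x-x_0)$. By the identity above, $v'\in\partial\L(\cdot,\lambda)(x)$.

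It remains to bound $\|v'\|$. Since $v$ is a subgradient at $x$, the subdifferential is nonempty, so $x\in\dom h$. By hypothesis $x_0\in\dom h$ as well. Therefore $\|x-x_0\|\le D$ by the definition of the diameter, Assumption~\ref{assmp:constrained}(d). Using the triangle inequality and $\gamma_p=\varepsilon/(2D)$,
\[
\|v'\|\le \|v\|+\gamma_p\|x-x_0\|\le \frac{\varepsilon}{2}+\frac{\varepsilon}{2D}\,D=\varepsilon .
\]

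There is no real obstacle here. The only step needing care is justifying the sum rule with equality, including the inclusion $\partial\L(\cdot,\lambda)(x)+\gamma_p(x-x_0)\subseteq\partial\tL(\cdot,\lambda)(x)$ and its converse. Both hold because the added term is smooth and has full domain. If preferred, the direction actually used can be checked directly: start from the subgradient inequality for $\tL$ at $x$ and subtract the convexity inequality of the quadratic. This shows $v-\gamma_p(x-x_0)$ satisfies the subgradient inequality for $\L(\cdot,\lambda)$. That argument uses the first-order characterization of $\partial$ together with a standard directional-derivative argument.
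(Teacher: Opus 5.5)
Your proof is correct and is essentially the paper's argument: the sum rule gives $v' := v-\gamma_p(x-x_0)\in\partial\L(\cdot,\lambda)(x)$, and the triangle inequality with $\|x-x_0\|\le D$ and $\gamma_p=\varepsilon/(2D)$ yields $\|v'\|\le\varepsilon$. One small correction to your optional direct check: what you subtract is the exact quadratic expansion $\frac{\gamma_p}{2}\|y-x_0\|^2=\frac{\gamma_p}{2}\|x-x_0\|^2+\langle\gamma_p(x-x_0),y-x\rangle+\frac{\gamma_p}{2}\|y-x\|^2$ (an upper bound on the increment, not the convexity lower bound), and the leftover $\frac{\gamma_p}{2}\|y-x\|^2$ term is then removed by the directional-derivative limit you mention.
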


Primal perturbations have been leveraged in several existing works~\cite{lanIterationcomplexityFirstorderAugmented2016a,luIterationComplexityFirstOrderAugmented2023} with strong relations to proximal ALM schemes~\cite{meloProximalAugmentedLagrangian2024a}. Dual perturbation, on the other hand, has been less explored, while it has appeared in previous works to improve the iteration-complexity of the outer ALM loop~\cite{patrascuAdaptiveInexactFast2017}. However, as far as we are aware, the two ideas have not been used in tandem. As noted in~\cite{luIterationComplexityFirstOrderAugmented2023}, the distance from $\lambda_0$ to the optimum of the perturbed dual (defined as $\dtL$) depends implicitly on $\gamma_p^{-1}\propto\varepsilon^{-1}$. Interestingly, adding dual regularization removes this hidden dependence, as we will show in Lemma~\ref{lem:double_perturbation_distance} below.

Incorporating the acceleration scheme into the outer loop, as well as the perturbations, we obtain the I-FALM, shown in Algorithm~\ref{alg:dsc_aalm}.


\begin{algorithm}[H]
\caption{Inexact Fast Augmented Lagrangian Method}\label{alg:dsc_aalm}
\begin{algorithmic}
\REQUIRE given initial $ x_0\in \dom h $, $\rho>0$, $\gamma_d> 0$, $\varepsilon>0$, and $\varepsilon_0\geq \varepsilon$, set $B_0=0$, $\tau_0=1$, $\gamma_p = \varepsilon/(2D)$, and $\lam_0=\nu_0=0$, and choose $\sigma\in(0,1)$ such that $4\sigma\rho\varepsilon\leq 1$, and $\alpha\geq 0$ satisfying $\alpha< (1+\sqrt{\gamma_d\rho})^{-2}$.
\FOR{$k=0,1,\cdots$}
\STATE {\bf{1.}} Set $\varepsilon_k= (7\varepsilon_0\alpha^k+\sigma\rho\varepsilon^2)/8$ and compute
		\begin{gather}
		    \quad b_k=\frac{\rho\tau_k + \sqrt{\rho^2\tau_k^2+4\rho\tau_kB_{k}}}{2},\quad
            B_{k+1} = B_k + b_k,\quad \tau_{k+1}=\tau_k + b_k\gamma_d; \nn \\
            \tnu_k=\frac{B_k}{B_{k+1}}\lam_k + \frac{b_k}{B_{k+1}}\nu_k.\label{def:tnu}
		\end{gather}

\STATE {\bf 2.} Call Algorithm~\ref{alg:ACG} with 
\begin{equation}
\begin{gathered}
x_0=x_k,\quad \psi(\cdot)=\Lpt(\cdot,\tnu_k)+\frac{\varepsilon_k}{8D^2}\|\cdot-x_k\|^2,\quad g(\cdot)=\Psi_{\tnu_k}^{\gamma_p}+\frac{\varepsilon_k}{8D^2}\|\cdot-x_k\|^2,\\ L=M_\rho,\quad \mu=\gamma_p+\frac{\varepsilon_k}{4D^2},\label{def:setup_alm_acc}
\end{gathered}
\end{equation}
to find a point $\tx_k$ satisfying $\|\mathcal{G}^{{(2L+\mu)^{-1}}}_{\Lpt(\cdot,\tnu_k)}(\tx_k)\|\leq \varepsilon_k/(2D)$ and set 
\begin{equation*}
    x_{k+1}=\tx_k-(2L+\mu)^{-1}\mathcal{G}^{(2L+\mu)^{-1}}_{\Lpt(\cdot,\tnu_k)}(\tx_k).
\end{equation*}
\STATE {\bf 3.} Compute 
\begin{align}
    \lam_{k+1}=\tnu_k+\rho(Ax_{k+1}-b)\label{eq:lambda_acc}.
\end{align}
\STATE {\bf 4.} If $\|\mathcal{G}^{(2L+\mu)^{-1}}_{\Lpt(\cdot,\tnu_k)}(\tx_k)\|\leq {\varepsilon}/{4}$ and $\|Ax_{k+1}-b\|\leq \varepsilon$, then return $(x_{k+1},\lambda_{k+1})$ and \textbf{terminate}; otherwise, compute
    \begin{equation}
        \nu_{k+1}=\frac{1}{\tau_{k+1}}\left(\tau_{k}\nu_k+b_k\gamma_d\frac{\lam_{k+1}}{1+\gamma_d\rho}-\frac{b_{k}}{\rho}\left(\tnu_k-\frac{\lam_{k+1}}{1+\gamma_d\rho}\right)\right),\label{def:nu_k}
    \end{equation}
    and continue.
\ENDFOR
\end{algorithmic} 
\end{algorithm}

Unlike Algorithm~\ref{alg:al}, the $\varepsilon_k/(8D^2)$ addition to $\psi$ in Algorithm~\ref{alg:dsc_aalm} is not necessary to obtain $\varepsilon^{-1}$ complexity, as shown in Proposition~\ref{prop:alm_inner_complexity_sc} below. However, the increased strong convexity modulus improves empirical performance, particularly in early iterations when $\varepsilon_k\gg \varepsilon$.

Theorem~\ref{thm:acc_alm_complexity_1}, whose proof is deferred to Subsection~\ref{ssec:proofs_falm}, states our main complexity results for Algorithm~\ref{alg:dsc_aalm}. 

\begin{theorem}\label{thm:acc_alm_complexity_1}
Let $\varepsilon>0$ satisfy $\varepsilon\leq \|A\|^2 / L_f$. Choose $\rho= L_f/\|A\|^2$, $\varepsilon_0=\rho^{-1}$, $\sigma=1/4$, $\gamma_p=\varepsilon/(2D)$, $\gamma_d = \sigma^{3/2}\varepsilon/(\sqrt{3}\mathcal{R})$,
where 
\begin{equation}\label{def:R-R}
        \mathcal{R}:=\bardtL(1 + \sqrt{2\varepsilon_0C})\left(\frac{2}{\sqrt{1-\sigma}}+1\right), \quad \bardtL:=\max\{1,\|\tilde\lambda_*-\lambda_0\|\}, \quad C:=\sum_{i=0}^\infty B_{i+1}\alpha^i<\infty,
    \end{equation}
    with $\tilde\lambda_*$ defined as the unique maximizer of~\eqref{def:pert_dual}. Furthermore, assume $\alpha$ satisfies
    \begin{equation}\label{ineq:alpha_conds}
        \alpha\leq \min\left\{\frac{9}{10}(1+\sqrt{\rho\gamma_d})^{-2},\left(\frac{15D\varepsilon}{28\varepsilon_0}\right)^{\sqrt{\rho\varepsilon/D}}\right\}.
    \end{equation}
        Then, Algorithm~\ref{alg:dsc_aalm} finds an $\varepsilon$-primal-dual solution to~\eqref{eq:ProbIntro_LC} in
    \begin{equation}\label{cmplx:total_aalm}
        \tO\left(1+\frac{\sqrt{D^2+D \hat{R}_\Lambda}\|A\|}{\varepsilon}+\frac{\sqrt{D+\hat{R}_\Lambda}\|A\|}{\sqrt{   L_f\varepsilon}}+\frac{\sqrt{DL_f}}{\sqrt{\varepsilon}}\right)
    \end{equation}
    total ACG iterations, 
    where \begin{equation}\label{def:hatR}
        \hat{R}_\Lambda=\max\{1,\|\lambda_*-\lambda_0\|\},
    \end{equation} with $\lambda_*=\argmin\{\|\lambda-\lambda_0\|:\lambda\in\Lambda_*\}$ and $\Lambda_*$ is the set of maximizers to the dual problem~\eqref{eq:strong_duality}.

\end{theorem}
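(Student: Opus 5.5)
The proof splits the count into an outer bound on the number of I-FALM iterations and an inner bound on the ACG iterations per call; the total is their product, up to logarithmic factors.

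\emph{Outer loop.} I view Step 3 and the update of $\nu_{k+1}$ as an instance of FLOrA applied to the $\gamma_d$-strongly convex function $-\td$. The stepsize is $\rho$, the extrapolated point is $\tnu_k$, and the lower model is built from the primal point $x_{k+1}$. Following Rockafellar's argument, $\lambda_{k+1}=\tnu_k+\rho(Ax_{k+1}-b)$ is an approximate proximal step on $-\td$ at $\tnu_k$. The error is controlled by the primal gap of $\Lpt(\cdot,\tnu_k)$ at $x_{k+1}$, and this gap is at most $\varepsilon_k$ by the gradient-mapping stopping rule and Lemma~\ref{lem:grad_map_merged}(a).

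The error $\varepsilon_k$ has two parts. The absolute part $7\varepsilon_0\alpha^k/8$ is summable against $B_{k+1}$; this is exactly why $C<\infty$ enters $\mathcal{R}$. The part $\sigma\rho\varepsilon^2/8$ plays the role of a relative, or floor, error.

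The FLOrA guarantee then gives two facts:
\begin{itemize}
\item all iterates stay bounded, $\|\lambda_k-\lambda_0\|\lesssim\mathcal{R}$;
\item $B_k$ grows like $(1+\sqrt{\gamma_d\rho})^{k}$ times $\rho$.
\end{itemize}
Hence the dual residual $\|\lambda_{k+1}-\tnu_k\|/\rho=\|Ax_{k+1}-b\|$ falls below $\varepsilon$ after $K=\tO(1+1/\sqrt{\rho\gamma_d})$ iterations. Here I use the condition $\alpha\le\frac{9}{10}(1+\sqrt{\rho\gamma_d})^{-2}$, so that the absolute errors decay faster than $B_k$ grows.

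At that point $\varepsilon_k\lesssim\varepsilon$, using the second condition on $\alpha$ and $4\sigma\rho\varepsilon\le1$. So the gradient-mapping test $\le\varepsilon/4$ also holds. Lemma~\ref{lem:perturb_solution} with $\gamma_p=\varepsilon/(2D)$ then turns the perturbed stationarity into the stationarity condition~\eqref{def:approximate_kkt} for $\L$. The gradient-mapping bound yields a subgradient of $\tL(\cdot,\tnu_k)$ at $x_{k+1}$, which can be shifted to $\lambda_{k+1}$ by absorbing $\rho A^\top(Ax_{k+1}-b)$.

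With $\gamma_d\asymp\varepsilon/\mathcal{R}$ and $\rho=L_f/\|A\|^2$, this gives $K=\tO(1+\|A\|\sqrt{\mathcal{R}}/\sqrt{L_f\varepsilon})$.

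\emph{Inner loop.} For each call I apply Lemma~\ref{lem:acg_convergence}, in its strongly convex form, to $\psi$ with $L=M_\rho=2L_f$ and $\mu\ge\gamma_p=\varepsilon/(2D)$. This gives a linear rate $A_j\gtrsim (1+\sqrt{\mu/L})^{j}/L$. By~\eqref{ineq:acg_grad_mapping}, the stopping rule $\|\mathcal{G}\|\le\varepsilon_k/(2D)$ is reached in $\tO(\sqrt{L/\mu})=\tO(\sqrt{DL_f/\varepsilon})$ iterations. The logarithm absorbs $R_0\le D$ and $\varepsilon_k\ge\sigma\rho\varepsilon^2/8$. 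A gradient-mapping bound at the perturbed $\psi$ transfers to $\Lpt$ at the cost of $(\varepsilon_k/4D^2)\cdot D$, which is within budget.

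Multiplying the two bounds gives
\[\tO\Big(\big(1+\|A\|\sqrt{\mathcal{R}}/\sqrt{L_f\varepsilon}\big)\sqrt{DL_f/\varepsilon}\Big),\]
which expands into the terms $\sqrt{D\mathcal{R}}\|A\|/\varepsilon$ and $\sqrt{DL_f/\varepsilon}$. The remaining terms of~\eqref{cmplx:total_aalm} come from the additive $1$ in the inner count.

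\emph{Main obstacle: bounding $\mathcal{R}$.} It remains to show $\mathcal{R}=\tO(D+\hat R_\Lambda)$, independent of $\gamma_p^{-1}$. Since $\varepsilon_0C$ is $\tO(1)\cdot\rho B$-scaled, the task reduces to bounding $\bardtL=\max\{1,\|\tilde\lambda_*-\lambda_0\|\}$. This is Lemma~\ref{lem:double_perturbation_distance}, which I would prove as follows:
\begin{enumerate}
\item By~\eqref{ineq:pert_dual_ub}, $\td(\tilde\lambda_*)\ge\td(\lambda_*)\ge d(\lambda_*)-\frac{\gamma_d}{2}\|\lambda_*-\lambda_0\|^2$.
\item For the matching upper bound, $\td(\lambda)\le\tilde\phi_*-\frac{\gamma_d}{2}\|\lambda-\lambda_0\|^2$ by weak duality of the perturbed problem.
\item Also $\tilde\phi_*\le\hat\phi_*+\gamma_pD^2/2=d(\lambda_*)+\varepsilon D/4$.
\item Combining, $\frac{\gamma_d}{2}\|\tilde\lambda_*-\lambda_0\|^2\le\frac{\gamma_d}{2}\hat R_\Lambda^2+\varepsilon D/4$.
\item With $\gamma_d\asymp\varepsilon/\mathcal{R}$, this gives $\|\tilde\lambda_*-\lambda_0\|^2\lesssim\hat R_\Lambda^2+D\mathcal{R}$.
\end{enumerate}
Step 5 is a self-referential bound, since $\mathcal{R}$ itself contains $\bardtL$. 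I would resolve it by solving the quadratic inequality, giving $\bardtL\lesssim\hat R_\Lambda+D$. The difficulty is verifying that $C$, and hence the factor $1+\sqrt{2\varepsilon_0C}$, stays $O(1)$ under~\eqref{ineq:alpha_conds}. That check uses $B_{i+1}\lesssim\rho(1+\sqrt{\rho\gamma_d})^{2i}$ together with $\alpha\le0.9(1+\sqrt{\rho\gamma_d})^{-2}$, which makes $C\lesssim10\rho$ and so $\varepsilon_0C=O(1)$.
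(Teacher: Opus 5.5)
\textbf{There is a genuine gap in your outer-loop step: the FLOrA error is \emph{not} just the inner accuracy $\varepsilon_k$.}

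The multiplier update \eqref{eq:lambda_acc} is the proximal step for the dual of the primal-perturbed problem. It ignores the regularizer $-\frac{\gamma_d}{2}\|\lambda-\lambda_0\|^2$ in $\td$. Concretely, the natural lower model $-\tL(x_{k+1},\cdot)+\frac{1}{2\rho}\|\cdot-\tnu_k\|^2$ coincides with $\Gamma_k^\lambda$ in \eqref{def:dual_Gamma_sc}. Its gradient at $\lambda_{k+1}$ is $\gamma_d\lambda_{k+1}$, so its minimizer is $\hat\lambda_{k+1}=\lambda_{k+1}/(1+\gamma_d\rho)$ rather than $\lambda_{k+1}$. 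This is why \eqref{def:nu_k} uses $\lambda_{k+1}/(1+\gamma_d\rho)$ as the FLOrA point $z_{k+1}$; your proposal never identifies this point.

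The left-hand side of \eqref{ineq:flora_cond} then equals $\tL(x_{k+1},\lambda_{k+1})-\td(\lambda_{k+1})+\frac{\gamma_d^2\rho}{2(1+\gamma_d\rho)}\|\lambda_{k+1}\|^2$ (see \eqref{eq:Ld_sc}). No inner accuracy controls the last term. Controlling it needs an a priori bound on $\|\lambda_{k+1}\|$. The only one available is $\|\tnu_k-\tilde\lambda_*\|\le\mathcal{R}$ from Lemma~\ref{lem:distance_bound_flora}, and it already presupposes that the FLOrA inequality held at iterations $0,\dots,k-1$. So the verification has to be an induction, as in Proposition~\ref{prop:OR}. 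Using $\|\lambda_{k+1}\|^2\le 3(\|\lambda_{k+1}-\tnu_k\|^2+2\mathcal{R}^2)$ and the requirement \eqref{ineq:gamma_d_req} on $\gamma_d$, the mismatch becomes $\frac{\sigma}{8\rho}\|\lambda_{k+1}-\tnu_k\|^2+\frac{\sigma\rho\varepsilon^2}{16}$. In your plan, boundedness of the iterates appears only as an output of FLOrA, and $\gamma_d\asymp\varepsilon/\mathcal{R}$ is used only for counting. Its actual role, making I-FALM a FLOrA instance at all, is missing.

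Two smaller points feed into the same step.
\begin{itemize}
\item The inner error that enters is the Lagrangian gap at the \emph{new} multiplier, $\tL(x_{k+1},\lambda_{k+1})-\td(\lambda_{k+1})\le\varepsilon_k$. It comes from a subgradient of $\tL(\cdot,\lambda_{k+1})$ at $x_{k+1}$ of norm at most $\varepsilon_k/D$ (Proposition~\ref{prop:lag_subgradient}) together with the diameter bound. The augmented primal gap at $\tnu_k$ from Lemma~\ref{lem:grad_map_merged}(a), combined with Rockafellar's estimate, controls only the distance to the exact prox point. It can leave a Lagrangian gap of order $D\sqrt{\rho\varepsilon_k}$.
\item Treating the floors $\sigma\rho\varepsilon^2/8$ and $\sigma\rho\varepsilon^2/16$ as ``relative error'' requires a three-case argument. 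If $\|Ax_{k+1}-b\|\ge\varepsilon$, they are dominated by $\frac{\sigma}{2\rho}\|\lambda_{k+1}-\tnu_k\|^2$. If the gradient mapping exceeds $\varepsilon/4$, then $\varepsilon_k\ge D\varepsilon/2$ and the term $7\varepsilon_0\alpha^k/8$ dominates. Otherwise Step~\textbf{4} has already returned an $\varepsilon$-primal-dual solution.
\end{itemize}

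The rest is sound. The inner count and the product match the paper. Your weak-duality proof of $\bardtL=\mathcal{O}(\hat R_\Lambda+D)$ is a valid, slightly simpler variant of Lemma~\ref{lem:double_perturbation_distance}, since it does not need strong concavity of $\td$. One correction there: $B_{i+1}\lesssim\rho(1+\sqrt{\rho\gamma_d})^{2i}$ drops a polynomial factor, so ``$C\lesssim10\rho$'' is not justified. The bound $C\le\rho(1-\beta)^{-4}=\mathcal{O}(\rho)$ from Lemma~\ref{lem:gamma_bounds_dual_sc}(d) still gives the conclusion you need.
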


Combining the previous complexity results with Lemma~\ref{lem:pd_gap} in Appendix \ref{appdx:technical}, we can state complexity results for obtaining an $\varepsilon_g$-primal solution (see~\ref{def:primal_lc_sol}). As in the previous subsection, the proof is deferred to Appendix~\ref{appdx:deferred_alm}. 
\begin{corollary}\label{cor:aalm_complexity_pd}
    Let $\varepsilon_g>0$ satisfy $\varepsilon_g\leq  2\|A\|^2(D+\mathcal{R})/L_f$. Then, using the parameter settings of Theorem~\ref{thm:acc_alm_complexity_1} with $\varepsilon=\varepsilon_g/(2(D+\hat R_\Lambda))$, Algorithm~\ref{alg:dsc_aalm} finds an $\varepsilon_{g}$-primal solution to~\eqref{eq:ProbIntro_LC} in
        
        \begin{equation}
            \tO\left(\sqrt{\hat R_\Lambda+D}\left(
            1+\frac{\sqrt{D}(D+\hat R_\Lambda)\|A\|}{\varepsilon_{g}}+\frac{\sqrt{D+\hat R_\Lambda}\|A\|}{\sqrt{L_f\varepsilon_{g}}}+\frac{\sqrt{ DL_f}}{\sqrt{\varepsilon_{g}}}\right)\right)\label{cmplx:total_pd_alm_1}
        \end{equation}
        ACG iterations,
    where $\mathcal{R}$ is as in~\eqref{def:R-R} and $\hat R_\Lambda$ is as in~\eqref{def:hatR}.
\end{corollary}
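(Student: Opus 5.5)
The natural route is to combine Theorem~\ref{thm:acc_alm_complexity_1} with Lemma~\ref{lem:pd_gap}, as was done for Corollary~\ref{cor:alm_complexity_pd}. First, I would invoke Lemma~\ref{lem:pd_gap} to show that any $\varepsilon$-primal-dual solution $(x,\lambda)$ in the sense of~\eqref{def:approximate_kkt} with $x\in\dom h$ satisfies $|\phi(x)-\hat\phi_*|\le (D+\|\lambda_*\|)\varepsilon$ up to a constant, and $\|Ax-b\|\le\varepsilon$. The upper bound comes from convexity of $\L(\cdot,\lambda)$: $\phi(x)-\phi(x_*)\le \inner{v}{x-x_*}-\inner{\lambda}{Ax-b}$. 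This step needs care, since it involves $\|\lambda\|$ rather than $\|\lambda_*\|$. The lower bound comes from the saddle point: $\phi(x)\ge \hat\phi_*-\inner{\lambda_*}{Ax-b}\ge \hat\phi_*-\hat R_\Lambda\varepsilon$, using $\lambda_0=0$.

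The choice $\varepsilon=\varepsilon_g/(2(D+\hat R_\Lambda))$ is exactly the scaling that makes the resulting gap at most $\varepsilon_g$. It also gives $\|Ax-b\|\le\varepsilon\le\varepsilon_g$, since $D\ge1$.

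Second, I would verify that the hypothesis of Theorem~\ref{thm:acc_alm_complexity_1}, namely $\varepsilon\le\|A\|^2/L_f$, follows from the assumption $\varepsilon_g\le 2\|A\|^2(D+\mathcal R)/L_f$. This requires comparing $\mathcal R$ with $\hat R_\Lambda$. If the inequality only goes one way, I would instead argue that whichever of $\mathcal{R}$, $\hat R_\Lambda$ appears makes the condition sufficient.

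Third, I would substitute $\varepsilon$ into~\eqref{cmplx:total_aalm} and simplify:
\[
\frac{\sqrt{D^2+D\hat R_\Lambda}\,\|A\|}{\varepsilon}=\frac{2\sqrt{D}(D+\hat R_\Lambda)^{3/2}\|A\|}{\varepsilon_g},\qquad \frac{\sqrt{D+\hat R_\Lambda}\,\|A\|}{\sqrt{L_f\varepsilon}}=\frac{\sqrt2 (D+\hat R_\Lambda)\|A\|}{\sqrt{L_f\varepsilon_g}},
\]
and $\sqrt{DL_f/\varepsilon}=\sqrt{2DL_f(D+\hat R_\Lambda)/\varepsilon_g}$. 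Factoring out $\sqrt{D+\hat R_\Lambda}$, and absorbing the constant term using $\hat R_\Lambda,D\ge1$, yields~\eqref{cmplx:total_pd_alm_1}. The logarithmic factors in $\tO$ change only by logs of $D+\hat R_\Lambda$, which is harmless.

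The main obstacle is the upper bound on $\phi(x)-\hat\phi_*$. That bound naturally contains $\|\lambda\|\,\|Ax-b\|$ with the output multiplier $\lambda=\lambda_{k+1}$, so I would need a bound on $\|\lambda_{k+1}\|$. I would try to obtain one from the FLOrA analysis, which bounds the iterates' distance to $\tilde\lambda_*$ by $O(\mathcal R)$, together with Lemma~\ref{lem:double_perturbation_distance}, which relates $\dtL$ to $\hat R_\Lambda$. This is presumably why $\mathcal R$ appears in the hypothesis. If that fails, I would use Lemma~\ref{lem:pd_gap}'s formulation directly, assuming it is already stated in terms of $\hat R_\Lambda$ and $D$.
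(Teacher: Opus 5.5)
Your architecture is the paper's: feed the $\varepsilon$-primal-dual output of Theorem~\ref{thm:acc_alm_complexity_1} into Lemma~\ref{lem:pd_gap}, control the output multiplier with the FLOrA distance bound, and substitute. However, the one nontrivial step is left as ``I would try'', and your fallback for it does not exist. Lemma~\ref{lem:pd_gap} gives $\varepsilon\max\{\|\lambda_*\|,\|\lambda\|+D\}$ with the \emph{output} multiplier $\lambda=\lambda_{k+1}$, not $(D+\|\lambda_*\|)\varepsilon$, so a bound on $\|\lambda_{k+1}\|$ is unavoidable. The bound the paper uses is
\[
\|\lambda_{k+1}\|\le\|\lambda_{k+1}-\tnu_k\|+\|\tnu_k-\tilde\lambda_*\|+\|\tilde\lambda_*\|\le\rho\varepsilon+\mathcal R+\bardtL\le 1+2\mathcal R .
\]
The three terms are handled as follows:
\begin{itemize}
\item $\|\lambda_{k+1}-\tnu_k\|=\rho\|Ax_{k+1}-b\|\le\rho\varepsilon\le1$, by the stopping test and $4\sigma\rho\varepsilon\le1$ with $\sigma=1/4$.
\item The middle term is Lemma~\ref{lem:distance_bound_flora} applied to $\tx^\flora_k=\tnu_k$, with $x^\flora_0=\nu_0=0$.
\item $\|\tilde\lambda_*\|=\dtL\le\bardtL\le\mathcal R$, since $\lambda_0=0$.
\end{itemize}
Apply the distance lemma to $\tnu_k$, not directly to $\lambda_{k+1}=y^\flora_{k+1}$. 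Bounding $\|y^\flora_{k+1}-\tilde\lambda_*\|$ needs \eqref{ineq:flora_cond} at iteration $k$, which Proposition~\ref{prop:OR} does not guarantee at the terminating iteration. The bound on $\tnu_k$ only uses earlier, non-terminating iterations, where Proposition~\ref{prop:OR} does give \eqref{ineq:key}.

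With this bound, the upper side of the gap is at most $(D+1+2\mathcal R)\varepsilon\le2(D+\mathcal R)\varepsilon$, using $D\ge1$. So your claim that $\varepsilon=\varepsilon_g/(2(D+\hat R_\Lambda))$ is ``exactly'' the right scaling is false; it works only up to a constant, via $\mathcal R=\mathcal O(\hat R_\Lambda+D)$. The paper's proof in fact substitutes $\varepsilon=\varepsilon_g/(2(D+\mathcal R))$, not the $\hat R_\Lambda$ version written in the statement. That choice also settles your unresolved hypothesis check without comparing $\mathcal R$ and $\hat R_\Lambda$: the assumption $\varepsilon_g\le2\|A\|^2(D+\mathcal R)/L_f$ is then literally $\varepsilon\le\|A\|^2/L_f$. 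The relation $\mathcal R=\mathcal O(\hat R_\Lambda+D)$ is needed only when substituting into \eqref{cmplx:total_aalm}, where your arithmetic is correct. It follows from Lemma~\ref{lem:gamma_bounds_dual_sc}(d) with $\beta\le\sqrt{9/10}$ and $\varepsilon_0=\rho^{-1}$, which give $\varepsilon_0C=\mathcal O(1)$, together with Lemma~\ref{lem:double_perturbation_distance} for $\dtL$.

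Your separate treatment of the lower side, $\hat\phi_*-\phi(x)\le R_\Lambda\varepsilon$, is cleaner than the paper's. The paper absorbs $R_\Lambda$ into $D+\|\lambda_k\|$ through the comparison $R_\Lambda\le D+\rho\|Ax_k-b\|+2\mathcal R$, which it asserts without justification. Taking $\varepsilon=\varepsilon_g/\max\{\hat R_\Lambda,2(D+\mathcal R)\}$ makes both gap bounds, the feasibility bound, and the hypothesis check exact. It still gives $1/\varepsilon=\mathcal O((D+\hat R_\Lambda)/\varepsilon_g)$, hence the same complexity.
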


Comparing to the lower bound in \cite[Theorem 3.1]{ouyangLowerComplexityBounds2021}, Algorithm~\ref{alg:dsc_aalm} is therefore optimal up to logarithmic terms in $\varepsilon_g$, $\|A\|$, and $L_f$. Again, however, it is sub-optimal in $\hat R_\Lambda$ ($\mathcal{O}(\hat R_\Lambda^{3/2})$ vs. $\mathcal{O}(\hat R_\Lambda)$) and $D$ ($\mathcal{O}(D^2)$ vs. $\mathcal{O}(D)$). As noted following Corollary~\ref{cor:alm_complexity_pd}, this is likely due to our method of analysis: reducing from approximate stationarity to a primal gap instead of directly bounding a gap function as in~\cite{ouyangAcceleratedLinearizedAlternating2015}.

\noindent\textbf{Remark. }To prove complexity for the case where $\varepsilon L_f\geq \|A\|^2$, we can simply rescale the objective $\phi(\cdot)$ by a scalar $\chi = \|A\|^2/(\varepsilon L_f) \le 1$ and find a $\chi\varepsilon$-solution using the settings of Theorem~\ref{thm:acc_alm_complexity_1}. Furthermore, we can show by elementary algebra that if $(x_*,\chi\lambda_*)$ is an optimal pair for the rescaled problem, then $(x_*,\lambda_*)$ is an optimal pair for the original problem~\eqref{eq:ProbIntro_LC}. Since $\|\chi\lambda\|\leq  \|\lambda\|$, the distance to $\Lambda_*$ from $\lambda_0=0$ does not increase, and we only need to consider the effects on $L_f$ and $\varepsilon$. Therefore, Theorem~\ref{thm:acc_alm_complexity_1} provides complexity bounds for~\eqref{eq:ProbIntro_LC} without loss of generality.

\noindent\textbf{Remark. }Focusing on the regime where $L_f\geq \varepsilon_g$ (true of most problems of interest), Corollary~\ref{cor:aalm_complexity_pd} implies that, omitting $\hat R_\Lambda$ and $D$ dependence, Algorithm~\ref{alg:dsc_aalm} has an iteration-complexity of
    $\tO\left(\sqrt{L_f/\varepsilon_g} + \|A\|/\varepsilon_g\right)$.
    In the case where $2(D+\mathcal{R})\|A\|^2\leq \varepsilon_g L_f$, following a similar rescaling argument as in the previous remark, we establish an iteration-complexity of
    $\tO\left(\sqrt{L_f/\varepsilon_g} + L_f/\|A\|\right)$.

\section{Frameworks for Generic Convex Optimization}\label{sec:framework}

In this section, we consider the generic optimization problem
\begin{equation}\label{eq:general_convex_prob}
\Phi_*=\min\{\Phi(x):x\in \R^n\},
\end{equation}
where $\Phi$ is proper, lower semi-continuous, and $\mu$-strongly convex for some $\mu\ge 0$ (with $\mu=0$ corresponding to the merely convex case). Motivated by IPP frameworks \cite{monteiro2010complexity,solodov1999hybrid1}, we propose two general schemes for solving \eqref{eq:general_convex_prob}: a baseline (unaccelerated) framework and an accelerated counterpart in the spirit of accelerated gradient methods.
Both frameworks rely on an abstract subroutine that prescribes the accuracy to which each proximal subproblem is solved. Under the assumption that such a subroutine is available, the main results of this section are the sub-optimality guarantees for the two frameworks. 
These guarantees will be used in Section~\ref{sec:proof} to establish the iteration-complexity bounds of Restarted ACG, I-ALM, and I-FALM, described in Sections~\ref{sec:primal} and~\ref{sec:dual}, which are special instances of the frameworks in primal and dual spaces.


\subsection{Lower Oracle Approximation Framework}\label{subsec:LORA}
This subsection presents the baseline framework, LOrA, given in Algorithm~\ref{alg:lora} below. We mark LOrA iterates (resp., parameters) with a superscript (resp., subscript) ``$\mathrm{L}$" to distinguish from those of specific implementations. For simplicity of presentation and analysis, we assume for this subsection that $\Phi$ is merely convex in \eqref{eq:general_convex_prob}.

\begin{algorithm}[H]
    \caption{LOrA Framework}\label{alg:lora}
    \begin{algorithmic}
        \REQUIRE given initial point $x^\lora_0 \in \dom\Phi$, $\sigma_\lora\in(0,1)$, $\lambda_\lora>0$, set $y^\lora_0=x^\lora_0$.
        \FOR{$k=0,1,\dots$}
        \STATE {\bf 1.} Choose $\delta^\lora_k>0$.
        \STATE {\bf 2.} Find $(y^\lora_{k+1}, \Gamma^\lora_k)\in \dom \Phi \times\CConv{1/\lam_\lora}{\dom\Phi}$ such that 
        \begin{equation}\label{ineq:Gamma}
            \Gamma_k^\lora(\cdot)\leq \Phi(\cdot)+\frac{1}{2\lambda_\lora}\|\cdot-x^\lora_k\|^2,
        \end{equation}
        \begin{equation}\label{ineq:lora_base}
            \|\lam_\lora \hat u^\lora_{k+1}\|^2 + 2\lambda_\lora\left[\Phi(y^\lora_{k+1}) +\frac{1}{2\lambda_\lora}\|y^\lora_{k+1}-x^\lora_k\|^2 - \Gamma^\lora_k(x^\lora_{k+1})\right]\le \sigma_\lora\|y^\lora_{k+1}-x^\lora_k\|^2 + 2\lambda_\lora\delta_k^\lora,
        \end{equation}
        where for some $\A^\lora_k\in(0,\infty]$,
        \begin{equation} \label{eq:x_lora}
            x_{k+1}^\lora=\underset{x\in\R^n}{\argmin}\left\{\Gamma^\lora_k(x) + \frac{1}{2\A^\lora_k}\|x-x^\lora_k\|^2\right\}, \quad \hat u^\lora_{k+1}=\frac{x^\lora_k-x^\lora_{k+1}}{\A^\lora_k}.
        \end{equation}
        \ENDFOR
    \end{algorithmic}
\end{algorithm}

LOrA can be understood as an iterative procedure of finding $\{y_{k+1}^\lora\}$ via certain subroutines satisfying \eqref{ineq:lora_base}, which provides sub-optimality guarantees as shown below in Theorem \ref{thm:lora_complexity}. From the perspective of IPP, $y_{k+1}^\lora$ is obtained by inexactly solving the proximal subproblem $\min_{z\in \R^n} \{\Phi(z)+\|z-x^\lora_k\|^2/(2\lambda_\lora)\}$, where the solution accuracy is controlled by the sum of a relative error and an absolute error as on the right-hand side of \eqref{ineq:lora_base}. Moreover, $\{x_{k+1}^\lora\}$ is an auxiliary sequence obtained as in \eqref{eq:x_lora} by (approximately) solving the surrogate function $\Gamma^\lora_k$, which approximates $\Phi+\|\cdot-x^\lora_k\|^2/(2\lambda_\lora)$ from below (see \eqref{ineq:Gamma}).

The following result formalizes the connection to IPP, that is, $y_{k+1}^\lora$ is an approximate solution to the proximal subproblem. For brevity of the main text, we defer the proof to Appendix~\ref{appdx:lora_framework}.
\begin{proposition}\label{prop:prox_point_lora}
    Let $\hat{x}_*^\lora$ be the minimizer of the proximal subproblem at iteration $k$,
    \[
    \hat{x}^\lora_* =\underset{z\in\R^n}\argmin \left\{\Phi(z)+\frac{1}{2\lambda_\lora}\|z-x^\lora_k\|^2\right\}.
    \]
    Then, $y_{k+1}^\lora$ obtained by the LOrA framework satisfies
    \[
        \Phi(y^\lora_{k+1})+\frac{1}{2\lambda_\lora}\|y^\lora_{k+1}-x^\lora_k\|^2 - \Phi(\hat{x}_*^\lora) - \frac{1}{2\lambda_\lora}\|\hat{x}_*^\lora-x^\lora_k\|^2 \le \frac{\sigma_\lora}{2\lambda_\lora}\|y^\lora_{k+1}-x^\lora_k\|^2+\delta^\lora_k.
    \]
\end{proposition}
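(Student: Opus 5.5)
The plan is to chain the LOrA inequality \eqref{ineq:lora_base} with the lower-model property \eqref{ineq:Gamma}. The key intermediate claim is a lower bound on the global minimum of $\Gamma^\lora_k$ in terms of $\Gamma^\lora_k(x^\lora_{k+1})$ and $\|\hat u^\lora_{k+1}\|$. Concretely, I will show
\[
\min_{z\in\R^n}\Gamma^\lora_k(z)\;\ge\;\Gamma^\lora_k(x^\lora_{k+1})-\frac{\lam_\lora}{2}\|\hat u^\lora_{k+1}\|^2 .
\]

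First, I read off a subgradient from the optimality condition of \eqref{eq:x_lora}. Since $x^\lora_{k+1}$ minimizes $\Gamma^\lora_k+\frac{1}{2\A^\lora_k}\|\cdot-x^\lora_k\|^2$, we get $0\in\partial\Gamma^\lora_k(x^\lora_{k+1})+(x^\lora_{k+1}-x^\lora_k)/\A^\lora_k$, that is, $\hat u^\lora_{k+1}\in\partial\Gamma^\lora_k(x^\lora_{k+1})$. When $\A^\lora_k=\infty$, $x^\lora_{k+1}$ is an exact minimizer of $\Gamma^\lora_k$ and $\hat u^\lora_{k+1}=0$, so the same inclusion holds.

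Next, $\Gamma^\lora_k\in\CConv{1/\lam_\lora}{\dom\Phi}$, so for every $z$,
\[
\Gamma^\lora_k(z)\ge \Gamma^\lora_k(x^\lora_{k+1})+\inner{\hat u^\lora_{k+1}}{z-x^\lora_{k+1}}+\frac{1}{2\lam_\lora}\|z-x^\lora_{k+1}\|^2 .
\]
Minimizing the right-hand side over $z$, which is attained at $z=x^\lora_{k+1}-\lam_\lora\hat u^\lora_{k+1}$, yields the claimed bound.

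Finally, I combine the pieces. By \eqref{ineq:Gamma} and the claim,
\[
\Phi(\hat x^\lora_*)+\frac{1}{2\lam_\lora}\|\hat x^\lora_*-x^\lora_k\|^2\ge\Gamma^\lora_k(\hat x^\lora_*)\ge\Gamma^\lora_k(x^\lora_{k+1})-\frac{\lam_\lora}{2}\|\hat u^\lora_{k+1}\|^2 .
\]
Dividing \eqref{ineq:lora_base} by $2\lam_\lora$ gives
\[
\Phi(y^\lora_{k+1})+\frac{1}{2\lam_\lora}\|y^\lora_{k+1}-x^\lora_k\|^2-\Gamma^\lora_k(x^\lora_{k+1})+\frac{\lam_\lora}{2}\|\hat u^\lora_{k+1}\|^2\le \frac{\sigma_\lora}{2\lam_\lora}\|y^\lora_{k+1}-x^\lora_k\|^2+\delta^\lora_k .
\]
Replacing $\Gamma^\lora_k(x^\lora_{k+1})-\frac{\lam_\lora}{2}\|\hat u^\lora_{k+1}\|^2$ by its upper bound $\Phi(\hat x^\lora_*)+\frac{1}{2\lam_\lora}\|\hat x^\lora_*-x^\lora_k\|^2$ gives exactly the stated inequality.

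The only delicate point is the subgradient step: it relies on the subdifferential sum rule for the closed convex $\Gamma^\lora_k$ plus a smooth quadratic, which is standard, and on handling the degenerate case $\A^\lora_k=\infty$. The rest is a one-line quadratic minimization.
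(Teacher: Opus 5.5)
Your proof is correct and follows essentially the same route as the paper. The paper also extracts $\hat u^\lora_{k+1}\in\partial\Gamma^\lora_k(x^\lora_{k+1})$ from \eqref{eq:x_lora}, evaluates the $\lambda_\lora^{-1}$-strong-convexity lower bound of $\Gamma^\lora_k$ at $\hat x^\lora_*$ together with \eqref{ineq:Gamma}, and combines the result with \eqref{ineq:lora_base}; the only cosmetic difference is that it completes the square at $\hat x^\lora_*$ (adding $\lambda_\lora\|\hat u^\lora_{k+1}\|^2/2$ to both sides) instead of minimizing the quadratic lower bound over all $z$.
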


LOrA is a generic framework for convex optimization that includes many first-order methods for solving smooth and nonsmooth problems as instances. 
In Subsection~\ref{ssec:proofs_alm}, we will show that I-ALM is an instance of LOrA. 
Here we provide two other concrete instances of the LOrA framework: the proximal gradient method and the modern proximal bundle (MPB) method~\cite{liang2021proximal,liang2024unified}.

\begin{example}[Proximal Gradient Method]
    Consider solving problem \eqref{eq:general_convex_prob} with  $\Phi(\cdot)=f(\cdot)+h(\cdot)$ where $f$ is convex and $L_f$-smooth and $h$ is convex and simple, and choose a stepsize $\eta\leq 1/L_f$, the proximal gradient method is
    \begin{equation}\label{def:pgm}
        x_{k+1}=\prox_{\eta h}(x_k-\eta \nabla f(x_k)).
    \end{equation}
    It is straightforward to verify that PGM is an instance of LOrA with the correspondence
    \begin{eqbox}
    \begin{equation}
    \begin{gathered}
        \Phi(\cdot)=f(\cdot)+h(\cdot),\quad\Gamma_k^\lora(\cdot) = \ell_f(\cdot;x_k)+h(\cdot)+\frac{1}{2\eta}\|\cdot-x_k\|^2,\quad \lambda_\lora = \eta,\quad \sigma_\lora=\eta L_f;\\
   \A^\lora_k=\infty,\quad\delta_k^\lora=0,\quad y_{k+1}^\lora=x_{k+1}^\lora=x_{k+1},\quad \hat{u}^\lora_{k+1}=0.\label{def:gd_corresp}
    \end{gathered}
    \end{equation}
    \end{eqbox}
\end{example}
    
We can easily verify \eqref{ineq:Gamma}-\eqref{eq:x_lora}.
    First, the inequality~\eqref{ineq:Gamma} follows trivially by the convexity of $f$. 
    Second, it is easy to verify that \eqref{def:pgm} indicates that
    \[
    x_{k+1}=\underset{x\in\R^n}{\argmin}\left\{\ell_f(x;x_k) +h(x)+ \frac{1}{2\eta}\|x-x_k\|^2\right\},
    \]
    which in view of the choices of $\Gamma_k^\lora$, $\A_k^\lora$, and $x^\lora_k$ in \eqref{def:gd_corresp} implies that the first relation in 
    \eqref{eq:x_lora} holds.
    Moreover, the second relation in \eqref{eq:x_lora} also simply follows from \eqref{def:gd_corresp}.
    Finally, we only need to show~\eqref{ineq:lora_base}. 
    Using the choices of $y_{k+1}^\lora$, $x_{k+1}^\lora$, and $\Gamma_k^\lora$ in \eqref{def:gd_corresp}, we have
    \begin{align*}
        &2\lambda_\lora\left[\Phi(y^\lora_{k+1})+\frac{1}{2\lambda_\lora}\|y^\lora_{k+1}-x^\lora_k\|^2-\Gamma^\lora_k(x^\lora_{k+1})\right]\\
        \stackrel{\eqref{def:gd_corresp}}=&2\eta[f(x_{k+1})-\ell_f(x_{k+1};x_k)]
        \leq \eta L_f\|x_{k+1}-x_k\|^2
        \stackrel{\eqref{def:gd_corresp}}=\sigma_\lora\|y^\lora_{k+1}-x^\lora_k\|^2.
    \end{align*}
    where the inequality follows from the $L_f$-smoothness of $f$ and the final identity follows from the choice of $\sigma_\lora$ in~\eqref{def:gd_corresp}.

\begin{example}[MPB Method]
    Consider the composite nonsmooth convex optimization problem $\min_x \{\phi(x):=f(x)+h(x)\}$ where $f$ is convex and Lipschitz continuous and $h$ is convex and simple. One method to solve such problem is the MPB method~\cite{liang2021proximal,liang2024unified}.
A key distinction of MPB from classical proximal bundle methods \cite{lemarechal1975extension,lemarechal1978nonsmooth,mifflin1982modification,wolfe1975method} lies in its incorporation of the IPP framework. MPB approximately solves a sequence of proximal subproblem of the form
\begin{equation}\label{eq:phi-lora}
    \min_{u \in \R^n} \left\{\psi(u) :=\phi(u)+\frac{1}{2 \lambda}\left\|u-x^\lora_{k}\right\|^2\right\}.
\end{equation}
Letting $x_0=x^\lora_{k}$ be the initial point of the subroutine for solving \eqref{eq:phi-lora}, MPB iteratively solves 
\begin{equation}\label{eq:PB-xj}
    x_j=\underset{u \in \R^n}{\argmin}\left\{\Gamma_j(u) + h(u) +\frac{1}{2 \lam}\|u-x_0\|^2\right\},
\end{equation}
where $\Gamma_j$ is a bundle model underneath $f$. Details about various models and a unifying framework underlying them are discussed in \cite{liang2024unified}. MPB keeps refining $\Gamma_j$ and solving $x_j$ through \eqref{eq:PB-xj}, until a criterion $t_j =\psi(\tx_j) - m_j \le \delta$ is met, where
\begin{equation}\label{def:tj}
	m_j = \Gamma_j(x_j) + h(x_j) +\frac{1}{2 \lam}\|x_j-x_0\|^2, \quad \tx_j \in \Argmin \{\psi(u): u \in\{x_0,x_1, \ldots, x_j\}\}.
	\end{equation}
As explained in \cite{liang2025primal}, the criterion $t_j \le \delta$ indicates that a primal-dual solution to \eqref{eq:phi-lora} with primal-dual gap bounded by $\delta$ is obtained. It also implies that $\tx_j$ is a $\delta$-solution to \eqref{eq:phi-lora} (see also \cite{liang2021proximal}).
Once the condition $t_j \le \delta$ is met, MPB updates the prox center to $x^\lora_{k+1}=x_j$, resets the bundle model $\Gamma_j$ from scratch, and proceeds to solve \eqref{eq:phi-lora} with $x^\lora_{k}$ replaced by $x^\lora_{k+1}$. We refer to iterations where the prox center $x^\lora_k$ is updated (and hence $t_j\leq \delta$) as serious steps. Otherwise, a step is referred to as a null step.
\end{example}

Let $j_k$ be the iteration corresponding to serious step $k$. We will show that for all serious steps $k \geq 0$, MPB is an instance of the LOrA framework with the correspondence

\begin{eqbox}
    \begin{equation}
    \begin{gathered}
        \Phi(\cdot)=\phi(\cdot),\quad\Gamma^\lora_k(\cdot) = \Gamma_{j_k}(\cdot) + h(\cdot) +\frac{1}{2 \lam}\|\cdot-x^\lora_{k}\|^2,\quad \lambda_\lora = \lambda,\quad \sigma_\lora=0;\\
   \A_k^\lora=\infty,\quad\delta_k^\lora=\delta,\quad y^\lora_{k+1}=\tx_{j_k},\quad x_{k+1}^\lora=x_{j_k},\quad \hat u^\lora_{k+1}=0.\label{def:mpb_corresp}
    \end{gathered}
    \end{equation}
\end{eqbox}

    Inequality~\eqref{ineq:Gamma} follows from the fact that $\Gamma_{j_k}\le f$. The first relation in \eqref{eq:x_lora} follows from \eqref{eq:PB-xj} with $j=j_k$ and the correspondence~\eqref{def:mpb_corresp}. The second relation in \eqref{eq:x_lora} trivially follows from \eqref{def:mpb_corresp}. Finally, in view of \eqref{def:mpb_corresp}, condition~\eqref{ineq:lora_base} is exactly the serious/null criterion $t_{j_k}\le \delta$, i.e.,
    \begin{equation*}
        \Phi(y^\lora_{k+1})+\frac{1}{2\lam_\lora}\|y^\lora_{k+1}-x^\lora_{k}\|^2-\Gamma^\lora_k(x^\lora_{k+1})\stackrel{\eqref{eq:phi-lora},\eqref{def:tj}}=\psi(\tx_{j_k})-m_{j_k}=t_{j_k}\leq \delta.
    \end{equation*}

The following theorem presents two sub-optimality guarantees of LOrA. Its proof is deferred to Appendix~\ref{appdx:lora_framework}.

\begin{theorem}\label{thm:lora_complexity}
    Let $X_*$ be the set of optimal solutions to \eqref{eq:general_convex_prob}. Define $R^\lora_0:=\|x^\lora_0-x_*\|=\min\{\|x^\lora_0-x\|:x\in X_*\}$ and $\bar{\delta}^\lora_{k}:=k^{-1}\sum_{i=0}^{k-1}\delta^\lora_i$. Suppose that $\A_k^\lora=\infty$ at all iterations. Then, for every $k\geq 1$, we have
     \begin{equation}
         \min_{1\leq i\leq k}\|y^\lora_i-x^\lora_{i-1}\|\leq \frac{R^\lora_0}{\sqrt{1-\sigma_\lora}\sqrt{k}} + \sqrt{\frac{2\lambda_\lora\bar{\delta}^\lora_{k}}{1-\sigma_\lora}}.\label{ineq:lora_a}
     \end{equation}
    Moreover,
     \begin{equation}
         \min_{1\leq i\leq k}\Phi(y^\lora_i)-\Phi(x_*)\leq \frac{\left(R^\lora_0\right)^2}{2\lambda_\lora k}+\bar{\delta}^\lora_{k}.\label{ineq:lora_b}
     \end{equation}

\end{theorem}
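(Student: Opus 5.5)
With $\A_k^\lora=\infty$ we have $x^\lora_{k+1}=\argmin \Gamma^\lora_k$ and $\hat u^\lora_{k+1}=0$. The plan is to derive a one-step descent inequality measured against a fixed optimal point $x_*\in X_*$ attaining $R^\lora_0$, and then telescope it.

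The key step comes from the $1/\lambda_\lora$-strong convexity of $\Gamma^\lora_k$ on $\dom\Phi$. Because $x^\lora_{k+1}$ minimizes $\Gamma^\lora_k$, the quadratic growth inequality gives, for every $x\in\dom\Phi$,
\[
\Gamma^\lora_k(x)\ge \Gamma^\lora_k(x^\lora_{k+1})+\frac{1}{2\lambda_\lora}\|x-x^\lora_{k+1}\|^2 .
\]
We evaluate this at $x=x_*\in\dom\Phi$ and upper bound the left side with the lower-model property \eqref{ineq:Gamma}:
\[
\Gamma^\lora_k(x_*)\le\Phi(x_*)+\frac{1}{2\lambda_\lora}\|x_*-x^\lora_k\|^2 .
\]
Next we lower bound $\Gamma^\lora_k(x^\lora_{k+1})$ using \eqref{ineq:lora_base} with $\hat u^\lora_{k+1}=0$. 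After dividing by $2\lambda_\lora$, this reads
\[
\Gamma^\lora_k(x^\lora_{k+1})\ge \Phi(y^\lora_{k+1})+\frac{1-\sigma_\lora}{2\lambda_\lora}\|y^\lora_{k+1}-x^\lora_k\|^2-\delta^\lora_k .
\]
Chaining the three inequalities yields the per-iteration estimate
\[
\Phi(y^\lora_{k+1})-\Phi(x_*)+\frac{1-\sigma_\lora}{2\lambda_\lora}\|y^\lora_{k+1}-x^\lora_k\|^2\le \frac{\|x_*-x^\lora_k\|^2-\|x_*-x^\lora_{k+1}\|^2}{2\lambda_\lora}+\delta^\lora_k .
\]

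We sum this over the iterations $0,\dots,k-1$. The right side telescopes to at most $(R^\lora_0)^2/(2\lambda_\lora)+k\bar\delta^\lora_k$, where we drop the final term $-\|x_*-x^\lora_k\|^2\le 0$ and use $x^\lora_0$ as the starting prox center.

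Both groups of terms on the left are nonnegative, since $\Phi(y^\lora_i)\ge\Phi(x_*)$ and $\sigma_\lora<1$. We can therefore discard either group.

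Discarding the distance terms and bounding the minimum by the average gives \eqref{ineq:lora_b}.

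Discarding the function-gap terms gives
\[
\frac{1-\sigma_\lora}{2\lambda_\lora}\,k\min_i\|y^\lora_i-x^\lora_{i-1}\|^2\le \frac{(R^\lora_0)^2}{2\lambda_\lora}+k\bar\delta^\lora_k .
\]
Rearranging, we obtain
\[
\min_i\|y^\lora_i-x^\lora_{i-1}\|^2\le \frac{(R^\lora_0)^2}{(1-\sigma_\lora)k}+\frac{2\lambda_\lora\bar\delta^\lora_k}{1-\sigma_\lora} .
\]
Applying $\sqrt{a+b}\le\sqrt a+\sqrt b$ then yields \eqref{ineq:lora_a}.

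The only delicate point is the first step. We must confirm that the quadratic growth inequality applies at $x_*$, which holds because $\Gamma^\lora_k\in\CConv{1/\lambda_\lora}{\dom\Phi}$ and $x_*\in\dom\Phi$. We must also confirm that the minimizer $x^\lora_{k+1}$ exists and is unique, which follows from the strong convexity of $\Gamma^\lora_k$. The rest is bookkeeping.
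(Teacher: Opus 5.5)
Your proposal is correct and follows essentially the same route as the paper. The paper's Lemma~\ref{lem:single_step_lora} derives the same one-step inequality $2\lambda_\lora[\Phi(y^\lora_{k+1})-\Phi(x_*)]+(1-\sigma_\lora)\|y^\lora_{k+1}-x^\lora_k\|^2\le\|x^\lora_k-x_*\|^2-\|x^\lora_{k+1}-x_*\|^2+2\lambda_\lora\delta^\lora_k$ by combining three ingredients: strong convexity of the prox objective at its minimizer, the lower-model bound \eqref{ineq:Gamma}, and the inexactness criterion \eqref{ineq:lora_base}; it then telescopes and bounds the minimum by the average, as you do. The only cosmetic difference is that the paper first writes the inequality for a general $\A^\lora_k$ and then specializes to $\A^\lora_k=\infty$.
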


\subsection{Fast Lower Oracle Approximation Framework}\label{sec:flora}

This subsection presents the FLOrA framework for (strongly) convex minimization. 
We mark FLOrA iterates (resp., parameters) with a superscript (resp., subscript) ``$\mathrm{F}$" to distinguish from those of specific implementations.
Incorporating a Nesterov-type acceleration scheme into LOrA, FLOrA achieves better sub-optimality guarantees, which are comparable to other accelerated IPP frameworks \cite{marquesalvesVariantsAHPELargestep2022b,MonteiroSvaiterAcceleration}.

\begin{algorithm}[H]
    \caption{FLOrA Framework}
    \label{alg:flora}
    \begin{algorithmic}
        \REQUIRE given initial point $x^\flora_{0}\in\dom\Phi$, $\mu_\flora\geq 0$, $\sigma_\flora\in(0,1]$, $\lambda_\flora > 0$, $\tau_0=1$, $\delta^\flora_0\geq0$, set $B_0=0$ and $y^\flora_0=x^\flora_0$ and choose an $\alpha_\flora\geq 0$ satisfying $\alpha_\flora<(1+\sqrt{\lambda_\flora\mu_\flora})^{-2}$.
        \FOR{$k=0,1,\dots$}
        \STATE {\bf 1.} Set $\delta^\flora_{k}=\delta^\flora_0(\alpha_\flora)^k$ and compute
        \begin{gather}
            b_{k}=\frac{\lambda_\flora\tau_k+\sqrt{\lambda_\flora^2\tau_k^2+4\lambda_\flora \tau_k B_k}}{2}\label{eq:bk_update},\quad
            B_{k+1}=B_k + b_{k},\quad\tau_{k+1} = \tau_{k} + b_{k}\mu_\flora,\\
             \xt^\flora_k = \frac{B_k}{B_{k+1}}y^\flora_k+\frac{b_{k}}{B_{k+1}}x^\flora_k\label{eq:tx_flora}.
        \end{gather}
        \STATE {\bf 2.} Find $(\ty^\flora_{k+1}, \Gamma^\flora_k)\in \dom\Phi\times\underset{\mu_\flora+\lambda_\flora^{-1}}{\overline{\text{Conv}}}(\dom \Phi)$ such that
        \begin{equation}\label{ineq:Gamma_acc}
            \Gamma^\flora_k(\cdot)\leq \Phi(\cdot)+\frac{1}{2\lambda_\flora}\|\cdot-\tx^\flora_k\|^2,
        \end{equation}
        \begin{equation}\label{ineq:flora_cond}
            \|\lam_\flora \hat{u}^\flora_{k+1}\|^2 + 2\lam_\flora\left[\Phi(\ty^\flora_{k+1}) +\frac{1}{2\lambda_\flora}\|\ty^\flora_{k+1}-\tx^\flora_{k}\|^2 - \Gamma^\flora_k(z^\flora_{k+1})\right]\le \sigma_\flora\|\ty^\flora_{k+1}-\tx^\flora_{k}\|^2+2\lambda_\flora\delta_{k}^\flora,
        \end{equation}
        where for some $\A_k^\flora \in (0,\infty]$,
        \begin{align}
        z^\flora_{k+1}=\underset{v\in\R^n}\argmin\left\{\Gamma^\flora_k(v)+\frac{1}{2\A_k^\flora}\|v-\tx^\flora_k\|^2\right\},\quad  \hat{u}^\flora_{k+1}=\frac{\tx^\flora_k-z^\flora_{k+1}}{\A_k^\flora}\label{eq:zkp1_hatuk}.
        \end{align}
        \STATE {\bf 3.} Choose $y^\flora_{k+1}$ satisfying $\Phi(y^\flora_{k+1})\leq \Phi(\ty^\flora_{k+1})$ and compute
        \begin{equation}\label{def:xk_flora}
            u^\flora_{k+1}=\hat{u}^\flora_{k+1}+\frac{\tx^\flora_k-z^\flora_{k+1}}{\lambda_\flora},\quad x^\flora_{k+1}=\frac{1}{\tau_{k+1}}\left(\tau_kx^\flora_k+b_k\mu_\flora z^\flora_{k+1}-b_ku^\flora_{k+1}\right).
        \end{equation}
        \ENDFOR
    \end{algorithmic}
\end{algorithm}

Similar to LOrA, FLOrA does not specify the subroutine used in Step \textbf{2} to find $\ty_{k+1}^\flora$ and instead describes the requirement \eqref{ineq:flora_cond} on the subroutine to establish sub-optimality guarantees.
In addition to LOrA (which is close to Step~\textbf{2} in FLOrA), FLOrA employs the necessary computation (i.e., Steps~\textbf{1} and~\textbf{3}) for Nesterov's acceleration to enable better guarantees.
Hence, FLOrA is considered as a generic framework consisting of accelerated methods as special instances.
More specifically, we will show that ACG, Restarted ACG, and I-FALM are instances of FLOrA in Appendix~\ref{appdx:acg}, Subsection~\ref{ssec:proofs_restart}, and Subsection~\ref{ssec:proofs_falm}, respectively.

As an accelerated version of LOrA, FLOrA naturally admits an accelerated IPP interpretation. Prior accelerated IPP frameworks focus on more restricted settings: \cite{MonteiroSvaiterAcceleration} studies \eqref{eq:general_convex_prob} in the purely convex case, while \cite{marquesalvesVariantsAHPELargestep2022b} considers the composite form $\Phi=f+h$ with $f$ being convex and $h$ being strongly convex. In contrast, by including $z_{k+1}^\flora$  in \eqref{def:xk_flora}, FLOrA accommodates strong convexity in $\Phi$ directly, without imposing any particular decomposition or structural assumptions on $\Phi$.


The following theorem presents three sub-optimality guarantees of FLOrA. Its proof is deferred to Appendix~\ref{appdx:flora_framework}.

\begin{theorem}\label{thm:flora_complexity} 
    Let $X_*$ be the set of optimal solutions to \eqref{eq:general_convex_prob}. Define $R^\flora_0:=\|x^\flora_0-x_*\|=\min\{\|x^\flora_0-x\|:x\in X_*\}$. Then, for every $k\geq 0$,
        \begin{equation}\label{ineq:flora_a}
            \Phi(y^\flora_{k+1})-\Phi_*\leq \frac{\left(R_0^\flora\right)^2}{2B_{k+1}}+\frac{\delta^\flora_0C_\flora}{B_{k+1}},
        \end{equation}
        where $C_\flora:=\sum_{i=0}^\infty B_{i+1}(\alpha_\flora)^i < \infty$.
        Furthermore, if $\sigma_\flora < 1$, then for every $k\geq 0$, we have
        \begin{align}
            \|\ty^\flora_{k+1}-\tx^\flora_k\| &\leq\frac{\sqrt{\lambda_\flora}R_0^\flora+\sqrt{2\lambda_\flora\delta^\flora_0C_\flora}}{\sqrt{(1-\sigma_\flora)B_{k+1}}}, \label{ineq:flora_b} \\
            \min_{0\leq i\leq k}\|\ty^\flora_{i+1}-\tx^\flora_i\| &\leq\frac{\sqrt{\lambda_\flora}R_0^\flora+\sqrt{2\lambda_\flora\delta^\flora_0C_\flora}}{\sqrt{(1-\sigma_\flora) \sum_{i=1}^{k+1}B_{i}}}. \label{ineq:flora_c}
        \end{align}
\end{theorem}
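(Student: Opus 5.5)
The plan is to run a standard Lyapunov (estimate-sequence) argument for accelerated proximal point methods, adapted to the lower-model condition \eqref{ineq:flora_cond}. I would first define the potential
\[
\mathcal{E}_k := B_k\bigl[\Phi(y^\flora_k)-\Phi(x_*)\bigr] + \frac{\tau_k}{2}\|x^\flora_k-x_*\|^2 ,
\]
with $\mathcal{E}_0=\tfrac12 (R_0^\flora)^2$ since $B_0=0$ and $\tau_0=1$. The goal is a one-step inequality of the form
\[
\mathcal{E}_{k+1}+\frac{(1-\sigma_\flora)B_{k+1}}{2\lambda_\flora}\|\ty^\flora_{k+1}-\tx^\flora_k\|^2\le \mathcal{E}_k + B_{k+1}\delta^\flora_k .
\]
Summing this gives $\mathcal{E}_{k+1}\le \tfrac12(R_0^\flora)^2+\delta_0^\flora\sum_{i\le k}B_{i+1}\alpha_\flora^i\le \tfrac12 (R_0^\flora)^2+\delta_0^\flora C_\flora$. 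Dropping the distance term then yields \eqref{ineq:flora_a}. Dropping instead the $\Phi$ and distance parts of $\mathcal{E}$ and keeping only the newest residual gives \eqref{ineq:flora_b}, using $\sqrt{a+b}\le\sqrt a+\sqrt b$. Keeping all residuals in the sum and bounding each by the minimum gives \eqref{ineq:flora_c}.

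To establish the one-step inequality, I would build from $\Gamma_k^\flora$ a lower model of $\Phi$ itself. Because $\Gamma^\flora_k$ is $(\mu_\flora+\lambda_\flora^{-1})$-strongly convex and $z^\flora_{k+1}$ minimizes $\Gamma^\flora_k+\frac{1}{2\A_k^\flora}\|\cdot-\tx_k^\flora\|^2$, we have $\hat u^\flora_{k+1}\in\partial\Gamma^\flora_k(z^\flora_{k+1})$. Hence
\[
\Gamma_k^\flora(x)\ge \Gamma_k^\flora(z_{k+1}^\flora)+\langle \hat u_{k+1}^\flora,x-z^\flora_{k+1}\rangle+\tfrac{\mu_\flora+\lambda_\flora^{-1}}{2}\|x-z^\flora_{k+1}\|^2 .
\]
Subtracting $\frac1{2\lambda_\flora}\|x-\tx^\flora_k\|^2$ and using \eqref{ineq:Gamma_acc}, I expect to obtain the affine-plus-quadratic minorant
\[
\Phi(x)\ge \gamma_k(x):=\Gamma^\flora_k(z^\flora_{k+1})-\tfrac{1}{2\lambda_\flora}\|z^\flora_{k+1}-\tx^\flora_k\|^2+\langle u^\flora_{k+1},x-z^\flora_{k+1}\rangle+\tfrac{\mu_\flora}{2}\|x-z^\flora_{k+1}\|^2 ,
\]
where $u^\flora_{k+1}$ is exactly the vector defined in \eqref{def:xk_flora}. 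With this minorant, the update \eqref{def:xk_flora} for $x^\flora_{k+1}$ is precisely the minimizer of $\tau_k\frac12\|\cdot-x_k^\flora\|^2+b_k\gamma_k(\cdot)$. This function is $\tau_{k+1}$-strongly convex, which produces the $\frac{\tau_{k+1}}{2}\|x^\flora_{k+1}-x_*\|^2$ term after evaluating at $x_*$ and using $\gamma_k(x_*)\le\Phi_*$.

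The remaining step is the standard combination. I would use convexity, $B_k\Phi(y_k^\flora)\ge B_k\gamma_k(y_k^\flora)$, and the choice \eqref{eq:tx_flora} of $\tx^\flora_k$ to merge $B_k\gamma_k(y^\flora_k)+b_k\gamma_k(x^\flora_{k+1})$ into $B_{k+1}\gamma_k(\text{convex combination})$, with the leftover handled by the identity $b_k^2=\lambda_\flora\tau_kB_{k+1}$ from \eqref{eq:bk_update}. This should reduce to bounding
\[
B_{k+1}\Bigl[\Phi(\ty^\flora_{k+1})-\Gamma^\flora_k(z^\flora_{k+1})+\tfrac{1}{2\lambda_\flora}\|z^\flora_{k+1}-\tx^\flora_k\|^2\Bigr]-\tfrac{B_{k+1}\lambda_\flora}{2}\|u^\flora_{k+1}\|^2 .
\]
I would control this quantity with \eqref{ineq:flora_cond} together with the decomposition $\lambda_\flora u^\flora_{k+1}=\lambda_\flora\hat u^\flora_{k+1}+(\tx^\flora_k-z^\flora_{k+1})$.

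I expect this last algebra to be the main obstacle: reconciling $\|\lambda_\flora u^\flora_{k+1}\|^2$ with the $\|\lambda_\flora\hat u^\flora_{k+1}\|^2$ and $\|z-\tx\|^2$ terms so that exactly $(1-\sigma_\flora)\|\ty-\tx\|^2/(2\lambda_\flora)$ survives. I would first try expanding the square and using that $\Phi(\ty)+\frac1{2\lambda_\flora}\|\ty-\tx\|^2\ge\Gamma_k^\flora(\ty)$, with strong convexity of $\Gamma^\flora_k$ around $z^\flora_{k+1}$ supplying the missing cross terms; the case $\A_k^\flora=\infty$ is the sanity check. Finally, $C_\flora<\infty$ follows because $B_{i}$ grows at most geometrically with ratio $(1+\sqrt{\lambda_\flora\mu_\flora})^2$, or polynomially when $\mu_\flora=0$, and this is beaten by $\alpha_\flora<(1+\sqrt{\lambda_\flora\mu_\flora})^{-2}$.
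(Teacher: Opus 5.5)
Your overall structure is correct, and it is the paper's argument rewritten as a telescoping Lyapunov potential. Your $\gamma_k$ is the paper's $\theta_{k+1}$ from \eqref{def:theta}. Characterizing $x^\flora_{k+1}$ as the minimizer of $\frac{\tau_k}{2}\|\cdot-x^\flora_k\|^2+b_k\gamma_k$ is the inductive step of Lemma~\ref{lem:theta_props}(b). Your merging step is \eqref{ineq:Theta_chain_final}. Finally, $B_k\Theta_k+\frac12\|\cdot-x^\flora_0\|^2$ is a quadratic with Hessian $\tau_kI$ minimized at $x^\flora_k$, so evaluating \eqref{ineq:Theta_ub} at $x_*$ gives exactly your telescoped inequality. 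The paper accumulates the lower models into $\Theta_k$ while you telescope a potential; the bookkeeping is equivalent.

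The gap is the step you left open. It is the only place where the framework's defining condition \eqref{ineq:flora_cond} is used, and the quantity you say it "should reduce to" is wrong unless $\A^\flora_k=\infty$. After merging and discarding the $\mu_\flora$-term, you must bound $B_{k+1}$ times $\Phi(y^\flora_{k+1})$ minus the minimum over $x$ of the affine-plus-prox model. That minimum is attained at $x=\tx^\flora_k-\lambda_\flora u^\flora_{k+1}$. It contributes $-\langle u^\flora_{k+1},\tx^\flora_k-z^\flora_{k+1}\rangle+\frac{\lambda_\flora}{2}\|u^\flora_{k+1}\|^2$, not $-\frac{\lambda_\flora}{2}\|u^\flora_{k+1}\|^2$.

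Your expression therefore undercounts by $B_{k+1}\lambda_\flora\langle u^\flora_{k+1},\hat u^\flora_{k+1}\rangle$. This equals $B_{k+1}(\lambda_\flora+\A^\flora_k)\|\hat u^\flora_{k+1}\|^2>0$ whenever $\hat u^\flora_{k+1}\neq 0$, which is precisely the Restarted ACG instance, where $\A^\flora_k=A_j<\infty$. So bounding your expression would not close the recursion.

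The missing idea is a completed square. By \eqref{def:xk_flora}, $\tx^\flora_k-z^\flora_{k+1}-\lambda_\flora u^\flora_{k+1}=-\lambda_\flora\hat u^\flora_{k+1}$, so
\begin{align*}
&\frac{1}{2\lambda_\flora}\|z^\flora_{k+1}-\tx^\flora_k\|^2-\langle u^\flora_{k+1},\tx^\flora_k-z^\flora_{k+1}\rangle+\frac{\lambda_\flora}{2}\|u^\flora_{k+1}\|^2\\
&\qquad=\frac{1}{2\lambda_\flora}\|\tx^\flora_k-z^\flora_{k+1}-\lambda_\flora u^\flora_{k+1}\|^2=\frac{1}{2\lambda_\flora}\|\lambda_\flora\hat u^\flora_{k+1}\|^2 .
\end{align*}
The quantity to control is therefore exactly $\Phi(y^\flora_{k+1})-\Gamma^\flora_k(z^\flora_{k+1})+\frac{1}{2\lambda_\flora}\|\lambda_\flora\hat u^\flora_{k+1}\|^2$. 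Condition \eqref{ineq:flora_cond}, together with $\Phi(y^\flora_{k+1})\le\Phi(\ty^\flora_{k+1})$, bounds it by $\delta^\flora_k-\frac{1-\sigma_\flora}{2\lambda_\flora}\|\ty^\flora_{k+1}-\tx^\flora_k\|^2$; this is the paper's Lemma~\ref{lem:min_linearization}.

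The tactic you propose instead points the wrong way. Invoking $\Gamma^\flora_k(\ty^\flora_{k+1})\le\Phi(\ty^\flora_{k+1})+\frac{1}{2\lambda_\flora}\|\ty^\flora_{k+1}-\tx^\flora_k\|^2$ and strong convexity of $\Gamma^\flora_k$ at $z^\flora_{k+1}$ both produce lower bounds on $\Phi(\ty^\flora_{k+1})$. Here you need an upper bound, which only \eqref{ineq:flora_cond} supplies. With this identity in place, your one-step inequality holds exactly as stated, and your derivation of \eqref{ineq:flora_a}--\eqref{ineq:flora_c} goes through.
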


    

\section{Proofs of Main Complexity Results} \label{sec:proof}

This section is devoted to the complexity analysis of the three algorithms studied in this paper: Restarted ACG, I-ALM, and I-FALM. The three subsections provide proofs of the main results for each method, namely Theorems~\ref{thm:main}, \ref{thm:al_baseline_complexity}, and \ref{thm:acc_alm_complexity_1}.

\subsection{Proof of Theorem~\ref{thm:main}}\label{ssec:proofs_restart}
To prove Theorem~\ref{thm:main}, we will first show that, assuming the call to Algorithm~\ref{alg:ACG} terminates in Step \textbf{2}, Algorithm~\ref{alg:restart} is an instance of the FLOrA framework with only relative error (i.e., $\delta^\flora_k=0$ for every $k \ge 0$). Theorem~\ref{thm:flora_complexity} will then imply the outer complexity. We then bound the number of ``inner'' iterations required by Algorithm~\ref{alg:ACG} in Step \textbf{2} to satisfy~\eqref{ineq:lora_restart_acg}. Combining the outer and inner complexities gives Theorem~\ref{thm:main}. For brevity of the main text, we defer the proofs of intermediate results to Appendix~\ref{appdx:deferred_restart}.

Let $j$ be the final (inner) iteration of Algorithm~\ref{alg:ACG} when invoked in Step~\textbf{2}, $y_j$ and $x_j$ be the final ACG iterates as in~\eqref{def:yj} and~\eqref{def:xj}, respectively, $s_j$ be as defined in~\eqref{def:hatxj_sj}, $A_j$ be the ACG scalar as in \eqref{def:acg_scalar}, and $\Theta_j$ be the aggregate function defined in~\eqref{def:Theta_acg}. Then we will show that Algorithm~\ref{alg:restart} is an instance of the FLOrA framework with the correspondence

\begin{eqbox}  
\begin{equation}
\begin{gathered}
    \Phi(\cdot)=\phi(\cdot),\quad\Gamma^\flora_k(\cdot) = \Theta_j(\cdot),\quad \A_k^\flora=A_j,\quad\delta^\flora_k=\alpha_\flora=0, \quad \mu_\flora = \mu_f,\quad \sigma_\flora=\sigma,\quad\lambda_\flora=\lambda;\\
    y^\flora_{k} = w_k,\,\,
    x^\flora_k = v_k,\,\, \tx^\flora_k=\tilde{v}_k,\,\, \ty^\flora_{k+1}=y_j,\,\, 
   z_{k+1}^\flora= x_j,\,\,  u^\flora_{k+1}=\frac{A_j+\lambda}{\lambda}s_j,\,\,  \hat u^\flora_{k+1}=s_j.\label{def:restart_corresp}
\end{gathered}
\end{equation}
\end{eqbox}

    \begin{lemma}\label{lem:restart_Gamma_eq}
        Assume for all $k\geq 0$, the call to Algorithm~\ref{alg:ACG} in Step \textbf{2} terminates. Then, with the correspondence~\eqref{def:restart_corresp}, Algorithm~\ref{alg:restart} is an instance of the FLOrA framework.
    \end{lemma}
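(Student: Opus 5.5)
We verify, item by item, that the quantities in \eqref{def:restart_corresp} satisfy every requirement of Algorithm~\ref{alg:flora}.

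\textbf{Step 1 and Step 3 updates.} The scalar updates for $b_k,B_{k+1},\tau_{k+1}$ in Algorithm~\ref{alg:restart} coincide with \eqref{eq:bk_update} under $\lambda_\flora=\lambda$ and $\mu_\flora=\mu_f$. Likewise \eqref{def:tv} is \eqref{eq:tx_flora} with $y^\flora_k=w_k$ and $x^\flora_k=v_k$. Since $\delta^\flora_k=\alpha_\flora=0$, the condition $\alpha_\flora<(1+\sqrt{\lambda_\flora\mu_\flora})^{-2}$ holds trivially. For Step~\textbf{3}, the choice of $w_{k+1}$ as the better of $w_k$ and $y_j$ gives $\phi(w_{k+1})\le\phi(y_j)=\Phi(\ty^\flora_{k+1})$. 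Substituting $z^\flora_{k+1}=x_j$ and $u^\flora_{k+1}=\frac{A_j+\lambda}{\lambda}s_j$ into \eqref{def:xk_flora} reproduces \eqref{def:vkp1} exactly.

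\textbf{Structure of $\Gamma^\flora_k=\Theta_j$.} We need $\Theta_j\in\CConv{\mu_f+1/\lambda}{\dom\phi}$ and the lower bound \eqref{ineq:Gamma_acc}. By \eqref{def:Theta_acg}, $\Theta_j$ is a convex combination of the $\theta_{i+1}$, $i<j$, because $\Theta_0=0$ carries weight $A_0=0$. Each $\theta_{i+1}$ is a quadratic with modulus $\mu=\mu_f+1/\lambda$, so $\Theta_j$ is $\mu$-strongly convex.

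For the lower bound, we need $\theta_{i+1}\le\psi=\phi+\frac{1}{2\lambda}\|\cdot-\tilde v_k\|^2$ on $\dom h$. This is the standard ACG argument, with $u_{i+1}$ the subgradient obtained from the optimality of $\ty_{i+1}$ in \eqref{def:tyj}:
\begin{itemize}
\item $\Gamma_i$ is $(2L+\mu)$-strongly convex with minimizer $\ty_{i+1}$.
\item Smoothness and strong convexity of $g$ give $\ell_g(\cdot;\tx_i)+\frac{\mu}{2}\|\cdot-\tx_i\|^2\le g$.
\item The $L\|\ty_{i+1}-\tx_i\|^2$ correction in \eqref{def:theta_acg} compensates the excess quadratic term.
\end{itemize}
I expect this to be established already in Appendix~\ref{appdx:acg}, where ACG is shown to be a FLOrA instance, and I would cite it there.

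\textbf{Identification of $z^\flora_{k+1}$ and $\hat u^\flora_{k+1}$.} By \eqref{def:hatxj_sj}, $s_j=(x_0-x_j)/A_j\in\partial\Theta_j(x_j)$ with $x_0=\tilde v_k$. Hence $0\in\partial\Theta_j(x_j)+\frac{1}{A_j}(x_j-\tilde v_k)$, and by strong convexity $x_j$ is the unique minimizer in \eqref{eq:zkp1_hatuk} with $\A^\flora_k=A_j$. The same relation gives $\hat u^\flora_{k+1}=s_j$. The subgradient inclusion itself comes from the ACG recursion \eqref{def:xj} and the $\Theta$ aggregation. Since the lemma's statement defines $s_j$ as belonging to $\partial\Theta_j(x_j)$, I would verify it by induction on $j$ using \eqref{def:xj}, or cite the ACG appendix.

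\textbf{Main condition \eqref{ineq:flora_cond}.} With $\Phi+\frac{1}{2\lambda}\|\cdot-\tilde v_k\|^2=\psi$, the left-hand side of \eqref{ineq:flora_cond} becomes $\|\lambda s_j\|^2+2\lambda[\psi(y_j)-\Theta_j(x_j)]$. The right-hand side becomes $\sigma\|y_j-\tilde v_k\|^2$. This is exactly the termination test \eqref{ineq:lora_restart_acg}, which holds because the inner call terminates by assumption.

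\textbf{Main obstacle.} The main difficulty is bookkeeping. We must confirm that $\Theta_j$ satisfies both the strong-convexity modulus $\mu_f+1/\lambda$ and the lower-bound property relative to the prox-center $\tilde v_k$, and that $x_j$ from \eqref{def:xj} is precisely the prox point of $\Theta_j$. Everything else is direct substitution.
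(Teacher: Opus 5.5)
Your proposal is correct and follows essentially the same route as the paper: an item-by-item check of the FLOrA requirements under \eqref{def:restart_corresp}. Like the paper, you rely on Lemma~\ref{lem:gamma}(a)--(c) for the strong convexity of $\Theta_j$, the lower bound $\Theta_j\le\psi$, and the prox-point identity for $x_j$, and you observe that \eqref{ineq:flora_cond} is literally the inner stopping test \eqref{ineq:lora_restart_acg}. The one step you leave implicit is that the vector FLOrA prescribes, $u^\flora_{k+1}=\hat u^\flora_{k+1}+(\tx^\flora_k-z^\flora_{k+1})/\lambda_\flora$, actually equals $\frac{A_j+\lambda}{\lambda}s_j$; the paper checks this in one line using $x_0-x_j=A_js_j$.
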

    
    Since Algorithm~\ref{alg:restart} is an instance of FLOrA, the following ``outer'' sub-optimality guarantee holds by Lemma~\ref{lem:b_seq}(c) in Appendix~\ref{appdx:flora_framework} and Theorem~\ref{thm:flora_complexity} (see~\eqref{ineq:flora_a} with $\delta_0^\flora=0$).
    \begin{proposition}\label{prop:outer}
        For every $k\ge 1$, the function value gap $\phi(w_k) - \phi_*$ satisfies\[
        \phi(w_k) - \phi_* \le \min\left\{\frac{2R_0^2}{\lam k^2},\quad \frac{R_0^2}{2\lam}\left(1+\frac{\sqrt{\lambda\mu_f}}{2}\right)^{-2(k-1)}\right\},
        \]
        where $R_0$ denotes the distance from initial point $w_0$ to solution set $X_*$, i.e.,
\[
R_0=\|w_0-x_*\|= \min \{\|w_0-x\| : x \in X_*\}. 
\]
    \end{proposition}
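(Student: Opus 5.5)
The plan is to combine Lemma~\ref{lem:restart_Gamma_eq} with the FLOrA bound \eqref{ineq:flora_a}, and then to lower bound the FLOrA scalar $B_k$. By Lemma~\ref{lem:restart_Gamma_eq}, Algorithm~\ref{alg:restart} is an instance of FLOrA under the correspondence \eqref{def:restart_corresp}, with $\delta^\flora_k=0$, $\alpha_\flora=0$, $\lambda_\flora=\lambda$, $\mu_\flora=\mu_f$, and $y^\flora_k=w_k$, $x^\flora_0=v_0=w_0$. Hence $R^\flora_0=R_0$ and $C_\flora$ plays no role, since $\delta^\flora_0=0$. Applying \eqref{ineq:flora_a} with index $k-1$ gives, for every $k\ge1$,
\[
\phi(w_k)-\phi_*\le \frac{R_0^2}{2B_k}.
\]
It therefore suffices to prove the two lower bounds
\[
B_k\ge \frac{\lambda k^2}{4}
\quad\text{and}\quad
B_k\ge \lambda\left(1+\frac{\sqrt{\lambda\mu_f}}{2}\right)^{2(k-1)} .
\]
These are what Lemma~\ref{lem:b_seq}(c) supplies. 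For completeness, I outline how each follows from the recursion \eqref{eq:bk_update}.

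\textbf{Sublinear bound.} By construction, $b_k$ is the positive root of $b_k^2=\lambda\tau_k(B_k+b_k)=\lambda\tau_kB_{k+1}$. Since $\tau_k\ge\tau_0=1$, this gives $b_k^2\ge\lambda B_{k+1}$. Write $B_{k+1}-B_k=b_k$. Then
\[
\sqrt{B_{k+1}}-\sqrt{B_k}=\frac{b_k}{\sqrt{B_{k+1}}+\sqrt{B_k}}\ge \frac{b_k}{2\sqrt{B_{k+1}}}\ge\frac{\sqrt\lambda}{2}.
\]
Summing from $B_0=0$ yields $\sqrt{B_k}\ge k\sqrt\lambda/2$, that is, $B_k\ge\lambda k^2/4$. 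This gives the $2R_0^2/(\lambda k^2)$ term.

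\textbf{Linear bound.} First, $B_1=b_0=\lambda\tau_0=\lambda$. Next, I use $\tau_k\ge\mu_fB_k$, which follows because $\tau_{k+1}-\tau_k=\mu_f b_k$ and $\tau_0=1\ge0=\mu_f B_0$. Combining this with $b_k^2=\lambda\tau_kB_{k+1}$ gives
\[
b_k^2\ge\lambda\mu_fB_kB_{k+1}\ge\lambda\mu_fB_k^2 ,
\]
so $b_k\ge\sqrt{\lambda\mu_f}\,B_k$. A refinement is needed to reach the sharper factor $(1+\sqrt{\lambda\mu_f}/2)^2$ per step. Using $B_{k+1}\ge B_k$ in the form
\[
\sqrt{B_{k+1}}-\sqrt{B_k}\ge \frac{b_k}{2\sqrt{B_{k+1}}}\ge \frac{\sqrt{\lambda\mu_f}\sqrt{B_k}}{2},
\]
where the second inequality uses $b_k\ge\sqrt{\lambda\mu_f B_kB_{k+1}}$, I obtain $\sqrt{B_{k+1}}\ge(1+\sqrt{\lambda\mu_f}/2)\sqrt{B_k}$. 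Iterating from $B_1=\lambda$ gives $B_k\ge\lambda(1+\sqrt{\lambda\mu_f}/2)^{2(k-1)}$. This yields the second term, $\frac{R_0^2}{2\lambda}(1+\sqrt{\lambda\mu_f}/2)^{-2(k-1)}$.

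\textbf{Conclusion and main obstacle.} Both bounds on $B_k$ hold simultaneously, so taking the better of the two gives the claimed minimum. The only genuine work is the per-step contraction in the linear bound. Care is needed to use $\sqrt{B_kB_{k+1}}$ rather than $B_k$ alone, so that the factor comes out as $1+\sqrt{\lambda\mu_f}/2$ rather than something weaker.
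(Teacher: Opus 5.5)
Your proposal is correct and follows the paper's route: the paper also proves Proposition~\ref{prop:outer} by invoking Lemma~\ref{lem:restart_Gamma_eq}, applying \eqref{ineq:flora_a} with $\delta^\flora_0=0$ to get $\phi(w_k)-\phi_*\le R_0^2/(2B_k)$, and then using the lower bounds on $B_k$ from Lemma~\ref{lem:b_seq}(c). Your derivations of $B_k\ge\lambda k^2/4$ and $B_k\ge\lambda(1+\sqrt{\lambda\mu_f}/2)^{2(k-1)}$ from $b_k^2=\lambda\tau_kB_{k+1}$, $\tau_k=1+\mu_fB_k$ and $B_1=\lambda$ are correct, and they supply the proof of Lemma~\ref{lem:b_seq}(c), which the paper omits.
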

    
    The following lemma provides a bound on the complexity of Algorithm~\ref{alg:ACG} to satisfy~\eqref{ineq:lora_restart_acg}, which connects the ``inner'' and ``outer'' perspectives.
    \begin{proposition}\label{prop:inner_to_outer}
        Assume that $\lambda\geq1/(L_f-\mu_f)$. Then in each call to ACG in Step \textbf{2} of Algorithm~\ref{alg:ACG}, after at most
        \begin{equation}\label{eq:bound}
          1+\left\lceil\min\left\{2\sqrt{10\sigma^{-1}\lambda(L_f-\mu_f)},\left(\frac{1}{4}+\frac{1}{2}\sqrt{\frac{2\lam (L_f-\mu_f)}{1+\lam \mu_f}}\right)\ln\left(10\sigma^{-1}\lambda(L_f-\mu_f)\right)\right\}\right\rceil.
        \end{equation}
        ACG iterations, the condition~\eqref{ineq:lora_restart_acg} is satisfied.
    \end{proposition}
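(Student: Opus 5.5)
Using Lemma~\ref{lem:tech}, I would reduce the stopping test~\eqref{ineq:lora_restart_acg} to a lower bound on $A_j$, and then convert that bound into an iteration count through the growth of the ACG scalars~\eqref{def:acg_scalar}. In the setup~\eqref{eq:setup} we have $\mu=\mu_f+1/\lam$ and $L=L_f-\mu_f$, so $\lam\mu\ge 1$; together with $\lam\ge 1/L$ this gives $\mu L\lam \ge 1$.

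\textbf{Step 1 (sufficient condition).} Suppose $A_j\ge 3/\mu$. Part (b) of Lemma~\ref{lem:tech} gives $\|\lam s_j\|^2\le \tfrac{9\lam^2}{4A_j^2}\|y_j-x_0\|^2$. Part (a), combined with $\Theta_j(x_j)\ge\Theta_j(\hat x_j)$, gives $2\lam[\psi(y_j)-\Theta_j(x_j)]\le \tfrac{2\lam\mu}{\mu A_j-2}\|y_j-x_0\|^2$. Since $\mu A_j-2\ge \mu A_j/3$, the left-hand side of~\eqref{ineq:lora_restart_acg} is at most
\[
\Big(\tfrac{9\lam^2}{4A_j^2}+\tfrac{6\lam}{A_j}\Big)\|y_j-x_0\|^2 .
\]
Hence a condition of the form $A_j\ge c\,\lam/\sigma$ with a modest constant $c$ (about $10$) suffices, alongside $A_j\ge 3/\mu$. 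The constraint $\lam\mu\ge1$ shows that the second condition is implied by the first. So the target is $A_j\ge 10\lam/\sigma$, which matches the $10\sigma^{-1}$ appearing in~\eqref{eq:bound}.

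\textbf{Step 2 (growth of $A_j$).} I would use two standard lower bounds.

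\emph{Sublinear bound.} Since $\tau_j\ge1$, we get $a_j\ge \sqrt{\tau_jA_j/(2L)}\ge\sqrt{A_j/(2L)}$, and therefore $\sqrt{A_{j+1}}\ge\sqrt{A_j}+\tfrac{1}{2\sqrt{2L}}$. Because $a_0=1/(2L)$, this yields $A_j\ge j^2/(8L)$ up to constants. Requiring $j^2/(8L)\ge 10\lam/\sigma$ gives $j\gtrsim 2\sqrt{20\lam L/\sigma}$. I would redo the induction with care, for example using $A_1=1/(2L)$ and the exact recursion, to recover the stated $2\sqrt{10\sigma^{-1}\lam L}$.

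\emph{Linear bound.} Since $\tau_j\ge \mu A_j$, we get $a_j\ge \sqrt{2\tau_jA_jL}/(2L)\ge A_j\sqrt{\mu/(2L)}$, so $A_{j+1}\ge A_j(1+\sqrt{\mu/(2L)})$. Starting from $A_1=1/(2L)$, the condition $A_j\ge10\lam/\sigma$ holds once
\[
(j-1)\ln\bigl(1+\sqrt{\mu/(2L)}\bigr)\ge \ln(20\lam L/\sigma).
\]
I would then use $1/\ln(1+t)\le 1/t+1/2$ for $t>0$ (valid since $\ln(1+t)\ge 2t/(2+t)$) with $t=\sqrt{\mu/(2L)}=\sqrt{(1+\lam\mu_f)/(2\lam L)}$. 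This produces exactly the factor $\tfrac12+\sqrt{2\lam L/(1+\lam\mu_f)}$, whose half is the coefficient appearing in~\eqref{eq:bound}.

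\textbf{Main obstacle.} The hard part is matching the constants exactly: the $\tfrac14$ versus $\tfrac12$ coefficients and the argument $10\sigma^{-1}\lam L$ rather than $20\sigma^{-1}\lam L$ inside the logarithm. I expect this to require a sharper per-step estimate, for instance $a_j\ge A_j\sqrt{2\mu/L}$ for $j\ge1$ obtained from the exact formula, together with careful use of the first iterate $A_1=1/(2L)$ and of the leading ``$1+$'' and ceiling in~\eqref{eq:bound}. If the constants do not match exactly, I would still obtain the bound up to absolute constants by taking the minimum of the two estimates, and this suffices for Theorem~\ref{thm:main}.
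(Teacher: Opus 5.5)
Your plan is the paper's plan: Lemma~\ref{lem:tech}(a),(b) reduce \eqref{ineq:lora_restart_acg} to a lower bound on $A_j$, and the growth of the ACG scalars turns that into an iteration count. The paper simply cites Lemma~\ref{lem:101}(c) for the growth step. Up to absolute constants your argument is sound. However, it does not give the stated constants, and the repairs in your ``main obstacle'' paragraph target the wrong step.

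\textbf{First term and the logarithm's argument.} The $10\sigma^{-1}$ in \eqref{eq:bound} corresponds to the target $A_j\ge 5\lam/\sigma$, not $10\lam/\sigma$, because every lower bound on $A_j$ carries the prefactor $1/(2L)$ (here $L=L_f-\mu_f$ and $\mu=\mu_f+\lam^{-1}$). You lose the factor $2$ in Step~1 through the estimate $\mu A_j-2\ge\mu A_j/3$. The paper instead uses two facts:
\begin{itemize}
\item $\mu\mapsto 2\lam\mu/(\mu A_j-2)$ is decreasing on $\mu>2/A_j$;
\item $\mu\ge\lam^{-1}$.
\end{itemize}
Hence this term is at most $2/(A_j\lam^{-1}-2)\le 2\sigma/(5-2\sigma)\le 2\sigma/3$ once $A_j\ge5\lam/\sigma$. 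Also $9\lam^2/(4A_j^2)\le 9\sigma^2/100$, so the sum is below $\sigma$, and $A_j\ge 3/\mu$ holds automatically. With this target, $A_j\ge j^2/(8L)$ gives exactly $2\sqrt{10\sigma^{-1}\lam L}$, and the logarithm becomes $\ln(10\sigma^{-1}\lam L)$. Being more careful with $A_1=1/(2L)$ only shifts the index by one; it cannot recover a multiplicative $\sqrt2$.

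\textbf{Second term.} Use the form $A_j\ge\frac{1}{2L}\bigl(1+\tfrac12\sqrt{\mu/(2L)}\bigr)^{2(j-1)}$ from Lemma~\ref{lem:101}(c), together with $1/\ln(1+s)\le 1/s+1/2$ at $s=\tfrac12\sqrt{\mu/(2L)}$. That is where the $\tfrac14$ comes from. The result is $\bigl(\tfrac14+\sqrt{2\lam L/(1+\lam\mu_f)}\bigr)\ln(10\sigma^{-1}\lam L)$, whose square-root coefficient is $1$, just as in your derivation.

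Do not chase the remaining factor $\tfrac12$. Your candidate estimate $a_j\ge A_j\sqrt{2\mu/L}$ fails exactly where you need it: $a_j/A_j$ decreases in $A_j$ toward $\frac{\mu}{4L}+\sqrt{\frac{\mu^2}{16L^2}+\frac{\mu}{2L}}$, which is below $\sqrt{2\mu/L}$ because $\mu\le 2L$ here.

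The paper's proof obtains \eqref{eq:bound} from the same lemma, so it does not deliver that $\tfrac12$ either. In fact the stated second term is too small when $\lam\mu_f\gg1$. Take $L_f-\mu_f=1$, $\mu_f=1/2$, $\lam=10^6$, $\sigma=0.9$:
\begin{itemize}
\item \eqref{eq:bound} evaluates to $22$;
\item the exact recursion gives $A_{22}\approx5\times10^4$;
\item by then $x_j$ and $y_j$ both lie within about $1\%$ of $\|x_0-x^*\|$ of the subproblem minimizer $x^*$;
\item the term $2\lam[\psi(y_j)-\Theta_j(x_j)]$ is at least $-\lam\|x_j-x_0\|^2/(\mu A_j^2)$.
\end{itemize}
So the left side of \eqref{ineq:lora_restart_acg} is about $(\lam/A_j)^2\|y_j-x_0\|^2\gg\sigma\|y_j-x_0\|^2$, and the condition fails at $j=22$.

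In the range $\lam\mu_f\le1$ used by Theorem~\ref{thm:main}, none of this affects the $\tO(1+\sqrt{\lam(L_f-\mu_f)})$ inner bound.
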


\vspace{1em}

We are now ready to prove Theorem \ref{thm:main}.

\vspace{1em}

\noindent
{\bf Proof of Theorem \ref{thm:main}:}
   Recall that by Proposition~\ref{prop:inner_to_outer}, the inner complexity of Algorithm~\ref{alg:ACG} in Step \textbf{2} is
    \begin{equation}\label{cmplx:inner}
        \tO(1+\sqrt{\lambda(L_f-\mu_f)}), 
    \end{equation}
    and by Proposition~\ref{prop:outer}, the outer complexity of Algorithm~\ref{alg:restart} to find an $\varepsilon$-solution is
    \begin{equation}\label{cmplx:outer}
        \tO\left(1+\min\left\lbrace
            \frac{R_0}{\sqrt{\lambda \varepsilon} },
            \frac{1}{\sqrt{\mu_f\lambda}}\right\rbrace\right).
    \end{equation}

    a) In the case $\mu_f=0$, the outer complexity is $\tO(1+R_0/\sqrt{\lambda\varepsilon})$, hence the total complexity is
    \[
        \tO\left(\left(1+\sqrt{\lambda L_f}\right)\left(1+\frac{R_0}{\sqrt{\lambda \varepsilon}}\right)\right),
    \]
    which becomes $\tO\left(R_0\sqrt{L_f/\varepsilon}\right)$ under the assumption that $1/L_f\leq \lam \leq R_0^2/\varepsilon$.

    b) In the case $\mu_f>0$, the total complexity immediately follows from \eqref{cmplx:inner}, \eqref{cmplx:outer}, and the assumption that $1/(L_f-\mu_f)\leq \lam\leq\min\{1/\mu_f,R_0^2/\varepsilon\}$.
\QEDA

\subsection{Proof of Theorem~\ref{thm:al_baseline_complexity}}\label{ssec:proofs_alm}
We consider two perspectives to prove Theorem~\ref{thm:al_baseline_complexity}: ``inner'' and ``outer''.  First, we bound the number of inner iterations needed to satisfy the termination criterion $\|\mathcal{G}_{\Lp(\cdot,\lambda_k)}^{(2L+\mu)^{-1}}\|\leq\varepsilon_k/2$ in Step~\textbf{1} of Algorithm~\ref{alg:al}. The inner bound is a direct result of the ACG convergence rate in Lemma~\ref{lem:acg_convergence}, and we therefore defer the proof to Appendix~\ref{appdx:deferred_alm}.

\begin{proposition}\label{prop:alm_inner_complexity}
    The number of ACG iterations required in the call to Algorithm~\ref{alg:ACG} in Step \textbf{1} of Algorithm~\ref{alg:al} is at most
   \begin{equation}\label{cmplx:inner_alm}
       \tO\left(1+\frac{D(\sqrt{L_f}+\sqrt{\rho}\|A\|)}{\sqrt{\sigma\rho}\varepsilon}\right).
   \end{equation}
\end{proposition}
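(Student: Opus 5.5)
We bound the inner iteration count with the gradient-mapping estimate \eqref{ineq:acg_grad_mapping} of Lemma~\ref{lem:acg_convergence}, applied to the subproblem \eqref{def:setup_alm}. The ACG gradient mapping is taken with respect to $\psi=\Lp(\cdot,\lambda_k)+\frac{\varepsilon_k}{8D^2}\|\cdot-x_k\|^2$. The termination test, however, uses the gradient mapping of $\Lp(\cdot,\lambda_k)$ itself. So the first step is to relate the two. The prox step for $h$ is the same in both cases, and the smooth gradients differ only by $\frac{\varepsilon_k}{4D^2}(\tx_j-x_k)$. Hence, by nonexpansiveness of $\prox$,
\[
\|\mathcal{G}^{\eta}_{\Lp(\cdot,\lambda_k)}(\tx_j)\|\le\|\mathcal{G}^{\eta}_{\psi}(\tx_j)\|+\frac{\varepsilon_k}{4D^2}\|\tx_j-x_k\|,\qquad \eta=(2L+\mu)^{-1}.
\]
Here $\tx_j$ is a convex combination of the ACG iterates $y_j$ and $x_j$. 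I will argue that these stay in, or near, $\dom h$, so that $\|\tx_j-x_k\|\le D$ (for $\tx_j$ outside $\dom h$ I would instead bound via $R_0$ and the ACG iterate bounds). Then the second term is at most $\varepsilon_k/(4D)$. It therefore suffices to make $\|\mathcal{G}^{\eta}_\psi(\tx_{j-1})\|\le\varepsilon_k/(4D)$.

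Second, bound $R_0$ and $A_j$. Since the minimizer of $\psi$ lies in $\dom h$ and $x_k\in\dom h$, we have $R_0\le D$. With $\mu=\varepsilon_k/(4D^2)>0$, the ACG scalars grow geometrically once $A_j\gtrsim 1/\mu$. Specifically, $A_j\ge \frac{1}{L}(1+\sqrt{\mu/(cL)})^{2(j-1)}$ for some absolute constant $c$, which follows from $a_j\ge\sqrt{\tau_jA_j/(2L)}$ and $\tau_j\ge\mu A_j$. By \eqref{ineq:acg_grad_mapping}, the requirement becomes $(2L+\mu)D/\sqrt{LA_j}\le\varepsilon_k/(4D)$, that is, $A_j\ge 16(2L+\mu)^2D^4/(L\varepsilon_k^2)$. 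By geometric growth this holds after
\[
j=\mathcal{O}\left(1+\sqrt{L/\mu}\,\log\frac{LD^2}{\varepsilon_k}\right)=\tO\left(1+\frac{D\sqrt{M_\rho}}{\sqrt{\varepsilon_k}}\right)
\]
iterations, where $L=M_\rho$.

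Third, substitute the parameters. We have $\varepsilon_k\ge\sigma\rho\varepsilon^2/2$ and $\sqrt{M_\rho}\le\sqrt{L_f}+\sqrt{\rho}\|A\|$. This gives $D\sqrt{M_\rho}/\sqrt{\varepsilon_k}\lesssim D(\sqrt{L_f}+\sqrt\rho\|A\|)/(\sqrt{\sigma\rho}\,\varepsilon)$, which is \eqref{cmplx:inner_alm}. The log factor depends on $L D^2/\varepsilon_k$, which is polynomial in the problem data and $1/\varepsilon$, so it is absorbed into $\tO$.

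The main obstacle is the first step: controlling $\|\tx_j-x_k\|$ rigorously, since $\tx_j$ need not lie in $\dom h$. The alternative I would try first is to apply the comparison at the prox output $x_{k+1}\in\dom h$. Rewriting the perturbation term in terms of $x_{k+1}-x_k$ costs a correction of size $\mu\eta\|\mathcal{G}\|$, which is negligible because $\mu\eta\le1/2$.
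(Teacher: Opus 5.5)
Your proof follows the paper's proof. The paper applies Lemma~\ref{lem:grad_map_merged}(c) with $\gamma=\varepsilon_k/(4D^2)$ and $\bar x=x_k$, which is exactly your prox-nonexpansiveness comparison, to reduce the inner test to $\|\mathcal{G}^{\eta}_\psi\|\le\varepsilon_k/(4D)$ with $\eta=(2L+\mu)^{-1}$. It then uses \eqref{ineq:acg_grad_mapping} with $R_0\le D$ and the geometric growth in Lemma~\ref{lem:101}(c) to get $\tO(1+\sqrt{L/\mu})=\tO(1+D\sqrt{M_\rho/\varepsilon_k})$ iterations, and finishes with $\varepsilon_k\ge\sigma\rho\varepsilon^2/2$.

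Your concern that $\tx_{j-1}$ may lie outside $\dom h$ is well founded. Lemma~\ref{lem:grad_map_merged}(c) assumes $\tx\in\dom h$, the paper does not verify this, and the ACG point $x_j$ is an extrapolation that can leave $\dom h$. The cleanest repair is to anchor at the $\psi$-prox output $\ty_j=\tx_{j-1}-\eta\mathcal{G}^\eta_\psi(\tx_{j-1})\in\dom h$. This gives $\|\mathcal{G}^\eta_{\Lp(\cdot,\lambda_k)}(\tx_{j-1})\|\le(1+\mu\eta)\|\mathcal{G}^\eta_\psi(\tx_{j-1})\|+\varepsilon_k/(4D)$, so tightening the ACG target to $\varepsilon_k/(8D)$ suffices for any $\mu\eta<1$.

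Anchoring at $x_{k+1}$, as you propose, gives $(1-\mu\eta)\|\mathcal{G}^\eta_{\Lp(\cdot,\lambda_k)}(\tx_{j-1})\|\le\|\mathcal{G}^\eta_\psi(\tx_{j-1})\|+\varepsilon_k/(4D)$. With only $\mu\eta\le 1/2$, the perturbation term alone already uses up the budget $\varepsilon_k/(2D)$, so you would need a strictly smaller bound on $\mu\eta$. In either version only constants inside the logarithm change.
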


To bound the outer complexity, we will show that Algorithm~\ref{alg:al} implements the LOrA framework, i.e., Algorithm \ref{alg:lora}, with the correspondence

\begin{eqbox}
\begin{equation}
\begin{gathered}
    \Phi(\cdot)=-d(\cdot),\quad\Gamma_k^\lora(\cdot) = -\L(x_{k+1},\cdot)+\frac{1}{2\rho}\|\cdot-\lambda_k\|^2,\quad\mu_\lora=0,\quad \lambda_\lora=\rho,\quad \sigma_\lora=\sigma;\\
   \A_k^\lora=\infty,\quad \delta^\lora_k=\varepsilon_0\alpha^k,\quad y^\lora_{k} =x^\lora_k=\lambda_k,\quad \hat{u}^\lora_{k+1}=0,\quad \alpha_\lora=\alpha.
\end{gathered}\label{def:alm_corresp}
\end{equation}
\end{eqbox}

We begin by showing that our choice of $\Gamma^\lora_k$, $x^\lora_k$, and $\hat u^\lora_k$ satisfy~\eqref{ineq:Gamma} and~\eqref{eq:x_lora}. 

\begin{lemma}\label{lem:gamma_dual_props_alm}
    Consider the sequences $\{\lambda_{k+1}\}$ and $\{x_{k+1}\}$ produced by Algorithm~\ref{alg:al}. Then, for every $k\geq 0$, the following statements hold:
    \begin{enumerate}[label={\rm \alph*)}]
        \item for every $\nu\in\R^m$, we have
        \begin{equation}\label{ineq:gamma_dual_lb}
            -\L(x_{k+1},\nu)+\frac{1}{2\rho}\|\nu-\lambda_k\|^2\leq -d(\nu) + \frac{1}{2\rho}\|\nu-\lambda_k\|^2;
        \end{equation}
        
        \item \begin{equation}\label{def:lambda_min}
            \lambda_{k+1}=\underset{\nu\in\R^m}\argmin\left\{-\L(x_{k+1},\nu)+\frac{1}{2\rho}\|\nu-\lambda_k\|^2\right\}.
        \end{equation}
        
    \end{enumerate}
    Moreover, in light of \eqref{def:alm_corresp}, \eqref{ineq:gamma_dual_lb} and \eqref{def:lambda_min} correspond to \eqref{ineq:Gamma} and \eqref{eq:x_lora}, respectively.
\end{lemma}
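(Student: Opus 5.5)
Part (a) is immediate from the definition of the dual function. For every $\nu\in\R^m$ we have $d(\nu)=\min_{x\in\R^n}\L(x,\nu)\le \L(x_{k+1},\nu)$, since $x_{k+1}\in\dom h$ is a feasible argument of the minimization. This gives $-\L(x_{k+1},\nu)\le -d(\nu)$. Adding $\frac{1}{2\rho}\|\nu-\lambda_k\|^2$ to both sides yields \eqref{ineq:gamma_dual_lb}.

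I would first note that $x_{k+1}\in\dom h$. Indeed, $x_{k+1}=\tx_k-(2L+\mu)^{-1}\mathcal{G}^{(2L+\mu)^{-1}}_{\Lp(\cdot,\lambda_k)}(\tx_k)$ is, by \eqref{def:grad_mapping}, a proximal point of $(2L+\mu)^{-1}h$. Hence $\L(x_{k+1},\nu)$ is finite.

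For part (b), the key observation is that $\nu\mapsto \L(x_{k+1},\nu)=\phi(x_{k+1})+\inner{\nu}{Ax_{k+1}-b}$ is affine in $\nu$. The objective in \eqref{def:lambda_min} is therefore
\[
-\phi(x_{k+1})-\inner{\nu}{Ax_{k+1}-b}+\frac{1}{2\rho}\|\nu-\lambda_k\|^2,
\]
which is a $\rho^{-1}$-strongly convex quadratic and so has a unique minimizer. Setting its gradient to zero gives $-(Ax_{k+1}-b)+\rho^{-1}(\nu-\lambda_k)=0$, that is, $\nu=\lambda_k+\rho(Ax_{k+1}-b)$. This is exactly $\lambda_{k+1}$ by \eqref{def:lambda_alm}.

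Finally, I match the statements to the framework via \eqref{def:alm_corresp}. With $\Phi=-d$, $x^\lora_k=\lambda_k$ and $\lambda_\lora=\rho$, inequality \eqref{ineq:gamma_dual_lb} reads $\Gamma^\lora_k(\cdot)\le \Phi(\cdot)+\frac{1}{2\lambda_\lora}\|\cdot-x^\lora_k\|^2$, which is \eqref{ineq:Gamma}. Also, $\Gamma^\lora_k$ is $1/\rho$-strongly convex and closed.

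For \eqref{eq:x_lora}, take $\A^\lora_k=\infty$, so the proximal term $\frac{1}{2\A^\lora_k}\|\cdot-x^\lora_k\|^2$ vanishes. Then $x^\lora_{k+1}=\argmin \Gamma^\lora_k=\lambda_{k+1}$ by (b), consistent with $x^\lora_{k+1}=\lambda_{k+1}$. Likewise $\hat u^\lora_{k+1}=(x^\lora_k-x^\lora_{k+1})/\A^\lora_k=0$ under the convention $1/\infty=0$.

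There is no genuine obstacle here. The only points needing care are:
\begin{itemize}
\item the domain issue ensuring $\L(x_{k+1},\cdot)$ is finite;
\item the $\A^\lora_k=\infty$ convention.
\end{itemize}
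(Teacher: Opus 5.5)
Your proof is correct and follows the paper's argument. The paper also gets the inequality from $d(\nu)=\min_x\L(x,\nu)\le\L(x_{k+1},\nu)$, and gets the argmin identity from the fact that the objective is a $\rho^{-1}$-strongly convex quadratic in $\nu$ whose first-order condition reproduces the update \eqref{def:lambda_alm}. Your checks that $x_{k+1}\in\dom h$ (via the prox step) and on the $\A^\lora_k=\infty$ convention are details the paper leaves implicit, and they are right.
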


We now prove that on all iterations of Algorithm~\ref{alg:al}, either the inequality~\eqref{ineq:lora_base} holds or $(x_{k+1},\lam_{k+1})$ is an $\varepsilon$-primal-dual solution to~\eqref{eq:ProbIntro_LC} and the outer loop terminates. Combined with Lemma~\ref{lem:gamma_dual_props_alm}, we therefore guarantee that, until termination, Algorithm~\ref{alg:al} is an instance of the LOrA framework (i.e., Algorithm~\ref{alg:lora}).

\begin{proposition}\label{prop:alm_contrapositive}
    For every $k\geq 0$, we have either
     \begin{align}
       -d(\lambda_{k+1})+\frac{1}{2\rho}\|\lambda_{k+1}-\lambda_k\|^2- \Gamma^\lora_k(\lambda_{k+1})
       \leq \varepsilon_0\alpha^k+\frac{\sigma}{2\rho}\|\lambda_{k+1}-\lambda_k\|^2,\label{ineq:alm_flora_ineq}
    \end{align}
    which corresponds to \eqref{ineq:lora_base} in view of \eqref{def:alm_corresp},
     or $(x_{k+1},\lambda_{k+1})$ is an $\varepsilon$-primal-dual solution to~\eqref{eq:ProbIntro_LC}.
\end{proposition}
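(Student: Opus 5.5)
The plan is to rewrite the left-hand side of \eqref{ineq:alm_flora_ineq} as a primal gap of the Lagrangian at the new multiplier, bound that gap by the inner gradient mapping, and then split into cases according to Step~\textbf{3} of Algorithm~\ref{alg:al}.

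\textbf{Step 1: rewrite the left-hand side.} By \eqref{def:alm_corresp}, $\Gamma^\lora_k(\lambda_{k+1})=-\L(x_{k+1},\lambda_{k+1})+\frac{1}{2\rho}\|\lambda_{k+1}-\lambda_k\|^2$. The quadratic terms cancel, so the left-hand side of \eqref{ineq:alm_flora_ineq} equals
\[
\L(x_{k+1},\lambda_{k+1})-d(\lambda_{k+1})=\L(x_{k+1},\lambda_{k+1})-\min_{x}\L(x,\lambda_{k+1}).
\]
By \eqref{def:lambda_alm}, the right-hand side equals $\varepsilon_0\alpha^k+\frac{\sigma\rho}{2}\|Ax_{k+1}-b\|^2$. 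So it suffices to bound this Lagrangian gap.

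\textbf{Step 2: transfer stationarity to $\lambda_{k+1}$.} The key identity is
\[
\nabla\Psi_{\lambda_k}(x)=\nabla f(x)+A^\top\bigl(\lambda_k+\rho(Ax-b)\bigr).
\]
At $x=x_{k+1}$ this equals $\nabla f(x_{k+1})+A^\top\lambda_{k+1}$. Hence $\partial_x\Lp(x_{k+1},\lambda_k)=\partial_x\L(x_{k+1},\lambda_{k+1})$.

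I will invoke Lemma~\ref{lem:grad_map_merged} with $\tx=\tx_k$, $x^+=x_{k+1}$ and stepsize $(2L+\mu)^{-1}\le 1/M_\rho$. This should give a $v\in\partial_x\Lp(x_{k+1},\lambda_k)$ with $\|v\|\le c\,\|\mathcal{G}_k\|$ for an absolute constant $c$ (I expect $c=2$). Here $\mathcal{G}_k:=\mathcal{G}^{(2L+\mu)^{-1}}_{\Lp(\cdot,\lambda_k)}(\tx_k)$.

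Then $v\in\partial_x\L(x_{k+1},\lambda_{k+1})$. Since $\L(\cdot,\lambda_{k+1})$ is convex and its minimizers lie in $\dom h$, whose diameter is $D$, we get
\[
\L(x_{k+1},\lambda_{k+1})-d(\lambda_{k+1})\le D\|v\|\le cD\|\mathcal{G}_k\|\le \tfrac{c}{2}\,\varepsilon_k .
\]
If the lemma's statement $\|\mathcal{G}\|\le\varepsilon_k/D\Rightarrow$ gap $\le\varepsilon_k$ (quoted in the text) is only phrased for $\Lp(\cdot,\lambda_k)$, I will redo it for $\L(\cdot,\lambda_{k+1})$ via the subgradient identity above.

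\textbf{Step 3: case analysis.}
\begin{itemize}
\item Case (i): $\|Ax_{k+1}-b\|>\varepsilon$. Then $\sigma\rho\varepsilon^2<\sigma\rho\|Ax_{k+1}-b\|^2$. With $c\le 2$ we get gap $\le\varepsilon_k=(\varepsilon_0\alpha^k+\sigma\rho\varepsilon^2)/2$, which is below the right-hand side.
\item Case (ii): $\|Ax_{k+1}-b\|\le\varepsilon$ and $\|\mathcal{G}_k\|\le\varepsilon/2$. The algorithm returns, and the vector $v$ above shows that $(x_{k+1},\lambda_{k+1})$ satisfies \eqref{def:approximate_kkt}. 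I need $\|v\|\le\varepsilon$, which requires the constant in Lemma~\ref{lem:grad_map_merged} to be at most $2$.
\item Case (iii): $\|\mathcal{G}_k\|>\varepsilon/2$. Combined with the inner stopping rule $\|\mathcal{G}_k\|\le\varepsilon_k/(2D)$, this gives $\varepsilon<\varepsilon_k/D$. The parameter condition $2\sigma\rho\le D/\varepsilon$ gives $\sigma\rho\varepsilon^2\le D\varepsilon/2<\varepsilon_k/2$. Substituting into the definition of $\varepsilon_k$ yields $\varepsilon_k\le\frac{2}{3}\varepsilon_0\alpha^k$. Hence gap $\le\varepsilon_k\le\varepsilon_0\alpha^k$, and \eqref{ineq:alm_flora_ineq} holds.
\end{itemize}

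\textbf{Main obstacle.} The delicate point is the constant bookkeeping in Steps~2 and~3. The precise form of Lemma~\ref{lem:grad_map_merged} must give both gap $\le D\cdot(\text{const})\|\mathcal{G}\|$ with constant at most $2$, and $\|v\|\le 2\|\mathcal{G}\|$ for the returned pair. Case~(iii) is where the parameter choice $2\sigma\rho\le D/\varepsilon$ is genuinely used, so I would check it most carefully.
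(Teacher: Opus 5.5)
Your proposal is correct and follows the paper's proof essentially step for step. The paper likewise rewrites the left-hand side as $\L(x_{k+1},\lambda_{k+1})-d(\lambda_{k+1})$. It then invokes Proposition~\ref{prop:lag_subgradient}, which is exactly Lemma~\ref{lem:grad_map_merged}(b) with your constant $c=2$ plus the multiplier-shift identity $\partial_x\Lp(x_{k+1},\lambda_k)=\partial_x\L(x_{k+1},\lambda_{k+1})$, to get $\|v\|\le\varepsilon_k/D$ and hence a gap of at most $\varepsilon_k$. It finishes with the same three-case split, using $2\sigma\rho\le D/\varepsilon$ only when the gradient-mapping test fails; there your bound $\varepsilon_k\le\tfrac{2}{3}\varepsilon_0\alpha^k$ is just a slightly sharper rearrangement of the paper's $\tfrac{\varepsilon_0\alpha^k}{2}\ge\tfrac{3D\varepsilon}{4}\ge\tfrac{\sigma\rho\varepsilon^2}{2}$.
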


\begin{proof}
    Setting $L$ and $\mu$ as in \eqref{def:setup_alm} and applying Proposition~\ref{prop:lag_subgradient} to \eqref{eq:ProbIntro_LC} with $(\tx, x^+, \lambda, \lambda^+)=(\tx_k, x_{k+1}, \lambda_k, \lambda_{k+1})$ and $\eta=(2L+\mu)^{-1}$, then the inner termination condition in Step \textbf{1}, i.e., $\|\mathcal{G}^{(2L+\mu)^{-1}}_{\Lp(\cdot,\lambda_k)}(\tx_k)\|\leq \varepsilon_k/(2D)$, implies that there exists some $v\in \partial\L(\cdot,\lambda_{k+1})(x_{k+1})$ satisfying $\|v\|\leq \varepsilon_k/D$. It then follows by the definition of $\Gamma_k$ in \eqref{def:alm_corresp}, the Cauchy-Schwarz inequality, and Assumption~\ref{assmp:constrained}(d) that we have
    \begin{align}
        -d(\lambda_{k+1})+&\frac{1}{2\rho}\|\lambda_{k+1}-\lambda_k\|^2-\Gamma^\lora_k(\lambda_{k+1})
        \stackrel{\eqref{def:alm_corresp}}=\L(x_{k+1},\lambda_{k+1})-d(\lambda_{k+1}) \nn \\
        \leq& \inner{v}{x_{k+1}-u(\lambda_{k+1})} \leq \|v\|\|x_{k+1}-u(\lambda_{k+1})\| \leq \varepsilon_k=\frac{\varepsilon_0\alpha^k}{2} + \frac{\sigma\rho\varepsilon^2}{2},\label{ineq:Ld}
    \end{align}
    where the first inequality follows from $v\in \partial\L(\cdot,\lambda_{k+1})(x_{k+1})$ and $u(\lambda_{k+1})=\underset{x\in\R^n}\argmin\L(x,\lambda_{k+1})$, and the last identity follows from the choice of $\varepsilon_k$ in Step \textbf{1}. 

    We now consider three cases to prove the proposition: 1) if $\|Ax_{k+1}-b\|\geq \varepsilon$, then we show \eqref{ineq:alm_flora_ineq} holds; 2) if $\|\mathcal{G}^{(2L+\mu)^{-1}}_{\Lp(\cdot,\lambda_k)}(\tx_k)\|\geq \varepsilon/2$, then we show \eqref{ineq:alm_flora_ineq} holds; and 3) if both conditions are violated, then we show that $(x_{k+1},\lambda_{k+1})$ is an $\varepsilon$-primal-dual solution to~\eqref{eq:ProbIntro_LC}.

    {\bf Case 1)} If $\|Ax_{k+1}-b\|\geq \varepsilon$, then $\rho\varepsilon^2\leq \rho\|Ax_{k+1}-b\|^2\stackrel{\eqref{def:lambda_alm}}=\rho^{-1}\|\lam_{k+1}-\lam_k\|^2$. Then \eqref{ineq:Ld} and \eqref{def:lambda_alm} imply that
    \[
        -d(\lambda_{k+1})+\frac{1}{2\rho}\|\lambda_{k+1}-\lambda_k\|^2-\Gamma^\lora_k(\lambda_{k+1}) \stackrel{\eqref{ineq:Ld}}\leq \frac{\varepsilon_0\alpha^k}{2}+\frac{\sigma\rho\varepsilon^2}{2}\leq \frac{\varepsilon_0\alpha^k}{2}+\frac{\sigma}{2\rho}\|\lambda_k-\lambda_{k+1}\|^2,
    \]
    which satisfies~\eqref{ineq:alm_flora_ineq}.

    {\bf Case 2)} If $\|\mathcal{G}^{(2L+\mu)^{-1}}_{\Lp(\cdot,\lambda_k)}(\tx_k)\|\geq \varepsilon/2$, then the termination condition of the inner solver, i.e., $\|\mathcal{G}^{(2L+\mu)^{-1}}_{\Lp(\cdot,\lambda_k)}(\tx_k)\|\leq \varepsilon_k/(2D)$, implies $\varepsilon_k\geq D\varepsilon$. By the condition on $\sigma$ in Algorithm~\ref{alg:al}, we have $\sigma \rho\varepsilon\leq D/2$. Then,
    \[D\varepsilon\leq \varepsilon_k=\frac{\varepsilon_0\alpha^k}{2}+\frac{\sigma\rho\varepsilon^2}{2}\leq \frac{\varepsilon_0\alpha^k}{2}+\frac{\varepsilon D}{4},\]
    which implies \begin{equation}
        \frac{\varepsilon_0\alpha^k}{2}\geq \frac{3D\varepsilon}{4}\geq \frac{\sigma\rho\varepsilon^2}{2}.\label{ineq:err_bound_case_2}
    \end{equation}
    Thus, by~\eqref{ineq:Ld}, we obtain
    \[
        -d(\lambda_{k+1})+\frac{1}{2\rho}\|\lambda_{k+1}-\lambda_k\|^2-\Gamma^\lora_k(\lambda_{k+1})\stackrel{\eqref{ineq:Ld}}\leq  \frac{\varepsilon_0\alpha^k}{2}+\frac{\sigma\rho\varepsilon^2}{2}\stackrel{\eqref{ineq:err_bound_case_2}}\leq \varepsilon_0\alpha^k,
    \]
    which satisfies~\eqref{ineq:alm_flora_ineq}.

    {\bf Case 3)} We now consider the third case, where the conditions for the first two cases fail to hold, that is, Algorithm \ref{alg:al} terminates in Step \textbf{3}. Now that $\|\mathcal{G}^{(2L+\mu)^{-1}}_{\Lp(\cdot,\lambda_k)}(\tx_k)\|\leq \varepsilon/2$, then Proposition~\ref{prop:lag_subgradient}  with $(\tx, x^+, \lambda, \lambda^+)=(\tx_k, x_{k+1}, \lambda_k, \lambda_{k+1})$ and $\eta=(2L+\mu)^{-1}$ implies that there exists a $v\in\partial \L(\cdot,\lambda_{k+1})(x_{k+1})$ satisfying $\|v\|\leq \varepsilon$. Hence, $\|Ax_{k+1}-b\|\leq \varepsilon$ and $\|\mathcal{G}^{(2L+\mu)^{-1}}_{\Lp(\cdot,\lambda_k)}(\tx_k)\|\leq \varepsilon/2$ indicates that $(x_{k+1},\lambda_{k+1})$ is an $\varepsilon$-primal-dual solution by~\eqref{def:approximate_kkt}.

    Therefore, we complete the proof.
\end{proof}

\vspace{1em}

We are now ready to prove Theorem \ref{thm:al_baseline_complexity}.

\vspace{1em}

\noindent
\textbf{Proof of Theorem~\ref{thm:al_baseline_complexity}:}
Recall from Proposition~\ref{prop:alm_inner_complexity} that
    the inner complexity to satisfy the inner termination condition $\|\mathcal{G}^{(2L+\mu)^{-1}}_{\Lp(\cdot,\lambda_k)}(\tx_k)\|\leq \varepsilon_k/(2D)$ is as in \eqref{cmplx:inner_alm}.

   Observe that $\sigma= 1/2$ and $\rho=\varepsilon^{-1}$ satisfy the requirement $2\sigma\rho=1/\varepsilon\leq D/\varepsilon$ (see initialization in Algorithm \ref{alg:al}) in view of Assumption~\ref{assmp:constrained}(d). Set $L$ and $\mu$ as in~\eqref{def:setup_alm}. By the inner termination condition, our choice $\varepsilon_0=\varepsilon$, and the condition $\rho\sigma\leq D/(2\varepsilon)$, for all iterations $k$ we have
    \[
        \|\mathcal{G}_{\Lp(\cdot,\lambda_k)}^{(2L+\mu)^{-1}}(\tx_k)\|\leq \frac{\varepsilon_k}{2D}\leq \frac{\varepsilon_0}{4D}+\frac{\rho\sigma\varepsilon^2}{4D}\leq \frac{\varepsilon}{2}.
    \]
    Then, Proposition~\ref{prop:lag_subgradient} applied to problem~\eqref{eq:ProbIntro_LC} with $(\tx, x^+, \lambda, \lambda^+)=(\tx_k, x_{k+1}, \lambda_k, \lambda_{k+1})$ and $\eta=(2L+\mu)^{-1}$ implies that for all iterations $k\geq0$, there exists a $v\in \partial\L(\cdot,\lambda_{k+1})(x_{k+1})$ satisfying $\|v\|\leq \varepsilon$.

    By Lemma~\ref{lem:gamma_dual_props_alm} and Proposition~\ref{prop:alm_contrapositive}, Algorithm~\ref{alg:al} is an instance of the LOrA framework (i.e., Algorithm~\ref{alg:lora}) until termination with the correspondence~\eqref{def:alm_corresp}. Then using Theorem~\ref{thm:lora_complexity} with $R^\lora_0=R_\Lambda$, we have \[
    \rho\min_{1\leq i\leq k}\|Ax_{i}-b\| \stackrel{\eqref{def:lambda_alm}}=\min_{1\leq i\leq k}\|\lam_{i}-\lam_{i-1}\|\stackrel{\eqref{def:alm_corresp}}=\min_{1\leq i\leq k}\|y^\lora_{i}-x^\lora_{i-1}\|\stackrel{\eqref{ineq:lora_a},\eqref{def:alm_corresp}}\leq \frac{R_\Lambda}{\sqrt{1-\sigma}\sqrt{k}} + \sqrt{\frac{2\rho\bar{\delta}^\lora_{k}}{1-\sigma}}.
    \]
    It follows from the definition of $\bar{\delta}_k^\lora$ in Theorem~\ref{thm:lora_complexity} and $\delta_k^\lora=\varepsilon_0\alpha^k=\varepsilon\alpha^k$ from \eqref{def:alm_corresp} that
    \[
        \bar{\delta}^\lora_k=\frac{\sum_{i=0}^{k-1}\varepsilon \alpha^i}{k}\leq \frac{\varepsilon}{(1-\alpha)k}.
    \]
    The above two inequalities immediately imply the outer complexity to guarantee near feasibility $\min_{1\leq i\leq k}\|Ax_{i}-b\|\le \varepsilon$ is 
    \begin{equation}\label{cmplx:outer_alm}
        \mathcal{O}\left(1 + \frac{R_\Lambda^2 + \rho\varepsilon}{(1-\sigma)\rho^2\varepsilon^2}\right)
    \end{equation}
    outer iterations, since at least one outer iteration is needed to ensure stationarity.

    Combining the inner complexity from~\eqref{cmplx:inner_alm} and outer complexity from \eqref{cmplx:outer_alm}, 
   and substituting $\rho=\varepsilon^{-1}$, we obtain the total complexity as in \eqref{cmplx:total_alm}.\QEDA


\subsection{Proof of Theorem~\ref{thm:acc_alm_complexity_1}}\label{ssec:proofs_falm}
We prove Theorem~\ref{thm:acc_alm_complexity_1} by following the same approach as in Subsection~\ref{ssec:proofs_alm}. First, we will provide a bound on the inner complexity in each call to Algorithm~\ref{alg:ACG} in Step \textbf{2}, then we will bound the outer complexity by proving that Algorithm~\ref{alg:dsc_aalm} is an instance of FLOrA (i.e., Algorithm~\ref{alg:flora}). However, the inclusion of dual perturbations requires more care in the outer analysis than in the prior subsection. Our choice of the auxiliary point $z^\flora_{k+1}$ in the FLOrA analysis will play a crucial role in our argument.

Before providing complexity bounds, we show that our primal-dual perturbations in~\eqref{def:pert_dual} do not add dependence on $\varepsilon^{-1}$, as observed for primal-only perturbations in~\cite[Appendix A]{luIterationComplexityFirstOrderAugmented2023}. Instead, when both $\gamma_p$ and $\gamma_d$ are $\mathcal{O}(\varepsilon)$, the dependence on $\varepsilon^{-1}$ disappears. The proof of the following lemma is deferred to Appendix \ref{proof:double_perturbation_distance}.

\begin{lemma}\label{lem:double_perturbation_distance}
Let $\Lambda_*=\{\lambda:d(\lambda)=d_*\}$ be the set of optimal multipliers for the original problem~\eqref{eq:ProbIntro_LC}. Define $R_{\Lambda}:=\|\lambda_0-\lambda_*\|=\min\{\|\lambda_0-\lambda\|:\lambda\in\Lambda_*\}$ and $\dtL:=\|\tilde\lambda_*-\lambda_0\|$ where $\tilde\lambda_*$ is the unique minimizer of $-\td(\cdot)$ and $-\td(\cdot)$ is as in~\eqref{def:pert_dual}. Suppose $\gamma_p=\varepsilon/(2D)$ and $\gamma_d= C_0\varepsilon/(\dtL)$ for some $C_0>0$, then we have
    \begin{equation}
        \dtL\leq R_{\Lambda}+\frac{D}{4C_0}.\label{ineq:pert_distance_propto}
    \end{equation}
\end{lemma}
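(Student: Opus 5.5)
The plan is to compare the perturbed dual objective $\td$ with the unperturbed dual $d$ at $\lambda_*$ and at $\tilde\lambda_*$. Strong concavity of $\td$ then converts the resulting value gap into a distance bound.

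\textbf{Step 1: a two-sided comparison.} The lower bound is already available as \eqref{ineq:pert_dual_ub}:
\[
\td(\lambda)\ge d(\lambda)-\frac{\gamma_d}{2}\|\lambda-\lambda_0\|^2 .
\]
For the upper bound, write $\|x-x_0\|\le D$ for every $x\in\dom h$. This holds because $x_0\in\dom h$ and we may assume $D$ bounds the distances from $x_0$. It gives
\[
\min_x\Big\{\L(x,\lambda)+\frac{\gamma_p}{2}\|x-x_0\|^2\Big\}\le d(\lambda)+\frac{\gamma_p D^2}{2},
\]
and therefore
\[
\td(\lambda)\le d(\lambda)+\frac{\gamma_pD^2}{2}-\frac{\gamma_d}{2}\|\lambda-\lambda_0\|^2 .
\]
With $\gamma_p=\varepsilon/(2D)$ the error term is $\gamma_pD^2/2=\varepsilon D/4$.

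\textbf{Step 2: chain the inequalities.} Since $\tilde\lambda_*$ maximizes $\td$, and using $d(\tilde\lambda_*)\le d_*=d(\lambda_*)$, we get
\[
d(\lambda_*)-\frac{\gamma_d}{2}R_\Lambda^2\le\td(\lambda_*)\le\td(\tilde\lambda_*)\le d(\tilde\lambda_*)+\frac{\varepsilon D}{4}-\frac{\gamma_d}{2}\dtL^2\le d(\lambda_*)+\frac{\varepsilon D}{4}-\frac{\gamma_d}{2}\dtL^2 .
\]
Cancelling $d(\lambda_*)$ leaves
\[
\frac{\gamma_d}{2}\big(\dtL^2-R_\Lambda^2\big)\le \frac{\varepsilon D}{4}.
\]
We could sharpen this with the $\gamma_d$-strong concavity term $\frac{\gamma_d}{2}\|\lambda_*-\tilde\lambda_*\|^2$ in the second inequality, but I expect it to be unnecessary.

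\textbf{Step 3: substitute $\gamma_d$.} Plugging in $\gamma_d=C_0\varepsilon/\dtL$ gives
\[
\frac{C_0\varepsilon}{2\dtL}\big(\dtL^2-R_\Lambda^2\big)\le\frac{\varepsilon D}{4},
\qquad\text{i.e.}\qquad
\dtL^2-R_\Lambda^2\le \frac{D\,\dtL}{2C_0}.
\]
The case $\dtL\le R_\Lambda$ is trivial, so assume $\dtL>R_\Lambda$. Then $\dtL^2-R_\Lambda^2=(\dtL-R_\Lambda)(\dtL+R_\Lambda)\ge(\dtL-R_\Lambda)\dtL$. Dividing by $\dtL$ yields
\[
\dtL-R_\Lambda\le \frac{D}{2C_0}.
\]

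\textbf{Main obstacle: the constant.} The direct route above gives $D/(2C_0)$, but \eqref{ineq:pert_distance_propto} requires $D/(4C_0)$. I would recover the missing factor of $2$ in one of two ways. The first is a sharper primal comparison: bound $\|x-x_0\|^2$ for the particular minimizer $u(\tilde\lambda_*)$, or use $\min_x\{\cdot\}\le \L(\hat x,\lambda)+\frac{\gamma_p}{2}\|\hat x-x_0\|^2$ at the unperturbed minimizer $\hat x$. The second is to keep the strong-concavity term from Step 2, which adds $\frac{\gamma_d}{2}\|\lambda_*-\tilde\lambda_*\|^2\ge\frac{\gamma_d}{2}(\dtL-R_\Lambda)^2$ to the left-hand side. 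With that term the inequality becomes $\dtL^2-R_\Lambda^2+(\dtL-R_\Lambda)^2\le D\dtL/(2C_0)$. The left side equals $2(\dtL-R_\Lambda)\dtL$, which gives $\dtL-R_\Lambda\le D/(4C_0)$, exactly the stated bound. So the final argument will include the strong-concavity term in the step $\td(\lambda_*)\le\td(\tilde\lambda_*)$, replacing it with $\td(\lambda_*)+\frac{\gamma_d}{2}\|\lambda_*-\tilde\lambda_*\|^2\le\td(\tilde\lambda_*)$.
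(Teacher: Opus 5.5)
Your final argument (the one that keeps the strong-convexity term) is correct and is essentially the paper's proof. The paper likewise bounds $\td$ below by \eqref{ineq:pert_dual_ub} at $\lambda_*$ and above by $d(\lambda)+\frac{\gamma_p}{2}D^2-\frac{\gamma_d}{2}\|\lambda-\lambda_0\|^2$ at $\tilde\lambda_*$. It uses $d(\tilde\lambda_*)\le d(\lambda_*)$ and keeps $\frac{\gamma_d}{2}\|\lambda_*-\tilde\lambda_*\|^2\ge\frac{\gamma_d}{2}(R_\Lambda-\dtL)^2$, obtained via Cauchy--Schwarz on the cross term rather than your reverse triangle inequality. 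This yields $\dtL^2-\dtL\bigl(R_\Lambda+D/(4C_0)\bigr)\le 0$, which is your $2\dtL(\dtL-R_\Lambda)\le D\dtL/(2C_0)$; as you noticed, the strong-convexity term is exactly what brings the constant from $D/(2C_0)$ down to $D/(4C_0)$.
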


With the perturbed bound proven, we proceed with our proof of Theorem~\ref{thm:acc_alm_complexity_1} by bounding the inner complexity in Step \textbf{2}. The proof is nearly identical to that of Proposition~\ref{prop:alm_inner_complexity}, and is likewise deferred to Appendix~\ref{proof:alm_inner_complexity_sc}.

\begin{proposition}\label{prop:alm_inner_complexity_sc}
    Choosing $\gamma_p=\varepsilon/(2D)$, then the number of ACG iterations required in the call to Algorithm~\ref{alg:ACG} in Step \textbf{2} of Algorithm~\ref{alg:dsc_aalm} is at most
   \begin{equation}
       \tO\left(1+\frac{\sqrt{D}(\sqrt{L_f}+\sqrt{\rho}\|A\|)}{\sqrt{\varepsilon}}\right). \label{cmplx:inner_aalm}
       \end{equation}
\end{proposition}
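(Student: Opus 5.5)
We bound the inner cost of Step~\textbf{2} of Algorithm~\ref{alg:dsc_aalm} by applying Lemma~\ref{lem:acg_convergence} to the strongly convex subproblem in~\eqref{def:setup_alm_acc}. This mirrors the argument for Proposition~\ref{prop:alm_inner_complexity}. The difference is that the strong convexity modulus is now $\mu=\gamma_p+\varepsilon_k/(4D^2)\geq \gamma_p=\varepsilon/(2D)$, which no longer depends on $\varepsilon_k$. This is what improves the $D/\varepsilon$ factor to $\sqrt{D/\varepsilon}$.

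The first step is to record the growth of the ACG scalars when $\mu>0$. From~\eqref{def:acg_scalar} we have $\tau_j\geq 1+\mu A_j$. Hence $a_j\geq \sqrt{\tau_j A_j/(2L)}\geq A_j\sqrt{\mu/(2L)}$, and therefore $A_{j+1}\geq A_j(1+\sqrt{\mu/(2L)})$. Together with $A_1=1/(2L)$, this gives
\[
A_j\geq \frac{1}{2L}\Bigl(1+\sqrt{\mu/(2L)}\Bigr)^{j-1}.
\]
These are the same facts already used in Proposition~\ref{prop:alm_inner_complexity}, or they can be read off from Lemma~\ref{lem:b_seq}.

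The second step is to plug this into~\eqref{ineq:acg_grad_mapping}. The relevant quantity is the gradient mapping of $\psi$, and $R_0\leq D$ because $x_k$ and the minimizer both lie in $\dom h$. This gives
\[
\|\mathcal{G}^{(2L+\mu)^{-1}}_{\psi}(\tx_{j-1})\|\leq \frac{(2L+\mu)D}{\sqrt{LA_j}}.
\]
The condition in Step~\textbf{2} is stated for $\Lpt(\cdot,\tnu_k)$, not for $\psi$. The two objectives differ by the smooth term $\frac{\varepsilon_k}{8D^2}\|\cdot-x_k\|^2$, whose gradient has norm at most $\varepsilon_k/(4D)$ on $\dom h$. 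Since the prox step is nonexpansive, the two gradient mappings differ by at most this amount. So it suffices to make the $\psi$-gradient mapping at most $\varepsilon_k/(4D)$; if a constant factor is lost here, the splitting can be adjusted, exactly as in Proposition~\ref{prop:alm_inner_complexity}. By the first step, this holds once
\[
j-1\geq \sqrt{2L/\mu}\,\ln\!\bigl(c\,L D^2/\varepsilon_k\bigr)
\]
for a suitable constant $c$.

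The third step substitutes the parameters. Using $L=M_\rho=L_f+\rho\|A\|^2$ and $\mu\geq \varepsilon/(2D)$, we get
\[
\sqrt{L/\mu}\leq \sqrt{2D/\varepsilon}\,\bigl(\sqrt{L_f}+\sqrt{\rho}\|A\|\bigr).
\]
For the logarithm, $\varepsilon_k\geq \sigma\rho\varepsilon^2/8$, so its argument is polynomial in $M_\rho$, $D$, $1/\varepsilon$, $1/\rho$ and $1/\sigma$, and it is absorbed into $\tO$. Adding one for the initial iteration gives~\eqref{cmplx:inner_aalm}.

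The main obstacle is the second step: transferring the bound from $\mathcal{G}_\psi$ to $\mathcal{G}_{\Lpt}$ and making sure that the quadratic perturbation centered at $x_k$ (which uses $\varepsilon_k/(8D^2)$, while $\mu$ uses $\varepsilon_k/(4D^2)$) is consistent with how Algorithm~\ref{alg:ACG} is set up. I would first follow the proof of Proposition~\ref{prop:alm_inner_complexity} verbatim, changing only $\mu$. If the transfer there goes through Lemma~\ref{lem:grad_map_merged}, I would reuse that lemma directly.
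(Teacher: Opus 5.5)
Your proof is correct and follows essentially the same route as the paper. The paper first reduces the Step~\textbf{2} criterion for $\Lpt(\cdot,\tnu_k)$ to $\|\mathcal{G}^{(2L+\mu)^{-1}}_{\psi}(\tx_{j-1})\|\le\varepsilon_k/(4D)$ via Lemma~\ref{lem:grad_map_merged}(c), which you re-derive from nonexpansiveness of the prox, and then combines \eqref{ineq:acg_grad_mapping} with the linear growth of $A_j$ (Lemma~\ref{lem:101}(c), which you also re-derive) and $\mu\ge\gamma_p=\varepsilon/(2D)$; the only addition on your side is making the logarithm's argument explicit via $\varepsilon_k\ge\sigma\rho\varepsilon^2/8$, which the paper absorbs into the $\tO$.
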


\noindent
We now switch to the ``outer'' perspective. Define the point
\begin{equation}\label{def:lambda_hat}
    \hat{\lambda}_{k+1}=\frac{\lambda_{k+1}}{1+\gamma_d\rho}
\end{equation} and the function
\begin{align}
    \Gamma_k^\lambda(\cdot)=-\tL(x_{k+1},\lambda_{k+1})+\frac{1}{2\rho}\|\lambda_{k+1}-\tnu_k\|^2
    +\inner{\gamma_d\lambda_{k+1}}{\cdot-\lambda_{k+1}}+\frac{1+\gamma_d\rho}{2\rho}\|\cdot-\lambda_{k+1}\|^2,\label{def:dual_Gamma_sc}
\end{align}
which is a $(\rho^{-1}+\gamma_d)$-strongly convex approximation of $-\tL(x_{k+1},\cdot)+\|\cdot-\tnu_k\|^2/(2\rho)$ at $\lambda_{k+1}$.

We will show that Algorithm~\ref{alg:dsc_aalm} is an instance of the FLOrA framework with the correspondence

\begin{eqbox}
\begin{equation}\label{def:acc_alm_corresp}
    \begin{gathered}
    \Phi(\cdot)=-\td(\cdot), \,\, \Gamma^\flora_k(\cdot) = \Gamma_k^{\lambda}(\cdot),\,\, \A_k^\flora=\infty,\,\,  \alpha_\flora=\alpha, \,\, \mu_\flora = \gamma_d,\,\, \sigma_\flora=\sigma,\,\, \lambda_\flora=\rho;\\
    \delta^\flora_k=\varepsilon_0\alpha^k,\,\, y_{k}^\flora = \ty^\flora_k=\lambda_k,\,\,
   z^\flora_{k}= \hat{\lambda}_k,\,\,
    x^\flora_k = \nu_k, \,\, \tx^\flora_k=\tnu_k,\,\, u^\flora_{k}=\rho^{-1}({\tnu_{k-1}-\lambda_{k}}),\,\, \hat u^\flora_{k}=0.
    \end{gathered}
\end{equation}
\end{eqbox}
First, we show that the conditions~\eqref{ineq:Gamma_acc}, \eqref{eq:zkp1_hatuk}, and \eqref{def:xk_flora} are satisfied, along with a summability bound related to the absolute error sequence $\{\varepsilon_0\alpha^k\}$.

\begin{lemma}\label{lem:gamma_bounds_dual_sc}
    The following statements hold for every $k\ge 0$,
    \begin{enumerate}[label={\rm\alph*)}]
        \item for every $\nu\in\R^m$ 
        \begin{equation*}
            \Gamma_k^\lambda(\nu)\leq -\td(\nu) + \frac{1}{2\rho}\|\nu-\tnu_k\|^2;
        \end{equation*}
        
        \item $\hat\lambda_{k+1}=\underset{\nu\in\R^m}\argmin\Gamma_k^\lambda(\nu)$ and
        \begin{equation}\label{eqn:min_GammaL_value}
            \min_{\nu\in\R^m}\Gamma_k^\lambda(\nu)=-\tL(x_{k+1},\lambda_{k+1})+\frac{1}{2\rho}\|\lambda_{k+1}-\tnu_k\|^2-\frac{\gamma_d^2\rho}{2(1+\gamma_d\rho)}\|\lambda_{k+1}\|^2.
        \end{equation}

        \item letting $u_{k+1}=\rho^{-1}(\tnu_{k}-\hat\lambda_{k+1})$, we can rewrite~\eqref{def:nu_k} as
        \[
            \nu_{k+1}=\frac{1}{\tau_{k+1}}\left(\tau_k\nu_k+b_k\gamma_d\hat\lambda_{k+1}-b_ku_{k+1}\right).
        \]
        \item defining $\beta=\sqrt{\alpha} (1+\sqrt{\rho\gamma_d})<1$, we have $C\leq \rho(1-\beta)^{-4}<\infty$, where $C$ is as in~\eqref{def:R-R}.
    \end{enumerate}
    Moreover, in light of \eqref{def:acc_alm_corresp}, statements a), b), and c) correspond to \eqref{ineq:Gamma_acc}, \eqref{eq:zkp1_hatuk}, and \eqref{def:xk_flora}, respectively, and therefore \eqref{def:tnu} is equivalent to \eqref{eq:tx_flora}.
\end{lemma}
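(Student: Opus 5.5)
Parts a)--c) rest on one structural fact. The function $\nu\mapsto -\tL(x_{k+1},\nu)$ is a concave quadratic negated, i.e.\ convex:
\[
-\tL(x_{k+1},\nu)=-\L^{\gamma_p}(x_{k+1},\nu)+\tfrac{\gamma_d}{2}\|\nu-\lambda_0\|^2 .
\]
Here $\L^{\gamma_p}(x_{k+1},\cdot)$ is affine in $\nu$ and $\lambda_0=0$. Hence $H(\nu):=-\tL(x_{k+1},\nu)+\frac{1}{2\rho}\|\nu-\tnu_k\|^2$ is a quadratic with Hessian $(\gamma_d+\rho^{-1})I$.

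\textbf{Step 1: show $H=\Gamma_k^\lambda$.} Compute $\nabla H(\lambda_{k+1})=-(Ax_{k+1}-b)+\gamma_d\lambda_{k+1}+\rho^{-1}(\lambda_{k+1}-\tnu_k)$. By \eqref{eq:lambda_acc}, $\rho^{-1}(\lambda_{k+1}-\tnu_k)=Ax_{k+1}-b$, so $\nabla H(\lambda_{k+1})=\gamma_d\lambda_{k+1}$. An exact second-order Taylor expansion of $H$ at $\lambda_{k+1}$ then reproduces \eqref{def:dual_Gamma_sc} term by term.

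\textbf{Part a).} Since $\Gamma_k^\lambda=H$, it suffices to show $-\tL(x_{k+1},\nu)\le -\td(\nu)$. This holds because $\td(\nu)=\min_u\tL(u,\nu)\le\tL(x_{k+1},\nu)$.

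\textbf{Part b).} The minimizer solves $\gamma_d\lambda_{k+1}+(1+\gamma_d\rho)\rho^{-1}(\nu-\lambda_{k+1})=0$. This gives $\nu=\lambda_{k+1}-\frac{\gamma_d\rho}{1+\gamma_d\rho}\lambda_{k+1}=\hat\lambda_{k+1}$. The minimum value equals the constant term minus $\|\gamma_d\lambda_{k+1}\|^2/(2(1+\gamma_d\rho)/\rho)$, which is \eqref{eqn:min_GammaL_value}. With $\A_k^\flora=\infty$, \eqref{eq:zkp1_hatuk} reduces to $z^\flora_{k+1}=\argmin\Gamma^\flora_k$ and $\hat u^\flora_{k+1}=0$, as in the correspondence.

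\textbf{Part c).} This is a direct substitution. With $u_{k+1}=\rho^{-1}(\tnu_k-\hat\lambda_{k+1})$, the last term of \eqref{def:nu_k}, $-\frac{b_k}{\rho}(\tnu_k-\frac{\lambda_{k+1}}{1+\gamma_d\rho})$, equals $-b_ku_{k+1}$, and the middle term equals $b_k\gamma_d\hat\lambda_{k+1}$. Under \eqref{def:acc_alm_corresp} this matches \eqref{def:xk_flora}. The same correspondence makes \eqref{def:tnu} identical to \eqref{eq:tx_flora}.

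\textbf{Part d) is the main obstacle.} The plan is to bound $B_{k+1}$ by a geometric growth rate and compare it with $\alpha^k$.

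1. From \eqref{eq:bk_update}, $b_k\le \rho\tau_k+\sqrt{\rho\tau_kB_k}$ and $\tau_k=1+\gamma_dB_k$. I would show by induction that $\sqrt{B_{k+1}}\le \sqrt{B_k}+\sqrt{\rho\tau_k}$, or an analogous linearized recursion.

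2. Combining this with $\tau_k\le 1+\gamma_dB_k$ should give $\sqrt{B_{k+1}}\le(1+\sqrt{\rho\gamma_d})\sqrt{B_k}+\sqrt\rho$. Unrolling, $\sqrt{B_{k+1}}\le\sqrt\rho\sum_{i=0}^{k}(1+\sqrt{\rho\gamma_d})^i\le\sqrt\rho\,(k+1)(1+\sqrt{\rho\gamma_d})^{k}$. Hence $B_{k+1}\le\rho(k+1)^2(1+\sqrt{\rho\gamma_d})^{2k}$.

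3. Then $B_{i+1}\alpha^i\le\rho(i+1)^2\beta^{2i}$. Using $\beta^{2i}\le\beta^i$ and $\sum_i(i+1)^2\beta^i=(1+\beta)/(1-\beta)^3\le 2(1-\beta)^{-3}$, the sum is $O(\rho(1-\beta)^{-4})$. Finally $\beta<1$ follows from the initialization condition $\alpha<(1+\sqrt{\gamma_d\rho})^{-2}$.

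Matching the exact constant $\rho(1-\beta)^{-4}$ may require a slightly looser polynomial-factor bound, for example $(i+1)^3\beta^i$, or using $2/(1-\beta)\le 2(1-\beta)^{-1}$. I expect the careful inequality in step 1 to be the fiddly part.
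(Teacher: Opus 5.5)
\textbf{Parts a)--c)} follow the paper's argument. The paper computes the same gradient $\gamma_d\lambda_{k+1}$ at $\lambda_{k+1}$ and invokes $(\rho^{-1}+\gamma_d)$-strong convexity; for this quadratic that is exactly your Taylor identity $H=\Gamma_k^\lambda$. It then uses $\td(\nu)\le\tL(x_{k+1},\nu)$. Parts b) and c) are the same computation and substitution as yours.

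\textbf{Part d)} takes a different route. The paper invokes the generic FLOrA Lemma~\ref{lem:b_seq}(d) with $\lambda_\flora=\rho$, $\mu_\flora=\gamma_d$, $\alpha_\flora=\alpha$. That proof derives your recursion $\sqrt{B_{k+1}}\le(1+\sqrt{\rho\gamma_d})\sqrt{B_k}+\sqrt{\rho}$, then multiplies by $(\sqrt{\alpha})^{k+1}$. It unrolls $\sqrt{\alpha^kB_{k+1}}$ as a convolution of $\beta^{k-i}$ with $(\sqrt{\alpha})^{i}$, and sums the double series to $\sqrt{\rho}/((1-\sqrt{\alpha})(1-\beta))$. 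It then squares via $\sum a_k^2\le(\sum a_k)^2$ and finishes with $\sqrt{\alpha}\le\beta$.

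You instead bound $B_{k+1}\le\rho(k+1)^2(1+\sqrt{\rho\gamma_d})^{2k}$ outright and sum an arithmetico-geometric series. This is self-contained and valid. Neither resulting bound dominates the other, but both lie below $\rho(1-\beta)^{-4}$, and only the order matters in Theorem~\ref{thm:acc_alm_complexity_1}, where $\beta$ is fixed.

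Two small corrections to your step 1. No induction is needed: $\sqrt{B_{k+1}}\le\sqrt{B_k}+\sqrt{\rho\tau_k}$ follows in one line from $b_k\le\rho\tau_k+\sqrt{\rho\tau_kB_k}$. Also, $\tau_k=1+\gamma_dB_k$ holds with equality (Lemma~\ref{lem:b_seq}(b)).

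\textbf{The constant is not yet proved.} As written you stop at $C\le 2\rho(1-\beta)^{-3}$, which is not bounded by $\rho(1-\beta)^{-4}$ when $\beta<1/2$. The fix is not to discard the factor $1+\beta$:
\[
\frac{1+\beta}{(1-\beta)^3}=\frac{1-\beta^2}{(1-\beta)^4}\le\frac{1}{(1-\beta)^4},
\]
which gives exactly $C\le\rho(1-\beta)^{-4}$. Alternatively, keep $\beta^{2i}$ and use $\sum_{i\ge0}(i+1)^2\beta^{2i}=(1+\beta^2)/(1-\beta^2)^3\le(1-\beta)^{-4}$.

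Your proposed remedies do not work. Replacing $(i+1)^2$ by $(i+1)^3$ gives $\sum_{i\ge0}(i+1)^3\beta^i=(1+4\beta+\beta^2)/(1-\beta)^4$, which exceeds $(1-\beta)^{-4}$. The inequality $2/(1-\beta)\le 2(1-\beta)^{-1}$ is a tautology and changes nothing.
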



Analyzing Algorithm~\ref{alg:dsc_aalm} as an instance of Algorithm~\ref{alg:flora} now requires that we show~\eqref{ineq:flora_cond} holds with the correspondence~\eqref{def:acc_alm_corresp}. The following proposition is the analogue of Proposition~\ref{prop:alm_contrapositive} from the prior subsection, retaining the same ``three-case'' structure while adapting the analysis to the dual perturbations.
\begin{proposition}\label{prop:OR}
     Suppose $\gamma_d>0$ satisfies
    \begin{equation}
        \gamma_d\leq\min\left\{\frac{\sqrt{\sigma}}{2\sqrt{3}\rho},\frac{\sqrt{\sigma}\varepsilon}{4\sqrt{3}\mathcal{R}}\right\},\label{ineq:gamma_d_req}
    \end{equation}
    where $\mathcal{R}$ is as in~\eqref{def:R-R}. Then, for every $k\geq 0$, we have either
     \begin{equation}\label{ineq:key}
         -\td(\lambda_{k+1})+\frac{1}{2\rho}\|\lambda_{k+1}-\tnu_k\|^2-\Gamma_{k}^\lambda(\hat\lambda_{k+1})\leq \frac{\sigma}{2\rho}\|\lambda_{k+1}-\tnu_k\|^2+\varepsilon_0\alpha^k,
     \end{equation}
     which corresponds to \eqref{ineq:flora_cond} in view of \eqref{def:acc_alm_corresp},
     or $(x_{k+1},\lambda_{k+1})$ is an $\varepsilon$-primal-dual solution to~\eqref{eq:ProbIntro_LC}.
\end{proposition}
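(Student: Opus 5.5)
The proof will follow the three-case structure of Proposition~\ref{prop:alm_contrapositive}, adapted to the perturbed Lagrangian $\tL$ and the strongly convex model $\Gamma_k^\lambda$.

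\textbf{Basic gap estimate.} First, I apply Proposition~\ref{prop:lag_subgradient} to the perturbed problem~\eqref{eq:ProbIntro_LC_pert}, with the dual-perturbed Lagrangian, using $(\tx,x^+,\lambda,\lambda^+)=(\tx_k,x_{k+1},\tnu_k,\lambda_{k+1})$ and $\eta=(2L+\mu)^{-1}$. The inner termination criterion $\|\mathcal{G}\|\le\varepsilon_k/(2D)$ then yields some $v\in\partial\tL(\cdot,\lambda_{k+1})(x_{k+1})$ with $\|v\|\le\varepsilon_k/D$. Since the dual perturbation term does not depend on $x$, we have $-\td(\lambda_{k+1})=-\tL(u(\lambda_{k+1}),\lambda_{k+1})$, where $u(\cdot)$ is the perturbed primal minimizer. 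Subgradient inequality, Cauchy--Schwarz, and the diameter bound then give $\tL(x_{k+1},\lambda_{k+1})-\td(\lambda_{k+1})\le\varepsilon_k$.

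Next, I expand the left-hand side of~\eqref{ineq:key} using~\eqref{eqn:min_GammaL_value}:
\[
-\td(\lambda_{k+1})+\tfrac{1}{2\rho}\|\lambda_{k+1}-\tnu_k\|^2-\Gamma_k^\lambda(\hat\lambda_{k+1})=\tL(x_{k+1},\lambda_{k+1})-\td(\lambda_{k+1})+\tfrac{\gamma_d^2\rho}{2(1+\gamma_d\rho)}\|\lambda_{k+1}\|^2 .
\]
Thus the left-hand side is at most $\varepsilon_k+\gamma_d^2\rho\|\lambda_{k+1}\|^2/2$. The extra term is what distinguishes this case from the unperturbed ALM, and bounding it is the main obstacle.

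\textbf{Controlling the perturbation term.} I split $\|\lambda_{k+1}\|\le\|\lambda_{k+1}-\tnu_k\|+\|\tnu_k\|$ and square, using $(a+b)^2\le 2a^2+2b^2$ (or a three-way split, which explains the $\sqrt3$).
\begin{itemize}
\item For the first piece, $\gamma_d\le\sqrt\sigma/(2\sqrt3\rho)$ gives $\gamma_d^2\rho\|\lambda_{k+1}-\tnu_k\|^2\le\frac{\sigma}{12\rho}\|\lambda_{k+1}-\tnu_k\|^2$. This is absorbed into a fraction of the relative-error term.
\item For the second piece, I need a uniform bound $\|\tnu_k\|\lesssim\mathcal{R}$, with $\lambda_0=0$. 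I would prove this by induction on $k$, using FLOrA's iterate bounds from the proof of Theorem~\ref{thm:flora_complexity}, assuming~\eqref{ineq:flora_cond} held at all previous iterations. Those bounds give $\|x^\flora_i-\tilde\lambda_*\|\le R^\flora_0+\sqrt{2\rho\delta_0C}$ and similar bounds for $y^\flora_i$, so $\tnu_k$, as a convex combination of them, lies within $\bardtL(1+\sqrt{2\varepsilon_0C})(2/\sqrt{1-\sigma}+1)=\mathcal{R}$ of $\lambda_0$. This is where the definition~\eqref{def:R-R} comes from.
\item Combined with $\gamma_d\le\sqrt\sigma\varepsilon/(4\sqrt3\mathcal R)$, the second piece becomes $O(\sigma\rho\varepsilon^2)$, matching the $\sigma\rho\varepsilon^2/8$ slack built into $\varepsilon_k$.
\end{itemize}

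\textbf{Three cases.} Finally, I argue as in Proposition~\ref{prop:alm_contrapositive}.
\begin{itemize}
\item \emph{Case 1:} $\|Ax_{k+1}-b\|\ge\varepsilon$. Since $\lambda_{k+1}-\tnu_k=\rho(Ax_{k+1}-b)$, we get $\rho\varepsilon^2\le\rho^{-1}\|\lambda_{k+1}-\tnu_k\|^2$. All $\sigma\rho\varepsilon^2$-type terms are therefore dominated by $\frac{\sigma}{2\rho}\|\lambda_{k+1}-\tnu_k\|^2$, and the remaining term is $7\varepsilon_0\alpha^k/8\le\varepsilon_0\alpha^k$.
\item \emph{Case 2:} the gradient mapping is at least $\varepsilon/4$. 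Then $\varepsilon_k\ge D\varepsilon/2$, and the condition $4\sigma\rho\varepsilon\le1$ forces $\varepsilon_0\alpha^k$ to dominate the $\sigma\rho\varepsilon^2$ terms, giving~\eqref{ineq:key}.
\item \emph{Case 3:} neither case holds, so Step~\textbf{4} terminates. Proposition~\ref{prop:lag_subgradient} gives a perturbed subgradient of norm at most $\varepsilon/2$. Lemma~\ref{lem:perturb_solution} converts this to an unperturbed subgradient of norm at most $\varepsilon$, since the dual perturbation does not affect $x$-subgradients. Together with $\|Ax_{k+1}-b\|\le\varepsilon$, this makes $(x_{k+1},\lambda_{k+1})$ an $\varepsilon$-primal-dual solution.
\end{itemize}

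I expect the constant bookkeeping in the $\|\tnu_k\|\le\mathcal R$ induction to be the most delicate part.
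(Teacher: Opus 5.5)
Your proposal matches the paper's proof. It uses the identity from \eqref{eqn:min_GammaL_value}, the bound $\tL(x_{k+1},\lambda_{k+1})-\td(\lambda_{k+1})\le\varepsilon_k$, an induction that invokes the FLOrA structure of earlier iterations through Lemma~\ref{lem:distance_bound_flora} to control $\|\lambda_{k+1}\|$, and the same three cases. One small correction: that lemma bounds $\|\tnu_k-\tilde\lambda_*\|\le\mathcal{R}$, not the distance to $\lambda_0$, so you get $\|\tnu_k\|\le\mathcal{R}+\bardtL\le 2\mathcal{R}$ (which is why the paper splits through $\tilde\lambda_*$ three ways); the constants in \eqref{ineq:gamma_d_req} still absorb this, and the FLOrA iterate bound is $R_0^\flora+\sqrt{2\delta_0^\flora C}$ without the extra factor $\rho$.
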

\begin{proof}
     We prove the proposition by induction. Throughout, let $L$ and $\mu$ be as in~\eqref{def:setup_alm_acc}.
     First, we note that Lemma~\ref{lem:gamma_bounds_dual_sc}(b) and the definition of $\Gamma_k^\lam$ in \eqref{def:dual_Gamma_sc} imply that
    \begin{equation}
        -\td(\lambda_{k+1})+\frac{1}{2\rho}\|\lambda_{k+1}-\tnu_k\|^2-\Gamma_{k}^\lambda(\hat\lambda_{k+1})\stackrel{\eqref{eqn:min_GammaL_value}}=\tL(x_{k+1},\lambda_{k+1})-\td(\lambda_{k+1})+\frac{\gamma_d^2\rho}{2(\gamma_d\rho+1)}\|\lambda_{k+1}\|^2.\label{eq:Ld_sc}
    \end{equation}
     Applying Proposition~\ref{prop:lag_subgradient} to~\eqref{eq:ProbIntro_LC_pert} with $(\tx, x^+, \lambda, \lambda^+)=(\tx_k, x_{k+1}, \tnu_k, \lambda_{k+1})$, $\eta=(2L+\mu)^{-1}$, and $f$ replaced by 
     $f+{\gamma_p}\|\cdot-x_0\|^2/2$, then the inner termination condition in Step \textbf{2}, i.e., $\|\mathcal{G}_{\Lpt(\cdot,\tnu_k)}^{(2L+\mu)^{-1}}(\tx_k)\|\leq \varepsilon_k/2D$, implies that there exists a subgradient $v\in\partial\tL(\cdot,\lambda_{k+1})(x_{k+1})$ satisfying $\|v\|\leq \varepsilon_k/D$. It then follows from the Cauchy-Schwarz inequality and Assumption~\ref{assmp:constrained}(d) that
     \begin{equation}\label{ineq:lag_error_bound}
         \tL(x_{k+1},\lambda_{k+1})-\td(\lambda_{k+1})\leq \inner{v}{x_{k+1}-\tilde u(\lambda_{k+1})}\leq \|v\|D\leq \varepsilon_k =\frac{7\varepsilon_0\alpha^k}{8}+\frac{\sigma\rho\varepsilon^2}{8},
     \end{equation}
    where the first inequality follows from $v\in \partial\tL(\cdot,\lambda_{k+1})(x_{k+1})$ and $\tilde u(\lambda_{k+1})=\underset{x\in\R^n}\argmin\tL(x,\lambda_{k+1})$, and the last identity follows from the choice of $\varepsilon_k$ in Step \textbf{1}.

    By our choice $\lambda_0=\nu_0=0$, for the base case we have $\tnu_0=0$ and so $\|\lambda_{1}\|=\|\tnu_0-\lambda_1\|$. It follows from the initialization in Algorithm \ref{alg:dsc_aalm} that $\sigma\rho\varepsilon^2 \leq \varepsilon/4\leq \varepsilon_0/4$, which together with \eqref{ineq:lag_error_bound} implies that
    \[
    \tL(x_{1},\lambda_{1})-\td(\lambda_{1})+\frac{\gamma_d^2 \rho}{2(\gamma_d\rho+1)}\|\lambda_{1}\|^2\stackrel{\eqref{ineq:lag_error_bound}}\leq \frac{7\varepsilon_0}{8}+\frac{\sigma\rho\varepsilon ^2}{8}+\frac{\sigma}{2\rho}\|\tnu_0-\lambda_1\|^2\leq \varepsilon_0+\frac{\sigma}{2\rho}\|\tnu_0-\lambda_1\|^2.
    \]
    In view of \eqref{eq:Ld_sc}, the above inequality proves \eqref{ineq:key} with $k=0$, which is the base case of the proposition.
    
    Now we assume the proposition holds for iterations $0\leq n\le k-1$.
    Without loss of generality, we assume that \eqref{ineq:key} holds with $k$ replaced by $k-1$, otherwise $(x_k,\lambda_k)$ is already an $\varepsilon$-primal-dual solution to~\eqref{eq:ProbIntro_LC}.
    Hence, \eqref{ineq:key} and Lemma~\ref{lem:gamma_bounds_dual_sc} imply that Algorithm~\ref{alg:dsc_aalm} is an instance of the FLOrA framework (i.e., Algorithm~\ref{alg:flora}) under the correspondence~\eqref{def:acc_alm_corresp}. Then, using Lemma~\ref{lem:distance_bound_flora} with $\mathcal{R}_\flora=\mathcal{R}$ (which is defined in \eqref{def:R-R}) and the correspondence \eqref{def:acc_alm_corresp}, we have
    \begin{equation}
        \|\tnu_k-\tilde\lambda_*\| \stackrel{\eqref{def:acc_alm_corresp}}= \|\tx_k^\flora-x_*\|\stackrel{\eqref{ineq:tx_dist_bound}}\leq \mathcal{R}.\label{ineq:tnu_k_bound}
    \end{equation}
    It thus follows from the triangle inequality and the Cauchy-Schwarz inequality that
    \begin{align*}
        \|\lambda_{k+1}\|^2
       & \leq (\|\lambda_{k+1}-\tnu_k\| + \|\tnu_k-\tilde\lambda_*\| + \|\tilde\lambda_*\|)^2 \\
       & \leq 3 (\|\lambda_{k+1}-\tnu_k\|^2 + \|\tnu_k-\tilde\lambda_*\|^2 + \|\tilde\lambda_*\|^2) \stackrel{\eqref{ineq:tnu_k_bound}}\leq 3 \left(\|\lambda_{k+1}-\tnu_k\|^2 + 2\mathcal{R}^2\right),
    \end{align*}
    where the last inequality is due to \eqref{ineq:tnu_k_bound} and the fact that $\|\tilde\lam_*\|=\|\tilde\lam_*-\lam_0\|\leq \bardtL\leq \mathcal{R}$ in view of \eqref{def:R-R}.
    The above inequality and the requirement on $\gamma_d$ in~\eqref{ineq:gamma_d_req} further imply that
    \begin{equation}
        \frac{\gamma_d^2\rho}{2(\gamma_d\rho+1)}\|\lambda_{k+1}\|^2
        \leq \frac{3 \gamma_d^2 \rho}{2}\left(\|\lambda_{k+1}-\tnu_k\|^2 + 2\mathcal{R}^2\right) \stackrel{\eqref{ineq:gamma_d_req}}\leq \frac{\sigma}{8\rho}\|\lambda_{k+1}-\tnu_k\|^2 + \frac{\sigma\rho\varepsilon^2}{16}. \label{ineq:lamb_bound_lambk}
    \end{equation}
    Putting together \eqref{eq:Ld_sc}, \eqref{ineq:lag_error_bound}, and \eqref{ineq:lamb_bound_lambk}, we obtain
    \begin{equation}
        -\td(\lambda_{k+1})+\frac{1}{2\rho}\|\lambda_{k+1}-\tnu_k\|^2-\Gamma_{k}^\lambda(\hat\lambda_{k+1}) \leq \frac{7\varepsilon_0\alpha^k}{8}+\frac{3 \sigma\rho\varepsilon^2}{16} +\frac{\sigma}{8\rho}\|\lambda_{k+1}-\tnu_k\|^2.\label{ineq:inductive_step_bound}
    \end{equation}
    
    We now consider three cases to prove the proposition: 1) if $\|Ax_{k+1}-b\|\geq \varepsilon$, then we show that~\eqref{ineq:key} holds; 2) if $\|\mathcal{G}^{(2L+\mu)^{-1}}_{\Lpt(\cdot,\tnu_k)}(\tx_k)\|\geq \varepsilon/4$, then we show \eqref{ineq:key} holds; and 3) if both conditions are violated, then we show that $(x_{k+1},\lambda_{k+1})$ is an $\varepsilon$-primal-dual solution to~\eqref{eq:ProbIntro_LC}.

    {\bf Case 1)} Since $\|Ax_{k+1}-b\|\geq \varepsilon$, it follows from \eqref{eq:lambda_acc} that
    \[
        \frac{3 \sigma\rho\varepsilon^2}{16}\leq\frac{3 \sigma\rho}{16}\|Ax_{k+1}-b\|^2\stackrel{\eqref{eq:lambda_acc}}=\frac{3 \sigma}{16\rho}\|\lambda_{k+1}-\tnu_k\|^2,
    \]
    which together with \eqref{ineq:inductive_step_bound} implies that
     \[
         -\td(\lambda_{k+1})+\frac{1}{2\rho}\|\lambda_{k+1}-\tnu_k\|^2-\Gamma_{k}^\lambda(\hat\lambda_{k+1})
         \leq \frac{7\varepsilon_0\alpha^k}{8}+\frac{5\sigma}{16\rho}\|\lambda_{k+1}-\tnu_k\|^2.
     \]
    Hence, \eqref{ineq:key} immediately follows.

     {\bf Case 2)} Since $\|\mathcal{G}_{\Lpt(\cdot,\tnu_k)}^{(2L+\mu)^{-1}}(\tx_k)\|\geq \varepsilon/4$, the inner termination condition in Step \textbf{2} of Algorithm~\ref{alg:dsc_aalm} implies that $\varepsilon_k\geq D\varepsilon/2$.  It thus follows from the choice of $\varepsilon_k$ in Step \textbf{1} that
     \[
         \frac{D\varepsilon}{2}\leq \varepsilon_k=\frac{7\varepsilon_0\alpha^k}{8}+\frac{\sigma\rho\varepsilon^2}{8}\leq \frac{7\varepsilon_0\alpha^k}{8}+\frac{D\varepsilon}{32},
     \]
    where the inequality is due to $\sigma\rho\leq 1/(4 \varepsilon) \leq D/(4 \varepsilon)$ by the initialization of Algorithm~\ref{alg:dsc_aalm} and Assumption~\ref{assmp:constrained}(d).
    The above inequality thus indicates that
     \[
         \frac{7\varepsilon_0\alpha^k}{8}\geq \frac{15D \varepsilon}{32}\geq \frac{15\sigma\rho\varepsilon^2}{8}\implies \frac{3\sigma\rho\varepsilon^2}{16}\leq \frac{7\varepsilon_0\alpha^k}{80}.
     \]
     Plugging the above bound into \eqref{ineq:inductive_step_bound}, we obtain
     \[
         -\td(\lambda_{k+1})+\frac{1}{2\rho}\|\lambda_{k+1}-\tnu_k\|^2-\Gamma_{k}^\lambda(\hat\lambda_{k+1}) 
         \leq \frac{77\varepsilon_0\alpha^k}{80}+\frac{\sigma}{8\rho}\|\lambda_{k+1}-\tnu_k\|^2.
     \]
    Hence, \eqref{ineq:key} immediately follows.
     
     {\bf Case 3)} We now consider the third case, where the conditions for the first two cases fail to hold, that is, Algorithm \ref{alg:dsc_aalm} terminates in Step \textbf{4}. 
     Now that $\|\mathcal{G}^{(2L+\mu)^{-1}}_{\Lpt(\cdot,\tnu_k)}(\tx_k)\|\leq \varepsilon/4$, then Proposition~\ref{prop:lag_subgradient} applied to problem~\eqref{eq:ProbIntro_LC_pert} with $(\tx, x^+, \lambda, \lambda^+)=(\tx_k, x_{k+1}, \tnu_k, \lambda_{k+1})$, $\eta=(2L+\mu)^{-1}$, and $f$ replaced by $f(\cdot)+\gamma_p\|\cdot-x_0\|^2/2$ implies that there exists a $v\in\partial \tL(\cdot,\lambda_{k+1})(x_{k+1})$ such that $\|v\|\leq \varepsilon/2$. Using the initialization $\gamma_p= \varepsilon/(2D)$ in Algorithm~\ref{alg:dsc_aalm}, Lemma~\ref{lem:perturb_solution} with $(x,\lam)=(x_{k+1},\lambda_{k+1})$ implies that there exists a $v'\in\partial\L(\cdot,\lambda_{k+1})(x_{k+1})$ such that $\|v'\|\leq \varepsilon$.
     Hence $\|Ax_{k+1}-b\|\leq \varepsilon$ and $\|\mathcal{G}^{(2L+\mu)^{-1}}_{\Lpt(\cdot,\tnu_k)}(\tx_k)\|\leq \varepsilon/4$ indicates that $(x_{k+1},\lambda_{k+1})$ is an $\varepsilon$-primal-dual solution by~\eqref{def:approximate_kkt}.

    Therefore, we finish the inductive proof and thus complete the proof of the lemma. 
\end{proof}

\vspace{1em}

We are now ready to prove Theorem~\ref{thm:acc_alm_complexity_1}.

\vspace{1em}

\noindent\textbf{Proof of Theorem~\ref{thm:acc_alm_complexity_1}:} 
Recall from Proposition~\ref{prop:alm_inner_complexity_sc} that the inner complexity to satisfy the inner termination condition $\|\mathcal{G}_{\Lpt(\cdot,\tnu_k)}^{(2L+\mu)^{-1}}(\tx_k)\|\leq\varepsilon_k/(2D)$ is as in~\eqref{cmplx:inner_aalm}. Then, we simply need to bound the complexity of the outer loop. 

It is trivial to show that the parameter choices satisfy the conditions in the initialization of Algorithm~\ref{alg:dsc_aalm},
\begin{equation}
    4\rho\sigma\varepsilon\leq 1,\quad \varepsilon_0\geq \varepsilon,\quad 0\leq \alpha< (1+\sqrt{\gamma_d\rho})^{-2}.\label{ineq:param_choice_bound}
\end{equation}
We then proceed to bound the outer iteration complexity to satisfy each of the termination criteria in Step \textbf{4} of Algorithm~\ref{alg:dsc_aalm}. Combining the outer complexity with the inner complexity in~\eqref{cmplx:inner_aalm} will then yield the total complexity in~\eqref{cmplx:total_aalm}. 





First, we bound the complexity to satisfy the stationarity condition $\|\mathcal{G}_{\Lpt(\cdot,\tnu_k)}^{(2L+\mu)^{-1}}(\tx_k)\|\leq \varepsilon/4$, where $L$ and $\mu$ are as in~\eqref{def:setup_alm_acc}. By the inner termination condition in Step \textbf{2}, i.e., $\|\mathcal{G}_{\Lpt(\cdot,\tnu_k)}^{(2L+\mu)^{-1}}(\tx_k)\|\leq \varepsilon_k/(2D)$, the inequality $\|\mathcal{G}_{\Lpt(\cdot,\tnu_k)}^{(2L+\mu)^{-1}}(\tx_k)\|\leq \varepsilon/4$ is satisfied by any iteration with $\varepsilon_k\leq D\varepsilon/2$. By Assumption~\ref{assmp:constrained}(d) and~\eqref{ineq:param_choice_bound}, we obtain $\sigma\rho\varepsilon^2\leq \varepsilon/4\le D\varepsilon/4$. Then, from the choice of $\varepsilon_k$ in Step \textbf{1} of Algorithm~\ref{alg:dsc_aalm}, the condition
\[\varepsilon_k=\frac{7}{8}\varepsilon_0\alpha^k+\frac{\sigma\rho\varepsilon^2}{8}\leq \frac{7}{8}\varepsilon_0\alpha^k+\frac{D\varepsilon}{32}\leq \frac{D\varepsilon}{2}\]
is satisfied when $7\varepsilon_0\alpha^k/8\leq 15D\varepsilon/32$,
which occurs in 
    \begin{equation}
        N_\alpha=\left\lceil\frac{\log (15D\varepsilon/32)-\log(7\varepsilon_0/8)}{\log\alpha}\right\rceil\leq 1+\frac{\log (15D\varepsilon/32)-\log(7\varepsilon_0/8)}{\log\alpha}\stackrel{\eqref{ineq:alpha_conds}}\leq 1+\sqrt{\frac{D}{\rho\varepsilon}}\label{def:Nalpha}
    \end{equation}
    outer iterations, where the second inequality follows by the second condition on $\alpha$ in~\eqref{ineq:alpha_conds}.
    Therefore, $N_{\alpha}=\mathcal{O}(1+\sqrt{D/(\rho\varepsilon)})$.
    
    Next, we bound the outer iteration complexity required to satisfy the termination condition $\|Ax_{k}-b\|\leq\varepsilon$. Combining $\sigma\varepsilon\leq 1/{(4\rho)}$ from~\eqref{ineq:param_choice_bound} with the choice of $\gamma_d$ and $\sigma=1/4$, we can show that the condition~\eqref{ineq:gamma_d_req} in Proposition~\ref{prop:OR} is satisfied. Therefore, Lemma~\ref{lem:gamma_bounds_dual_sc} and Proposition~\ref{prop:OR} imply that Algorithm~\ref{alg:dsc_aalm} is an instance of the FLOrA framework (i.e., Algorithm~\ref{alg:flora}) with the correspondence~\eqref{def:acc_alm_corresp}.
    
    Then, using Theorem~\ref{thm:flora_complexity} with $R^\flora_0=\|\tilde\lambda_*\|\leq\bardtL$ (see~\eqref{def:R-R}) and $C_\flora=C$, we have
    \[
    \rho\|Ax_{k}-b\|\stackrel{\eqref{eq:lambda_acc}}=\|\lambda_{k}-\tnu_{k-1}\|\stackrel{\eqref{def:acc_alm_corresp}}=\|\ty^\flora_{k}-\tx^\flora_{k-1}\|\stackrel{\eqref{ineq:flora_b},\eqref{def:acc_alm_corresp}}\leq \frac{\sqrt{\rho}\bardtL+\sqrt{2\rho\varepsilon_0C}}{\sqrt{(1-\sigma)B_k}}.
    \]
    Combining the above inequality with Lemma~\ref{lem:b_seq}(c) immediately implies the outer complexity to guarantee near feasibility $\|Ax_{k+1}-b\|\leq \varepsilon$ is $\tO(1+1/\sqrt{\rho\gamma_d})$. Using Lemma~\ref{lem:gamma_bounds_dual_sc}(d) with $\beta=\sqrt{9/10}$ from~\eqref{ineq:alpha_conds} and the choice $\varepsilon_0=\rho^{-1}$, we can show that $\gamma_d = \mathcal{O}(\varepsilon/\bardtL)$ and $\mathcal{R}=\mathcal{O}(\bardtL)$. Then by Lemma~\ref{lem:double_perturbation_distance}, we have that $\mathcal{R}=\mathcal{O}(\bardtL)=\mathcal{O}(\hat R_\Lambda + D)$.

    Accordingly, the outer iteration count $k$ to satisfy $\|Ax_{k}-b\|\leq\varepsilon$ is
    \begin{equation}\label{cmplx:outer_aalm}
        \tO\left(1+\frac{1}{\sqrt{\rho\gamma_d}}\right)=\tO\left(1+\frac{\sigma^{3/4}\sqrt{\mathcal{R}}}{\sqrt{\rho\varepsilon}}\right)=\tO\left(1+\frac{\sqrt{\hat R_\Lambda+D}}{\sqrt{\rho\varepsilon}}\right),
    \end{equation}
    which is of the same order as $N_\alpha$ in~\eqref{def:Nalpha}.

    Combining the inner complexity from~\eqref{cmplx:inner_aalm} and the outer complexity from~\eqref{cmplx:outer_aalm}, substituting $\rho= L_f/\|A\|^2$, and using $D\geq 1$, we obtain the total complexity as in~\eqref{cmplx:total_aalm}.
    \QEDA

\section{Numerical Experiments}\label{sec:numerical}
In this section we provide numerical illustrations of the proposed primal and dual methods.  All code is implemented in Julia and is publicly available\footnote{\href{https://github.com/mxburns2022/PrimalDualRestart}{https://github.com/mxburns2022/PrimalDualRestart}}. Details of numerical experiments (problem generation, libraries, etc.) can be found in Appendix~\ref{appdx:numerical_details}. In-depth experimental analysis is beyond the scope of this work, and these tests should be taken as preliminary illustrations.

\subsection{Primal Methods: Restarted ACG}
We compare Restarted ACG (Algorithm~\ref{alg:restart}) to baseline ACG (Algorithm~\ref{alg:ACG}, ``None'') as well as two prominent restart schemes from literature: ``gradient'' restarting~\cite{odonoghueAdaptiveRestartAccelerated2015} and ``speed'' restarting~\cite{su2016differential}. 

Gradient restarting is a heuristic scheme that restarts the ACG solver whenever the gradient mapping forms an acute angle with the update direction, i.e., $\inner{\tx_k-y_{k+1}}{y_{k+1}-y_k}>0$.

For speed restarting, we restart the acceleration whenever the distance between adjacent iterates decreases, $\|y_{k+1}-y_k\|<\|y_{k}-y_{k-1}\|$, motivated by the continuous-time limit of ACG~\cite{su2016differential}. To prevent the speed scheme from restarting too often, we only allow restarts at most every $k_{\min}$ iterations. As in~\cite{su2016differential}, we set $k_{\min}=10$.

We focus on the sparse linear regression/LASSO problem~\cite{tibshiraniRegressionShrinkageSelection1996}
\begin{equation}\label{def:lasso}
        \phi_*:=\min_{x\in\R^n}\left\{\phi(x):={\frac{1}{2}\|Ax-b\|_2^2} + {\gamma\|x\|_1}\right\}.
\end{equation}
where $A\in\R^{m\times n}$, $b\in\R^m$, and $\gamma > 0$. We use $n=1000$, $m=500$, and $\gamma=1/2$.

Fig.~\ref{fig:restart_figures} shows the estimated function value gap $\phi(y_k)-\phi_*$, where $\phi_*$ is the best solution found by any solver, versus the number of ACG iterations. All of the restart methods are significantly faster than baseline ACG (``None''). Speed restarting and Restarted ACG behave quite similarly, while gradient restarting has the most rapid convergence.  


\begin{figure}[t]
    \centering
    \includegraphics[width=0.7\linewidth]{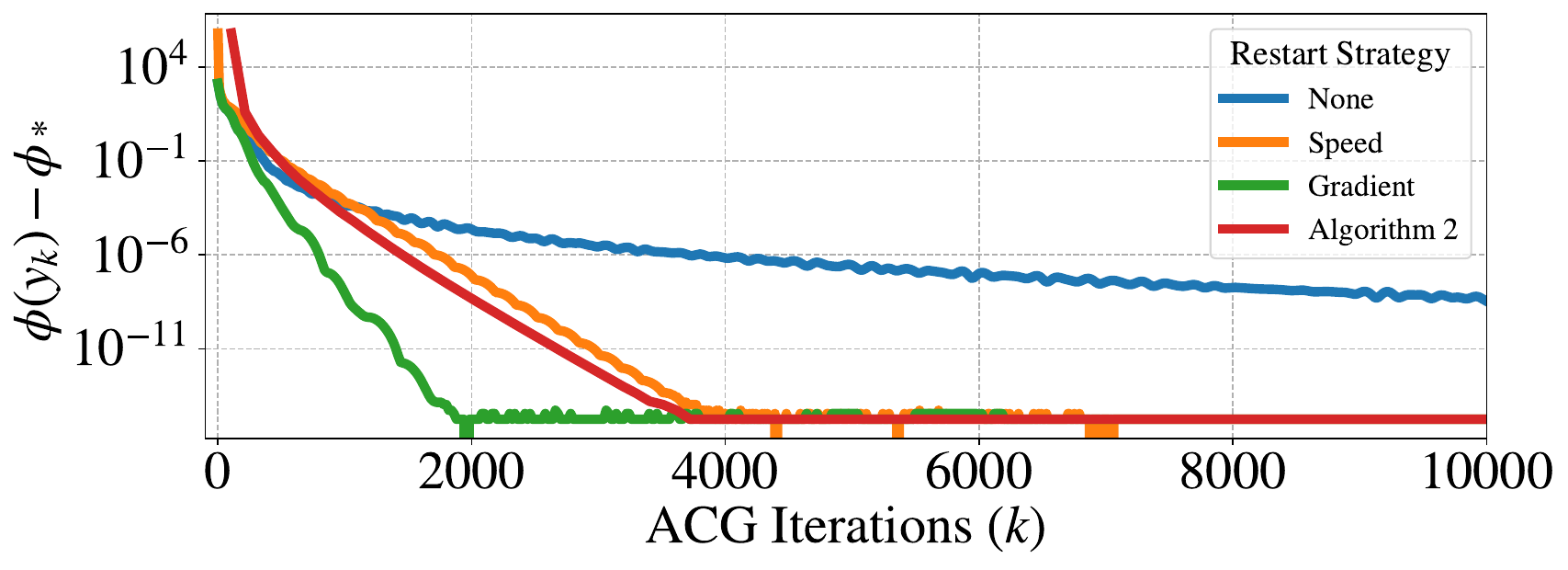}
    \caption{Numerical results for Restarted ACG algorithms.}
    \label{fig:restart_figures}
\end{figure}
\subsection{Dual Methods: Augmented Lagrangian}

\begin{figure}[t]
    \centering

\begin{subfigure}[t]{0.6\textwidth}
    \includegraphics[width=\linewidth]{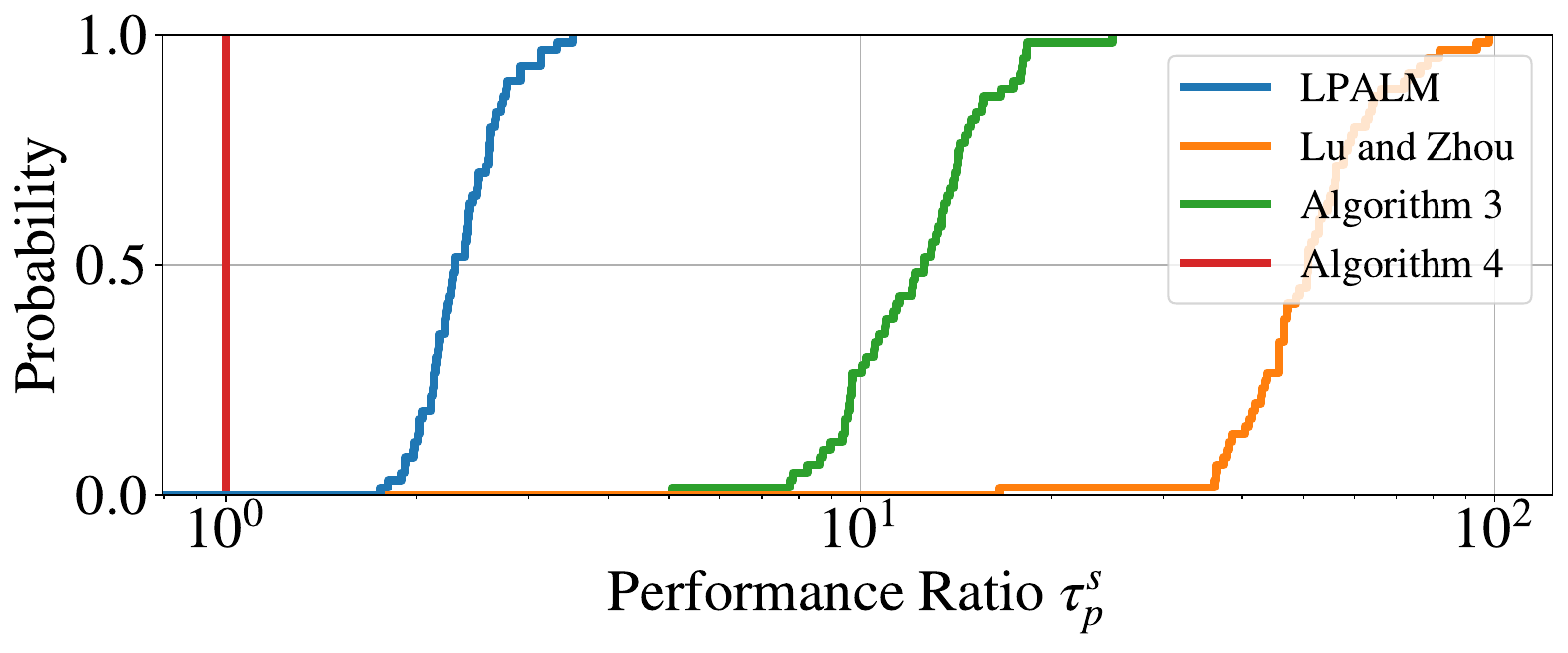}
    \subcaption{Performance profile of I-ALM algorithms in $n=200$, $m=100$ LCQP instances with $\varepsilon=10^{-3}$}
    \label{fig:alm_performance_profile}
\end{subfigure}
\begin{subfigure}[t]{0.4\textwidth}
    \centering
    \includegraphics[width=\linewidth]{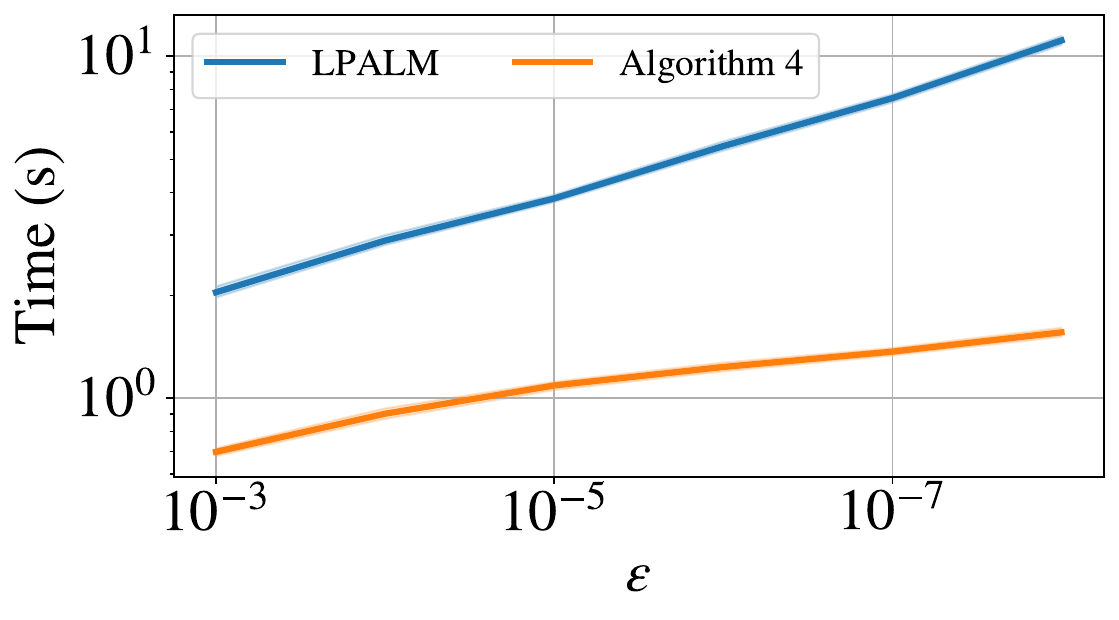}
    \subcaption{Wall time scaling of Algorithm~\ref{alg:dsc_aalm} and~\cite[Algorithm 1]{liAcceleratedAlternatingDirection2019} with varying {accuracy} $\varepsilon^{-1}$.}
    \label{fig:alm_absolute_1000}
\end{subfigure}
\begin{subfigure}[t]{0.4\textwidth}
    \centering
    \includegraphics[width=\linewidth]{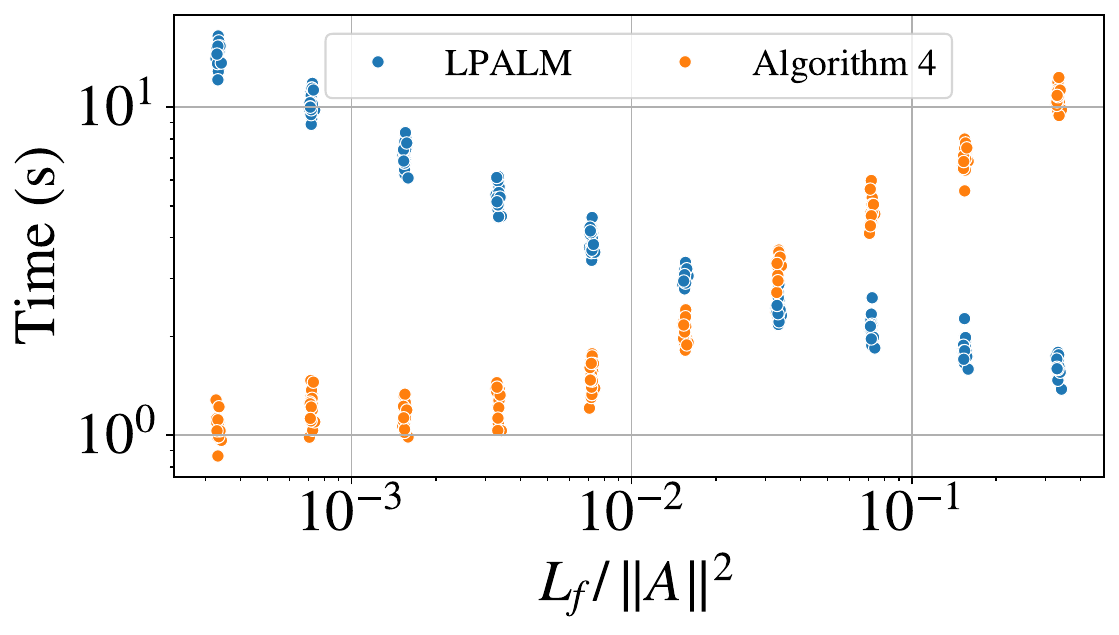}
    \subcaption[t]{Wall time scaling of Algorithm~\ref{alg:dsc_aalm} and~\cite[Algorithm 1]{liAcceleratedAlternatingDirection2019} with varying $L_f/\|A\|^2$ with relative accuracy $L_f/\varepsilon$ and optimizer held constant.}
    \label{fig:alm_relative_1000}
\end{subfigure}
    \caption{Numerical experiments for ALM variants tested.}
    \label{fig:alm_1000}
\end{figure}
In this subsection, we compare several proposed ALM variants in linearly-constrained quadratic programming (LCQP). The LCQP problem is given by
\begin{equation*}
    \hat\phi_*:=\min_{x\in\R^n} \left\{\phi(x):=\frac{1}{2}x^\top Mx + c^\top x + \delta_{\mathrm{Q}}(x):Ax=b\right\},
\end{equation*}
where $M\in\mathbb{R}^{n\times n}$ is positive semi-definite, $c\in \R^n$, $A\in \R^{m\times n}$ is full rank, $b\in \R^m$, and $m\leq n$. For the nonsmooth component, we choose the indicator function $\delta_Q(\cdot)$, where $Q$ is the $n$-dimensional box with side length 20 centered at 0, $Q=\{x:-10\leq x_i\leq 10 \text{ for all } 1\leq i\leq n\}$. The LCQP problem is a staple of model predictive control~\cite{frisonHPIPMHighperformanceQuadratic2020} and as a subproblem in algorithms for nonsmooth optimization~\cite[Lemma 10.8]{bonnans2006numerical}, motivating its inclusion.

In addition to Algorithms~\ref{alg:al} and~\ref{alg:dsc_aalm},  we compare against two $\tO(\varepsilon^{-1})$ ALM proposals from the literature: the decreasing error/decreasing penalty scheme from~\cite[Algorithm 2]{luIterationComplexityFirstOrderAugmented2023} (``Lu and Zhou'') and the linearized proximal ALM (``LPALM'') with static $\rho$. LPALM is derived from the LPADMM method of~\cite[Algorithm 1]{liAcceleratedAlternatingDirection2019} by setting one block to zero. The inner loop of Lu and Zhou is terminated based on the absolute error condition $\Lp(x_{k+1},\lambda_k)-\min_{x\in \R^n}\Lp(x,\lambda_k)\leq \varepsilon_k$, which is estimated by the gradient mapping norm using Lemma~\ref{lem:grad_map_merged}(a). We tested Algorithm~\ref{alg:al} with a variety of $\rho$ values, finding that $\rho=1.0$ was the most performant in practice. For Algorithm~\ref{alg:dsc_aalm}, we set $\rho=\sqrt{m}\|M\|/\|A\|^2$ (i.e., $\sqrt{m}L_f/\|A\|^2$).

Figure~\ref{fig:alm_performance_profile} shows the performance profile~\cite{dolanBenchmarkingOptimizationSoftware2002b} of the four algorithms across 60 randomly generated LCQP instances with $n=200$ and $m=100$. We solve each instance to $\varepsilon=10^{-3}$ accuracy (using the definition in~\eqref{def:approximate_kkt}), terminating when an $\varepsilon$-primal-dual solution is detected. The x-axis is the ``performance ratio'' $\tau^s_p=t^s_p / \min_s t^s_p$, where $t^s_p$ is the elapsed wall-time needed for solver $s$ to reach the target accuracy on problem $p$, i.e., $\tau_p^s=1$ if solver $s$ was first to achieve $\varepsilon\leq10^{-3}$ on problem $p$. The y-axis of Figure~\ref{fig:alm_performance_profile} shows the cumulative distribution of $\tau_p^s$ for each solver across the 60 instances tested. Algorithm~\ref{alg:dsc_aalm} shows a clear advantage, with LPALM placing second. 

Focusing on the two most performant methods, Figs.~\ref{fig:alm_absolute_1000} and~\ref{fig:alm_relative_1000} compare LPALM and Algorithm~\ref{alg:dsc_aalm} across 20 LCQP instances with $n=1000$ and $m=500$. Fig~\ref{fig:alm_absolute_1000} fixes the problem set ($L_f=1.0$, $\|A\|\approx 17$) and varies the target accuracy $\varepsilon$. Both methods appear to scale similarly. However Algorithm~\ref{alg:dsc_aalm} is over $5\times$ faster, with the gap widening in the high-accuracy regime.

Fig.~\ref{fig:alm_relative_1000} examines the performance impact of the ratio $L_f/\|A\|^2$ in Algorithm~\ref{alg:dsc_aalm} and LPALM. We fix 20 problems $\{(M_i,c_i,A_i,b_i)\}$, then rescale each problem $(M_i,c_i,A_i,b_i)$ to $(\chi M_i,\chi c_i,A_i,b_i)$ and solve to $\chi 10^{-6}$ accuracy for some $\chi>0$\footnote{We hold the primal feasibility target constant, only adjusting the tolerance for the primal subgradient norm.}. It is worth emphasizing again that the problems, minimizers, and relative accuracy are constant: the only variable is the rescaled ratio. We tested 10 values of $\chi\in[0.1, 100]$. As seen in Fig.~\ref{fig:alm_relative_1000}, LPALM and Algorithm~\ref{alg:dsc_aalm} are effective in two very different regimes. For $L_f\ll \|A\|^2$, Algorithm~\ref{alg:dsc_aalm} is over 10$\times$ faster. However, the methods meet when $L_f\sim 0.03\|A\|^2$, and LPALM significantly overtakes Algorithm~\ref{alg:dsc_aalm} in the regime $L_f\geq 0.1\|A\|^2$. These findings suggest that the ``rescaling'' discussed in the remarks after Corollary~\ref{cor:aalm_complexity_pd} is more than a theoretical convenience: Algorithm~\ref{alg:dsc_aalm} performs significantly better when $L_f\ll \|A\|^2$, even when that requires decreasing $\varepsilon$.


\section{Concluding Remarks}\label{sec:conclusion}
This paper proposes the Restarted ACG method (Algorithm~\ref{alg:restart}), I-ALM (Algorithm~\ref{alg:al}), and I-FALM (Algorithm~\ref{alg:dsc_aalm}). Our improved analysis of all three methods is grounded in a unified IPP perspective, making use of the LOrA and FLOrA frameworks proposed in Section~\ref{sec:framework}.
Using the FLOrA framework, we show that Algorithm~\ref{alg:restart} achieves optimal global complexity for solving~\eqref{eq:ProbIntro} in both convex and strongly convex settings, which, to our knowledge, is a novel result in the restarted ACG literature. Similarly, we utilize the LOrA framework to prove that  Algorithm~\ref{alg:al} achieves near-optimal, non-ergodic complexity for solving~\eqref{eq:ProbIntro_LC} with constant regularization, a novel result in the ALM literature to our knowledge. Finally, we combine the analysis of Algorithm~\ref{alg:al} with the FLOrA framework to develop an accelerated variant, I-FALM (Algorithm~\ref{alg:dsc_aalm}), which also achieves near-optimal non-ergodic complexity for solving~\eqref{eq:ProbIntro_LC}. Both Algorithms~\ref{alg:al} and~\ref{alg:dsc_aalm} utilize gradient mapping-based termination criteria for the inner ACG solver, which are both efficiently computable and remove the need for the postprocessing used in previous ALM literature~\cite{lanIterationcomplexityFirstorderAugmented2016a}.
Numerical experiments validate the empirical performance of the proposed algorithms, with Algorithm~\ref{alg:dsc_aalm} significantly outperforming competing ALM variants.

Several related questions merit future investigation. First, Restarted ACG attains optimal complexity for strongly convex optimization if the modulus $\mu_f$ is provided. However, in the absence of prior knowledge about $\mu_f$, one must rely on universal methods such as \cite{guiguesUniversalSubgradientProximal2026b,sujanani2025efficient}, which achieve complexity bounds in terms of $\mu_f$ as good as those obtained when $\mu_f$ is known in advance. 
Second, Assumption~\ref{assmp:constrained}(d) plays a crucial role in our analysis throughout Section~\ref{sec:dual}, and it remains an open question whether optimal I-ALM variants can be designed with inexact subroutines without boundedness. It is also of interest to design an algorithm that does not require an estimate of $R_\Lambda$ as input, since an estimate may not be available a priori. Third, another related pursuit would be to obtain (near)-optimal complexities for Algorithm~\ref{alg:dsc_aalm} without primal-dual perturbations (i.e., $\gamma_d=0$ and $\gamma_p=0$), which may remove the explicit need for an $R_\Lambda$ estimate.
    
	\bibliographystyle{plain}
	\bibliography{ref_cleaned}
	
	\appendix


\section{Details of Numerical Experiments}\label{appdx:numerical_details}
All experiments are run on a workstation desktop running Ubuntu 24.04.3 LTS with an Intel i9-13900k CPU and 64 GB of RAM. Proximal operator implementations are taken from the ProximalOperators.jl\footnote{\url{https://github.com/JuliaFirstOrder/ProximalOperators.jl}} package.
\subsection{Restarted ACG Experimental Details}
Recall that our problem of interest is the sparse linear regression/LASSO problem ~\eqref{def:lasso}.
For testing we set $n=1000$, $m=500$, and $\gamma=1/2$. $A$ is set to 20\% density, with nonzero entries generated IID normal. The vector $b$ is randomly generated with IID uniform entries over $[0,1]$. We start each solver from the origin $x_0=0$ and use $L_f=\|A\|^2$. After some brief parameter tuning, we set $\lambda=0.2$ in Algorithm~\ref{alg:restart}. The function value and number of restart steps are logged on every restart for each algorithm. Note the location of data points along the x-axis of Fig.~\ref{fig:restart_figures} is therefore non-uniform, since the number of steps between each restart differ for each algorithm.
\subsection{I-ALM Experimental Details}
Recall that the problem class used for I-ALM testing is the linearly constrained quadratic program

\begin{align*}
    &\min_{x\in\R^n} \frac{1}{2}x^\top Mx + c^\top x\\
    &\text{s.t.  } Ax=b\\
    &\quad x_\ell\leq x_i \leq x_u \text{ for all } i\in\{1,\dots,n\}.
\end{align*}

Fixing $n, m,r\in\mathbb{N}$ with $n\geq \max\{r,m\}$ and $\delta\in[0,1]$, we generate problem structures using the following procedure:
\begin{itemize}
    \item $\tilde{M}=RR^T$ with $R\in\R^{n\times r}$, $R_{ij}\sim\mathcal{N}(0,1)$. We then rescale $M_{ij}=\tilde{M}_{ij}/\|M\|$ to ensure that $L_f=\|M\|=1$,
    \item $c\in\R^n$, $c_i\sim \mathcal{N}(0,1)$,
    \item $A\in\R^{m\times n}$, $A_{ij}\sim \text{Bern}(\delta)\cdot\mathcal{N}(0,1)$,
    \item $b\in\R^{m}$, $b_{i}\sim \mathcal{N}(0,1)$,
    \item $x_\ell=-10$, $x_u=10$.
\end{itemize} 
$M$ is therefore an $n\times n$ matrix with rank $r$, $c$ and $b$ are entry-wise normally-distributed vectors, and $A$ is a normally distributed matrix with density $\delta=0.1$. We them compute the diameter $D$ as $D=\sqrt{n}(x_\ell-x_u)$. We estimate $\hat{R}_\Lambda=1000$ for all problems, which worked as a sufficient upper bound in practice. This procedure was only used for testing. In general, one could use a ``guess-and-check'' procedure as proposed in~\cite{monteiro2016adaptive} which only adds $\mathcal{O}(\log\bardtL)$ complexity.
For ``Lu and Zhou", we set $\varepsilon_k= \varepsilon_0\alpha^k$ with $\alpha=0.7$, $\varepsilon_0=0.1$, and $\rho_0=10$ after preliminary tuning. For LPALM we set $\rho=\max\{\sqrt{L_f}/\|A\|,L_f/\|A\|^2\}$, which was a performant heuristic in our limited numerical testing. We set $\alpha=0.7$, $\varepsilon_0=100$ for Algorithm~\ref{alg:al} and $\alpha=0.85$, $\varepsilon_0=\rho^{-1}$ for Algorithm~\eqref{alg:dsc_aalm}.

\section{Technical Results}\label{appdx:technical}
The gradient mapping (defined in~\eqref{def:grad_mapping}) serves a critical role in Subsections~\ref{ssec:proofs_alm} and~\ref{ssec:proofs_falm}, as well as in the numerical experiments in Section~\ref{sec:numerical}. The following lemma provides several technical results for the gradient mapping of a convex composite function.
\begin{lemma}\label{lem:grad_map_merged}
    Consider problem~\eqref{eq:ProbIntro}, which we assume satisfies Assumption~\ref{assmp:csco}. Additionally assume that $\dom h$ is bounded with diameter $D$.  Given $\eta\leq L_f^{-1}$ and $\tx\in\dom h$, define the gradient mapping $\mathcal{G}^\eta_\phi(\tx)$ as in~\eqref{def:grad_mapping} and set 
    \begin{equation}\label{eq:tx+}
        x^+=\tx-\eta\mathcal{G}^\eta_\phi(\tx).
    \end{equation} 
    Then, the following statements hold:
    \begin{itemize}
        \item[{\rm a)}] denoting $\phi_*=\min_{x\in\dom h}\phi(x)$, we have
    \[
        \phi(x^+)-\phi_*\leq D\|\mathcal{G}^\eta_\phi(\tx)\|-\frac{\eta}{2}\|\mathcal{G}^\eta_\phi(\tx)\|^2;
    \]
        \item[{\rm b)}] if $\|\mathcal{G}^\eta_\phi(\tx)\|\leq \varepsilon$, then there exists a subgradient $v\in\partial \phi(x^+)$ satisfying $\|v\| \le 2\varepsilon$;
        \item[{\rm c)}] given $\gamma>0$ and $\bar{x}\in \dom h$, define $\phi_\gamma(x)= \phi(x)+\gamma\|x-\bar{x}\|^2/2$. Suppose that $\gamma\leq \varepsilon/(2D)$ and $\|\mathcal{G}^{\eta}_{\phi_\gamma}(\tx)\|\leq \varepsilon/2$, where the proximal mapping in $\mathcal{G}^{\eta}_{\phi_\gamma}(\cdot)$ is still with respect to $h$, then  $\|\mathcal{G}^{\eta}_\phi(\tx)\|\leq \varepsilon$.
    \end{itemize}
\end{lemma}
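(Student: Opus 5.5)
The plan is to work directly from the optimality condition defining $x^+$. By \eqref{def:grad_mapping} and \eqref{eq:tx+}, $x^+=\prox_{\eta h}(\tx-\eta\nabla f(\tx))$. The prox optimality condition therefore gives
\[
\mathcal{G}^\eta_\phi(\tx)-\nabla f(\tx)\in\partial h(x^+).
\]
Write $G:=\mathcal{G}^\eta_\phi(\tx)$. All three parts will follow from this inclusion, from $L_f$-smoothness \eqref{ineq:Lsmooth}, and from the boundedness of $\dom h$.

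For part (a), I will invoke the standard descent inequality for the proximal gradient step. Fix $x\in\dom h$. Convexity of $f$ gives $f(x)\ge f(\tx)+\inner{\nabla f(\tx)}{x-\tx}$. The subgradient inclusion gives $h(x)\ge h(x^+)+\inner{G-\nabla f(\tx)}{x-x^+}$. Smoothness with $\eta\le 1/L_f$ gives
\[
f(x^+)\le f(\tx)+\inner{\nabla f(\tx)}{x^+-\tx}+\frac{1}{2\eta}\|x^+-\tx\|^2 .
\]
Adding these three inequalities and using $x^+-\tx=-\eta G$ yields
\[
\phi(x^+)\le\phi(x)+\inner{G}{x^+-x}+\frac{\eta}{2}\|G\|^2 .
\]
Writing $x^+-x=(\tx-x)-\eta G$, the right-hand side becomes
\[
\phi(x)+\inner{G}{\tx-x}-\eta\|G\|^2+\frac{\eta}{2}\|G\|^2 = \phi(x)+\inner{G}{\tx-x}-\frac{\eta}{2}\|G\|^2 .
\]
Now take $x=x_*$ and apply Cauchy–Schwarz together with $\|\tx-x_*\|\le D$, which holds because both points lie in $\dom h$. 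This gives (a). The only care needed is to expand around $\tx$ rather than around $x^+$, since that is what produces the sign of the $\eta\|G\|^2/2$ term.

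For part (b), I take
\[
v:=G-\nabla f(\tx)+\nabla f(x^+)\in\partial\phi(x^+).
\]
Then $\|v\|\le\|G\|+L_f\|x^+-\tx\|=\|G\|+L_f\eta\|G\|\le 2\|G\|\le 2\varepsilon$.

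Part (c) is the main obstacle, because the two gradient mappings involve different prox inputs. Write $\phi_\gamma=f_\gamma+h$ with $f_\gamma=f+\gamma\|\cdot-\bar x\|^2/2$, so that $\nabla f_\gamma(\tx)=\nabla f(\tx)+\gamma(\tx-\bar x)$. The two prox inputs differ by $\eta\gamma(\tx-\bar x)$. Since $\prox_{\eta h}$ is nonexpansive, I obtain
\[
\|\mathcal{G}^\eta_\phi(\tx)-\mathcal{G}^\eta_{\phi_\gamma}(\tx)\|\le\frac{1}{\eta}\,\eta\gamma\|\tx-\bar x\|\le\gamma D\le\frac{\varepsilon}{2},
\]
where $\|\tx-\bar x\|\le D$ because $\tx,\bar x\in\dom h$. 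The triangle inequality then gives $\|\mathcal{G}^\eta_\phi(\tx)\|\le\varepsilon/2+\varepsilon/2=\varepsilon$. The step to check carefully is that the stepsize $\eta$ is admissible for both mappings, or at least that the nonexpansiveness argument does not depend on it; it does not, since nonexpansiveness of the prox is unconditional.
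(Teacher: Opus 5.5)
Your proposal is correct and matches the paper's proof essentially step for step. In (a) you derive inline the descent inequality $\phi(x)-\phi(x^+)\ge \frac{\eta}{2}\|\mathcal{G}^\eta_\phi(\tx)\|^2-\inner{\tx-x}{\mathcal{G}^\eta_\phi(\tx)}$, which the paper cites as Lemma 2.3 of Beck and Teboulle; in (b) you use the same subgradient $v$; and in (c) you obtain the bound $\|\mathcal{G}^\eta_\phi(\tx)-\mathcal{G}^\eta_{\phi_\gamma}(\tx)\|\le\gamma\|\tx-\bar x\|$ directly from nonexpansiveness of $\prox_{\eta h}$ (valid for any $\eta>0$), where the paper cites Ito and Fukuda for it, so your version simply makes the argument self-contained.
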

\begin{proof}
    a) Applying Lemma 2.3 of \cite{beck2009fast} and using the definition of $x^+$ in \eqref{eq:tx+}, we have for every $y\in\dom h$,
    \begin{align*}
        \phi(y)-\phi(x^+)&\geq \frac{1}{2\eta}\|x^+-\tx\|^2+\eta^{-1}\inner{\tx-y}{x^+-\tx} \stackrel{\eqref{eq:tx+}}=\frac{\eta}{2}\|\mathcal{G}^\eta_\phi(\tx)\|^2-\inner{\tx-y}{\mathcal{G}^\eta_\phi(\tx)}\\
        &\geq \frac{\eta}{2}\|\mathcal{G}^\eta_\phi(\tx)\|^2-\|\tx-y\|\|\mathcal{G}^\eta_\phi(\tx)\|
        \geq \frac{\eta}{2}\|\mathcal{G}^\eta_\phi(\tx)\|^2-D\|\mathcal{G}^\eta_\phi(\tx)\|,
    \end{align*}
    where the second inequality is due to the Cauchy-Schwarz inequality and the last inequality follows from the boundedness of $\dom h$. The statement follows by taking $y=x_*$ for any $x_*\in\{x\in\dom h:\phi(x)=\phi_*\}$.    

    b) In view of \eqref{def:grad_mapping}, the definition of $x^+$ in \eqref{eq:tx+} can be rewritten as
    \[
    x^+ = \prox_{\eta h}(\tx-\eta\nabla f(\tx)),
    \]
    whose optimality condition yields that
    \[
        0\in \frac{x^+-\tx}{\eta}+\nabla f(\tx)+\partial h(x^+).
    \]
    Rearranging terms and adding $\nabla f(x^+)$ to both sides, we have
    \begin{equation*}
        v:= \frac{\tx-x^+}{\eta}-\nabla f(\tx)+\nabla f(x^+)\in \partial h(x^+)+\nabla f(x^+)= \partial \phi(x^+).
    \end{equation*}
    Using the triangle inequality and the smoothness of $f$, we have
    \begin{align*}
        \|v\|\leq \frac{1}{\eta}\|{\tx-x^+}\|+ L_f\|x^+ - \tx\| \le \frac{2}{\eta}\| \tx-x^+\| \stackrel{\eqref{eq:tx+}}=2\|\mathcal{G}_\phi^\eta(\tx)\|\leq 2\varepsilon,
    \end{align*}
    where the second inequality is due to the fact that $L_f \le 1/\eta$.
    Hence, we prove the statement.

    c) It follows from Lemma 3.1(iii) of \cite{itoNearlyOptimalFirstOrder2021} that for any $\gamma >0$,
    \begin{equation}\label{ineq:gradient_map_diff}
        \|\mathcal{G}^{\eta}_\phi(\tx)-\mathcal{G}^{\eta}_{\phi_\gamma}(\tx)\|\leq \gamma\|\tx-\bar{x}\|.
    \end{equation}
    Using the above inequality and the triangle inequality, we have
    \[
        \|\mathcal{G}^{\eta}_\phi(\tx)\|\leq \|\mathcal{G}^{\eta}_{\phi_\gamma}(\tx)\|+\|\mathcal{G}^{\eta}_{\phi_\gamma}(\tx)-\mathcal{G}^{\eta}_{\phi}(\tx)\|
        \stackrel{\eqref{ineq:gradient_map_diff}}\leq \|\mathcal{G}^{\eta}_{\phi_\gamma}(\tx)\|+\gamma\|\tx-\bar{x}\|\leq \|\mathcal{G}^{\eta}_{\phi_\gamma}(\tx)\|+\gamma D\leq \varepsilon,
    \]
    where the third inequality follows from boundedness, and the final inequality follows from the assumptions on $\|\mathcal{G}^{\eta}_{\phi_\gamma}(\tx)\|$ and $\gamma$.
\end{proof}

The following result connects the gradient mapping of the augmented Lagrangian function $\Lp$ to the subdifferential of the Lagrangian $\L$. The lemma is used to prove Propositions~\ref{prop:alm_contrapositive} and~\ref{prop:OR}.
\begin{proposition}\label{prop:lag_subgradient}
    Consider problem~\eqref{eq:ProbIntro_LC}, which we assume satisfies Assumption~\ref{assmp:constrained}. Given $\lambda\in\R^m$ and $\rho>0$, let $\Lp(\cdot,\lambda)$ be the augmented Lagrangian in~\eqref{def:augmented_lagrangian}. Define $M_\rho$ as in~\eqref{def:Mrho}, let $\eta\leq M_\rho^{-1}$, and
    suppose $\tx$ is a point satisfying $\|\mathcal{G}^{\eta}_{\Lp(\cdot,\lambda)}(\tx)\|\leq \varepsilon/2$. Set 
    \begin{equation}\label{eq:lora_update_appdx_b}
        x^+=\tx-M_\rho^{-1}\mathcal{G}^{\eta}_{\Lp(\cdot,\lambda)}(\tx),\quad \lambda^+=\lambda+\rho(Ax^+-b).
    \end{equation}
    Then, there exists a subgradient $v\in\partial \L(\cdot,\lambda^+)(x^+)$ satisfying $\|v\|\leq \varepsilon$.
\end{proposition}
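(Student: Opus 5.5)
The plan has two steps: reduce the statement to a subgradient bound for the augmented Lagrangian itself, then invoke Lemma~\ref{lem:grad_map_merged}(b) with $f$ replaced by the smooth part $\Psi_\lambda$ of $\Lp(\cdot,\lambda)$, which is $M_\rho$-smooth. As worded, the statement holds only when the point $\tx-M_\rho^{-1}\mathcal{G}^{\eta}_{\Lp(\cdot,\lambda)}(\tx)$ is a genuine prox-gradient point. That is automatic for $\eta=M_\rho^{-1}$, which I also take to be the intended reading, since the algorithms actually step with $\eta$. For $\eta<M_\rho^{-1}$ the argument below does not close the gap.

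\textbf{Step 1 (key identity).} Since $\Psi_\lambda$ is differentiable and $h$ is convex, for every $x\in\dom h$ we have
\[
\partial \Lp(\cdot,\lambda)(x)=\nabla f(x)+A^\top\bigl(\lambda+\rho(Ax-b)\bigr)+\partial h(x).
\]
With $\lambda^+=\lambda+\rho(Ax^+-b)$ as in~\eqref{eq:lora_update_appdx_b}, this gives $\partial\Lp(\cdot,\lambda)(x^+)=\nabla f(x^+)+A^\top\lambda^+ +\partial h(x^+)=\partial\L(\cdot,\lambda^+)(x^+)$. It therefore suffices to find $v\in\partial\Lp(\cdot,\lambda)(x^+)$ with $\|v\|\le\varepsilon$.

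\textbf{Step 2 (case $\eta=M_\rho^{-1}$).} Here $x^+=\tx-\eta\mathcal{G}^\eta_{\Lp(\cdot,\lambda)}(\tx)=\prox_{\eta h}(\tx-\eta\nabla\Psi_\lambda(\tx))$, which lies in $\dom h$. Lemma~\ref{lem:grad_map_merged}(b), applied with $f$ replaced by $\Psi_\lambda$, $L_f$ by $M_\rho$, and its $\varepsilon$ equal to $\varepsilon/2$, gives $v\in\partial\Lp(\cdot,\lambda)(x^+)$ with $\|v\|\le 2\|\mathcal{G}^\eta_{\Lp(\cdot,\lambda)}(\tx)\|\le\varepsilon$. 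Concretely,
\[
v=\frac{\tx-x^+}{\eta}+\nabla\Psi_\lambda(x^+)-\nabla\Psi_\lambda(\tx),
\]
and the bound follows from the triangle inequality together with $M_\rho\le\eta^{-1}$. Step 1 then finishes this case.

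\textbf{Main obstacle (case $\eta<M_\rho^{-1}$).} Now $x^+=\tx-M_\rho^{-1}\mathcal{G}^\eta$ overshoots the prox point $p=\tx-\eta\mathcal{G}^\eta$ along the same ray, so $x^+$ may leave $\dom h$. In that case $\partial\L(\cdot,\lambda^+)(x^+)=\emptyset$ and the statement fails as literally written. The first thing I would try is the standard monotonicity of the gradient mapping: $t\mapsto\|\mathcal{G}^t(\tx)\|$ is nonincreasing, so $\|\mathcal{G}^{1/M_\rho}\|\le\|\mathcal{G}^\eta\|\le\varepsilon/2$. This lets Step 2 run at step $1/M_\rho$, but only for the point $\tx-M_\rho^{-1}\mathcal{G}^{1/M_\rho}(\tx)$, which differs from the stated $x^+$. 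Unless $x^+$ is read as the prox-gradient point for the step actually used ($\eta$ or $M_\rho^{-1}$), I do not see how to close this gap. In that reading Steps 1--2 give the claim, and this is exactly how it is invoked in Propositions~\ref{prop:alm_contrapositive} and~\ref{prop:OR}, where $\eta=(2L+\mu)^{-1}\le M_\rho^{-1}$.
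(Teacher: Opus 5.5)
Your proposal is correct and is essentially the paper's proof: apply Lemma~\ref{lem:grad_map_merged}(b) to $\Lp(\cdot,\lambda)$, whose smooth part is $M_\rho$-smooth, to get $v\in\partial\Lp(\cdot,\lambda)(x^+)$ with $\|v\|\le\varepsilon$, and then use $\partial\Lp(\cdot,\lambda)(x^+)=\partial\L(\cdot,\lambda^+)(x^+)$. You are also right that the step in the definition of $x^+$ should be $\eta$ rather than $M_\rho^{-1}$: the paper's proof tacitly takes $x^+=\tx-\eta\,\mathcal{G}^{\eta}_{\Lp(\cdot,\lambda)}(\tx)$ when it invokes Lemma~\ref{lem:grad_map_merged}(b), which is how $x_{k+1}$ is defined in Algorithms~\ref{alg:al} and~\ref{alg:dsc_aalm}, and with the literal $M_\rho^{-1}$ step and $\eta<M_\rho^{-1}$ the point can indeed leave $\dom h$.
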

\begin{proof}
    By Lemma~\ref{lem:grad_map_merged}(b) applied to $\Lp(\cdot,\lambda)$ with gradient mapping stepsize $\eta$, there exists a $v\in \partial \Lp(\cdot,\lambda) (x^+)$ satisfying $\|v\|\leq \varepsilon$. It follows from the definition of $\Lp(\cdot,\lambda)$ in \eqref{def:augmented_lagrangian} and subdifferential calculus that
    \begin{align*}
        \partial \Lp(\cdot,\lambda)(x^+)&=\partial\left(\L(\cdot,\lambda)+\frac{\rho}{2}\|A\cdot -b\|^2\right)(x^+)
        =\partial \L(\cdot,\lambda)(x^+)+\rho A^\top(Ax^+-b)\\
        &=\partial(\L(\cdot,\lambda)+\inner{\rho(Ax^+-b)}{A\cdot-b})(x^+)
        \stackrel{\eqref{eq:strong_duality},\eqref{eq:lora_update_appdx_b}}=\partial \L(\cdot,\lambda^+) (x^+),
    \end{align*}
    where the last identity follows from \eqref{eq:lora_update_appdx_b} and the definition of $\L(\cdot,\lambda)$ in \eqref{eq:strong_duality}.
    Therefore, we prove $v\in \partial \L(\cdot,\lambda^+) (x^+)$ and thus conclude the proof.
\end{proof}

We can show that an $\varepsilon$-primal-dual solution to~\eqref{eq:ProbIntro_LC} in the sense of~\eqref{def:approximate_kkt} implies an $\mathcal{O}(\varepsilon)$ bound on the absolute primal gap $|\phi(x)-\hat\phi_*|$. The result has been used in prior works~\cite{luIterationComplexityFirstOrderAugmented2023}, though we repeat the proof for completeness.

\begin{lemma}\label{lem:pd_gap}
    Suppose $(x,\lam)$ is an $\varepsilon$-primal-dual solution to~\eqref{eq:ProbIntro_LC} in the sense of~\eqref{def:approximate_kkt}. Then the absolute value of the primal gap is bounded by
    \begin{equation}\label{ineq:primal_gap_ul}
        |\phi(x)-\hat\phi_*|\leq \varepsilon\max\{\|\lambda_*\|,\|\lam\|+D\},
    \end{equation}
    where $\lam_*\in\Lambda_*=\{\lambda:d(\lambda)=d_*\}$ is an optimal dual solution to~\eqref{eq:strong_duality}.
\end{lemma}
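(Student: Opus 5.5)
We will prove the two one-sided bounds separately. Let $(x,\lambda)$ satisfy \eqref{def:approximate_kkt} with some $v\in\partial h(x)+\nabla f(x)+A^\top\lambda$, $\|v\|\le\varepsilon$ and $\|Ax-b\|\le\varepsilon$. Let $x_*$ be a primal optimal solution and $\lambda_*\in\Lambda_*$. Slater's condition and strong duality \eqref{eq:strong_duality} guarantee that both exist and that $\hat\phi_*=d(\lambda_*)$.

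\textbf{Lower bound.} By definition of the dual function, $\hat\phi_*=d(\lambda_*)\le \L(x,\lambda_*)=\phi(x)+\inner{\lambda_*}{Ax-b}$. Cauchy--Schwarz then gives
\[
\phi(x)-\hat\phi_*\ge -\|\lambda_*\|\|Ax-b\|\ge -\varepsilon\|\lambda_*\|.
\]

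\textbf{Upper bound.} Since $v\in\partial\L(\cdot,\lambda)(x)$ and $\L(\cdot,\lambda)$ is convex, the subgradient inequality at the feasible point $x_*$ (which lies in $\dom h$) gives
\[
\L(x_*,\lambda)\ge \L(x,\lambda)+\inner{v}{x_*-x}.
\]
Feasibility $Ax_*=b$ means $\L(x_*,\lambda)=\hat\phi_*$. Rearranging,
\[
\phi(x)-\hat\phi_*\le -\inner{\lambda}{Ax-b}+\inner{v}{x-x_*}\le \|\lambda\|\varepsilon+\varepsilon D.
\]
Here the last step uses Cauchy--Schwarz, $\|v\|\le\varepsilon$, and the fact that $x,x_*\in\dom h$, so $\|x-x_*\|\le D$ by Assumption~\ref{assmp:constrained}(d).

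Combining the two bounds yields \eqref{ineq:primal_gap_ul}. There is no real obstacle in this argument. The only points needing care are that $x\in\dom h$, which holds because $\partial h(x)\neq\emptyset$, and that the optimal pair $(x_*,\lambda_*)$ exists, which follows from Slater's condition together with compactness of $\dom h$ and closedness of $\phi$.
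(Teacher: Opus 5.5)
Your proof is correct and follows essentially the same route as the paper. The lower bound is the same inequality $\hat\phi_*=d(\lambda_*)\le\L(x,\lambda_*)$, which the paper phrases via the saddle-point property of $(x_*,\lambda_*)$. The upper bound uses the same subgradient inequality for $\L(\cdot,\lambda)$, followed by Cauchy--Schwarz and the diameter bound.

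The only difference is the test point. The paper evaluates the subgradient inequality at the minimizer $u(\lambda)$ of $\L(\cdot,\lambda)$, obtaining the slightly stronger duality-gap bound $\phi(x)-d(\lambda)\le\varepsilon(\|\lambda\|+D)$, and then invokes $d(\lambda)\le\hat\phi_*$. You instead plug in the feasible optimum $x_*$ directly, which bypasses the dual function.
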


\begin{proof}
    We start by proving
    \begin{equation}\label{ineq:pd_gap}
        \phi(x)-d(\lam)\leq \varepsilon(\|\lam\|+D).
    \end{equation}
    Since $(x,\lam)$ is an $\varepsilon$-primal-dual solution to~\eqref{eq:ProbIntro_LC},
    \begin{equation}\label{ineq:assumption_pd_sol}
        v\in \partial\mathcal{L}(\cdot,\lam)(x),\quad\|v\|\leq \varepsilon,\quad \|Ax-b\|\leq \varepsilon,
    \end{equation}
    for some $v\in \R^n$.
    Define $u(\lam)=\underset{x\in\R^n}\argmin \L(x,\lambda)$. Then by the definition of the subdifferential $\partial\mathcal{L}(\cdot,\lam)(x)$ we have
    \[
        -d(\lambda)\stackrel{\eqref{eq:strong_duality}}=-\L(u(\lam),\lam)
        \leq -\L(x,\lam)-\inner{v}{u(\lam)-x}
        \stackrel{\eqref{eq:strong_duality}}=- \phi(x)-\inner{\lam}{Ax-b}-\inner{v}{u(\lam)-x}.
    \]
Rearranging and using the Cauchy-Schwarz inequality gives
\begin{align*}
    \phi(x)&-d(\lambda_r)\leq -\inner{\lam}{Ax-b}-\inner{v}{u(\lam)-x}\\
    &\leq \|\lam\|\|Ax-b\|+\|v\|\|u(\lam)-x\|
    \stackrel{\eqref{ineq:assumption_pd_sol}}\leq \varepsilon\|\lam\|+\varepsilon D,
\end{align*}
where the last inequality follows from~\eqref{ineq:assumption_pd_sol} and Assumption~\ref{assmp:constrained}(d).

    We now prove~\eqref{ineq:primal_gap_ul}. Clearly, the upper bound
    \[
    \phi(x)-\hat\phi_*\leq \varepsilon(\|\lam\|+D)
    \]
    follows from~\eqref{ineq:pd_gap} and strong duality $d(\lambda)\leq d_*=\hat\phi_*$ by~\eqref{eq:strong_duality}. Let $x_*\in \{x\in\dom h: \phi(x)=\hat\phi_*, \,\,Ax=b\}$ be an optimizer of~\eqref{eq:ProbIntro_LC}. Note that since $(x_*,\lam_*)$ is a saddle-point of $\L$ in~\eqref{eq:strong_duality}, we have 
    \begin{align*}
       0&\leq \L(x,\lam_*)-\L(x_*,\lam_*)=\phi(x)+\inner{\lambda_*}{Ax-b}-\hat\phi_*-\inner{\lambda_*}{Ax_*-b}\\
       &=\phi(x)+\inner{\lambda_*}{Ax-b}-\hat\phi_*.
    \end{align*}
    where the last equality follows by the feasibility of $x_*$. Rearranging, we have
    \begin{align*}
        \phi(x)-\hat\phi_*\geq -\inner{\lambda_*}{Ax-b}\geq -\|\lambda_*\|\|Ax-b\|\stackrel{\eqref{ineq:assumption_pd_sol}}\geq -\|\lambda_*\|\varepsilon,
    \end{align*}
    where the second inequality follows from the Cauchy-Schwarz inequality and the final inequality follows from the assumption that $(x,\lambda)$ is an $\varepsilon$-primal-dual solution to~\eqref{eq:ProbIntro_LC}. The inequality~\eqref{ineq:primal_gap_ul} follows from combining the upper and lower bounds.
\end{proof}

\section{Analysis of Frameworks in Section \ref{sec:framework}}\label{appdx:framework}

This section develops the analysis of LOrA and FLOrA frameworks introduced in Section \ref{sec:framework} and finally proves the two main results on sub-optimality guarantees, namely Theorems~\ref{thm:lora_complexity} and~\ref{thm:flora_complexity}.

\subsection{LOrA Analysis}\label{appdx:lora_framework}

We begin the analysis of LOrA by providing the proof of Proposition~\ref{prop:prox_point_lora}. For simplicity of notation, we omit the superscripts $\cdot^\lora$ and subscripts $\cdot_\lora$ in all proofs.

\vspace{1em}

\noindent
{\bf Proof of Proposition~\ref{prop:prox_point_lora}:}
We first note from \eqref{eq:x_lora} that
\begin{equation}\label{incl:optcond}
    \hat u_{k+1} \in \partial \Gamma_k(x_{k+1}).
\end{equation}
    Using \eqref{ineq:Gamma}, \eqref{incl:optcond}, and the fact that $\Gamma_k$ is $\lam^{-1}$-strongly convex, we have 
\[
\Phi(\hat{x}_*) + \frac{1}{2\lam}\|\hat{x}_*-x_k\|^2 \stackrel{\eqref{ineq:Gamma}}{\ge} \Gamma_k(\hat{x}_*) \stackrel{\eqref{incl:optcond}}\ge 
\Gamma_k(x_{k+1}) + \inner{\hat u_{k+1}}{\hat{x}_*-x_{k+1}} + \frac{1}{2\lam}\|\hat{x}_*-x_{k+1}\|^2.
\]
Rearranging the terms and adding $\lam\|\hat u_{k+1}\|^2/2$ to both sides, we have
\begin{align*}
    \frac{\lam}{2}\|\hat u_{k+1}\|^2 - \Gamma_k(x_{k+1}) + \Phi(\hat{x}_*) + \frac{1}{2\lam}\|\hat{x}_*-x_k\|^2&\ge \frac{\lam}{2}\|\hat u_{k+1}\|^2 + \inner{\hat u_{k+1}}{\hat{x}_*-x_{k+1}} + \frac{1}{2\lam}\|\hat{x}_*-x_{k+1}\|^2 \\
    &= \frac{1}{2\lam}\|\lambda\hat u_{k+1} + \hat{x}_*-x_{k+1}\|^2 \ge 0,
\end{align*}
and hence
\begin{equation}\label{ineq:inter}
    \frac{\lam}{2}\|\hat u_{k+1}\|^2 - \Gamma_k(x_{k+1})\geq -\Phi(\hat{x}_*) - \frac{1}{2\lam}\|\hat{x}_*-x_k\|^2.
\end{equation}
Combining the above inequality with \eqref{ineq:lora_base} yields
\begin{align*}
    \frac{\sigma}{2\lam}\|y_{k+1}-x_k\|^2 +\delta_k&\stackrel{\eqref{ineq:lora_base}}\geq \frac{\lam}{2}\|\hat u_{k+1}\|^2+\Phi(y_{k+1})+\frac{1}{2\lambda}\|y_{k+1}-x_k\|^2-\Gamma_k(x_{k+1})\\
    &\stackrel{\eqref{ineq:inter}}\ge \Phi(y_{k+1})+\frac{1}{2\lambda}\|y_{k+1}-x_k\|^2 - \Phi(\hat{x}_*) - \frac{1}{2\lambda}\|\hat{x}_*-x_k\|^2,
\end{align*}
proving the claim.
\QEDA

We next present a technical lemma that is useful in the analysis of LOrA. The first statement is a single-step bound on the primal gap, while the second statement provides a uniform upper bound for the iterate distance. Note that if $\delta_k^\lora=0$ for all $k\geq 0$, then the sequence $\|x_k^\lora-x_*\|$ is non-increasing. However, uniformly bounding the distance with $\delta_k^\lora>0$ requires summability of the absolute error terms.

\begin{lemma}\label{lem:single_step_lora}

    Let $X_*$ be the set of optimal solutions to \eqref{eq:general_convex_prob}. Define $x_*=\argmin\{\|x^\lora_0-x\|:x\in X_*\}$ and suppose $\A^\lora_k=\infty$. Then, for every $k\ge 0$, we have
    \begin{equation}
        2\lambda_\lora[\Phi(y^\lora_{k+1})-\Phi(x_*)]+(1-\sigma_\lora)\|y^\lora_{k+1}-x^\lora_k\|^2
        \leq \|x^\lora_k-x_*\|^2-\|x^\lora_{k+1}-x_*\|^2 + 2\lambda\delta^\lora_k.\label{ineq:lora_one_step}
    \end{equation}
    Moreover, if $\{\delta^\lora_k\}$ is summable with $C_\delta:=\sum_{i=0}^{\infty}\delta^\lora_k<\infty$, we have for every $k\geq 0$,
    \begin{equation}
        \|x^\lora_{k}-x_*\|\leq R_0^\lora + \sqrt{2\lambda_\lora C_\delta},\label{ineq:lora_xkdist}
    \end{equation}
    where $R_0^\lora=\|x_0^\lora-x_*\|$.
\end{lemma}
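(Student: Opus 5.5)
With $\A_k^\lora=\infty$, \eqref{eq:x_lora} gives $x_{k+1}^\lora=\argmin \Gamma_k^\lora$ and $\hat u_{k+1}^\lora=0$. I would therefore prove \eqref{ineq:lora_one_step} by the standard three-point argument for a $\lambda^{-1}$-strongly convex lower model. Superscripts and subscripts are dropped below.

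\emph{Step 1: lower bound on the model at $x_*$.} Since $\Gamma_k$ is $\lambda^{-1}$-strongly convex and minimized at $x_{k+1}$ (so $0\in\partial\Gamma_k(x_{k+1})$), we get $\Gamma_k(x_*)\ge \Gamma_k(x_{k+1})+\frac{1}{2\lambda}\|x_*-x_{k+1}\|^2$. Combining this with \eqref{ineq:Gamma} at $x_*$ yields
\[
\Phi(x_*)+\frac{1}{2\lambda}\|x_*-x_k\|^2\ge \Gamma_k(x_{k+1})+\frac{1}{2\lambda}\|x_*-x_{k+1}\|^2 .
\]

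\emph{Step 2: use the inexactness condition.} With $\hat u_{k+1}=0$, condition \eqref{ineq:lora_base} gives
\[
\Gamma_k(x_{k+1})\ge \Phi(y_{k+1})+\frac{1-\sigma}{2\lambda}\|y_{k+1}-x_k\|^2-\delta_k .
\]
Substituting this into Step 1 and multiplying by $2\lambda$ gives exactly \eqref{ineq:lora_one_step}. This part is routine.

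\emph{Step 3: the distance bound \eqref{ineq:lora_xkdist}.} Since $\Phi(y_{i+1})\ge\Phi(x_*)$, inequality \eqref{ineq:lora_one_step} implies $\|x_{i+1}-x_*\|^2\le \|x_i-x_*\|^2+2\lambda\delta_i$. Telescoping from $i=0$ to $k-1$ gives
\[
\|x_k-x_*\|^2\le (R_0)^2+2\lambda\sum_{i<k}\delta_i\le R_0^2+2\lambda C_\delta .
\]
Finally, $\sqrt{a+b}\le\sqrt a+\sqrt b$ gives the claim.

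The only point needing care is Step 1. The minimizer characterization requires $\Gamma_k$ to be closed with $x_{k+1}$ in its domain, which holds because $\Gamma_k\in\CConv{1/\lambda}{\dom\Phi}$. The argument also needs $x_*\in\dom\Phi$ so that \eqref{ineq:Gamma} can be applied there. No other obstacle is expected; the telescoping crude bound suffices since the statement does not require a sharper recursion.
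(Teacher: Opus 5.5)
Your proposal is correct and follows the paper's proof. The paper uses the same steps: the strong-convexity bound on the model at its minimizer, combined with \eqref{ineq:Gamma} at $x_*$ and with \eqref{ineq:lora_base}, followed by the same telescoping. The only difference is that the paper carries a general $\A_k^\lora$ (with modulus $1/\A_k^\lora+1/\lambda_\lora$) and sets $\A_k^\lora=\infty$ only at the end. Your telescoped bound $\|x_k-x_*\|^2\le (R_0^\lora)^2+2\lambda_\lora C_\delta$ is actually more careful than the paper's display, which drops the $(R_0^\lora)^2$ term by a typo.
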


\begin{proof}
    Using \eqref{ineq:lora_base} and the fact that the objective in \eqref{eq:x_lora} is $(\A_k^{-1} + \lam^{-1})$-strongly convex, we have for every $v\in \R^n$,
    \begin{align*}
        \Gamma_k(v) + \frac{1}{2\A_k}\|v-x_k\|^2- \frac{\lambda+\A_k}{2\lambda \A_k}\|v-x_{k+1}\|^2 &+ \delta_k \stackrel{\eqref{eq:x_lora}}\ge \Gamma_k(x_{k+1}) + \frac{1}{2\A_k}\|x_{k+1}-x_k\|^2 + \delta_k \\
        &\stackrel{\eqref{ineq:lora_base}}\ge \frac{1}{2\lambda}\|\lambda\hat u_{k+1}\|^2+\Phi(y_{k+1})+\frac{1-\sigma}{2\lambda}\|y_{k+1}-x_k\|^2.
    \end{align*}
    The above inequality together with \eqref{ineq:Gamma} implies that
    \[
    \Phi(v) + \frac{\lambda+\A_k}{2\A_k\lambda}\|v-x_k\|^2-\frac{\lambda + \A_k}{2\A_k\lambda}\|v-x_{k+1}\|^2 + \delta_k \ge
      \frac{1}{2\lambda}\|\lambda\hat u_{k+1}\|^2+\Phi(y_{k+1})+\frac{1-\sigma}{2\lambda}\|y_{k+1}-x_k\|^2.
    \]
    Hence, \eqref{ineq:lora_one_step} immediately follows by taking $v=x_*$ and $\A_k = \infty$ and rearranging the terms.
    
    Rearranging~\eqref{ineq:lora_one_step} and discarding non-negative terms, we have for every $k\ge 0$,
    \[
        \|x_{k+1} - x_*\|^2 \le \|x_k-x_*\|^2 + 2\lambda\delta_k.
    \]
    Summing both sides from $0$ to $k-1$ gives
    \[
        \|x_{k}-x_*\|^2 \le \|x_0-x_*\|^2 + 2\lambda\sum_{i=0}^{k-1}\delta_0\leq 2\lambda C_\delta,
    \]
    which proves \eqref{ineq:lora_xkdist} using the fact that $\sqrt{a+b} \le 
    \sqrt{a} + \sqrt{b}$ for $a, b \ge 0$.
\end{proof}

\vspace{1em}

We are now ready to prove Theorem~\ref{thm:lora_complexity}, which follows directly from the single-step claim of Lemma~\ref{lem:single_step_lora}.

\vspace{1em}

\noindent
{\bf Proof of Theorem~\ref{thm:lora_complexity}:}
    Summing \eqref{ineq:lora_one_step} from $k=0$ to $k-1$ we have
    \[
       2\lam \sum_{i=0}^{k-1}[\Phi(y_{i+1})-\Phi(x_*)] + (1-\sigma)\sum_{i=0}^{k-1}\|y_{i+1}-x_i\|^2 \leq \sum_{i=0}^{k-1}\left(\|x_i-x_*\|^2-\|x_{i+1}-x_*\|^2\right)+ 2\lambda_L \sum_{i=0}^{k-1}\delta_i.
    \]
    Using the definitions of $R_0$ and $\bar \delta_k$ given in the theorem, we obtain
    \[
        2\lambda k\min_{1\leq i\leq k}[\Phi(y_{i})-\Phi(x_*)] + (1-\sigma)k\min_{1\leq i\leq k}\|y_{i}-x_{i-1}\|^2 \leq \|x_0-x_*\|^2+2\lambda k \bar{\delta}_k,
    \]
    and hence conclude the claims.
\QEDA


\subsection{FLOrA Analysis}\label{appdx:flora_framework}

This subsection is devoted to the analysis of FLOrA, which is introduced in Subsection~\ref{sec:flora}. For simplicity of notation, we omit the superscripts $\cdot^\flora$ and subscripts $\cdot_\flora$ in all proofs.

Many of the following results are analogous to those in the analysis of accelerated first-order methods, however our inclusion of the absolute error sequence $\{\delta_k^\flora\}$ in Algorithm \ref{alg:flora} requires modification of some statements and/or proofs.

\begin{lemma}\label{lem:b_seq}
    For every $k\geq 0$, the following statements hold:
        \begin{itemize}
            \item[{\rm a)}]
                $b_k^2=\tau_k\lambda_\flora B_{k+1}$;
            \item[{\rm b)}] $\tau_{k}=1+\mu_\flora B_{k}$;
            
            \item[{\rm c)}]\begin{equation*}
               B_{k+1}\geq \lambda_\flora\max\left\{\frac{(k+1)^2}{4}, \left(1+\frac{\sqrt{\lambda_\flora\mu_\flora}}{2}\right)^{2k}\right\};
            \end{equation*}
            
            \item[{\rm d)}]  recall $C_\flora=\sum_{i=0}^\infty B_{i+1}\alpha_\flora^i$ defined in Theorem \ref{thm:flora_complexity}, then $C_\flora < \infty$. Furthermore,
            denoting $\beta_\flora=\sqrt{\alpha_\flora}(1+\sqrt{\lambda_\flora\mu_\flora})<1$, we have $C_\flora\leq {\lambda_\flora}/{(1-\beta_\flora)^4}$;
            
            \item[{\rm e)}] define the sequence $\{\Delta^\flora_{k}\}$ as
            \begin{equation}\label{def:Delta}
                \Delta^\flora_{-1}=0,\quad\Delta^\flora_{k}=\frac{B_k}{B_{k+1}}\Delta^\flora_{k-1} + \delta^\flora_{k},
            \end{equation}
            then we have
        \begin{equation}\label{ineq:Delta_bound}
                \Delta^\flora_k= \frac{\delta^\flora_0}{B_{k+1}} \sum_{i=0}^kB_{i+1} (\alpha_\flora)^i \leq \frac{\delta_0^\flora C_\flora}{B_{k+1}}.
            \end{equation}
        \end{itemize} 
\end{lemma}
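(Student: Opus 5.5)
The plan is to prove the five statements in order, since each uses the previous ones. All of them follow from the scalar recursion in \eqref{eq:bk_update} together with $B_0=0$ and $\tau_0=1$.

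\emph{(a)–(b).} For (a), $b_k$ is by construction the positive root of $b^2-\lambda_\flora\tau_k b-\lambda_\flora\tau_k B_k=0$. This equation is exactly $b_k^2=\lambda_\flora\tau_k(B_k+b_k)=\lambda_\flora\tau_kB_{k+1}$. For (b), I argue by induction: $\tau_0=1=1+\mu_\flora B_0$, and $\tau_{k+1}=\tau_k+\mu_\flora b_k=1+\mu_\flora(B_k+b_k)=1+\mu_\flora B_{k+1}$.

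\emph{(c).} I work with $s_k:=\sqrt{B_k}$. From (a),
\[
s_{k+1}-s_k=\frac{b_k}{s_{k+1}+s_k}\ge \frac{b_k}{2s_{k+1}}=\frac{\sqrt{\lambda_\flora\tau_k}}{2}.
\]
Using $\tau_k\ge1$ gives $s_{k+1}\ge s_k+\sqrt{\lambda_\flora}/2$, hence $B_{k+1}\ge\lambda_\flora(k+1)^2/4$. Using instead $\tau_k\ge\mu_\flora B_k$ from (b) gives $s_{k+1}\ge(1+\sqrt{\lambda_\flora\mu_\flora}/2)s_k$. Since $b_0=\lambda_\flora$, we have $B_1=\lambda_\flora$, and iterating yields the geometric lower bound.

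\emph{(d).} This is the main step, because it needs a matching upper bound on the growth of $B_k$. I use the reverse estimate
\[
s_{k+1}-s_k\le b_k/s_{k+1}=\sqrt{\lambda_\flora\tau_k}\le\sqrt{\lambda_\flora}+\sqrt{\lambda_\flora\mu_\flora}\,s_k,
\]
which follows from (a), (b) and $\sqrt{a+b}\le\sqrt a+\sqrt b$. With $q:=1+\sqrt{\lambda_\flora\mu_\flora}$ this reads $s_{k+1}\le q s_k+\sqrt{\lambda_\flora}$, so $s_{k+1}\le\sqrt{\lambda_\flora}\sum_{i=0}^kq^i\le\sqrt{\lambda_\flora}(k+1)q^k$. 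Consequently
\[
B_{i+1}\alpha_\flora^i\le\lambda_\flora(i+1)^2\bigl(\sqrt{\alpha_\flora}\,q\bigr)^{2i}=\lambda_\flora(i+1)^2\beta_\flora^{2i}.
\]
The assumption $\alpha_\flora<(1+\sqrt{\lambda_\flora\mu_\flora})^{-2}$ is exactly $\beta_\flora<1$. Since $\beta_\flora^{2i}\le\beta_\flora^i$ and $(i+1)^2\le\binom{i+3}{3}$, summing with $\sum_i\binom{i+3}{3}\beta^i=(1-\beta)^{-4}$ gives $C_\flora\le\lambda_\flora(1-\beta_\flora)^{-4}<\infty$.

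\emph{(e).} Multiplying \eqref{def:Delta} by $B_{k+1}$ gives $B_{k+1}\Delta_k=B_k\Delta_{k-1}+B_{k+1}\delta_k$. I telescope this from $\Delta_{-1}=0$ and use $\delta_i=\delta_0\alpha_\flora^i$ to obtain $B_{k+1}\Delta_k=\delta_0\sum_{i=0}^kB_{i+1}\alpha_\flora^i$. Bounding the partial sum by $C_\flora$ then gives \eqref{ineq:Delta_bound}.
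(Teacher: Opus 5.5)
Your proposal is correct. Parts (a), (b), and (e) coincide with the paper's arguments. For (c), the paper omits the proof and refers to Proposition~1(c) of~\cite{monteiro2016adaptive}, whereas you supply the standard $\sqrt{B_k}$-increment argument. Your version is sound: it uses $s_k\le s_{k+1}$, $B_1=\lambda_{\flora}$, and $\tau_k\ge\max\{1,\mu_{\flora}B_k\}$.

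For (d), you and the paper reach the same one-step recursion $\sqrt{B_{k+1}}\le(1+\sqrt{\lambda_{\flora}\mu_{\flora}})\sqrt{B_k}+\sqrt{\lambda_{\flora}}$. The paper gets it by bounding $b_k\le\lambda_{\flora}\tau_k+\sqrt{\lambda_{\flora}\tau_k B_k}$ and completing a square; you get it via $s_{k+1}-s_k=b_k/(s_{k+1}+s_k)\le b_k/s_{k+1}$.

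The closing step differs. The paper multiplies the recursion by $\sqrt{\alpha_{\flora}}^{\,k+1}$ and unrolls it to $\sqrt{\alpha_{\flora}^k B_{k+1}}\le\sqrt{\lambda_{\flora}}\sum_{i=0}^k\beta_{\flora}^{k-i}\sqrt{\alpha_{\flora}}^{\,i}$. It sums this as a Cauchy product of two geometric series, then squares using $\sum_k a_k^2\le(\sum_k a_k)^2$ and $1-\sqrt{\alpha_{\flora}}\ge 1-\beta_{\flora}$. You instead relax $\sum_{i\le k}q^i\le(k+1)q^k$ to get the explicit pointwise bound $B_{k+1}\le\lambda_{\flora}(k+1)^2q^{2k}$ and square termwise. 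You then finish with $(i+1)^2\le\binom{i+3}{3}$ (true since $i(i-1)\ge 0$) and the negative-binomial identity $\sum_i\binom{i+3}{3}\beta^i=(1-\beta)^{-4}$.

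Both routes land on exactly $\lambda_{\flora}/(1-\beta_{\flora})^4$. The paper's needs only geometric sums. Yours yields, as a by-product, an explicit upper growth estimate on $B_k$ that complements the lower bounds in (c).
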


\begin{proof}
a) It is easy to verify that $b_k$ in \eqref{eq:bk_update} is the root of equation $b_k^2-\lambda\tau_k b_k -\lambda\tau_k B_k=0$, which is equivalent to statement a) in view of the second identity in \eqref{eq:bk_update}.

b) This statement immediately follows from the second and last equations in \eqref{eq:bk_update} and $B_0=0$.

c) This statement can be easily shown in a way similar to the proof of Proposition 1(c) of \cite{monteiro2016adaptive} and hence we omit the proof.

d) Using \eqref{eq:bk_update} and the fact that $\sqrt{a+b} \le \sqrt{a} + \sqrt{b}$ for $a, b \ge 0$, we have
    \begin{align*}
        B_{k+1}&=B_k+b_k\stackrel{\eqref{eq:bk_update}}\leq B_k+\lambda\tau_k+\sqrt{\lambda \tau_k B_k}
        \leq (\sqrt{B_k}+\sqrt{\lambda \tau_k})^2 \\
        &=(\sqrt{B_k}+\sqrt{\lambda(1+\mu B_k)})^2
        \le [(1+\sqrt{\lambda\mu})\sqrt{B_k}+\sqrt{\lambda}]^2,
    \end{align*}
    where the last identity is due to  statement b).
    We thus have the recursion
    \[
        \sqrt{B_{k+1}}\leq (1+\sqrt{\lambda\mu})\sqrt{B_k}+\sqrt{\lambda}.
    \]

    Note that $\beta:=\sqrt{\alpha}(1+\sqrt{\lambda\mu})<1$ by the requirement $\alpha < (1+\sqrt{\lambda\mu})^{-2}$ from the initialization of Algorithm~\ref{alg:flora}. Then, we obtain for all $k\geq 0$
    \[
        \sqrt{\alpha^{k+1}B_{k+2}}\leq \beta\sqrt{\alpha^{k}B_{k+1}}+\sqrt{\alpha}^{k+1}\sqrt{\lambda}.
    \]
    Unrolling with initial element $\alpha^0B_1=\lambda$, we obtain the upper bound
    \[
        \sqrt{\alpha^{k}B_{k+1}}\leq\sqrt{\lambda}\sum_{i=0}^{k}\beta^{k-i}\sqrt{\alpha}^i.
    \]
    Then, summing from 0 to $\infty$, we obtain
    \[
        \sum_{k=0}^\infty\sqrt{\alpha^{k}B_{k+1}}\leq\sqrt{\lambda}\sum_{k=0}^\infty\sum_{i=0}^{k}\beta^{k-i}\sqrt{\alpha}^i=\sqrt{\lambda}\sum_{i=0}^\infty\sqrt{\alpha}^i\sum_{k=i}^{\infty}\beta^{k-i}
        =\frac{\sqrt{\lambda}}{1-\beta}\sum_{i=0}^\infty\sqrt{\alpha}^i=\frac{\sqrt{\lambda}}{(1-\sqrt{\alpha})(1-\beta)}.
    \]
    Hence, we have
    \[
        \sum_{k=0}^\infty\alpha^{k}B_{k+1} \le \left( \sum_{k=0}^\infty\sqrt{\alpha^{k}B_{k+1}}\right)^2 \le \frac{\lambda}{(1-\sqrt{\alpha})^2(1-\beta)^2}\leq \frac{\lambda}{(1-\beta)^4},
    \]
    where the last inequality follows from $\beta\geq \sqrt{\alpha}$.


    e) It follows from \eqref{def:Delta} and $\delta_{k}=\delta_0\alpha^k$ (see Step \textbf{1} of Algorithm \ref{alg:flora}) that
    \[
    B_{k+1}\Delta_k=\sum_{i=0}^kB_{i+1}\delta_i=\delta_0\sum_{i=0}^kB_{i+1} \alpha^i.
    \]
   Hence, we prove \eqref{ineq:Delta_bound} in view of statement d).
\end{proof}

\begin{lemma}\label{lem:theta_props}
    For every $k\geq 0$, define
\begin{align}
        \theta_{k+1}(x)&=\Gamma^\flora_k(z^\flora_{k+1})-\frac{1}{2\lambda_\flora}\|z^\flora_{k+1}-\tx^\flora_{k}\|^2+\inner{u^\flora_{k+1}}{x-z^\flora_{k+1}} + \frac{\mu_\flora}{2}\|x-z^\flora_{k+1}\|^2\label{def:theta},\\
        \Theta_{k+1}(x)&=\frac{B_{k}\Theta_{k}(x) + b_{k}\theta_{k+1}(x)}{B_{k+1}}+\delta^\flora_{k}\label{def:Theta},
\end{align}
with $\Theta_0\equiv 0$.
Then, for every $k\geq 0$, the following statements hold:
\begin{enumerate}[label={\rm\alph*)}]
    \item $\theta_{k+1}$ and $\Theta_{k+1}$ are $\mu_\flora$-strongly convex quadratic functions;
    \item $x^\flora_{k} = \underset{u\in\R^n}\argmin\{B_{k}\Theta_{k}(u)+\|u-x^\flora_0\|^2/2\}$;
    \item for all $x\in\dom\Phi$, $\theta_{k+1}(x)\leq \Phi(x)$ and $\Theta_{k+1}(x)\leq \Phi(x)+\Delta^\flora_{k}$.
\end{enumerate}
\end{lemma}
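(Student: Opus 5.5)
We will prove the three statements of Lemma~\ref{lem:theta_props} together by induction on $k$, since b) and c) both depend on the recursive structure of $\Theta_{k+1}$.

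\textbf{Statement a).} By \eqref{def:theta}, $\theta_{k+1}$ is a constant plus a linear term plus $\frac{\mu_\flora}{2}\|x-z^\flora_{k+1}\|^2$, so it is a $\mu_\flora$-strongly convex quadratic. Using \eqref{def:Theta} with $\Theta_0\equiv 0$, induction shows that $\Theta_{k+1}$ is a convex combination (weights $B_k/B_{k+1}$ and $b_k/B_{k+1}$, summing to one) of $\mu_\flora$-strongly convex quadratics, plus the constant $\delta^\flora_k$. For $k=0$ we have $B_0=0$, so $\Theta_1=\theta_1+\delta_0$. Hence a) holds for all $k$.

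\textbf{Statement c).} To show $\theta_{k+1}\le\Phi$, first note that \eqref{eq:zkp1_hatuk} gives $\hat u^\flora_{k+1}\in\partial\Gamma_k(z_{k+1})$. The function $\Gamma_k(\cdot)-\frac{1}{2\lambda}\|\cdot-\tx_k\|^2$ is $\mu_\flora$-strongly convex, because $\Gamma_k$ is $(\mu_\flora+\lambda^{-1})$-strongly convex. Its subgradient at $z_{k+1}$ is
\[
\hat u_{k+1}-\frac{z_{k+1}-\tx_k}{\lambda}=u_{k+1},
\]
by \eqref{def:xk_flora}. The strong-convexity subgradient inequality then gives
\[
\theta_{k+1}(x)\le \Gamma_k(x)-\frac{1}{2\lambda}\|x-\tx_k\|^2\le\Phi(x),
\]
where the last step uses \eqref{ineq:Gamma_acc}. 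The second claim follows by induction. Assuming $\Theta_k\le\Phi+\Delta_{k-1}$, we get
\[
\Theta_{k+1}\le \frac{B_k}{B_{k+1}}\bigl(\Phi+\Delta_{k-1}\bigr)+\frac{b_k}{B_{k+1}}\Phi+\delta_k=\Phi+\Delta_k,
\]
which matches \eqref{def:Delta}. The base case is $B_0=0$ and $\Delta_{-1}=0$.

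\textbf{Statement b).} This is the main computation. Write $F_k(u)=B_k\Theta_k(u)+\frac12\|u-x_0\|^2$. This function is $\tau_k$-strongly convex, since $\tau_k=1+\mu_\flora B_k$ by Lemma~\ref{lem:b_seq}(b). It is also quadratic, so it is determined by its Hessian and minimizer: $F_k(u)=\text{const}+\frac{\tau_k}{2}\|u-m_k\|^2$, where $m_k$ denotes its minimizer.

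We argue by induction with $m_k=x_k$; the base case is $m_0=x_0$. For the step,
\[
F_{k+1}=F_k+b_k\theta_{k+1}+\text{const}.
\]
Setting $\nabla F_{k+1}=0$ gives
\[
\tau_k(u-x_k)+b_k u_{k+1}+b_k\mu_\flora(u-z_{k+1})=0.
\]
Using $\tau_k+b_k\mu_\flora=\tau_{k+1}$, this yields
\[
u=\frac{\tau_k x_k+b_k\mu_\flora z_{k+1}-b_k u_{k+1}}{\tau_{k+1}},
\]
which is exactly \eqref{def:xk_flora}.

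The only delicate points are checking that the subgradient identity for $u_{k+1}$ is valid when $\A_k=\infty$, and handling the constant $\delta_k$, which does not affect minimizers. In the case $\A_k=\infty$, the point $z_{k+1}$ minimizes $\Gamma_k$ and $\hat u_{k+1}=0$, so the argument goes through unchanged.
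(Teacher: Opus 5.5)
Your proposal is correct and follows essentially the same route as the paper's proof. For c), both use the subgradient $u^\flora_{k+1}$ of the $\mu_\flora$-strongly convex function $\Gamma^\flora_k-\frac{1}{2\lambda_\flora}\|\cdot-\tx^\flora_k\|^2$ at $z^\flora_{k+1}$, followed by induction through \eqref{def:Delta}, with the base case handled by $B_0=0$. For b), both expand $B_k\Theta_k+\frac12\|\cdot-x^\flora_0\|^2$ as a quadratic with Hessian $\tau_k I$ around $x^\flora_k$ and solve the stationarity equation using $\tau_{k+1}=\tau_k+b_k\mu_\flora$.
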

\begin{proof}
    a) The statement simply follows from the definitions of $\theta_{k+1}$ and $\Theta_{k+1}$ in~\eqref{def:theta} and \eqref{def:Theta}, respectively.

    b) We prove the statement by induction. Since $B_0=0$ and $\Theta_0\equiv 0$, we trivially have the base case $k=0$.
    We assume the statement holds for some $k\geq 0$. Since $\Theta_{k}$ is a  quadratic function with $\nabla^2 \Theta_k=\mu I$, Taylor expansion around the minimizer $x_{k}$ yields the following equality for all $x\in\R^n$,
    \[ B_k\Theta_k(x)+\frac{1}{2}\|x_0-x\|^2=B_k\Theta_k(x_k) +\frac{1}{2}\|x_0-x_k\|^2+ \frac{1+B_k\mu}{2}\|x_k-x\|^2. 
    \]
    Hence, it follows from \eqref{def:Theta} that
    \[
        B_{k+1}\Theta_{k+1}(x)+\frac{1}{2}\|x_0-x\|^2=B_{k}\Theta_k(x_k)+\frac{1}{2}\|x_0-x_k\|^2 + \frac{1+B_k\mu}{2}\|x_k-x\|^2 + b_k\theta_{k+1}(x) + B_{k+1} \delta_k.
    \]
    In view of \eqref{def:theta}, the stationarity condition of $B_{k+1}\Theta_{k+1}(x)+ \|x_0-x\|^2/2$ is
    \begin{align*}
        0&=(1+B_{k}\mu)(x-x_k) + b_k u_{k+1}+ b_k\mu(x-z_{k+1}).
        \end{align*}
    It is thus straightforward to verify that $x_{k+1}$ in \eqref{def:xk_flora} is the solution to the above equation using Lemma \ref{lem:b_seq}(b). Therefore, we complete the proof by induction and conclude the statement.

    c) Noting from \eqref{eq:zkp1_hatuk} that $\hat u_{k+1} \in \Gamma_k(z_{k+1})$, which together with the first identity in \eqref{def:xk_flora} implies that
    \[  u_{k+1}\in\partial\left(\Gamma_k(\cdot)-\frac{1}{2\lam}\|\cdot-\tx_k\|^2\right).
    \]
    It follows from \eqref{ineq:Gamma_acc} and the fact that $\Gamma_k(\cdot)-\frac{1}{2\lam}\|\cdot-\tx_k\|^2$ is $\mu$-strongly convex that for every $x\in\dom\Phi$,
    \begin{align*}
    \Phi(x) &\stackrel{\eqref{ineq:Gamma_acc}}\ge \Gamma_k(x)-\frac{1}{2\lambda}\|x-\tx_k\|^2 \\
    &\ge \Gamma_k(z_{k+1})-\frac{1}{2\lambda}\|z_{k+1}-\tx_k\|^2 + \inner{u_{k+1}}{x-z_{k+1}}+\frac{\mu}{2}\|x-z_{k+1}\|^2 \stackrel{\eqref{def:theta}}= \theta_{k+1}(x),
    \end{align*}
    where the identity is due to the definition of $\theta_{k+1}$ in \eqref{def:theta}. We have thus proved the first claim.
    
    Using the definitions of $\Theta_{k+1}$ and $\Delta_k$ in \eqref{def:Theta} and \eqref{def:Delta}, respectively, we can show by induction that 
    \[
        \Theta_{k+1}(x)=\frac{\sum_{i=0}^{k}b_i\theta_{i+1}(x)}{B_{k+1}}+\frac{B_k}{B_{k+1}}\Delta_{k-1}+\delta_{k}\leq \Phi(x)+\Delta_{k},
    \]
    where the inequality follows from the first claim $\theta_{k+1}\leq \Phi$.
\end{proof}

\begin{lemma}\label{lem:min_linearization}
    For every $k\geq 0$, we have
    \begin{align}
    &\min_{x\in\R^n}\left\{\Gamma^\flora_{k}(z^\flora_{k+1})-\frac{1}{2\lambda_\flora}\|z^\flora_{k+1}-\tx^\flora_k\|^2+\inner{u^\flora_{k+1}}{x-z^\flora_{k+1}}+\frac{1}{2\lambda_\flora}\|x-\tx^\flora_k\|^2+{{\delta}^\flora_{k}}\right\} \nn \\
        \ge&
        \Phi(\ty^\flora_{k+1})+\frac{1-\sigma_\flora}{2\lambda_\flora}\|\ty^\flora_{k+1}-\tx^\flora_{k}\|^2. \label{ineq:min_linearization}
    \end{align}
\end{lemma}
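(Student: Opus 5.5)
The idea is to show that the left-hand side is an exact quadratic minimization whose value equals $\Gamma^\flora_k(z^\flora_{k+1})+\frac{1}{2\A^\flora_k}\|z^\flora_{k+1}-\tx^\flora_k\|^2-\frac{\lambda_\flora}{2}\|\hat u^\flora_{k+1}\|^2+\delta^\flora_k$, and then to compare this with \eqref{ineq:flora_cond}.

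\textbf{Step 1 (closed form of the minimum).} The objective in \eqref{ineq:min_linearization} is a quadratic in $x$ with Hessian $\lambda_\flora^{-1}I$. Its minimizer is $x=\tx^\flora_k-\lambda_\flora u^\flora_{k+1}$. Substituting, the minimum value equals
\[
\Gamma^\flora_k(z^\flora_{k+1})-\frac{1}{2\lambda_\flora}\|z^\flora_{k+1}-\tx^\flora_k\|^2+\inner{u^\flora_{k+1}}{\tx^\flora_k-z^\flora_{k+1}}-\frac{\lambda_\flora}{2}\|u^\flora_{k+1}\|^2+\delta^\flora_k.
\]

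\textbf{Step 2 (substitute $u^\flora_{k+1}$).} Write $w:=\tx^\flora_k-z^\flora_{k+1}$. By \eqref{def:xk_flora} and \eqref{eq:zkp1_hatuk}, $u^\flora_{k+1}=\hat u^\flora_{k+1}+w/\lambda_\flora$. Expanding $\inner{u}{w}-\frac{\lambda_\flora}{2}\|u\|^2-\frac{1}{2\lambda_\flora}\|w\|^2$, the $\|w\|^2/\lambda_\flora$ terms cancel and leave
\[
\inner{\hat u^\flora_{k+1}}{w}-\frac{\lambda_\flora}{2}\|\hat u^\flora_{k+1}\|^2-\inner{\hat u^\flora_{k+1}}{w}+\cdots
\]
Redoing this carefully, I expect the result to be $-\frac{\lambda_\flora}{2}\|\hat u^\flora_{k+1}\|^2$ plus the cross term $\inner{\hat u^\flora_{k+1}}{w}(1-1)=0$. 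Since $\hat u^\flora_{k+1}=w/\A^\flora_k$, any leftover term of the form $\|w\|^2/\A^\flora_k\ge 0$ only helps. This algebra is the one place where a sign slip could occur, and it is the main obstacle.

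\textbf{Step 3 (conclude).} Steps 1 and 2 give
\[
\text{LHS}\;\ge\;\Gamma^\flora_k(z^\flora_{k+1})-\frac{\lambda_\flora}{2}\|\hat u^\flora_{k+1}\|^2+\delta^\flora_k .
\]
Dividing \eqref{ineq:flora_cond} by $2\lambda_\flora$ and rearranging gives
\[
\Gamma^\flora_k(z^\flora_{k+1})-\frac{\lambda_\flora}{2}\|\hat u^\flora_{k+1}\|^2+\delta^\flora_k\;\ge\;\Phi(\ty^\flora_{k+1})+\frac{1-\sigma_\flora}{2\lambda_\flora}\|\ty^\flora_{k+1}-\tx^\flora_k\|^2 .
\]
Chaining the two inequalities yields \eqref{ineq:min_linearization}.

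\textbf{Special case $\A^\flora_k=\infty$.} Here $\hat u^\flora_{k+1}=0$ and $u^\flora_{k+1}=w/\lambda_\flora$, so the minimum is exactly $\Gamma^\flora_k(z^\flora_{k+1})+\delta^\flora_k$. In this case the argument reduces directly to \eqref{ineq:flora_cond}.
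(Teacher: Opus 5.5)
Your proposal is correct and follows the same route as the paper. The paper also takes the minimizer $x=\tx^\flora_k-\lambda_\flora u^\flora_{k+1}$, substitutes $u^\flora_{k+1}=\hat u^\flora_{k+1}+(\tx^\flora_k-z^\flora_{k+1})/\lambda_\flora$ to get the minimum value $\Gamma^\flora_k(z^\flora_{k+1})-\frac{\lambda_\flora}{2}\|\hat u^\flora_{k+1}\|^2+\delta^\flora_k$, and then applies \eqref{ineq:flora_cond}. One blemish: your opening sentence includes an extra term $\frac{1}{2\A^\flora_k}\|z^\flora_{k+1}-\tx^\flora_k\|^2$ that does not belong. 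The cross terms in Step 2 cancel exactly with no leftover, so you should state the value as an equality rather than hedging. This does not affect your conclusion, since you only use the lower bound.
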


\begin{proof}
In view of \eqref{def:xk_flora}, we first observe that the minimizer of the left-hand side of~\eqref{ineq:min_linearization} is
    \begin{equation}
        \hat{x}:=\tx_k - \lambda u_{k+1}\stackrel{\eqref{def:xk_flora}}=z_{k+1}-\lambda \hat u_{k+1}.\label{eq:xhat_min}
    \end{equation} 
    Using the second equation in \eqref{eq:zkp1_hatuk}, it is easy to verify that
    \begin{equation}
        \inner{u_{k+1}}{\hat{x}-z_{k+1}}\stackrel{\eqref{eq:xhat_min}}=-\inner{\hat u_{k+1}+\lambda^{-1}(\tx_k-z_{k+1})}{\lambda\hat{u}_{k+1}}\stackrel{\eqref{eq:zkp1_hatuk}}=-\frac{1}{\lambda}\|\lambda\hat u_{k+1}\|^2 - \frac{1}{\A_k}\|\tx_k-z_{k+1}\|^2.\label{eq:inner_u_min}
    \end{equation}
    Using the above relations and \eqref{eq:zkp1_hatuk} and \eqref{def:xk_flora}, we have
    \begin{align*}
        &\Gamma_{k}(z_{k+1})-\frac{1}{2\lambda}\|z_{k+1}-\tx_k\|^2+\inner{u_{k+1}}{\hat{x}-z_{k+1}}+\frac{1}{2\lambda}\|\hat{x}-\tx_k\|^2\\
        &\stackrel{\eqref{eq:xhat_min},\eqref{eq:inner_u_min}}=\Gamma_{k}(z_{k+1})-\frac{1}{2\lambda}\|z_{k+1}-\tx_k\|^2-\frac{1}{\lambda}\|\lambda\hat u_{k+1}\|^2 - \frac{1}{\A}\|\tx_k-z_{k+1}\|^2+\frac{(\A_k+\lambda)^2}{2\lambda \A_k^2}\|z_{k+1}-\tx_k\|^2\\
        &\stackrel{\eqref{eq:zkp1_hatuk}}=\Gamma_{k}(z_{k+1})-\frac{1}{2\lambda}\|\lambda \hat{u}_{k+1}\|^2.
    \end{align*}
    It follows from \eqref{ineq:flora_cond} that
    \begin{equation*}
        {{\delta}_{k}}+\Gamma_{k}(z_{k+1})-\frac{1}{2\lambda}\|\lambda \hat u_{k+1}\|^2\stackrel{\eqref{ineq:flora_cond}}\geq \Phi(\ty_{k+1})+\frac{1-\sigma}{2\lambda}\|\ty_{k+1}-\tx_k\|^2.
    \end{equation*}
    Finally, \eqref{ineq:min_linearization} immediately follows from combining the above two relations.
\end{proof}


\begin{lemma}\label{lem:Theta_ub}
    Let $X_*$ be the set of optimal solutions to \eqref{eq:general_convex_prob}. Define $R^\flora_0:=\|x^\flora_0-x_*\|=\min\{\|x^\flora_0-x\|:x\in X_*\}$. Then, for every $k\geq 0$, the following statements hold:
    \begin{itemize}
        \item[{\rm a)}] 
    \begin{equation}\label{ineq:Theta_ub}
        \min_{x\in\R^n}\left\{B_{k}\Theta_{k}(x)+\frac{1}{2}\|x-x^\flora_0\|^2\right\} \ge B_{k}\Phi(y^\flora_{k}) + \sum_{i=1}^k\frac{B_{i}(1-\sigma_\flora)}{2\lambda_\flora}\|\ty^\flora_{i}-\tx^\flora_{i-1}\|^2;
\end{equation}
        \item[{\rm b)}] 
        \begin{equation}\label{ineq:xk_bound_flora}
            \|x^\flora_k-x_*\| \le R^\flora_0 + \sqrt{2\delta^\flora_0C_\flora},
        \end{equation} where $C_\flora$ is as in Theorem \ref{thm:flora_complexity}.
    \end{itemize}
\end{lemma}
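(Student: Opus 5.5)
The plan is to prove part a) by induction on $k$ using the aggregate functions of Lemma~\ref{lem:theta_props}, and then to read off part b) from a) together with the lower-bound property $\Theta_k\le\Phi+\Delta^\flora_{k-1}$. Superscripts and subscripts ``$\mathrm{F}$'' are dropped throughout, as in the other proofs of this subsection.

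\emph{Part a), base case and splitting.} For $k=0$ we have $B_0=0$ and $\Theta_0\equiv0$, so the left-hand side of~\eqref{ineq:Theta_ub} equals $\min_x\|x-x_0\|^2/2=0$. The right-hand side is also $0$, so the base case holds. For the inductive step, assume~\eqref{ineq:Theta_ub} holds at $k$. By~\eqref{def:Theta},
\[
B_{k+1}\Theta_{k+1}(x)+\tfrac12\|x-x_0\|^2=\Bigl[B_k\Theta_k(x)+\tfrac12\|x-x_0\|^2\Bigr]+b_k\theta_{k+1}(x)+B_{k+1}\delta_k .
\]
The bracket is a $(1+\mu B_k)$-strongly convex quadratic whose minimizer is $x_k$, by Lemma~\ref{lem:theta_props}(b). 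Hence it equals its minimum plus $\frac{1+\mu B_k}{2}\|x-x_k\|^2$. The induction hypothesis bounds that minimum below by $B_k\Phi(y_k)+\sum_{i\le k}\frac{B_i(1-\sigma)}{2\lambda}\|\ty_i-\tx_{i-1}\|^2$. Lemma~\ref{lem:theta_props}(c) gives $\Phi(y_k)\ge\theta_{k+1}(y_k)$.

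\emph{Part a), the key estimate.} Convexity of $\theta_{k+1}$ gives $B_k\theta_{k+1}(y_k)+b_k\theta_{k+1}(x)\ge B_{k+1}\theta_{k+1}(w)$, where $w:=(B_ky_k+b_kx)/B_{k+1}$. By~\eqref{eq:tx_flora}, $w-\tx_k=\frac{b_k}{B_{k+1}}(x-x_k)$. Lemma~\ref{lem:b_seq}(a),(b) give $b_k^2=\lambda\tau_kB_{k+1}$ and $\tau_k=1+\mu B_k$, so
\[
\tfrac{1+\mu B_k}{2}\|x-x_k\|^2=\tfrac{B_{k+1}}{2\lambda}\|w-\tx_k\|^2 .
\]
Collecting terms, the left-hand side is at least
\[
B_{k+1}\Bigl[\theta_{k+1}(w)+\tfrac1{2\lambda}\|w-\tx_k\|^2+\delta_k\Bigr]+\sum_{i\le k}(\cdots).
\]
Next drop the nonnegative term $\frac{\mu}{2}\|w-z_{k+1}\|^2$ from $\theta_{k+1}$; what remains is exactly the objective in Lemma~\ref{lem:min_linearization}, evaluated at $w$. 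That lemma bounds it below by $\Phi(\ty_{k+1})+\frac{1-\sigma}{2\lambda}\|\ty_{k+1}-\tx_k\|^2$. Finally $\Phi(y_{k+1})\le\Phi(\ty_{k+1})$ from Step~3 of Algorithm~\ref{alg:flora}, and minimizing over $x$ closes the induction. I expect this bookkeeping to be the main obstacle: the convex-combination identity must make the $(1+\mu B_k)$ strong-convexity term match $B_{k+1}/(2\lambda)$ exactly, which hinges on $b_k^2=\lambda\tau_kB_{k+1}$.

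\emph{Part b).} Evaluate at $x=x_*$ and use the quadratic expansion around $x_k$ together with part a):
\[
B_k\Theta_k(x_*)+\tfrac12 (R_0)^2\ge B_k\Phi(y_k)+\tfrac12\|x_*-x_k\|^2 .
\]
Lemma~\ref{lem:theta_props}(c) gives $\Theta_k(x_*)\le\Phi_*+\Delta_{k-1}$, and $\Phi(y_k)\ge\Phi_*$. Therefore
\[
\tfrac12\|x_k-x_*\|^2\le\tfrac12 (R_0)^2+B_k\Delta_{k-1}\le\tfrac12 (R_0)^2+\delta_0C_\flora ,
\]
where the last step is Lemma~\ref{lem:b_seq}(e) with index $k-1$ (for $k=0$ the term is zero). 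Then $\sqrt{a+b}\le\sqrt a+\sqrt b$ yields~\eqref{ineq:xk_bound_flora}.
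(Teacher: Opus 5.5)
Your proposal is correct and follows the paper's proof essentially step for step. For part a) you use the same induction: the quadratic expansion around $x_k$, the convex combination $(B_ky_k+b_kx)/B_{k+1}$ matched to $\tx_k$ through $b_k^2=\lambda\tau_kB_{k+1}$, and dropping the $\mu$-term to invoke Lemma~\ref{lem:min_linearization}. For part b) you evaluate at $x_*$ and bound $B_k\Delta_{k-1}\le\delta_0C_\flora$ via Lemma~\ref{lem:b_seq}(e), exactly as the paper does.
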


\begin{proof} 
    a) The statement follows by induction. Relation \eqref{ineq:Theta_ub} trivially holds for $k=0$ since $B_0=0$. Then we assume the claim holds for some $k\geq 0$. For convenience, we denote
    \begin{equation}\label{def:betak}
        \beta_k =\sum_{i=1}^k\frac{B_{i}(1-\sigma)}{2\lambda}\|\ty_{i}-\tx_{i-1}\|^2.
    \end{equation}
    It follows from Lemma~\ref{lem:theta_props}(a) and (b) that for every $u \in \R^n$ that
    \begin{align}
        B_{k}\Theta_{k}(u)+\frac{1}{2}\|u-x_0\|^2
        &\geq B_{k}\Theta_{k}(x_{k}) +\frac{1}{2}\|x_k-x_0\|^2+\frac{1+\mu B_k}{2}\|x_k-u\|^2 \nn \\
        &\geq B_k\Phi(y_{k})+\beta_k+\frac{1+\mu B_k}{2}\|x_k-u\|^2 \label{ineq:used_for_xk_bound},
    \end{align}
    where the second inequality follows from the inductive hypothesis.
    Using \eqref{def:Theta} and \eqref{ineq:used_for_xk_bound}, we have
    \begin{align*}
        B_{k+1}\Theta_{k+1}(u)&+\frac{1}{2}\|u-x_0\|^2 - B_{k+1} \delta_k - b_{k}\theta_{k+1}(u) \stackrel{\eqref{def:Theta}}= B_{k}\Theta_{k}(u)+\frac{1}{2}\|u-x_0\|^2 \\
        &\stackrel{\eqref{ineq:used_for_xk_bound}}\geq B_k\Phi(y_{k})+\beta_k+\frac{1+\mu B_k}{2}\|x_k-u\|^2 
        \ge B_k\theta_{k+1}(y_{k})+\beta_k+ \frac{\tau_k}{2}\|x_k-u\|^2,
    \end{align*}
    where the last inequality follows from Lemmas~\ref{lem:theta_props}(c) and~\ref{lem:b_seq}(b).
    For $u\in\R^n$, define $\tilde{u}=B_{k+1}^{-1}(b_ku+B_ky_k)$. Rearranging the terms, we have
    \begin{align}
        B_{k+1}\Theta_{k+1}(u)&+\frac{1}{2}\|u-x_0\|^2 
        \ge B_{k+1}\delta_k+b_{k}\theta_{k+1}(u) + B_k\theta_{k+1}(y_{k})+\beta_k+ \frac{\tau_k}{2}\|x_k-u\|^2 \nn \\
        \stackrel{\eqref{eq:tx_flora}}\geq& B_{k+1}\theta_{k+1}(\tilde u)+\beta_k+B_{k+1}{{\delta}_{k}}+\frac{\tau_k B_{k+1}^2}{2b_k^2}\|\tx_k-\tilde{u}\|^2 \nn \\
        =&B_{k+1}\theta_{k+1}(\tilde u)+\beta_k+B_{k+1}{{\delta}_{k}}+\frac{B_{k+1}}{2\lambda}\|\tx_k-\tilde{u}\|^2.\label{ineq:Theta_chain_final}
    \end{align}
    where the second inequality is due to the convexity of $\theta_{k+1}$ and~\eqref{eq:tx_flora} and the identity is due to Lemma~\ref{lem:b_seq}(a). 
    It follows from the definition of $\theta_{k+1}$ in~\eqref{def:theta} that
    \[
    \theta_{k+1}(u)\geq\theta_{k+1}(u) -\frac{\mu}{2}\|x-z_{k+1}\|^2\stackrel{\eqref{def:theta}}=\Gamma_k(z_{k+1})-\frac{1}{2\lambda}\|z_{k+1}-\tx_k\|^2+\inner{u_{k+1}}{u-z_{k+1}}.
    \] 
    This inequality and \eqref{ineq:Theta_chain_final} imply that
    \begin{align*}
    &B_{k+1}\Theta_{k+1}(u) +\frac{1}{2}\|u-x_0\|^2 \\
        &\stackrel{\eqref{ineq:Theta_chain_final}}\geq B_{k+1}\left(\Gamma_{k}(z_{k+1})-\frac{1}{2\lambda}\|z_{k+1}-\tx_k\|^2+\inner{u_{k+1}}{\tilde{u}-z_{k+1}} + \delta_k+\frac{1}{2\lambda}\|\tilde{u}-\tx_k\|^2\right)
        +\beta_k.
    \end{align*}
    Minimizing over both sides of the above inequality, we obtain
    \begin{align*}
        &\min_{x\in \R^n} \left\{B_{k+1}\Theta_{k+1}(x) +\frac{1}{2}\|x-x_0\|^2\right\}\\
        &\ge B_{k+1}\min_{x\in\R^n}\left\{\Gamma_{k}(z_{k+1})-\frac{1}{2\lambda}\|z_{k+1}-\tx_k\|^2+\inner{u_{k+1}}{x-z_{k+1}} + \delta_k+\frac{1}{2\lambda}\|x-\tx_k\|^2\right\}
        +\beta_k\\
        &\stackrel{\eqref{ineq:min_linearization}}\geq B_{k+1}\Phi(\ty_{k+1})+\frac{B_{k+1}(1-\sigma)}{2\lambda}\|\ty_{k+1}-\tx_k\|^2 +\beta_k,
    \end{align*}
    where the last inequality follows by Lemma~\ref{lem:min_linearization}.
    Finally, the target inequality \eqref{ineq:Theta_ub} directly follows from the fact that $\Phi(\ty_{k+1}) \ge \Phi(y_{k+1})$ (see Step \textbf{3} of Algorithm \ref{alg:flora}) and the observation that $\beta_{k+1}= \beta_k + B_{k+1}(1-\sigma)\|\ty_{k+1}-\tx_k\|^2/(2\lambda)$ in view of \eqref{def:betak}.

    b) It follows from \eqref{ineq:used_for_xk_bound} with $u=x_*$ that
    \[
         B_k(\Theta_{k}(x_*)-\Phi(y_k)) \stackrel{\eqref{ineq:used_for_xk_bound}}\geq \frac{1+\mu B_k}{2}\|x_*-x_k\|^2-\frac{1}{2}\|x_*-x_0\|^2 + \beta_k
        \geq\frac{1}{2}\|x_*-x_k\|^2-\frac{1}{2}\|x_*-x_0\|^2.
    \]
    Using the second inequality in Lemma~\ref{lem:theta_props}(c) with $x=x_*$ and Lemma~\ref{lem:b_seq}(e), we have
    \begin{align*}
        B_k(\Theta_{k}(x_*)-\Phi(y_k)) \le B_k(\Phi(x_*)-\Phi(y_k) + \Delta_{k-1}) \le 
        B_k\Delta_{k-1} \stackrel{\eqref{ineq:Delta_bound}}\le \delta_0 \sum_{i=0}^{k-1} B_{i+1} \alpha^i.
    \end{align*}
    In view of the definition of $C_\flora$ in Lemma \ref{lem:b_seq}(c), the statement directly follows from combining the above two inequalities. 
\end{proof}

\vspace{1em}

We are now ready to prove Theorem~\ref{thm:flora_complexity}, which directly follows from Lemmas~\ref{lem:b_seq} and~\ref{lem:Theta_ub}.

\vspace{1em}

\noindent
{\bf Proof of Theorem~\ref{thm:flora_complexity}:}
     It follows from Lemma~\ref{lem:Theta_ub}(a),
    \begin{align*}
         B_{k+1}\Phi(y_{k+1}) + \sum_{i=1}^{k+1}\frac{B_{i}(1-\sigma)}{2\lambda}\|\ty_{i}-\tx_{i-1}\|^2
         &\stackrel{\eqref{ineq:Theta_ub}}\leq B_{k+1}\Theta_{k+1}(x_*)+\frac{1}{2}\|x_*-x_0\|^2 \\
         &\leq B_{k+1}\Phi(x_*)+\frac{1}{2}\|x_*-x_0\|^2 + B_{k+1}\Delta_{k}
    \end{align*}
    where the second inequality is due to Lemma~\ref{lem:theta_props}(c) with $x=x_*$.
    Using Lemma~\ref{lem:b_seq}(e) yields
    \[
        B_{k+1}(\Phi(y_{k+1})-\Phi(x_*)) + \sum_{i=1}^{k+1}\frac{B_{i}(1-\sigma)}{2\lambda}\|\ty_{i}-\tx_{i-1}\|^2\leq\frac{1}{2}\|x_*-x_0\|^2+{\delta_0C_\flora}.
    \]
    Therefore, \eqref{ineq:flora_a}-\eqref{ineq:flora_c} immediately follow.
\QEDA

In the course of our analysis in Subsection~\ref{ssec:proofs_falm} (see Proposition \ref{prop:OR}), we found it necessary to uniformly bound the distance from the prox center $\tx_k^\flora$ to the minimum distance optimizer $x_*$. 
The following lemma provides a uniform upper bound on $\|\tx^\flora_{k}-x_*\|$ over the iterations $k\geq 0$.
Part 1 in the proof of Lemma \ref{lem:distance_bound_flora} below largely follows~\cite[Theorem 3.10]{MonteiroSvaiterAcceleration}.

\begin{lemma}\label{lem:distance_bound_flora}
    Suppose we choose $y^\flora_k=\ty^\flora_k$ in Step \textbf{3} of Algorithm \ref{alg:flora} and $\sigma_\flora < 1$, and suppose that $x^\flora_0=0$. Define $\bar{R}^\flora_0=\max\{1, R^\flora_0\}$, where $R^\flora_0:=\|x^\flora_0-x_*\|=\min\{\|x^\flora_0-x\|:x\in X_*\}$, where $X_*$ is the optimal solution set to~\eqref{eq:general_convex_prob}. Then, for every $k\geq 0$, we have
    \begin{equation}\label{ineq:tx_dist_bound}
        \|\tx^\flora_{k}-x_*\|\leq \mathcal{R}_\flora,
    \end{equation}
    where $\mathcal{R}_\flora$ is defined as
    \begin{equation}\label{ineq:cal_R}
        \mathcal{R}_\flora:=\bar{R}_0^\flora(1 + \sqrt{2\delta^\flora_{0}C_\flora})\left(\frac{2}{\sqrt{1-\sigma_\flora}}+1\right),
    \end{equation}
    and $C_\flora$ is as in Theorem \ref{thm:flora_complexity}.
\end{lemma}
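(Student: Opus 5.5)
Since $\tx_k^\flora$ is the convex combination $\frac{B_k}{B_{k+1}}y_k^\flora+\frac{b_k}{B_{k+1}}x_k^\flora$ by \eqref{eq:tx_flora}, we have
\[
\|\tx_k^\flora-x_*\|\le \max\{\|y_k^\flora-x_*\|,\|x_k^\flora-x_*\|\}.
\]
It therefore suffices to bound both $\|x_k^\flora-x_*\|$ and $\|y_k^\flora-x_*\|$ uniformly by $\mathcal{R}_\flora$. For the $x$-sequence this is already done: Lemma~\ref{lem:Theta_ub}(b) gives $\|x_k^\flora-x_*\|\le R_0^\flora+\sqrt{2\delta_0^\flora C_\flora}\le \bar R_0^\flora(1+\sqrt{2\delta_0^\flora C_\flora})$, using $\bar R_0^\flora\ge\max\{1,R_0^\flora\}$. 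This is at most $\mathcal{R}_\flora$, since the last factor in \eqref{ineq:cal_R} is at least $1$.

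For the $y$-sequence I follow \cite[Theorem 3.10]{MonteiroSvaiterAcceleration} and induct on $k$. The base case is $y_0^\flora=x_0^\flora$. Since $y_{k+1}^\flora=\ty_{k+1}^\flora$, the triangle inequality gives
\[
\|y_{k+1}^\flora-x_*\|\le \|\tx_k^\flora-x_*\|+\|\ty_{k+1}^\flora-\tx_k^\flora\|.
\]
This naive step accumulates, so I would unroll it instead. Write $\tx_k^\flora$ as a convex combination of $y_k^\flora$ and $x_k^\flora$. Expanding $y_{k+1}^\flora$ recursively then expresses it as a convex combination of the points $x_i^\flora$, $i\le k$, plus the displacements $\ty_{i+1}^\flora-\tx_i^\flora$ weighted by $\frac{b_i}{B_{i+1}}$-type products. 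Concretely,
\[
B_{k+1}(y_{k+1}^\flora-x_*) = \sum_{i=0}^k b_i(x_i^\flora-x_*) + \sum_{i=0}^k B_{i+1}(\ty_{i+1}^\flora-\tx_i^\flora),
\]
which follows from the identity $B_{i+1}y_{i+1}=B_i y_i+b_i x_i+B_{i+1}(\ty_{i+1}-\tx_i)$. The first sum is at most $B_{k+1}\max_i\|x_i^\flora-x_*\|$. For the second sum I apply Cauchy--Schwarz and then the summed inequality from the proof of Theorem~\ref{thm:flora_complexity}, namely
\[
\sum_{i=1}^{k+1}\frac{B_i(1-\sigma_\flora)}{2\lambda_\flora}\|\ty_i^\flora-\tx_{i-1}^\flora\|^2\le \tfrac12 (R_0^\flora)^2+\delta_0^\flora C_\flora,
\]
which holds because $\Phi(y_{k+1}^\flora)\ge\Phi_*$. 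This yields
\[
\sum_i B_{i+1}\|\ty_{i+1}-\tx_i\|\le \Big(\sum_i B_{i+1}\Big)^{1/2}\Big(\frac{2\lambda_\flora}{1-\sigma_\flora}\Big)^{1/2}\big(R_0^\flora+\sqrt{2\delta_0^\flora C_\flora}\big).
\]

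The main obstacle is converting $(\lambda_\flora\sum_{i\le k+1}B_i)^{1/2}$ into something at most about $2B_{k+1}$, so that dividing by $B_{k+1}$ leaves the constant $2/\sqrt{1-\sigma_\flora}$. Here I would use Lemma~\ref{lem:b_seq}(a),(b): $b_i^2=\tau_i\lambda_\flora B_{i+1}\ge \lambda_\flora B_{i+1}$, so $\sqrt{\lambda_\flora B_{i+1}}\le b_i$. It follows that $\sum_i \sqrt{\lambda_\flora B_{i+1}}\,\|\cdot\|$-type terms telescope to at most $B_{k+1}$. I would therefore weight the Cauchy--Schwarz step as
\[
\sum_i B_{i+1}\|d_i\|=\sum_i \sqrt{B_{i+1}}\,\sqrt{B_{i+1}}\|d_i\|\le \Big(\sum_i B_{i+1}\Big)^{1/2}\Big(\sum_i B_{i+1}\|d_i\|^2\Big)^{1/2},
\]
and bound $\lambda_\flora\sum_{i\le k}B_{i+1}\le (\sum_i b_i)^2 = B_{k+1}^2$ via $\sqrt{\lambda_\flora B_{i+1}}\le b_i$ and $\sum_i x_i\le(\sum_i\sqrt{x_i})^2$. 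This gives a bound of the form $\sqrt{2/(1-\sigma_\flora)}\,(R_0^\flora+\sqrt{2\delta_0^\flora C_\flora})$, which is at most $\frac{2}{\sqrt{1-\sigma_\flora}}\bar R_0^\flora(1+\sqrt{2\delta_0^\flora C_\flora})$. Adding the $x$-bound gives $\|y_{k+1}^\flora-x_*\|\le\mathcal{R}_\flora$, and hence $\|\tx_k^\flora-x_*\|\le\mathcal{R}_\flora$. The factor $\bar R_0^\flora$ rather than $R_0^\flora$ is used to absorb $\sqrt{2\delta_0^\flora C_\flora}$ multiplicatively, via $a+b\le \max\{1,a\}(1+b)$.
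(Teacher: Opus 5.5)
Your proof is correct and is essentially the paper's argument: after taking norms, your exact unrolled identity for $B_{k+1}(y^\flora_{k+1}-x_*)$ gives precisely the paper's inductive bound \eqref{ineq:hypo}. As in the paper, the $x$-terms are controlled by Lemma~\ref{lem:Theta_ub}(b), and the final step through the convex combination \eqref{eq:tx_flora} matches Part 2 of the paper's proof.

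The one real difference is how you bound $\frac{1}{B_{k+1}}\sum_{i}B_{i+1}\|\ty^\flora_{i+1}-\tx^\flora_i\|$. The paper uses the per-iterate bound \eqref{ineq:flora_b}, monotonicity of $\{B_k\}$, and $B_{k+1}\ge\lambda_\flora(k+1)^2/4$ from Lemma~\ref{lem:b_seq}(c), whose proof it omits. Your use of $\lambda_\flora B_{i+1}\le b_i^2$ from Lemma~\ref{lem:b_seq}(a),(b) avoids part (c) altogether and gives a constant no worse than $\sqrt{2}/\sqrt{1-\sigma_\flora}$ (indeed $1/\sqrt{1-\sigma_\flora}$ with tighter bookkeeping), comfortably within the $2/\sqrt{1-\sigma_\flora}$ allowed in $\mathcal{R}_\flora$.
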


\begin{proof} 
We prove the claim in three parts. First, we provide an upper bound on $\|y_{k+1}-x_*\|$. Second, we combine the previous bound with Lemma~\ref{lem:Theta_ub}(b) to bound $\|\tx_k-x_*\|$.

    \textbf{Part 1)}
    First, we show by induction that for every $k\geq 0$, 
    \begin{equation}\label{ineq:hypo}
        \|y_{k+1}-x_*\|\leq \frac{1}{B_{k+1}}\sum_{i=0}^{k}B_{i+1}\|\tx_i-y_{i+1}\| + R_0+\sqrt{2\delta_0C_\flora}.
    \end{equation}
    For $k=0$, observe that $b_0=B_{1}$ and $\tx_0=x_0$ in view of \eqref{eq:bk_update} and \eqref{eq:tx_flora}, the claim \eqref{ineq:hypo} follows directly from the triangle inequality
    \[
        \|y_{1}-x_*\|\leq\|x_0-y_{1}\|+ \|x_0-x_*\|=\frac1{B_1}\sum_{i=0}^0B_{i+1}\|\tx_i-y_{i+1}\| + R_0,
    \]
    which proves the base case.
        We now perform the inductive step.
First, applying the triangle inequality twice and using \eqref{eq:tx_flora}, we have
    \[
        \|y_{k+1}-x_*\|\leq \|y_{k+1}-\tx_k\|+\|x_*-\tx_k\|\stackrel{\eqref{eq:tx_flora}}{\leq} \|y_{k+1}-\tx_k\|+\frac{B_k}{B_{k+1}}\|x_*-y_k\|+\frac{b_k}{B_{k+1}}\|x_*-x_k\|.
    \]
    It thus follows from Lemma~\ref{lem:Theta_ub}(b) that
    \[
        \|y_{k+1}-x_*\| \stackrel{\eqref{ineq:xk_bound_flora}}\leq \|y_{k+1}-\tx_k\|+\frac{B_k}{B_{k+1}}\|x_*-y_k\|+\frac{b_k}{B_{k+1}}(R_0 + \sqrt{2\delta_0C_\flora}).
    \]
    Applying the inductive hypothesis \eqref{ineq:hypo} with $k+1$ replaced by $k$, we obtain
        \begin{align*}
        \|y_{k+1}-x_*\|\stackrel{\eqref{ineq:hypo}}\leq & \|y_{k+1}-\tx_k\|+\frac{1}{B_{k+1}}\sum_{i=0}^{k-1}B_{i+1}\|\tx_i-y_{i+1}\|
        + \frac{B_k}{B_{k+1}} (R_0 + \sqrt{2\delta_0C_\flora}) \\
        +&\frac{b_k}{B_{k+1}} (R_0 + \sqrt{2\delta_0C_\flora})
        = \frac{1}{B_{k+1}}\sum_{i=0}^{k}B_{i+1}\|\tx_i-y_{i+1}\| + R_0+\sqrt{2\delta_0C_\flora}.
    \end{align*}
    Hence, we prove \eqref{ineq:hypo} holds for every $k \ge 0$.

    It follows from \eqref{ineq:flora_b} from Theorem \ref{thm:flora_complexity} and \eqref{ineq:hypo} that
    \begin{align*}
        \|y_{k+1}-x_*\|\leq \left(\frac{\sqrt{\lam}}{\sqrt{1-\sigma}B_{k+1}}\sum_{i=0}^{k}\sqrt{B_{i+1}} + 1\right) (R_0+\sqrt{2\delta_0C_\flora}).
    \end{align*}
    Since $\{B_{k}\}$ is increasing, we have
    \begin{equation}\label{ineq:y-x}
        \|y_{k+1}-x_*\|\leq \left(\frac{\sqrt{\lambda}(k+1)}{\sqrt{1-\sigma}\sqrt{B_{k+1}}} +1\right) (R_0 + \sqrt{2\delta_0C_\flora}) \le \left(\frac{2}{\sqrt{1-\sigma}} +1\right) (R_0 + \sqrt{2\delta_0C_\flora}),
    \end{equation}
    where the second inequality follows from the fact that $B_{k+1} \ge \lambda(k+1)^2/4$ from Lemma~\ref{lem:b_seq}(c).
    Using the definition $\bar R_0=\max\{1, R_0\}$, we note that $R_0+\sqrt{2\delta_0C_\flora}\leq \bar{R}_0(1+\sqrt{2\delta_0C_\flora})$. Therefore, we conclude from \eqref{ineq:y-x} and the definition of $\mathcal{R}_\flora$ in \eqref{ineq:cal_R} that 
    \begin{equation}\label{ineq:dist_R}
        \|y_{k+1}-x_*\|\leq \mathcal{R}_\flora.
    \end{equation}

    \textbf{Part 2)} Next, combining the triangle inequality with the fact that $(\alpha a + \beta b)^2\leq (\alpha +\beta)(\alpha a^2+\beta b^2)$ for $a,b,\alpha,\beta\in\R_{++}$, we have
        \begin{align*}
            \|\tx_k-x_*\|^2&\stackrel{\eqref{eq:tx_flora}}\leq\left(\frac{B_k}{B_{k+1}}\|y_k-x_*\|+\frac{b_k}{B_{k+1}}\|x_k-x_*\|\right)^2\\
            &\leq \left(\frac{B_k}{B_{k+1}} + \frac{b_k}{B_{k+1}}\right)\left(\frac{B_k}{B_{k+1}}\|y_k-x_*\|^2+\frac{b_k}{B_{k+1}}\|x_k-x_*\|^2\right)\\
            &\stackrel{\eqref{ineq:dist_R}}{\leq }\frac{B_k}{B_{k+1}}\mathcal{R}_\flora^2+\frac{b_k}{B_{k+1}}\|x_k-x_*\|^2\stackrel{\eqref{ineq:xk_bound_flora}}\leq \frac{B_k}{B_{k+1}}\mathcal{R}_\flora^2+\frac{b_k}{B_{k+1}}(R_0+\sqrt{2\delta_0C_\flora})^2 \le \mathcal{R}_\flora^2,
        \end{align*}
    where the third inequality follows by Part 1 and the fact that $B_{k+1}=B_k+b_k$ (see \eqref{eq:bk_update}), the fourth inequality by Lemma~\ref{lem:Theta_ub}(b), and the final one by
    $R_0+\sqrt{2\delta_0C_\flora}\leq \bar{R}_0(1+\sqrt{2\delta_0C_\flora})\leq \mathcal{R}_\flora$.
\end{proof}

\section{Deferred Proofs for ACG and Restarted ACG}\label{appdx:primal_deferred}
\subsection{FLOrA Analysis of Algorithm~\ref{alg:ACG}}\label{appdx:acg}

In this subsection, we provide a self-contained analysis of Algorithm~\ref{alg:ACG} by showing that it is an instance of the FLOrA framework.

Clearly the scalar sequences in Algorithm~\ref{alg:flora} and Algorithm~\ref{alg:ACG} are equivalent with $b_k=a_j$, $B_k=A_j$, $\tau_k=\tau_j$, $\mu_\flora=\mu$, and $\lambda_\flora=1/(2L)$. We can then restate Lemma~\ref{lem:b_seq} for Algorithm~\ref{alg:ACG}.
\begin{lemma}\label{lem:101}
    The following statements hold for every $ j\ge 0 $:
    \begin{itemize}
        \item[a)] $ 2La_j^2=A_{j+1}\tau_{j} $;
        \item[b)] $ \tau_j=1+\mu A_j $;
        \item[c)] 
        \[
            A_{j+1}\geq\frac{1}{2L}\max\left\{\frac{(j+1)^2}{4}, \left(1+\frac{1}{2}\sqrt{\frac{\mu}{2L}}\right)^{2j}\right\}.
        \]
    \end{itemize}
\end{lemma}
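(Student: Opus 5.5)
The plan is to obtain all three statements by specializing Lemma~\ref{lem:b_seq} to the scalar recursion of Algorithm~\ref{alg:ACG}. The specialization is $b_k=a_j$, $B_k=A_j$, $\tau_k=\tau_j$, $\mu_\flora=\mu$, and $\lambda_\flora=1/(2L)$.

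First we check that the recursions really coincide. In \eqref{eq:bk_update} with $\lambda_\flora=1/(2L)$ we have
\[
b_k=\frac{\tau_k/(2L)+\sqrt{\tau_k^2/(4L^2)+4\tau_kB_k/(2L)}}{2}.
\]
Multiplying numerator and denominator by $2L$ gives
\[
b_k=\frac{\tau_k+\sqrt{\tau_k^2+8L\tau_kB_k}}{4L},
\]
which is exactly $a_j$ in \eqref{def:acg_scalar}. The updates $A_{j+1}=A_j+a_j$ and $\tau_{j+1}=\tau_j+\mu a_j$ match $B_{k+1}$ and $\tau_{k+1}$ term by term. The initializations also agree: $A_0=B_0=0$ and $\tau_0=1$. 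So by induction on $j$ the two scalar sequences are identical.

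Statement (a) then follows from Lemma~\ref{lem:b_seq}(a), since $b_k^2=\tau_k\lambda_\flora B_{k+1}$ becomes $a_j^2=\tau_jA_{j+1}/(2L)$, i.e., $2La_j^2=A_{j+1}\tau_j$. Alternatively, one can note directly that $a_j$ is the positive root of $2La^2-\tau_ja-\tau_jA_j=0$.

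Statement (b) is Lemma~\ref{lem:b_seq}(b) verbatim. It also follows by telescoping $\tau_{j+1}-\tau_j=\mu a_j=\mu(A_{j+1}-A_j)$ from $\tau_0=1$, $A_0=0$.

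Statement (c) comes from substituting $\lambda_\flora=1/(2L)$ into Lemma~\ref{lem:b_seq}(c). The prefactor $\lambda_\flora$ becomes $1/(2L)$, and $\sqrt{\lambda_\flora\mu_\flora}/2$ becomes $\frac12\sqrt{\mu/(2L)}$.

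The only potential obstacle is that Lemma~\ref{lem:b_seq}(c) was stated with its proof deferred to \cite{monteiro2016adaptive}. If a self-contained argument is wanted, I would derive both bounds from (a) and (b).

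For the quadratic bound, use (a) with $\tau_j\ge1$ to get $a_j\ge\sqrt{A_{j+1}/(2L)}$. Then
\[
\sqrt{A_{j+1}}-\sqrt{A_j}=\frac{a_j}{\sqrt{A_{j+1}}+\sqrt{A_j}}\ge\frac{a_j}{2\sqrt{A_{j+1}}}\ge\frac{1}{2\sqrt{2L}}.
\]
Summing from $A_0=0$ gives $\sqrt{A_{j+1}}\ge (j+1)/(2\sqrt{2L})$, i.e., $A_{j+1}\ge (j+1)^2/(8L)$.

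For the geometric bound, use $\tau_j\ge\mu A_j$ in (a), together with $A_{j+1}\ge A_j$, to get $a_j\ge\sqrt{\mu A_jA_{j+1}/(2L)}$. The same difference-of-square-roots manipulation then gives
\[
\sqrt{A_{j+1}}\ge\Bigl(1+\tfrac12\sqrt{\mu/(2L)}\Bigr)\sqrt{A_j}.
\]
Iterating from $A_1=a_0=1/(2L)$ yields $A_{j+1}\ge\frac{1}{2L}\bigl(1+\tfrac12\sqrt{\mu/(2L)}\bigr)^{2j}$.
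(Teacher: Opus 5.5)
Your proposal is correct and matches the paper's argument: the paper notes that under $b_k=a_j$, $B_k=A_j$, $\tau_k=\tau_j$, $\mu_{\flora}=\mu$, $\lambda_{\flora}=1/(2L)$ the scalar recursions of Algorithm~\ref{alg:ACG} and Algorithm~\ref{alg:flora} coincide, and then restates Lemma~\ref{lem:b_seq}. Your optional self-contained derivation of (c) from (a)--(b) via $\sqrt{A_{j+1}}-\sqrt{A_j}=a_j/(\sqrt{A_{j+1}}+\sqrt{A_j})$ is also correct, and it supplies the step the paper defers to \cite{monteiro2016adaptive}.
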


We start by defining our lower model. On each iteration $j$, define     
\begin{gather}\label{def:Gamma}
        \Gamma_j(x) = \ell_{g}(x;\tx_j)+h(x) + \frac{2L + \mu}{2}\|u-\tx_j\|^2,
    \end{gather}
where $\Gamma_j$ is in fact the objective function in \eqref{def:tyj}.
To match the algorithm statements, we denote FLOrA iterates with $k$ and ACG iterates with $j$. Recalling the objective function $\psi(\cdot)$ defined in~\eqref{eq:prob},  we will show next that Algorithm~\ref{alg:ACG} is an instance of the FLOrA framework (i.e., Algorithm~\ref{alg:flora}) with the correspondence
\begin{eqbox}
\begin{equation}
\begin{gathered}
    \Phi(\cdot)=\psi(\cdot),\quad\Gamma^\flora_k(\cdot) = \Gamma_j(\cdot),\quad \A_k^\flora=\infty,\quad\delta^\flora_k=\alpha_\flora=0, \quad \mu_\flora = \mu,\quad \sigma_\flora=1/2;\\
    \lambda_\flora=\frac{1}{2L},\quad
    y^\flora_{k} = y_j,\quad
   z^\flora_{k}= \tilde y^\flora_{k} = \tilde y_{j},\quad
    x^\flora_k = x_j,\quad u^\flora_{k}=u_j:=2L({\tx_{j-1}-\ty_{j}}),\quad \hat u^\flora_{k}=0.\label{def:acg_flora_corr}
\end{gathered}
\end{equation}
\end{eqbox}


First, We show that the lower model $\Gamma_j$ and $\ty_{j+1}$ satisfy~\eqref{ineq:Gamma_acc} and~\eqref{eq:zkp1_hatuk} with $\A_k^\flora=\infty$.
\begin{lemma}\label{lem:acg_Gamma}
    Consider $\Gamma_j(x)$ defined in~\eqref{def:Gamma}. Then, the following statements hold:
    \begin{enumerate}[label={\rm\alph*)}]
        \item $\Gamma_j(u)\leq \psi(u)+L\|u-\tx_j\|^2$ for every $u \in \R^n$;
        \item $\ty_{j+1}=\underset{u\in\R^n}\argmin\Gamma_j(u)$.
    \end{enumerate}
    Moreover, the two statements satisfy~\eqref{ineq:Gamma_acc} and~\eqref{eq:zkp1_hatuk} with the correspondence~\eqref{def:acg_flora_corr}.
\end{lemma}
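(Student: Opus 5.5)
Both statements follow from the definition \eqref{def:Gamma} and the structural assumptions on $g$ in \eqref{eq:prob}, namely that $g$ is $\mu$-strongly convex and $(L+\mu)$-smooth. The only care needed is over notation: the expression $\|u-\tx_j\|^2$ in \eqref{def:Gamma} is read as $\|x-\tx_j\|^2$.

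\emph{Statement a).} We upper bound $\ell_g(u;\tx_j)$ by $g(u)$ using convexity of $g$. This gives
\[
\Gamma_j(u)\le g(u)+h(u)+\frac{2L+\mu}{2}\|u-\tx_j\|^2=\psi(u)+\frac{2L+\mu}{2}\|u-\tx_j\|^2.
\]
This falls short of the claimed bound by an extra $\frac{\mu}{2}\|u-\tx_j\|^2$. To remove it, we use strong convexity of $g$ instead of plain convexity:
\[
\ell_g(u;\tx_j)\le g(u)-\frac{\mu}{2}\|u-\tx_j\|^2 .
\]
Substituting this cancels the $\mu/2$ term exactly, so
\[
\Gamma_j(u)\le \psi(u)+L\|u-\tx_j\|^2,
\]
which is statement a).

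\emph{Statement b).} This is immediate, because $\Gamma_j$ is precisely the objective minimized in \eqref{def:tyj}. It is $(2L+\mu)$-strongly convex and $h$ is proper closed convex, so the minimizer exists and is unique, and it equals $\ty_{j+1}$.

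\emph{Matching to FLOrA.} Under the correspondence \eqref{def:acg_flora_corr} we have $\lambda_\flora=1/(2L)$, so $\frac{1}{2\lambda_\flora}=L$. Statement a) is then exactly \eqref{ineq:Gamma_acc} with $\Phi=\psi$ and $\tx^\flora_k=\tx_j$.

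We also check membership $\Gamma_j\in\CConv{\mu_\flora+\lambda_\flora^{-1}}{\dom\Phi}$. The modulus of $\Gamma_j$ is $2L+\mu=\mu+\lambda_\flora^{-1}$, which is what is required.

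For \eqref{eq:zkp1_hatuk} we take $\A_k^\flora=\infty$. The quadratic term in \eqref{eq:zkp1_hatuk} vanishes, so $z^\flora_{k+1}=\argmin\Gamma_j=\ty_{j+1}$ by statement b). In addition, $\hat u^\flora_{k+1}=(\tx-z)/\infty=0$, matching \eqref{def:acg_flora_corr}.

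The only delicate step is the first one: plain convexity of $g$ does not suffice, and the strong convexity of $g$ must be used in the linearization to recover the coefficient $L$.
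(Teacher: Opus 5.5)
Your proof is correct and matches the paper's: part a) uses the $\mu$-strong convexity of $g$ to cancel the extra $\frac{\mu}{2}\|u-\tx_j\|^2$ term, part b) is read off from \eqref{def:tyj}, and the match to \eqref{ineq:Gamma_acc} and \eqref{eq:zkp1_hatuk} uses $\lambda_\flora=1/(2L)$ and $\A_k^\flora=\infty$. Your explicit check that $\Gamma_j$ has strong convexity modulus $2L+\mu=\mu_\flora+\lambda_\flora^{-1}$ is a detail the paper leaves implicit.
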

\begin{proof}
    a) The claim follows by the $\mu$-strong convexity of $g$ and the definition of $\Gamma_j$ in~\eqref{def:Gamma}.

    b) The claim follows directly from the definitions of $\ty_{j+1}$ in~\eqref{def:tyj} and $\Gamma_j$ in~\eqref{def:Gamma}.

    Finally, it is easy to verify the final claim with the correspondence \eqref{def:acg_flora_corr}.
\end{proof}

 Combining these properties with the $(L+\mu)$-smoothness and $\mu$-strong convexity of $g$, we show that $\Gamma_j$ satisfies the key inequality~\eqref{ineq:flora_cond} in FLOrA.
\begin{lemma}\label{lem:acg_ALAM_ineq}
    Consider $\Gamma_j(x)$ defined in~\eqref{def:Gamma}. Then, for every $j\geq 0$,
    \begin{equation}\label{ineq:psi}
        \frac{1}{L}\left[\psi(\tilde y_{j+1})+L\|\ty_{j+1}-\tx_j\|^2-\Gamma_j(\tilde y_{j+1})\right]\leq \frac{1}{2}\|\ty_{j+1}-\tx_j\|^2.
    \end{equation}
    Moreover, \eqref{ineq:psi}
    satisfies \eqref{ineq:flora_cond} with the correspondence \eqref{def:acg_flora_corr}.
\end{lemma}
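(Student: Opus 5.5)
The plan is to combine the $(L+\mu)$-smoothness of $g$ with the exact form of $\Gamma_j$ in~\eqref{def:Gamma}. In $\Gamma_j$, $g$ enters only through its linearization $\ell_g(\cdot;\tx_j)$, while $h$ appears exactly. Hence the $h$ terms cancel in $\psi(\ty_{j+1})-\Gamma_j(\ty_{j+1})$, leaving
\[
\psi(\ty_{j+1})-\Gamma_j(\ty_{j+1}) = g(\ty_{j+1})-\ell_g(\ty_{j+1};\tx_j)-\frac{2L+\mu}{2}\|\ty_{j+1}-\tx_j\|^2 .
\]
By the smoothness inequality~\eqref{ineq:Lsmooth} with constant $L+\mu$, the first difference is at most $\frac{L+\mu}{2}\|\ty_{j+1}-\tx_j\|^2$. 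This gives
\[
\psi(\ty_{j+1})-\Gamma_j(\ty_{j+1})\le -\frac{L}{2}\|\ty_{j+1}-\tx_j\|^2 .
\]
Adding $L\|\ty_{j+1}-\tx_j\|^2$ yields $\psi(\ty_{j+1})+L\|\ty_{j+1}-\tx_j\|^2-\Gamma_j(\ty_{j+1})\le \frac{L}{2}\|\ty_{j+1}-\tx_j\|^2$. Dividing by $L$ is exactly~\eqref{ineq:psi}. The strong convexity of $g$ is not needed for this part; it is only used in Lemma~\ref{lem:acg_Gamma}(a).

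For the correspondence claim, I substitute the choices in~\eqref{def:acg_flora_corr} into~\eqref{ineq:flora_cond}: $\lambda_\flora=1/(2L)$, $\hat u^\flora_{k+1}=0$ (so $\A_k^\flora=\infty$ and the first term vanishes), $z^\flora_{k+1}=\ty^\flora_{k+1}=\ty_{j+1}$, $\tx^\flora_k=\tx_j$, $\delta^\flora_k=0$, and $\sigma_\flora=1/2$. Then $\frac{1}{2\lambda_\flora}=L$, and $2\lambda_\flora=1/L$. So~\eqref{ineq:flora_cond} reads
\[
\frac1L\left[\psi(\ty_{j+1})+L\|\ty_{j+1}-\tx_j\|^2-\Gamma_j(\ty_{j+1})\right]\le \frac12\|\ty_{j+1}-\tx_j\|^2,
\]
which is precisely~\eqref{ineq:psi}.

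The only delicate point is bookkeeping. One must confirm that $z^\flora_{k+1}$ is the minimizer of $\Gamma_j$, which is Lemma~\ref{lem:acg_Gamma}(b), so that $\Gamma^\flora_k(z^\flora_{k+1})=\Gamma_j(\ty_{j+1})$. One must also keep the smoothness constant as $L+\mu$ rather than $L$, which is what leaves the margin $L/2$. There is no real obstacle beyond these checks.
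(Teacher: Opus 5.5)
Your proposal is correct and follows the paper's proof essentially verbatim. The $h$ terms cancel, the $(L+\mu)$-smoothness of $g$ bounds $g(\ty_{j+1})-\ell_g(\ty_{j+1};\tx_j)$ so that the net coefficient is $\frac{L}{2}\|\ty_{j+1}-\tx_j\|^2$, and substituting $\lambda_\flora=1/(2L)$, $\sigma_\flora=1/2$, $\hat u^\flora_{k+1}=0$, $\delta^\flora_k=0$ turns \eqref{ineq:flora_cond} into \eqref{ineq:psi}; as a minor aside, $\Gamma^\flora_k(z^\flora_{k+1})=\Gamma_j(\ty_{j+1})$ comes straight from the correspondence, so Lemma~\ref{lem:acg_Gamma}(b) is needed only for \eqref{eq:zkp1_hatuk}, not here.
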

\begin{proof}
    Using the definition of $\Gamma_j$ in~\eqref{def:Gamma} and the $(L+\mu)$-smoothness of $g$, we have
    \begin{align*}
        \psi(\ty_{j+1})+L\|\ty_{j+1}-\tx_j\|-\Gamma_j(\ty_{j+1})&\stackrel{\eqref{def:Gamma}}=g(\ty_{j+1})-\ell_{g}(\ty_{j+1};\tx_j)-\frac{\mu}{2}\|\ty_{j+1}-\tx_j\|^2 \\
        &\leq \frac{L+\mu}{2}\|\ty_{j+1}-\tx_{j}\|^2-\frac{\mu}{2}\|\ty_{j+1}-\tx_{j}\|^2
        =\frac{L}{2}\|\ty_{j+1}-\tx_{j}\|^2.
    \end{align*}
   Hence, \eqref{ineq:psi} immediately follows.
   Finally, it is easy to verify the final claim with the correspondence \eqref{def:acg_flora_corr}.
\end{proof}

Finally, we show that the auxiliary sequence $\{x_{j+1}\}$ with update~\eqref{def:xj} is equivalent to the FLOrA sequence $\{x_{k+1}^\flora\}$ with update~\eqref{def:xk_flora}. 

\begin{lemma}\label{lem:acg_xj}
    Choosing $u_{j+1}$ as in~\eqref{def:acg_flora_corr}, we can rewrite $x_{j+1}$ in \eqref{def:xj} as
    \begin{equation}\label{eq:x-new}
        x_{j+1}=\frac{1}{\tau_{j+1}}\left(\tau_j x_j-a_ju_{j+1}+\mu a_j\ty_{j+1}\right).
    \end{equation}
    Moreover,~\eqref{def:xj} is equivalent to~\eqref{def:xk_flora} with the correspondence~\eqref{def:acg_flora_corr}.
\end{lemma}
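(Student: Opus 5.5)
The plan is a direct algebraic verification. I will start from the right-hand side of \eqref{eq:x-new}, substitute the definition of $u_{j+1}$ and of $\tx_j$, and use the scalar identities of Lemma~\ref{lem:101} to reduce it to \eqref{def:xj}.

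\textbf{Step 1 (expand the numerator).} By \eqref{def:acg_flora_corr}, $u_{j+1}=2L(\tx_j-\ty_{j+1})$. Substituting this, the numerator of \eqref{eq:x-new} becomes
\[
\tau_j x_j-2La_j\tx_j+(2L+\mu)a_j\ty_{j+1}.
\]
Next I substitute $\tx_j=(A_jy_j+a_jx_j)/A_{j+1}$ from \eqref{def:tx}. The coefficient of $x_j$ is then $\tau_j-2La_j^2/A_{j+1}$, and this vanishes by Lemma~\ref{lem:101}(a), which states $2La_j^2=A_{j+1}\tau_j$. What remains is
\[
(2L+\mu)a_j\ty_{j+1}-\frac{2A_ja_jL}{A_{j+1}}\,y_j,
\]
which is exactly the numerator in \eqref{def:xj}.

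\textbf{Step 2 (match the denominator).} By Lemma~\ref{lem:101}(b) applied at index $j+1$ (equivalently, from $\tau_{j+1}=\tau_j+\mu a_j$ and $\tau_0=1$), we have $\tau_{j+1}=1+\mu A_{j+1}$. This is the denominator in \eqref{def:xj}, so the two expressions for $x_{j+1}$ coincide. The cancellation of the $x_j$ terms in Step~1 is the only nontrivial point, and it is handled entirely by Lemma~\ref{lem:101}(a).

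\textbf{Step 3 (FLOrA correspondence).} Take $b_k=a_j$, $B_k=A_j$, $\mu_\flora=\mu$ and $\lambda_\flora=1/(2L)$. The FLOrA update \eqref{def:xk_flora} reads $x^\flora_{k+1}=\tau_{k+1}^{-1}\left(\tau_kx^\flora_k+b_k\mu_\flora z^\flora_{k+1}-b_ku^\flora_{k+1}\right)$, with
\[
u^\flora_{k+1}=\hat u^\flora_{k+1}+\frac{\tx^\flora_k-z^\flora_{k+1}}{\lambda_\flora}.
\]
Since $\A_k^\flora=\infty$ and $\hat u^\flora_{k+1}=0$, this gives $u^\flora_{k+1}=2L(\tx_j-\ty_{j+1})=u_{j+1}$. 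Moreover $z^\flora_{k+1}=\ty_{j+1}$, which is justified by Lemma~\ref{lem:acg_Gamma}(b). Finally, \eqref{eq:tx_flora} reproduces \eqref{def:tx} because $y^\flora_k=y_j$ and $x^\flora_k=x_j$. With these identifications, \eqref{def:xk_flora} is literally \eqref{eq:x-new}, and hence equivalent to \eqref{def:xj} by Steps~1 and~2.
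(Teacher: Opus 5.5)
Your proposal is correct and follows the paper's proof. Both are the same direct computation: substitute $\tx_j$ from \eqref{def:tx}, cancel the $x_j$ coefficient via $2La_j^2=A_{j+1}\tau_j$ (Lemma~\ref{lem:101}(a)), and identify the denominator $1+\mu A_{j+1}$ with $\tau_{j+1}$ (Lemma~\ref{lem:101}(b)). You simply run the chain from \eqref{eq:x-new} back to \eqref{def:xj} instead of forward, and your Step~3 accurately spells out the correspondence check that the paper leaves as ``easy to verify.''
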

\begin{proof}
    Using the definition of $x_{j+1}$ in \eqref{def:xj} and Lemma~\ref{lem:101}(a) and (b), we have
    \begin{align*}
        x_{j+1}&\stackrel{\eqref{def:xj}}=\frac{1}{\tau_{j+1}}\left(2La_j\ty_{j+1}+\tau_j x_j-\frac{2La_j^2}{A_{j+1}}x_j+\mu a_j\ty_{j+1} - 2L\frac{A_ja_j}{A_{j+1}}y_j\right)\\
        &\stackrel{\eqref{def:tx}}=\frac{1}{\tau_{j+1}}\left(\tau_j x_j+2La_j\ty_{j+1}-2La_j\tx_j+\mu a_j\ty_{j+1}\right)\\
        &=\frac{1}{\tau_{j+1}}\left(\tau_j x_j-a_ju_{j+1}+\mu a_j\ty_{j+1}\right).
        \end{align*}
        where the second identity is due to \eqref{def:tx} and the last one holds by our choice of $u_{j+1}$ in \eqref{def:acg_flora_corr}.
        Finally, it is easy to verify the last claim in view of \eqref{def:acg_flora_corr} and \eqref{eq:x-new}.
\end{proof}

Having shown that Algorithm~\ref{alg:ACG} is an instance of~Algorithm \ref{alg:flora}, then Theorem~\ref{thm:flora_complexity} holds in the context of this subsection using the translation in~\eqref{def:acg_flora_corr}. Therefore, we can present a simple proof of Lemma~\ref{lem:acg_convergence} based on the correspondence in~\eqref{def:acg_flora_corr}.

\vspace{1em}
\noindent
\textbf{Proof of Lemma~\ref{lem:acg_convergence}:} 
From the correspondence~\eqref{def:acg_flora_corr} and Theorem~\ref{thm:flora_complexity} with $R^\flora_0=R_0$ and $B_k=A_j$, we obtain
\[
    \psi(y_{j})-\psi_*\stackrel{\eqref{def:acg_flora_corr}}{=}\Phi(y^\flora_{k})-\Phi_*\stackrel{\eqref{ineq:flora_a},\eqref{def:acg_flora_corr}}\leq\frac{R_0^2}{2A_{j}},
\]
which proves~\eqref{ineq:acg_primal_gap}.

The second claim follows by first noting that, by the definition of the gradient map,
\begin{equation}
    \mathcal{G}_\psi^{(2L+\mu)^{-1}}(\tx_{j-1})\stackrel{\eqref{def:grad_mapping}}=(2L+\mu)(\tx_{j-1}-y_j).\label{eq:grad_mapping_acg}
\end{equation}
Then, once again applying Theorem~\ref{thm:flora_complexity} with $R^\flora_0=R_0$ and $B_k=A_j$ under the correspondence~\eqref{def:acg_flora_corr}, we obtain
\[
    \|\mathcal{G}_\psi^{(2L+\mu)^{-1}}(\tx_{j-1})\|\stackrel{\eqref{eq:grad_mapping_acg}}{=}(2L+\mu)\|y_j-\tx_{j-1}\|\stackrel{\eqref{def:acg_flora_corr}}=(2L+\mu)\|y^\flora_k-\tx^\flora_{k-1}\|\stackrel{\eqref{ineq:flora_b}\eqref{def:acg_flora_corr}}\leq \frac{(2L+\mu)R_0}{\sqrt{LA_j}},
\]
which proves~\eqref{ineq:acg_grad_mapping}.
\QEDA

Similarly, since Algorithm~\ref{alg:ACG} is an instance of FLOrA, the following lemma is a direct consequence of Lemmas~\ref{lem:theta_props} and \ref{lem:Theta_ub}(a) under the correspondence~\eqref{def:acg_flora_corr}. The proof is omitted, since all results directly follow by substitution from~\eqref{def:acg_flora_corr}.

	\begin{lemma}\label{lem:gamma}
		For all $j\geq 0$, let $\theta_j$ and $\Theta_j$ be as defined in~\eqref{def:theta_acg} and~\eqref{def:Theta_acg}, respectively.
        Then, the following statements hold for every $j \ge 0$:
		
\begin{enumerate}[label={\rm\alph*)}]
    \item $\theta_{j+1}$ and $\Theta_{j+1}$ are $\mu$-strongly convex quadratic functions;
    \item $x_{j} = \underset{x\in\R^n}\argmin\{A_{j}\Theta_{j}(x)+\|x-x_0\|^2/2\}$;
    \item for all $x\in\dom\psi$, $\theta_{j+1}(x)\leq \psi(x)$ and $\Theta_{j+1}(x)\leq \psi(x)$;
    \item \begin{equation}\label{ineq:induction}
		    A_j \psi(y_j)\le \underset{u\in \R^n}\min\left\lbrace A_j\Theta_j(u)+\frac12 \|u-x_0\|^2\right\rbrace.
		\end{equation}
\end{enumerate}
	\end{lemma}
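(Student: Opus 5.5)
The approach is to obtain all four claims by direct substitution of the correspondence \eqref{def:acg_flora_corr} into Lemmas~\ref{lem:theta_props}, \ref{lem:b_seq} and~\ref{lem:Theta_ub}(a). Lemmas~\ref{lem:acg_Gamma}, \ref{lem:acg_ALAM_ineq} and~\ref{lem:acg_xj} already establish that Algorithm~\ref{alg:ACG} is an instance of Algorithm~\ref{alg:flora} with $\delta^\flora_k\equiv 0$. So the FLOrA objects $\theta_{k+1}$, $\Theta_{k+1}$ of \eqref{def:theta}--\eqref{def:Theta} need only be identified with the ACG objects $\theta_{j+1}$, $\Theta_{j+1}$ of \eqref{def:theta_acg}--\eqref{def:Theta_acg}.

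\textbf{Step 1: identifying $\theta$ and $\Theta$.} Under \eqref{def:acg_flora_corr} we have $z^\flora_{k+1}=\ty_{j+1}$, $\tx^\flora_k=\tx_j$, $\lambda_\flora=1/(2L)$, $u^\flora_{k+1}=u_{j+1}$ and $\mu_\flora=\mu$. Then the FLOrA expression
\[
\Gamma^\flora_k(z^\flora_{k+1})-\tfrac{1}{2\lambda_\flora}\|z^\flora_{k+1}-\tx^\flora_k\|^2
\]
becomes exactly $\Gamma_j(\ty_{j+1})-L\|\ty_{j+1}-\tx_j\|^2$. Hence \eqref{def:theta} coincides with \eqref{def:theta_acg}, once the subgradient there is the same $u_{j+1}=2L(\tx_j-\ty_{j+1})$; I read the paper's indexing as intending this. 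Since $\delta^\flora_k=0$ and $B_k=A_j$, $b_k=a_j$, definition \eqref{def:Theta} reduces to \eqref{def:Theta_acg} with the same initialization $\Theta_0\equiv0$.

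\textbf{Step 2: parts a)--c).} Part a) is then Lemma~\ref{lem:theta_props}(a) verbatim. Part b) is Lemma~\ref{lem:theta_props}(b), using $x^\flora_k=x_j$ and $x^\flora_0=x_0$. For part c), Lemma~\ref{lem:theta_props}(c) gives $\theta_{j+1}\le\psi$ and $\Theta_{j+1}\le\psi+\Delta^\flora_j$. By \eqref{def:Delta} with all $\delta^\flora_k=0$ we get $\Delta^\flora_j=0$, which yields c).

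\textbf{Step 3: part d) and the expected obstacle.} Part d) follows from Lemma~\ref{lem:Theta_ub}(a) by discarding the nonnegative sum $\sum_i B_i(1-\sigma_\flora)\|\ty_i-\tx_{i-1}\|^2/(2\lambda_\flora)$, where $\sigma_\flora=1/2<1$. The only point needing care is the index bookkeeping. The FLOrA iteration counter $k$ must match the ACG counter $j$, so that $B_k=A_j$ for the same index, which holds because both start with $B_0=A_0=0$ and $\tau_0=1$. Also, $y^\flora_k=y_j$ is defined by the min rule \eqref{def:yj}, which satisfies Step~3 of Algorithm~\ref{alg:flora}. With these identifications checked, all four statements are immediate.
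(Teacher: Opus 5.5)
Your proposal is correct and matches the paper, which omits the proof because all four claims follow by substituting the correspondence \eqref{def:acg_flora_corr} (with $\delta^\flora_k\equiv 0$, hence $\Delta^\flora_k\equiv 0$) into Lemmas~\ref{lem:theta_props} and~\ref{lem:Theta_ub}(a). Your explicit checks of the index alignment $u_{j+1}=2L(\tx_j-\ty_{j+1})$ and $B_k=A_j$, and of the min rule \eqref{def:yj} satisfying Step~3 of Algorithm~\ref{alg:flora}, are the bookkeeping that the paper leaves implicit.
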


\vspace{1em}

We are now ready to prove Lemma~\ref{lem:tech}, which is the key result enabling the inner complexity bound in Proposition~\ref{prop:inner_to_outer}.

\vspace{1em}

\noindent
\textbf{Proof of Lemma~\ref{lem:tech}:}  
		a) It follows from Lemma \ref{lem:gamma}(d) that
		\begin{equation} 
		    \psi (y_j)
		\stackrel{\eqref{ineq:induction}}\le \underset{u\in \R^n}\min \left \{ \Theta_j (u) + \frac1{2A_j} \|u-x_0\|^2 \right\} \le  \Theta_j( \hat x_j) +  \frac1{2A_j} \|\hat x_j-x_0\|^2. \label{ineq:psi-new}
		\end{equation}
        Using Lemma \ref{lem:gamma}(a) and~\eqref{def:hatxj_sj}, we have for every $u \in \R^n$,
		\[
		\psi (y_j)  - \frac1{2A_j} \|\hat x_j-x_0\|^2 \le \Theta_j( \hat x_j) \stackrel{\eqref{def:hatxj_sj}}\leq \Theta_j(u)- \frac{\mu}{2}\|u-\hat x_j\|^2.
		\]
		Taking $u = y_j$ in the above inequality and using Lemma~\ref{lem:gamma}(c), we obtain
		\[
		\|y_j-\hat x_j\|^2 \le \frac{1}{\mu A_j} \|\hat x_j-x_0\|^2.
		\]
		Using the above inequality, the triangle inequality, and the fact that $(a+b)^2\le 2(a^2+b^2)$, we have
		\[
		\|\hat x_j-x_0\|^2 \le 2(\|\hat x_j-y_j\|^2+\|y_j-x_0\|^2)
		\le \frac{2}{\mu A_j}\|\hat x_j-x_0\|^2 + 2\|y_j-x_0\|^2.
		\]
		Hence, \eqref{ineq:mj_mu} follows from the assumption that $A_j\ge 3/\mu$ and \eqref{ineq:psi-new}.
		
		b) 	It follows from Lemma~\ref{lem:gamma}(d) that for any $u\in\R^n$
		\begin{align*}
		\psi(y_j)+\frac12 \left(\mu + \frac{1}{A_j}\right) \|u- x_j\|^2 &\stackrel{\eqref{ineq:induction}}\le \underset{u\in \R^n}\min\left\lbrace  \Theta_j(u)+ \frac{1}{2A_j} \|u-x_0\|^2\right\rbrace +\frac12 \left( \mu + \frac{1}{A_j}\right) \|u- x_j\|^2\\
		&\le  \Theta_j(u)+ \frac{1}{2A_j} \|u-x_0\|^2,
		\end{align*}
        where the second inequality follows from Lemma \ref{lem:gamma}(a) and (b).
		Taking $ u=y_j $ in the above inequality and Lemma \ref{lem:gamma}(c), we have
		\[
		\frac12 \left( \mu + \frac{1}{A_j}\right) \|y_j- x_j\|^2 \le  \Theta_j(y_j) - \psi(y_j)+ \frac{1}{2A_j} \|y_j-x_0\|^2 \le \frac{1}{2A_j} \|y_j-x_0\|^2,
		\]
		and hence 
		\[
		\|y_j- x_j\|^2 \le \frac{1}{1 + A_j \mu}\|y_j-x_0\|^2 \le \frac14 \|y_j-x_0\|^2
		\]
		where the second inequality is due to $ A_j\ge 3/\mu $. Finally, \eqref{ineq:sj} immediately follows from the above inequality, the triangle inequality and the definition of $ s_j $ in \eqref{def:hatxj_sj}.
	\QEDA

\subsection{Deferred Proofs from Subsection~\ref{ssec:proofs_restart}}\label{appdx:deferred_restart}

We begin by proving that Algorithm~\ref{alg:restart} is an instance of the FLOrA framework. The proof directly follows from substituting terms using the correspondence~\eqref{def:restart_corresp}.

\vspace{1em}

\noindent
{\bf Proof of Lemma \ref{lem:restart_Gamma_eq}:}
        Clearly, the scalar sequences $b_k$, $B_k$, and $\tau_k$ are equivalent. Similarly, $y^\flora_k=w_k$ given the choice $\ty^\flora_{k+1}=y_j$, and $\tx^\flora_k$ in~\eqref{eq:tx_flora} is equivalent to $\tilde{v}_k$ in~\eqref{def:tv} given $y^\flora_k=w_k$ and the choice of $x^\flora_k=v_k$ in~\eqref{def:restart_corresp}. Then we need to show the FLOrA conditions~\eqref{ineq:Gamma_acc},~\eqref{ineq:flora_cond},~\eqref{eq:zkp1_hatuk} hold and the update~\eqref{def:xk_flora} is equivalent to~\eqref{def:vkp1}. 

        First, note that $\Theta_j$ is $(\mu_f+\lambda^{-1})$-strongly convex in view of Lemma~\ref{lem:gamma}(a) and the choice of $\mu_\flora$ in~\eqref{eq:setup}, matching the condition on $\Gamma^\flora_k$ in Step \textbf{2} of Algorithm \ref{alg:flora}. Condition~\eqref{ineq:Gamma_acc} holds by Lemma~\ref{lem:gamma}(c) and the definition of $\psi$ in~\eqref{eq:setup}.
        
        Second, the first relation in \eqref{eq:zkp1_hatuk} holds by Lemma~\ref{lem:gamma}(b)
        in view of $\tx^\flora_k = \tilde v_k= x_0$ (see \eqref{eq:setup} and \eqref{def:restart_corresp}) and the choices $z^\flora_{k+1}=x_j$, $\Gamma^\flora_k = \Theta_j$, and $\A_k^\flora = A_j$ given in \eqref{def:restart_corresp}. Similarly, the second relation in~\eqref{eq:zkp1_hatuk} follows from the definition of $s_j$ in \eqref{def:hatxj_sj}.

        Third, we prove the equivalence of \eqref{ineq:flora_cond} and \eqref{ineq:lora_restart_acg}.
        By the correspondence~\eqref{def:restart_corresp} and the relation $\tx_k^\flora = \tilde v_k= x_0$ noted above, we have
           \begin{gather*}
               \|\lambda_\flora\hat{u}^\flora_{k+1}\|^2+2\lambda_\flora\left[\Phi(\ty^\flora_{k+1})+\frac{1}{2\lambda_\flora}\|\tx^\flora_k-\ty^\flora_{k+1}\|^2 -\Gamma^\flora_k(z^\flora_{k+1})\right]\\\stackrel{\eqref{def:restart_corresp}}=\|\lambda s_j\|^2+2\lambda\left[\phi(y_j)+\frac{1}{2\lambda}\|x_0-y_j\|^2 -\Theta_j(x_j)\right]\stackrel{\eqref{ineq:lora_restart_acg}}\leq \sigma\|y_j-x_0\|^2\stackrel{\eqref{def:restart_corresp}}=\sigma_\flora\|\ty^\flora_{k+1}-\tx^\flora_k\|^2 
           \end{gather*}
        which proves~\eqref{ineq:flora_cond} is equivalent to~\eqref{ineq:lora_restart_acg}. 

        Finally we verify the equivalence of~\eqref{def:xk_flora} and~\eqref{def:vkp1}. Given our choice of $z^\flora_{k+1}=x_j$ and the relation $\tx^\flora_k=\tilde v_k=x_0$, we observe
        \begin{equation*}
            u^\flora_{k+1}\stackrel{\eqref{def:xk_flora}}=\hat{u}^\flora_{k+1}+\frac{\tx^\flora_k-z^\flora_{k+1}}{\lambda}\stackrel{\eqref{def:restart_corresp}}=s_j+\frac{x_0-x_j}{\lambda}
            \stackrel{\eqref{def:hatxj_sj}}=s_j+\frac{A_j}{\lambda}s_j=\frac{\lambda+A_j}{\lambda}s_j,
        \end{equation*}
        which validates our choice of $u^\flora_{k+1}$ in~\eqref{def:restart_corresp}. Then with the choices $x^\flora_k=v_k$, $z^\flora_{k+1}=x_j$, $\mu_\flora=\mu_f$, and $u^\flora_{k+1}=\lambda^{-1}(A_j+\lambda)s_j$ given in \eqref{def:restart_corresp}, we can rewrite \eqref{def:vkp1} as
        \[
        v_{k+1}\stackrel{\eqref{def:vkp1},\eqref{def:restart_corresp}}=\frac{1}{\tau_{k+1}}\left(\tau_k x_k^\flora+b_k\mu_\flora z_{k+1}^\flora-b_ku^\flora_{k+1}\right)
        \stackrel{\eqref{def:xk_flora}}= x^\flora_{k+1}.
        \]
        Therefore, Algorithm~\ref{alg:restart} is an instance of the FLOrA framework.
    \QEDA

Using the results from Subsection~\ref{subsec:ACG} and Appendix~\ref{appdx:acg}, we are now ready to prove Proposition~\ref{prop:inner_to_outer}, which connects the inner ACG subroutine with the outer termination condition.

\vspace{1em}

\noindent
{\bf Proof of Proposition \ref{prop:inner_to_outer}:}
        By Lemma~\ref{lem:101}(c) with $L=L_f - \mu_f$ and $\mu= \lambda^{-1}+\mu_f$ (see \eqref{eq:setup}),  \eqref{eq:bound} implies that $ A_j\ge 5\lambda/\sigma$, with the condition on $\lambda$ ensuring that the $\log$ term is non-negative.
       Using Lemma~\ref{lem:tech}(a) and (b) with $\mu=\mu_f+\lambda^{-1}$, we have
		\begin{align*}
		\|\lam s_j\|^2 + 2\lam[\psi(y_j) - \Theta_j(x_j)]
		& \stackrel{\eqref{ineq:mj_mu},\eqref{ineq:sj}}\le \frac{9\lam^2\|y_j-x_0\|^2}{4A_j^2} + \frac{2\lam(\mu_f+\lam^{-1})}{A_j(\mu_f+\lam^{-1})-2}\|y_j-x_0\|^2 \\
		\le\frac{9\lam^2\|y_j-x_0\|^2}{4A_j^2} + \frac{2}{A_j\lam^{-1}-2}\|y_j-x_0\|^2 &\le \left( \frac{\sigma^2}{10}+\frac{2\sigma}{3}\right) \|y_j-x_0\|^2\leq \sigma\|y_j-x_0\|^2,
		\end{align*}	
        where the third inequality follows from the facts that $A_j \ge 5\lam/\sigma$ and $\sigma \in (0,1)$.
    \QEDA

\section{Deferred Proofs for I-ALM and I-FALM}\label{appdx:deferred_alm}
\subsection{Deferred Proofs for I-ALM}

\noindent
\textbf{Proof of Proposition~\ref{prop:alm_inner_complexity}}
    Set $L$ and $\mu$ as in~\eqref{def:setup_alm} define $\Phi(\cdot)=\L_\rho(\cdot,\lambda_k)$, $\gamma=\varepsilon_k/(4D^2)$, $\eta=(2L+\mu)^{-1}$, and $\bar{x}=x_k$. Using Lemma~\ref{lem:grad_map_merged}(c) and noting $\phi_\gamma(\cdot)=\psi(\cdot)$ in light of \eqref{def:setup_alm}, requiring $\|\mathcal{G}_{\psi}^{(2L+\mu)^{-1}}(\tx_k)\|\leq \varepsilon_k/4D$ guarantees that $\|\mathcal{G}_{\L_\rho(\cdot,\lambda_k)}^{(2L+\mu)^{-1}}(\tx_k)\|\leq \varepsilon_k/2D$ as in Step~\textbf{1}.
    Applying Lemma~\ref{lem:acg_convergence} to $\psi$ as in \eqref{def:setup_alm} together with Lemma~\ref{lem:101}(c), each call to Algorithm~\ref{alg:ACG} in Step \textbf{1} takes
    \[
        \tO\left(1+\frac{\sqrt{L}}{\sqrt{\mu}}\right)=\tO\left(1+\frac{D(\sqrt{L_f}+\sqrt{\rho}\|A\|)}{\sqrt{\varepsilon_k}}\right)
    \]
    ACG iterations to guarantee $\|\mathcal{G}_{\psi}^{(2L+\mu)^{-1}}(\tx_k)\|\leq \varepsilon_k/4D$. The claim follows by taking the uniform lower bound $\varepsilon_k\geq \sigma\rho\varepsilon^2/2$.
\QEDA

\vspace{1em}

\noindent
\textbf{Proof of Lemma~\ref{lem:gamma_dual_props_alm}}
    a) The claim immediately follows from the update~\eqref{def:lambda_alm} and the definition of $\L(x,\lam)$ in~\eqref{eq:strong_duality}.

    b) The inequality directly follows by the definition of the dual $d(\nu)$ in~\eqref{eq:strong_duality}, which implies $-d(\nu)\geq-\L(x,\nu)$ for all $x\in\R^n$.
\QEDA

\vspace{1em}

\noindent
\textbf{Proof of Corollary~\ref{cor:alm_complexity_pd}}
    Noting that $\delta^\lora_k=\varepsilon_0\alpha^k$ (see \eqref{def:alm_corresp}) is summable with 
    \[
    \sum_{i=0}^\infty\delta^\lora_i=\sum_{i=0}^\infty\varepsilon_0\alpha^i=\frac{\varepsilon_0}{1-\alpha}.
    \]
    Hence, applying Lemma~\ref{lem:single_step_lora} to $\Phi(\cdot)=-d(\cdot)$ with $R^\lora_0=R_\Lambda$ and $C_\delta=\varepsilon_0/(1-\alpha)$, we have
    \begin{equation}
        \|\lambda_{k}-\lambda_*\|\stackrel{\eqref{def:alm_corresp}}=\|x^\lora_{k}-x_*\|\stackrel{\eqref{def:alm_corresp},\eqref{ineq:lora_xkdist}}\leq R_\Lambda + \sqrt{\frac{2\rho\varepsilon_0}{1-\alpha}}
        =R_\Lambda + \sqrt{\frac{2}{1-\alpha}}\leq R_\Lambda + D\sqrt{\frac{2}{1-\alpha}},\label{ineq:lora_k_bound_al}
    \end{equation}
    where the second equality follows by the choice $\rho=\varepsilon^{-1}=\varepsilon_0^{-1}$ in Theorem~\ref{thm:al_baseline_complexity} and the last inequality is due to Assumption~\ref{assmp:constrained}(d).
    It follows from the triangle inequality and $\lambda_0=0$ that
    \begin{equation}
        \|\lambda_k\|\leq \|\lambda_k-\lambda_*\|+\|\lambda_*\|= \|\lambda_k-\lambda_*\|+R_\Lambda
        \stackrel{\eqref{ineq:lora_k_bound_al}}\leq 2R_\Lambda + \zeta D,\label{ineq:lora_k_norm_bound_al}
    \end{equation}
    where $\zeta = \sqrt{2/(1-\alpha)}$.
    Suppose that $(x_k,\lambda_k)$ is an $\varepsilon/[2R_\Lambda + (1+\zeta) D]$-solution to~\eqref{eq:ProbIntro_LC}, then by Lemma~\ref{lem:pd_gap}, we have
    \begin{align*}
        |\phi(x_k)-\hat\phi_*|\stackrel{\eqref{ineq:primal_gap_ul}}\leq \max\{R_\Lambda,\|\lambda_k\|+D\}\frac{\varepsilon}{2R\lambda+(1+\zeta)D}
        \stackrel{\eqref{ineq:lora_k_norm_bound_al}}\leq \frac{\varepsilon}{2R_\Lambda+(1+\zeta) D}(2R_\Lambda+(1+\zeta)D) = \varepsilon,
    \end{align*}
    where the second inequality follows from the fact that $R_\Lambda\leq 2R_\Lambda+\zeta D$.
    Then, by Theorem~\ref{thm:al_baseline_complexity} with $\varepsilon$ replaced by $\varepsilon/(2R_\Lambda + (1+\zeta)D)\leq\varepsilon$, the iteration-complexity for $(x_k,\lambda_k)$ to guarantee $|\phi(x_k)-\hat\phi_*|\leq\varepsilon$ and $\|Ax_k-b\|\leq\varepsilon$ is given by~\eqref{cmplx:alm_pd_total}.
\QEDA

\subsection{Deferred Proofs for I-FALM}

\subsubsection{Proof of Lemma~\ref{lem:perturb_solution}}
By subdifferential calculus, we can show
\begin{equation*}
    v\in\partial\tilde{\L}(\cdot,\lambda)(x)=\partial\left(\L(\cdot,\lambda)+\frac{\gamma_p}{2}\|\cdot-x_0\|^2\right)(x)=\partial\L(\cdot,\lambda)(x) +\gamma_p(x-x_0),
\end{equation*} 
hence rearranging yields
\begin{equation*}
    v':=v-\gamma_p(x-x_0)\in\partial\L(\cdot,\lambda)(x).
\end{equation*} 
Then, applying the triangle inequality, Assumption~\ref{assmp:constrained}(d), and the choice $\gamma_p=\varepsilon/(2D)$ we have
\[
    \|v'\|\leq \|v\|+{\gamma_p}\|x-x_0\|\leq\frac{\varepsilon }{2}+\frac{\varepsilon D}{2D}\leq \varepsilon.
\]

\subsubsection{Proof of Lemma \ref{lem:double_perturbation_distance}}\label{proof:double_perturbation_distance} 
Let $\lambda_*\in\Lambda_*$ be the optimal multiplier achieving $R_\Lambda$. Since $-\td$ is $\gamma_d$-strongly convex with minimizer $\tilde\lambda_*$, we have
    \begin{align}
        -\td(\lam_*)+\td(\tilde\lam_*)&\geq \frac{\gamma_d}{2}\|\lambda_*-\tilde\lambda_*\|^2 = \frac{\gamma_d}{2}\|\lambda_*-\lambda_0\|^2 + \gamma_d\inner{\lambda_*-\lambda_0}{\lambda_0-\tilde\lambda_*}+ \frac{\gamma_d}{2}\|\lambda_0-\tilde\lambda_*\|^2 \nn \\
        &\geq\frac{\gamma_d}{2}R_\Lambda^2-\gamma_d R_\Lambda \dtL + \frac{\gamma_d}{2}\dtL^2,\label{ineq:dual_difference_lb}
    \end{align}
    where the second inequality follows from the Cauchy-Schwarz inequality and the facts that $R_\Lambda = \|\lam_0 - \lam_*\|$ and $R_{\tilde \Lambda} = \|\lam_0 - \tilde \lam_*\|$. 
    Define $u(\lambda)=\underset{x\in\R^n}{\argmin}\L(x,\lambda)$, then by the definitions of $d$ and $\tilde d$ in  \eqref{eq:strong_duality} and \eqref{def:pert_dual}, respectively, we have
    \begin{equation}
        \tilde{d}(\lambda)\stackrel{\eqref{def:pert_dual}}\leq \L(u(\lambda),\lambda) + \frac{\gamma_p}{2}\|u(\lambda)-x_0\|^2 - \frac{\gamma_d}{2}\|\lambda-\lambda_0\|^2\stackrel{\eqref{eq:strong_duality}}=d(\lambda) + \frac{\gamma_p}{2}\|u(\lambda)-x_0\|^2 - \frac{\gamma_d}{2}\|\lambda-\lambda_0\|^2.\label{ineq:dual_ub}
    \end{equation}
    Applying~\eqref{ineq:pert_dual_ub} with $\lambda=\lambda_*$ and~\eqref{ineq:dual_ub} with $\lambda=\tilde{\lambda}_*$, we obtain 
    \begin{align*}
        -\td(\lam_*)+\td(\tilde\lam_*)\stackrel{\eqref{ineq:pert_dual_ub},\eqref{ineq:dual_ub},}\leq&-d(\lam_*)+\frac{\gamma_d}{2}\|\lam_*-\lam_0\|^2 +d(\tilde\lam_*) +\frac{\gamma_p}{2}\|u(\tilde \lambda_*)-x_0\|^2 -\frac{\gamma_d}{2}\|\tilde\lam_*-\lam_0\|^2.
    \end{align*}
    Plugging $R_\Lambda = \|\lam_0 - \lam_*\|$ and $R_{\tilde \Lambda} = \|\lam_0 - \tilde \lam_*\|$, and using Assumption~\ref{assmp:constrained}(d), we have
    \begin{align}\label{ineq:dual_difference_ub}
        -\td(\lam_*)+\td(\tilde\lam_*)\leq&-d(\lam_*)+d(\tilde\lam_*)+\frac{\gamma_d}{2}R_{\Lambda}^2-\frac{\gamma_d}{2}\dtL ^2+\frac{\gamma_p}{2}D^2\leq \frac{\gamma_d}{2}R_{\Lambda}^2-\frac{\gamma_d}{2}\dtL ^2+\frac{\gamma_p}{2}D^2.
        \end{align}
        where the second inequality follows from $d(\tilde\lam_*)\leq d(\lam_*)$.
    Combining the lower bound~\eqref{ineq:dual_difference_lb} and upper bound~\eqref{ineq:dual_difference_ub} on $-\td(\lam_*)+\td(\tilde\lam_*)$, we obtain
     \[
    \frac{\gamma_d}{2}R_{\Lambda}^2-\frac{\gamma_d}{2}\dtL ^2+\frac{\gamma_p}{2}D^2\stackrel{\eqref{ineq:dual_difference_ub}}\geq -\td(\lam_*)+\td(\tilde\lam_*)\stackrel{\eqref{ineq:dual_difference_lb}}\geq\frac{\gamma_d}{2}R_\Lambda^2-\gamma_d R_\Lambda \dtL + \frac{\gamma_d}{2}\dtL^2,
    \]
    which, by rearranging, yields
     \begin{equation}
         \dtL ^2-\dtL R_{\Lambda}-\frac{\gamma_p}{2\gamma_d}D^2\leq 0.\label{ineq:quad_dtL_1}
     \end{equation}

    If $\gamma_p=\varepsilon/(2D)$ and $\gamma_d=C_0\varepsilon/(\dtL)$ for some constant $C_0> 0$, then $\gamma_p/\gamma_d=\dtL/(2C_0D)$. Then, we can rewrite~\eqref{ineq:quad_dtL_1} as
    \[
    \dtL ^2 - \dtL\left(R_\Lambda + \frac{D}{4C_0}\right)\leq 0.
    \]
        Clearly, $\dtL$ attains its extremal values when the LHS equals 0. Taking the nonzero solution $R_\Lambda+D/(4C_0)$ yields the claim~\eqref{ineq:pert_distance_propto}.

\subsubsection{Proof of Proposition \ref{prop:alm_inner_complexity_sc}}\label{proof:alm_inner_complexity_sc} 
Set $L$ and $\mu$ as in~\eqref{def:setup_alm_acc} and define $\Phi(\cdot)=\L_\rho(\cdot,\tnu_k)$, $\gamma=\varepsilon_k/(4D^2)$, $\eta=(2L+\mu)^{-1}$, and $\bar{x}=x_k$. Using Lemma~\ref{lem:grad_map_merged}(c) and noting $\phi_\gamma(\cdot)=\psi(\cdot)$ in light of \eqref{def:setup_alm_acc}, requiring $\|\mathcal{G}_{\psi}^{(2L+\mu)^{-1}}(\tx_k)\|\leq \varepsilon_k/4D$ guarantees that $\|\mathcal{G}_{\L_\rho(\cdot,\lambda_k)}^{(2L+\mu)^{-1}}(\tx_k)\|\leq \varepsilon_k/2D$ as in Step~\textbf{2}. Applying Lemma~\ref{lem:acg_convergence} to $\psi$ and using Lemma~\ref{lem:101}(c) with $L$ and $\mu$ as in \eqref{def:setup_alm_acc}, we show that each call to Algorithm~\ref{alg:ACG} takes
    \begin{equation*}
        \tO\left(1+\frac{\sqrt{L}}{\sqrt{\mu}}\right)=\tO\left(1+\frac{\sqrt{L_f}+\sqrt{\rho}\|A\|}{\sqrt{\gamma_p+\varepsilon_k/(4D^2) }}\right)
    \end{equation*}
    ACG iterations to guarantee $\|\mathcal{G}_{\psi}^{(2L+\mu)^{-1}}(\tx_k)\|\leq \varepsilon_k/(4D)$. The claim follows from the trivial lower bound $\gamma_p + \varepsilon_k/(4D^2)\geq \gamma_p=\varepsilon/(2D)$.

\subsubsection{Proof of Lemma~\ref{lem:gamma_bounds_dual_sc}}\label{proof:gamma_bounds_dual_sc}
    a) We first note that
    \[
    \nabla \left(-\tL(x_{k+1},\cdot)+\frac{1}{2\rho}\|\cdot-\tnu_k\|^2\right)(\lambda_{k+1})=\gamma_d\lambda_{k+1}-(Ax_{k+1}-b)+\frac{\lambda_{k+1}-\tnu_k}{\rho}\stackrel{\eqref{eq:lambda_acc}}=\gamma_d\lambda_{k+1}.
    \]
    It thus follows from the $(\rho^{-1}+\gamma_d)$-strong convexity of $-\tL(x_{k+1},\cdot)+\|\cdot-\tnu_k\|^2/(2\rho)$ that for every $\nu\in\R^m$,
    \begin{align*}
        \Gamma_{k}^{\lambda}(\nu)&=-\tL(x_{k+1},\lambda_{k+1})+\frac{1}{2\rho}\|\lambda_{k+1}-\tnu_k\|^2+\inner{\gamma_d\lambda_{k+1}}{\nu-\lambda_{k+1}}+\frac{1+\gamma_d\rho}{2\rho}\|\nu-\lambda_{k+1}\|^2\\
        &\leq -\tL(x_{k+1},\nu)+\frac{1}{2\rho}\|\nu-\tnu_k\|^2.
    \end{align*}
    Hence, this statement immediately follows from the definition of $\tilde d$ in \eqref{def:pert_dual}.
    
    b) Since $\Gamma_k^\lam(\nu)$ is a quadratic function, it is easy to verify that $\hat \lam_{k+1}$ as in \eqref{def:lambda_hat} is the solution to $\min\{\Gamma_k^\lambda(\nu):\nu\in\R^m\}$. Also, it is straightforward to verify \eqref{eqn:min_GammaL_value} by computation.

    c) The claim follows directly from~\eqref{def:nu_k}, $u_{k+1}=\rho^{-1}(\tnu_{k}-\hat\lambda_{k+1})$, and the definition of $\hat\lambda_{k+1}$ in~\eqref{def:lambda_hat}.

    d) The claim follows from the requirement $\alpha\leq (1+\sqrt{\rho\gamma_d})^{-2}$ in the initialization of Algorithm~\ref{alg:dsc_aalm} and Lemma~\ref{lem:b_seq}(d) with $C_\flora=C$ in view of the definition of $C$ in~\eqref{def:R-R} and the correspondence $\alpha_\flora=\alpha$, $\mu_\flora=\gamma_d$, and $\lambda_\flora=\rho$ in~\eqref{def:acc_alm_corresp}. 
    %

\subsubsection{Proof of Corollary~\ref{cor:aalm_complexity_pd}}\label{proof:aalm_complexity_pd}
By the parameters chosen in Theorem~\ref{thm:acc_alm_complexity_1}, Lemma~\ref{lem:gamma_bounds_dual_sc} and Proposition~\ref{prop:OR} imply that Algorithm~\ref{alg:dsc_aalm} is an instance of the FLOrA framework under the correspondence~\eqref{def:acc_alm_corresp}. Then, applying Lemma~\ref{lem:distance_bound_flora} with $\Phi(\cdot)=-\td(\cdot)$ noting that $\mathcal{R}_\flora=\mathcal{R}$ (where $\mathcal{R}$ is as in~\eqref{def:R-R}), we obtain for any $k\geq 1$,
    \begin{equation}
    \begin{split}
    \|\tnu_{k-1}-\tilde\lambda_*\|\stackrel{\eqref{def:acc_alm_corresp}}=\|\tx^\flora_{k-1}-x_*\|\stackrel{\eqref{def:acc_alm_corresp},\eqref{ineq:tx_dist_bound}}\leq \mathcal{R}.
    \end{split}\label{ineq:tnu_dist}
    \end{equation}
    Then, it follows from the triangle inequality that
    \begin{equation}
    \begin{split}
    \|\lam_k\|&\leq\|\lam_k-\tnu_{k-1}\|+\|\tnu_{k-1}-\tilde{\lambda}_*\|+\|\tilde{\lambda}_*\|\stackrel{\eqref{eq:lambda_acc}}=\rho\|Ax_k-b\|+\|\tnu_{k-1}-\tilde{\lambda}_*\|+\|\tilde{\lambda}_*\|\\
    &\stackrel{\eqref{ineq:tnu_dist}}\leq \rho\|Ax_k-b\|+\mathcal{R}+\|\tilde{\lambda}_*\|
    \leq \rho\|Ax_k-b\|+2\mathcal{R},
    \end{split}\label{ineq:lamk_bound_pd}
    \end{equation}
    where the last inequality follows from the definition of $\mathcal{R}$ in~\eqref{def:R-R} and the choice $\lambda_0=0$. 
    
    Suppose that $(x_k,\lam_k)$ is an $\varepsilon$-primal-dual solution to~\eqref{eq:ProbIntro_LC}. Then, by Lemma~\ref{lem:pd_gap} the absolute primal gap is bounded from above by
    \[
        |\phi(x_k)-\hat\phi_*|\stackrel{\eqref{ineq:primal_gap_ul}}\leq \max\{(D+\|\lam_k\|),R_\Lambda\}\varepsilon\stackrel{\eqref{ineq:lamk_bound_pd}}{\leq} (D+\rho\|Ax_k-b\|+2\mathcal{R})\varepsilon\stackrel{\eqref{def:approximate_kkt}}\leq (D+2\mathcal{R})\varepsilon + \rho\varepsilon^2\leq 2(D+\mathcal{R})\varepsilon,
    \]
    where the second inequality follows from $R_\Lambda\leq (D+\rho\|Ax_k-b\|+2\mathcal{R})$ and the final inequality follows by the condition $\rho\varepsilon =4\sigma\rho\varepsilon\leq 1$ and Assumption~\ref{assmp:constrained}(d).
    
    Therefore,~\eqref{cmplx:total_pd_alm_1} follows by substituting $\varepsilon=\varepsilon_g/(2(D+\mathcal{R}))$ into Theorem~\ref{thm:acc_alm_complexity_1}  and using the fact that $\mathcal{R}=\mathcal{O}(\hat R_\Lambda + D)$ under the parameter settings of Theorem~\ref{thm:acc_alm_complexity_1} and noting that $\|Ax_k-b\|\leq \varepsilon_g\leq \varepsilon$.

\end{document}